    \newcommand{\CC}{\mathbf{C}}
    \newcommand{\DD}{\mathbf{D}}
    \newcommand{\F}{\mathcal F}
    \renewcommand{\H}{\mathcal{H}}
    \renewcommand{\L}{\mathcal{L}}
    \renewcommand{\P}{\mathcal{P}}
    \newcommand{\Sc}{\mathcal S}
    \newcommand{\N}{\mathbb{N}}
    \newcommand{\Z}{\mathbb{Z}}
    \newcommand{\Q}{\mathbb{Q}}
    \newcommand{\R}{\mathbb{R}}
    \renewcommand{\SS}{\mathbb{S}}
    \newcommand{\Om}{\Omega}
    \renewcommand{\a}{\alpha}
    \renewcommand{\b}{\beta}
    \newcommand{\g}{\gamma}
    \newcommand{\de}{\delta}
    \newcommand{\e}{\varepsilon}
    \renewcommand{\l}{\lambda}
    \newcommand{\s}{\sigma}
    \newcommand{\om}{\omega}
    \newcommand{\vphi}{\varphi}
    \newcommand{\Lip}{{\rm Lip}}
    \newcommand{\Div}{{\rm div}\,}
    \newcommand{\Id}{{\rm Id}\,}
    \newcommand{\dist}{{\rm dist}}
    \newcommand{\diam}{{\rm diam}\,}
    \newcommand{\spt}{{\rm spt}}
    \newcommand{\weakstar}{\stackrel{\scriptscriptstyle{*}}{\rightharpoonup}}
    \newcommand{\toloc}{\stackrel{\scriptscriptstyle{{\rm loc}}}{\to}}
    \newcommand{\pa}{\partial}
    \newcommand{\cc}{\subset\!\subset}
    \newcommand{\cl}{\mathrm{cl}\,}
    \newcommand{\KK}{\mathcal{K}}
    \newcommand{\T}{\mathcal{T}}
    \newcommand{\pp}{\mathbf{p}}
    \newcommand{\var}{\mathbf{var}\,}
    \newcommand{\intci}{\mathrm{int}\, U_j^\one}
    \newcommand{\C}{\mathcal{C}}
    \newcommand{\A}{\mathcal{A}}
    \newcommand\restr[2]{{
      \left.\kern-\nulldelimiterspace 
      #1 
      \right|_{#2} 
      }}
    \newcommand{\K}{\mathcal{K}}
    \newcommand{\RR}{\mathcal{R}}
    \newcommand{\one}{{\scriptscriptstyle{(1)}}}
    \newcommand{\zero}{{\scriptscriptstyle{(0)}}}
    \newcommand{\half}{{\scriptscriptstyle{(1/2)}}}
    \newcommand{\mres}{\mathbin{\vrule height 1.6ex depth 0pt width 
    0.13ex\vrule height 0.13ex depth 0pt width 1.3ex}}
    \theoremstyle{plain}
    \newtheorem{theorem}{Theorem}[section]
    \newtheorem{lemma}[theorem]{Lemma}
    \newtheorem*{theorem*}{Theorem}
    \newtheorem*{corollary*}{Corollary}
    \theoremstyle{definition}
    \newtheorem{definition}{Definition}
    \newtheorem{remark}[theorem]{Remark}
    \newtheorem*{notation*}{Notation}
    \numberwithin{equation}{section}
    \numberwithin{figure}{section}
    \newcommand{\id}{{\rm id}\,}
    \newcommand{\wire}{\mathbf{W}}
    \newcommand{\shn}{\overset{\scriptscriptstyle{\H^n}}{\subset}}
    \newcommand{\ehn}{\overset{\scriptscriptstyle{\H^n}}{=}}
    \newcommand{\tnl}{\theta^{n}_*}
    \title[]{Plateau borders in soap films \\
    and {G}auss' capillarity theory}
    \author{Francesco Maggi}
    \address{Department of Mathematics, The University of Texas at Austin, Austin, TX, United States of America}
    \email{maggi@math.utexas.edu}
    \author{Michael Novack}
    \address{Department of Mathematical Sciences, Carnegie Mellon University, Pittsburgh, PA, United States of America}
    \email{mnovack@andrew.cmu.edu}
    \author{Daniel Restrepo}
    \address{Department of Mathematics, Johns Hopkins University, Baltimore, MD, United States of America}
    \email{drestre1@jh.edu}
\begin{document}

    \begin{abstract}
    {\rm We provide, in the setting of Gauss' capillarity theory, a rigorous derivation of the equilibrium law for the three dimensional structures known as {\it Plateau borders} which arise in ``wet'' soap films and foams. A key step in our analysis is a complete measure-theoretic overhaul of the homotopic spanning condition introduced by Harrison and Pugh in the study of Plateau's laws for two-dimensional area minimizing surfaces (``dry'' soap films). This new point of view allows us to obtain effective compactness theorems and energy representation formulae for the homotopic spanning relaxation of Gauss' capillarity theory which, in turn, lead to prove sharp regularity properties of energy minimizers. The equilibrium law for Plateau borders in wet foams is also addressed as a (simpler) variant of the theory for wet soap films.}
    \end{abstract}

    \maketitle

    \setcounter{tocdepth}{1}

    \tableofcontents

\section{Introduction} \subsection{Overview}\label{section overview} Equilibrium configurations of soap films and foams are governed, at leading order, by the balance between surface tension forces and atmospheric pressure. This balance is expressed by the {\it Laplace--Young law of pressures}, according to which such systems can be decomposed into smooth interfaces with constant mean curvature equal to the pressure difference across them, and by the {\it Plateau laws}, which precisely postulate which arrangements of smooth interfaces joined together along lines of ``singular'' points are stable, and thus observable.

\medskip

The physics literature identifies two (closely related) classes of soap films and foams, respectively labeled as ``dry'' and ``wet''. This difference is either marked in terms of the amount of liquid contained in the soap film/foam \cite[Section 1.3]{weaireBOOK}, or in terms of the scale at which the soap film/foam is described \cite[Chapter 2, Section 3 and 4]{foamchapter}.

\medskip

In the dry case, Plateau laws postulates that (i) interfaces can only meet in three at a time forming 120-degree angles along lines of ``$Y$-points''; and (ii) lines of $Y$-points can only meet in fours at isolated ``$T$-points'', where six interfaces asymptotically form a perfectly symmetric tetrahedral angle; see, e.g. \cite[Equilibrium rules A1, A2, page 24]{weaireBOOK}.

\medskip

In the wet case, small but positive amounts of liquid are bounded by negatively curved interfaces, known as {\it Plateau borders}, and arranged near ideal lines of $Y$-points or isolated $T$-points;  see Figure
\begin{figure}
\input{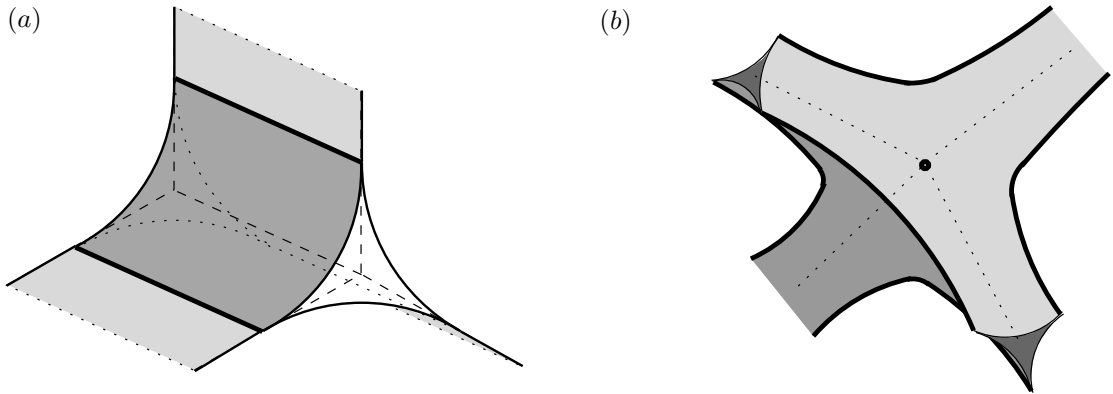}
\caption{\small{(a) A Plateau border develops around a ``wet'' line of $Y$-points. The wet region is bounded by interfaces of {\it negative} constant mean curvature. The equilibrium condition which needs to hold across the transition lines (here depicted in bold) between the negatively curved interfaces of a Plateau border and the incoming dry interfaces is that these interfaces meet tangentially. In the case of soap films, where the dry interfaces have zero mean curvature, the jump in the mean curvature across the transition lines implies a discontinuity in the gradient of the unit normal. (b) An arrangement of Plateau borders near a tetrahedral singularity. The transition lines are again depicted in bold. The incoming dry interfaces are omitted for clarity.}}
\label{fig plateauborder}
\end{figure}
\ref{fig plateauborder} and \cite[Fig. 1.8 and Fig. 1.9]{weaireBOOK}. A ``third Plateau law'' is then postulated to hold across the transition lines between wet and dry parts of soap films/foams, and can be formulated as follows:
\begin{eqnarray}\label{third pl}
  &&\mbox{{\it the unit normal to a soap film/foam changes continuously}}
  \\\nonumber
  &&\mbox{{\it across the transition lines between wet and dry interfaces}}\,;
\end{eqnarray}
see, e.g., \cite[Equilibrium rule B, page 25]{weaireBOOK} and \cite[Section 4.1.4]{foamchapter}. It is important to recall that Plateau borders play a crucial role in determining the mechanical properties of the many physical and biological systems in which they are observed. As a sample of older and newer papers discussing Plateau borders, we mention here \cite{Leonard196518,JohaPugh,Bhakta19971,Koehler19994232,LU19991469,EXAMPLEchem,EXAMPLEbio,Singh2015467}. Postulate \eqref{third pl} is assumed in all these works.

\medskip

The goal of this paper is answering the natural problem of rigorously deriving the equilibrium condition for Plateau borders \eqref{third pl} in the context of Gauss' capillarity theory. Since the case of soap films is much harder and interesting from the mathematical viewpoint, we will postpone the discussion of foams until the very last section of this introduction. The main highlight is that, in addressing Plateau borders of soap films, we will develop a new ``theory of spanning'' for surfaces of geometric measure theory (GMT) which will find further applications in the two companion papers \cite{MNR2,MNR3}; see the closing of this overview for more details about these additional applications.

\medskip

We now give an informal description of our approach. The starting point is \cite{MaggiScardicchioStuvard}, where the idea  is introduced  of modeling soap films as regions $E$ of positive volume $|E|=v$ contained in the complement $\Omega=\R^{n+1}\setminus\wire$ of a ``wire frame'' $\wire$ ($n=2$ is the physical case, although the planar case $n=1$ is also quite interesting in applications). We associate to $E$ the surface tension energy $\H^n(\Om\cap\pa E)$ (where $\H^n$ stands for $n$-dimensional (Hausdorff) measure, i.e., area when $n=2$ and length when $n=1$), and minimize $\H^n(\Om\cap\pa E)$ under the constraints that $|E|=v$ (for some given $v>0$) and
\begin{equation}
  \label{meaning of}
  \mbox{$\Om\cap\pa E$ is spanning $\wire$}\,.
\end{equation}
From the mathematical viewpoint the meaning assigned to \eqref{meaning of} is, of course, the crux of the matter. In the informal spirit of this overview, we momentarily leave the concept of ``spanning'' only intuitively defined.

\medskip

As proved in \cite{KingMaggiStuvard}, this minimization process leads to the identification of {\it generalized minimizers} in the form of pairs $(K,E)$ with $E\subset\Om$, $|E|=v$, and such that
\begin{equation}
  \label{meaning of generalized}
  \mbox{$\Om\cap\pa E\subset K$ and $K$ is spanning $\wire$}\,.
\end{equation}
These pairs are minimizing in the sense that
\begin{equation}
  \label{kms minimizers}
  \H^n(\Om\cap\pa E)+2\,\H^n(K\setminus\pa E)\le \H^n(\Om\cap\pa E')\,,
\end{equation}
whenever $E'\subset\Om$, $|E'|=v$ and $\Om\cap\pa E'$ is spanning $\wire$.

\medskip

If $K=\Om\cap\pa E$, then generalized minimizers are of course minimizers in the proper sense. If not, the {\it collapsed interface} $K\setminus\pa E$ is a surface whose positive area has to be counted with a multiplicity factor $2$ (which arises from the asymptotic collapsing along $K\setminus\pa E$ of oppositely oriented boundaries in minimizing sequences $\{E_j\}_j$, see
\begin{figure}
\input{collapsing.pstex_t}\caption{{\small Emergence of collapsing along a minimizing sequence $\{E_j\}_j$ for the minimization of $\H^n(\Om\cap\pa E)$ among sets $E\subset\Omega=\R^{n+1}\setminus\wire$ with $|E|=v$ and $\Om\cap\pa E$ spanning $\wire$, when $n=1$ and $\wire$ is the union of three disks in the plane. Notice that for this choice of $\wire$ the minimization of $\H^n(S)$ among $S\subset\Om$ such that $S$ is spanning $\wire$ is solved by three segments meeting at $Y$-point. Collapsing is intuitively related to the presence of $Y$-type and $T$-type singularities.}
}\label{fig collapsing}
\end{figure}
Figure \ref{fig collapsing}). We expect collapsing to occur whenever the Plateau problem for $\wire$ admits one minimizer $S$ with Plateau-type singularities. Whenever this happens, a {\it wetting conjecture} is made: sequences $\{(K_{v_j},E_{v_j})\}_j$ of generalized minimizers with $|E_{v_j}|=v_j\to 0^+$ as $j\to\infty$ will be such that the set of Plateau's singularities $\Sigma(S)$ of $S$ is such that $\sup\{\dist(x,E_{v_j}):x\in\Sigma(S)\}\to 0$. Thus we expect that Plateau's singularities are never ``left dry'' in the small volume capillarity approximation of the Plateau problem.

\medskip

A lot of information about generalized minimizers can be extracted from \eqref{kms minimizers}, and this is the content of \cite{KingMaggiStuvard,KMS2,KMS3}. With reference to the cases when $n=1$ or $n=2$, one can deduce from \eqref{kms minimizers} that if $\H^n(K\setminus\pa E)>0$, then $K\setminus\pa E$ is a smooth minimal surface (a union of segments if $n=1$) and that $\pa E$ contains a regular part $\pa^*E$ that is a smooth constant mean curvature surface (a union of circular arcs if $n=1$) with {\it negative} curvature. This is of course strongly reminiscent of the behavior of Plateau borders, and invites to analyze the validity of \eqref{third pl} in this context. A main obstacle is that, due to serious technical issues (described in more detail later on) related to how minimality is expressed in \eqref{kms minimizers}, it turns out to be very difficult to say much about the ``transition line''
\[
\pa E\setminus\pa^*E
\]
between the zero and the negative constant mean curvature interfaces in $K$, across which one should check the validity of \eqref{third pl}. More precisely, all that descends from \eqref{kms minimizers} and a direct application of Allard's regularity theorem \cite{Allard} is that {\it $\pa E\setminus\pa^*E$ has empty interior in $K$}. Far from being a line in dimension $n=2$, or a discrete set of points when $n=1$, the transition line $\pa E\setminus\pa^*E$ could very well have positive $\H^n$-measure and be everywhere dense in $K$! With such poor understanding of $\pa E\setminus\pa^*E$, proving the validity of \eqref{third pl} -- that is, the continuity of the unit normals to $K\setminus\pa E$ and $\pa^*E$ in passing across $\pa E\setminus\pa^*E$ -- is of course out of question.

\medskip

We overcome these difficulties by performing a major measure-theoretic overhaul of the Harrison--Pugh homotopic spanning condition \cite{harrisonpughACV,harrisonpughGENMETH} used in \cite{MaggiScardicchioStuvard,KingMaggiStuvard,KMS2,KMS3} to give a rigorous meaning to \eqref{meaning of}, and thus to formulate the homotopic spanning relaxation of Gauss' capillarity discussed above.

\medskip

The transformation of this purely topological concept into a measure-theoretic one is particularly powerful. Its most important consequence for the problem discussed in this paper is that it allows us to upgrade the partial minimality property \eqref{kms minimizers} of $(K,E)$ into the full minimality property
\begin{equation}
  \label{full minimizers}
  \H^n(\Om\cap\pa E)+2\,\H^n(K\setminus\pa E)\le  \H^n(\Om\cap\pa E')+2\,\H^n(K'\setminus\pa E')
\end{equation}
whenever $E'\subset\Om$, $|E'|=v$, $\Om\cap\pa E'\subset K'$ and $K'$ is spanning $\wire$. The crucial difference between \eqref{kms minimizers} and \eqref{full minimizers} is that the latter is much more efficient than the former when it comes to study the regularity of generalized minimizers $(K,E)$, something that is evidently done by energy comparison with competitors $(K',E')$. Such comparisons are immediate when working with \eqref{full minimizers}, but they are actually quite delicate to set up when we only have \eqref{kms minimizers}. In the latter case, given a competitor $(K',E')$, to set up the energy comparison with $(K,E)$ we first need to find a sequence of non-collapsed competitors $\{E'_j\}_j$ (with $E'_j\subset\Om$, $|E'_j|=v$, and $\Om\cap\pa E'_j$ spanning $\wire$) such that $\H^n(\Om\cap\pa E'_j)\to\H^n(\Om\cap\pa E')+2\,\H^n(K'\setminus\pa E')$. Intuitively, $E_j'$ needs to be a $\de_j$-neighborhood of $K'\cup E'$ for some $\de_j\to 0^+$ and the energy approximation property has to be deduced from the theory of Minkowski content. But applying the theory of Minkowski content to $(K',E')$ (which is the approach followed, e.g., in \cite{KMS3}) requires $(K',E')$ to satisfy rectifiability and uniform density properties that substantially restrict the class of available competitors $(K',E')$.

\medskip

In contrast, once the validity of \eqref{full minimizers} is established, a suitable generalization (Theorem \ref{theorem decomposition intro}) of the partition theorem of sets of finite perimeter into indecomposable components \cite[Theorem 1]{ambrosiocaselles} combined with a subtle variational argument (see Figure \ref{fig lambda}) allows us to show that, in any ball $B\cc\Omega$ with sufficiently small radius and for some sufficiently large constant $\Lambda$ (both depending just on $(K,E)$), the connected components $\{U_i\}_i$ of $B\setminus (K\cup E)$ satisfy a perturbed area minimizing property of the form
\begin{equation}
  \label{lambda minimality}
  \H^n(B\cap\pa U_i)\le\H^n(B\cap \pa V)+\Lambda\,|U_i\Delta V|\,,
\end{equation}
with respect to {\it completely arbitrary perturbations} $V\subset B$, $V\Delta U_i\cc B$. By a classical theorem of De Giorgi \cite{DeGiorgiREG,tamanini}, \eqref{lambda minimality} implies (away from a closed singular set of codimension at least $8$, which is thus empty if $n\le 6$) the $C^{1,\a}$-regularity of $B\cap\pa U_i$ for each $i$, and thus establishes {\it the continuity of the normal stated in \eqref{third pl}}. In fact, locally at each $x$ on the transition line, $K$ is the union of the graphs of two $C^{1,\a}$-functions $u_1\le u_2$ defined on an $n$-dimensional disk, having zero mean curvature above the interior of $\{u_1=u_2\}$, and opposite constant mean curvature above $\{u_1<u_2\}$. We can thus exploit the regularity theory for double-membrane free boundary problems devised in \cite{silvestre,FocardiGelliSpadaro} to deduce that the transition line $\pa E\setminus\pa^*E$ is indeed $(n-1)$-dimensional, and to improve the $C^{1,\a}$-regularity of $B\cap\pa U_i$ to $C^{1,1}$-regularity. Given the mean curvature jump across $\pa E\setminus\pa^*E$ we have thus established the {\it sharp} degree of regularity for minimizers of the homotopic spanning relaxation of Gauss' capillarity theory.

\medskip

The measure-theoretic framework for homotopic spanning conditions laid down in this paper provides the starting point for additional investigations that would otherwise seem unaccessible. In two forthcoming companion papers we indeed establish (i) the convergence towards Plateau-type singularities of energy-minimizing diffused interface solutions of the Allen--Cahn equation \cite{MNR2}, and (ii) some sharp convergence theorems for generalized minimizers in the homotopic spanning relaxation of Gauss' capillarity theory in the vanishing volume limit, including a proof of the above mentioned wetting conjecture \cite{MNR3}.

\medskip

The rest of this introduction is devoted to a rigorous formulation of the results presented in this overview. We begin in Section \ref{section plateau intro} with a review of the Harrison and Pugh homotopic spanning condition in relation to the classical Plateau problem and to the foundational work of Almgren and Taylor \cite{Almgren76,taylor76}. In Section \ref{section measth homotopic span} we introduce the new measure-theoretic formulation of homotopic spanning and discuss its relation to the measure-theoretic notion of {\it essential connectedness} introduced by Cagnetti, Colombo, De Philippis and the first-named author in the study of symmetrization inequalities \cite{CCDPM17,CCDPMSteiner}. In Section \ref{section main results} we introduce the {\it bulk} and {\it boundary} spanning relaxations of Gauss' capillarity theory, state a general closure theorem for ``generalized soap films'' that applies to both relaxed problems (Theorem \ref{theorem first closure theorem intro}). In Section \ref{subsection existence and convergence} we prove the existence of generalized soap film minimizers (Theorem \ref{thm existence EL for bulk}) and their convergence in energy to solutions to the Plateau problem. A sharp regularity theorem (Theorem \ref{thm regularity for bulk}) for these minimizers, which validates \eqref{third pl}, is stated in Section \ref{section validation}. Finally, in Section \ref{section foams intro} we reformulate the above results in the case of foams, see in particular Theorem \ref{theorem foams}.

\subsection{Homotopic spanning: from Plateau's problem to Gauss' capillarity}\label{section plateau intro} The theories of currents and of sets of finite perimeter, i.e. the basic distributional theories of surface area at the center of GMT, fall short in the task of modeling Plateau's laws. Indeed, two-dimensional area minimizing currents in $\R^3$ are carried by smooth minimal surfaces, and thus cannot model $Y$-type\footnote{Currents modulo $3$ are compatible with $Y$-type singularities, but not with $T$-type singularities.} and $T$-type singularities. This basic issue motivated the introduction of {\bf Almgren minimal sets} as models for soap films in \cite{Almgren76}: these are sets $S\subset\R^{n+1}$ that are relatively closed in a given open set $\Om\subset\R^{n+1}$, and satisfy $\H^n(S)\le\H^n(f(S))$ whenever $f:\Om\to\Om$ is a {\it Lipschitz} (not necessarily injective) map with $\{f\ne\id\}\cc\Om$. Taylor's historical result \cite{taylor76} validates the Plateau laws in this context, by showing that, when\footnote{Similar regularity assertions hold when $n=1$ (by elementary methods) and, in much more recent developments, when $n\ge 3$ \cite{ColomboEdelenSpolaor}.} $n=2$, Almgren minimal sets are locally $C^{1,\a}$-diffeomorphic either to planes, to $Y$-cones, or to $T$-cones.

\medskip

The issue of proposing and solving a formulation of Plateau's problem whose minimizers are Almgren minimal sets, and indeed admit Plateau-type singularities, is quite elusive, as carefully explained in \cite{davidshouldwe}. In this direction, a major breakthrough has been obtained by Harrison and Pugh in \cite{harrisonpughACV} with the introduction of a new spanning condition, which, following the presentation in \cite{DLGM}, can be defined as follows:

\begin{definition}[Homotopic spanning (on closed sets)]\label{def homot span closed}{\rm
Given a closed set $\wire \subset \mathbb{R}^{n+1}$ (the ``wire frame''), a {\bf spanning class for $\wire$} is a family $\C$ of smooth embeddings of $\mathbb{S}^1$ into
\[
\Om=\mathbb{R}^{n+1}\setminus \wire
\]
that is {\it closed under homotopies in $\Omega$}, that is, if $\Phi:[0,1]\times\SS^1\to\Om$ is smooth family of embeddings $\Phi_t=\Phi(t,\cdot):\SS^1\to\Om$ with $\Phi_0\in\C$, then $\Phi_t\in\C$ for every $t\in(0,1]$. A set $S$, contained and relatively closed in $\Om$, is said to be {\bf $\C$-spanning $\wire$} if
\[
S \cap \gamma \neq \varnothing\,,\qquad \forall \gamma \in \C\,.
\]
Denoting by $\Sc(\C)$ the class of sets $S$ $\C$-spanning $\wire$, one can correspondingly formulate the {\bf Plateau problem} (with homotopic spanning)
\begin{equation}
\label{hp problem intro}
\ell=\ell(\C):=\inf\big\{\H^n(S):S\in\Sc(\C)\big\}\,.
\end{equation}}
\end{definition}

Existence of minimizers of $\ell$ holds as soon as $\ell<\infty$, and minimizers $S$ of $\ell$ are Almgren minimal sets in $\Om$ \cite{harrisonpughACV,DLGM} that are indeed going to exhibit Plateau-type singularities (this is easily seen in the plane, but see also \cite{bernsteinmaggi} for a higher dimensional example). Moreover, given a same $\wire$, different choices of $\C$ are possible and can lead to different minimizers, see
\begin{figure}\input{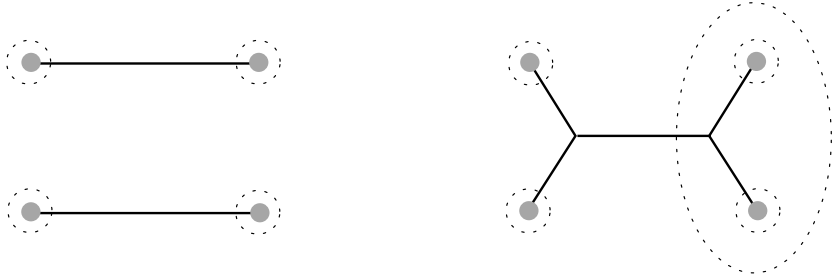}\caption{\small{The dashed lines denote the embeddings of $\SS^1$ whose homotopy classes relative to $\Om$ generate different spanning classes $\C$, to which there correspond different minimizers of $\ell$.}}\label{fig choice}\end{figure}
Figure \ref{fig choice}. Finally, the approach is robust enough to provide the starting point for several important extensions \cite{dPdRghira,derosaSIAM,harrisonpughGENMETH,FangKola,delederosaghira,DDG2}, including higher codimension, anisotropic energies, etc.

\medskip

The study of soap films as minimizers of Gauss's capillarity energy with small volume and under homotopic spanning conditions has been initiated in \cite{MaggiScardicchioStuvard,KingMaggiStuvard}, with the introduction of the model
\begin{equation}
\label{def of psi}
\psi(v):=\inf\Big\{\H^n(\Om\cap\pa E):|E|=v\,,\,\,\mbox{$\Om\cap\pa E$ is $\C$-spanning $\wire$}\Big\}\,,
\end{equation}
where $E\subset\Omega$ is an open set with smooth boundary. Without the spanning condition, at small volumes, minimizers of $\H^n(\Om\cap\pa E)$ would be small diffeomorphic images of half-balls \cite{maggimihaila}. However, the introduction of the $\C$-spanning constraint rules out small droplets, and forces the exploration of a different part of the energy landscape of $\H^n(\Om\cap\pa E)$. As informally discussed in Section \ref{section overview}, this leads to the emergence of generalized minimizers $(K,E)$. More precisely, in \cite{KingMaggiStuvard} the existence is proved of $(K,E)$ in the class
\begin{eqnarray} \label{def of K}
&&\K=\Big\{(K,E):\mbox{$K$ is relatively closed and $\H^n$-rectifiable in $\Om$, $E$ is open,}
\\\nonumber
&&\hspace{2.7cm}\mbox{$E$ has finite perimeter in $\Om$, and $\Om\cap\cl(\pa^*E)=\Om\cap\pa E\subset K$}\Big\}\,,
\end{eqnarray}
(where $\pa^* E$ denotes the reduced boundary of $E$) such that, for every competitor $E'$ in $\psi(v)$, it holds
\begin{equation}
  \label{kms minimizers intro}
  \H^n(\Om\cap\pa^*E)+2\,\H^n(\Om\cap(K\setminus\pa^*E))\le \H^n(\Om\cap\pa E')\,.
\end{equation}
Starting from \eqref{kms minimizers intro} one can apply Allard's regularity theorem \cite{Allard} and various {\it ad hoc} comparison arguments \cite{KMS2,KMS3} to prove that $\Om\cap\pa^*E$ is a smooth hypersurface with constant mean curvature (negative if $\H^n(K\setminus\pa^*E)>0$), $\Om\cap(\pa E\setminus\pa^*E)$ has empty interior in $K$, and that $K\setminus(\Sigma\cup\pa E)$ is a smooth minimal hypersurface, where $\Sigma$ is a closed set with codimension at least $8$.

\subsection{Measure theoretic homotopic spanning}\label{section measth homotopic span} In a nutshell, the idea behind our measure theoretic revision the Harrison--Pugh homotopic spanning condition is the following. Rather than asking that $S\cap\g(\SS^1)\ne\varnothing$ for every $\g\in\C$, as done in Definition \ref{def homot span closed}, we shall replace $\g$ with an open ``tube'' $T$ containing $\g(\SS^1)$, and ask that $S$, with the help of a generic ``slice'' $T[s]$ of $T$, ``disconnects'' $T$ itself into two nontrivial regions $T_1$ and $T_2$; see
    \begin{figure}\input{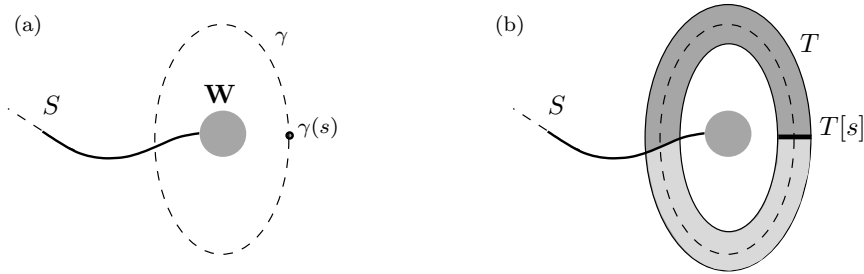}\caption{\small{(a) Homotopic spanning according to Harrison--Pugh: $S$ must intersect every curve $\g\in\C$, in particular, the $\C$-spanning property may be lost by removing a single point from $S$; (b) Homotopic spanning based on essential connectedness: for a.e. section $T[s]$ of the tube $T$ around a curve $\g\in\C$, the union $T[s]\cup S$ (essentially) disconnects $T$ (i.e., divides $T$ into two non-trivial parts, depicted here with two different shades of gray).}}\label{fig disco}\end{figure}
    Figure \ref{fig disco}. The key to make this idea work is, of course, giving a proper meaning to the word ``disconnects''.

    \medskip

    To this end, we recall the  notion of {\bf essential connectedness} introduced in \cite{CCDPM17,CCDPMSteiner} in the study of the rigidity of equality cases in Gaussian and Euclidean perimeter symmetrization inequalities. Essential connectedness is the ``right'' notion to deal with such problems since it leads to the formulation of sharp rigidity theorems, and can indeed be used to address other rigidity problems (see \cite{CagnettiPeruginiStoger,perugini,domazakis2023rigidity}). This said, it seems remarkable that the very same notion of what it means for ``one Borel set to disconnect another Borel set'' proves to be extremely effective also in the context of the present paper, which is of course very far from the context of symmetrization theory.

    \medskip

    Denoting by $T^{{\scriptscriptstyle{(t)}}}$ ($0\leq t \leq 1$) the {\bf points of density $t$} of a Borel set $T\subset\R^{n+1}$ (i.e., $x\in T^{{\scriptscriptstyle{(t)}}}$ if and only if $|T\cap B_r(x)|/\om_{n+1}\,r^{n+1}\to t$ as $r\to 0^+$, where $\om_k$ is the Lebesgue measure of the unit ball in $\R^k$), and by $\partial^e T= \mathbb{R}^{n+1}\setminus (T^{\zero} \cup T^{\one})$ the {\bf essential boundary} of $T$, given Borel sets $S$, $T$, $T_1$ and $T_2$ in $\mathbb{R}^{n+1}$, and given $n\geq 0$, we say that {\bf $S$  essentially disconnects $T$ into $\{T_1,T_2\}$}, if
    \begin{equation}\label{def essential connectedness}
    \begin{split}
    &\mbox{$\{T_1, T_2\}$ is a non-trivial Borel partition of $T$}\,,
    \\
    &\mbox{and $T^{\one}\cap \partial^e T_1 \cap \partial^e T_2$ is $\H^n$-contained in $S$}\,.
    \end{split}
    \end{equation}
    (For example, if $K$ is a set of full $\L^1$-measure in $[-1,1]$, then $S=K\times\{0\}$ essentially disconnects the unit disk in $\R^2$.)
    Moreover, we say that $T$ is {\bf essentially connected}\footnote{Whenever $T$ is of locally finite perimeter, being essentially connected is equivalent to being indecomposable.} if $\varnothing$ does not essentially disconnect $T$. The requirement that $\{T_1,T_2\}$ is a non-trivial Borel partition of $T$ means that $|T\Delta (T_1\cup T_2)|=0$ and $|T_1|\,|T_2|>0$. By saying that ``$E$ is $\H^n$-contained in $F$'' we mean that $\H^n(E\setminus F)=0$. We also notice that, in \eqref{def essential connectedness}, we have
    $T^{\one}\cap \partial^e T_1 \cap \partial^e T_2=T^{\one}\cap \partial^e T_i$ ($i=1,2$), a fact that is tacitly and repeatedly considered in the use of \eqref{def essential connectedness} in order to shorten formulas.

\medskip

With this terminology in mind, we introduce the following definition:

\begin{definition}[Measure theoretic homotopic spanning]\label{def homot span borel}
        Given a closed set $\wire$ and a spanning class $\C$ for $\wire$, the {\bf tubular spanning class} $\mathcal{T}(\C)$ associated to $\C$ is the family of triples $(\gamma, \Phi, T)$ such that $\g\in\C$, $T = \Phi(\mathbb{S}^1 \times B_1^n)$, and\footnote{Here $B_1^n=\{x\in\R^n:|x|<1\}$ and $\SS^1=\{s\in\R^2:|s|=1\}$.}
        \[
        \textup{$\Phi:\mathbb{S}^1 \times \cl B_1^n \to  \Omega$ is a diffeomorphism with $\restr{\Phi}{\mathbb{S}^1\times \{0\}}= \gamma$}\,.
        \]
        When $(\gamma, \Phi, T)\in\T(\C)$, the {\bf slice of $T$} defined by $s\in\mathbb{S}^1$ is
        \[
        T[s]=\Phi(\{s\}\times B_1^n)\,.
        \]
        Finally, we say that a Borel set $S \subset\Omega$ is {\bf $\C$-spanning} $\wire$ if for each $(\gamma,\Phi, T)\in \mathcal{T}(\C)$, $\H^1$-a.e. $s\in \mathbb{S}^1$ has the following property:
        \begin{eqnarray}\nonumber
        &&\mbox{for $\H^n$-a.e. $x\in T[s]$}
        \\\label{spanning borel intro}
          &&\mbox{$\exists$ a partition $\{T_1,T_2\}$ of $T$ s.t. $x\in\partial^e T_1 \cap \partial^e T_2$}\\ \nonumber
          &&\mbox{and s.t. $S \cup  T[s]$ essentially disconnects $T$ into $\{T_1,T_2\}$}\,.
        \end{eqnarray}
\end{definition}

Before commenting on \eqref{spanning borel intro}, we notice that the terminology of Definition \ref{def homot span borel} is coherent with that of Definition \ref{def homot span closed} thanks to the following theorem.

\begin{theorem}\label{theorem definitions equivalence intro}
Given a closed set $\wire\subset\R^{n+1}$, a spanning class $\C$ for $\wire$, and a set $S$ relatively closed in $\Om$, then $S$ is $\C$-spanning $\wire$ in the sense of Definition \ref{def homot span closed} if and only if $S$ is $\C$-spanning $\wire$ in the sense of Definition \ref{def homot span borel}.
\end{theorem}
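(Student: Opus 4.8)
The plan is to prove the two implications separately, treating the topological-to-measure-theoretic direction first since it is the more routine one. Suppose $S$ is relatively closed in $\Om$ and $\C$-spanning in the sense of Definition \ref{def homot span closed}. Fix $(\g,\Phi,T)\in\T(\C)$. For each $s\in\SS^1$, the curve $\g_s:=\Phi(\cdot\,,\,\text{(a fixed radius)})$ obtained by pushing $\g$ slightly into the tube along the $s$-slices is homotopic to $\g$ in $\Om$, hence belongs to $\C$; more usefully, for \emph{every} $s$ the ``core loop'' $\g$ itself meets $S$, but to get the disconnection statement I will instead argue as follows. Given $s\in\SS^1$, consider the two ``half-tubes'' obtained by cutting $T$ along $T[s]$: writing $\SS^1\setminus\{s\}$ as an arc and splitting it at its midpoint $s'$ gives $T=\Phi((\text{arc}_1)\times B_1^n)\cup T[s']\cup\Phi((\text{arc}_2)\times B_1^n)$. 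The closed set $S\cup T[s]$ separates $T$ topologically into these pieces precisely because any path in $T$ from one half to the other either crosses $T[s]$ or, composed with a return path through the core, produces a loop homotopic in $\Om$ to a nonzero multiple of $\g$, which therefore must meet $S$. Topological separation of an open connected set by a closed set implies essential disconnection into the corresponding Borel partition $\{T_1,T_2\}$ (the density-one boundary points common to $T_1,T_2$ lie in $\overline{S\cup T[s]}=S\cup T[s]$, and the $T[s]$ contribution is $\H^n$-null off $S$ away from $T[s]$ itself — here one uses that the common essential boundary, intersected with $T^\one$, cannot charge $T[s]$ for a.e.\ $x$, by a Fubini/coarea argument in the $s$-variable). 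This handles $\H^1$-a.e.\ $s$ and $\H^n$-a.e.\ $x\in T[s]$.

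For the converse — $S$ relatively closed and $\C$-spanning in the sense of Definition \ref{def homot span borel}, prove $S\cap\g(\SS^1)\neq\varnothing$ for all $\g\in\C$ — I argue by contradiction. If $S\cap\g(\SS^1)=\varnothing$ for some $\g\in\C$, then since $S$ is closed in $\Om$ and $\g(\SS^1)$ is compact, there is $\e>0$ with $\dist(\g(\SS^1),S)>2\e$; choose a tubular neighborhood parametrization $\Phi:\SS^1\times\cl B_1^n\to\Om$ of radius $\le\e$ with $\Phi|_{\SS^1\times\{0\}}=\g$, so that $(\g,\Phi,T)\in\T(\C)$ and $\overline T\cap S=\varnothing$. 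Now apply \eqref{spanning borel intro}: for $\H^1$-a.e.\ $s$ and $\H^n$-a.e.\ $x\in T[s]$ there is a partition with $S\cup T[s]$ essentially disconnecting $T$. But $S\cap T=\varnothing$, so in fact $T[s]$ alone essentially disconnects $T$ into a nontrivial Borel partition. This is the crux: I must show $T[s]$ \emph{cannot} essentially disconnect the solid torus $T$. Since $\Phi$ is a diffeomorphism and essential connectedness is diffeomorphism-invariant, it suffices to show the flat solid torus $\SS^1\times B_1^n$ is not essentially disconnected by a single meridian disk $\{s\}\times B_1^n$. Removing one meridian disk from $\SS^1\times B_1^n$ leaves a connected open set (an open ``cylinder'' $(0,2\pi)\times B_1^n$ up to the obvious identification), which is indecomposable as a set of finite perimeter, hence essentially connected; and adding back an $\H^n$-finite piece (the disk) to the allowed exceptional set for the common essential boundary does not create an essential disconnection, because any Borel partition $\{T_1,T_2\}$ with $T^\one\cap\pa^eT_1\cap\pa^eT_2$ $\H^n$-contained in that disk would, after intersecting with the complement of a slightly fatter sub-tube around the disk, contradict indecomposability of the cylinder. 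This yields the contradiction, proving $S$ meets every $\g\in\C$.

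The main obstacle, and where I would spend the most care, is the converse direction's核心 claim that a single meridian slice $T[s]$ fails to essentially disconnect $T$, together with the companion fact used in the forward direction that, for a.e.\ $s$, the slice $T[s]$ does not spuriously contribute to the common essential boundary away from $S$. Both hinge on controlling how the $\H^n$-null-modulo-$S$ condition interacts with the distinguished slices $T[s]$, and the natural tool is a coarea/Fubini slicing of $T$ in the $s$-direction: for a.e.\ $s$, the set $T[s]$ meets the essential boundary of a fixed finite-perimeter $T_i$ in an $\H^{n-1}$-rectifiable (hence $\H^n$-null within the $n$-dimensional disk $T[s]$) set, so the hypothesis $T^\one\cap\pa^eT_1\cap\pa^eT_2\subset_{\H^n}S\cup T[s]$ forces $T^\one\cap\pa^eT_1\cap\pa^eT_2\subset_{\H^n}S$ up to the negligible slice, which is what genuine disconnection by $S$ requires. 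I would also need the elementary but slightly technical fact that topological separation by a relatively closed set upgrades to essential disconnection (one direction) — this follows from De Giorgi's structure theorem applied to the indecomposable components produced by topological separation, as in \cite{ambrosiocaselles}, and the hypothesis that $S$ is relatively closed is essential here and is exactly why Definition \ref{def homot span closed} is a special case rather than equivalent in full generality.
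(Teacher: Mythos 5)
Your ``Definition \ref{def homot span borel} $\Rightarrow$ Definition \ref{def homot span closed}'' direction is essentially correct and is the same as the paper's: a thin tube disjoint from $S$ reduces the matter to showing that a single slice $T[s]$ cannot essentially disconnect $T$, which you deduce (correctly, and in more detail than the paper, which leaves it implicit) from the fact that the cut-open tube is an open connected set, hence essentially connected.

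The forward direction, however, is where the real content of the theorem lies, and your argument for it has genuine gaps. First, the partition into the two ``half-tubes'' cut at $s$ and at the midpoint $s'$ cannot witness \eqref{spanning borel intro}: the common essential boundary of that partition inside $T^\one$ contains the entire slice $T[s']$, which in general is not $\H^n$-contained in $S\cup T[s]$ (take $S\cap T$ to be a single disk crossing the tube at some third slice). No Fubini/coarea genericity in $s'$ can repair this, because the offending slice is built into the partition itself. The paper's analogous construction (for $x$ a density-$0$ point of $T\setminus S$) avoids exactly this by taking $T_1=\Phi(I_1\times B_1^n)\cap S$ rather than the full half-tube, so that the boundary created along $T[s']$ lies, for a.e.\ $s'$, in $S^\one\subset S$ --- this is precisely where closedness of $S$ is used. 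Second, your justification of ``topological separation'' is invalid: a path joining the two half-tubes may simply cross $T[s']$; and the claim that a loop homotopic to a nonzero multiple of $\g$ must meet $S$ is false --- $\C$ consists of embeddings closed under isotopy, multiples of $\g$ need not belong to $\C$, and the paper's Remark \ref{remark x dep} (the linking-number-three example of Figure \ref{fig triple}) exhibits a closed $\C$-spanning set avoided by an embedded loop winding three times around $\wire$. Even granting that such a loop met $S$, the intersection could occur on your ``return path through the core'', yielding no information about the original path. Third, a single $x$-independent partition cannot work in general (again Remark \ref{remark x dep}); the paper's proof needs, for $x$ lying in a connected component $U_i$ of $T\setminus S$, the partition built from the components of $U_i\setminus T[s]$, and --- when $U_i\setminus T[s]$ fails to disconnect --- a degree/winding-number argument producing an embedded loop crossing $T[s]$ exactly once, one of whose two orientations is homotopic to $\g$, hence in $\C$, while avoiding $S$, contradicting Definition \ref{def homot span closed}. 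This construction is the heart of the proof and is missing from your proposal; the homotopy claim you substitute for it does not hold.
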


Theorem \ref{theorem definitions equivalence intro} is proved in Appendix \ref{appendix equivalence of}. There we also comment on the delicate reason why, in formulating \eqref{spanning borel intro}, the partition $\{T_1,T_2\}$ must be allowed to depend on specific points $x\in T[s]$. This would not seem necessary by looking at the simple situation depicted in Figure \ref{fig disco}, but it is actually so when dealing with more complex situations; see Figure \ref{fig triple}.

\medskip

Homotopic spanning according to Definition \ref{def homot span borel} is clearly stable under modifications of $S$ by $\H^n$-negligible sets, but there is more to it.
    \begin{figure}
    \input{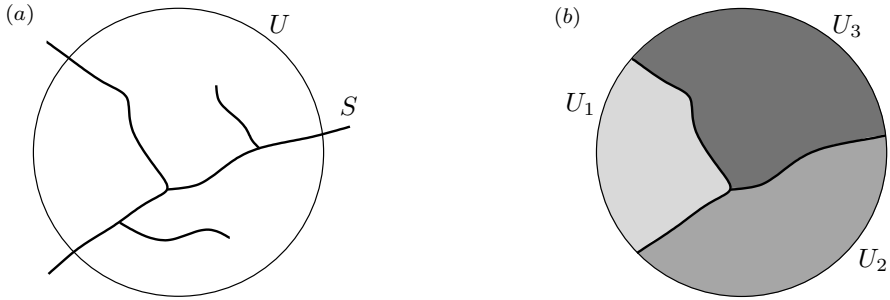}
    \caption{\small{An example of induced essential partition. The union of the boundaries of the $U_i$'s (inside of $U$) is contained in $S$, and the containment may be strict. However, the part of $S$ not contained in $U\cap\bigcup_i\pa U_i$ is not such to disconnect any of the $U_i$'s. In particular, each $U_i$ is essentially connected.}}
    \label{fig esspart}
    \end{figure}
Indeed, even a notion like ``$\H^n(S\cap T)>0$ for every $T\in\T(\C)$'' would be stable under modifications by $\H^n$-negligible sets, and would probably look more appealing in its simplicity. The catch, of course, is finding an extension of Definition \ref{def homot span closed} for which compactness theorems, like Theorem \ref{theorem first closure theorem intro} below, hold true. This is evidently not the case, for example, if one tries to work with a notion like ``$\H^n(S\cap T)>0$ for every $T\in\T(\C)$''.

\medskip

The first key insight on Definition \ref{def homot span borel} is that, if restricted to Borel sets $S$ that are locally $\H^n$-finite in $\Om$, then it can be reformulated in terms of partitions into indecomposable sets of finite perimeter. This is the content of the following theorem, whose case $S=\varnothing$ corresponds to the standard decomposition theorem for sets of finite perimeter \cite[Theorem 1]{ambrosiocaselles}. For an illustration of this result, see Figure \ref{fig esspart}.

    \begin{theorem}[Induced essential partitions (Section \ref{section essential partitions})]\label{theorem decomposition intro}
        If $U\subset\R^{n+1}$ is a bounded set of finite perimeter and $S\subset\R^{n+1}$ is a Borel set with $\H^n(S \cap U^\one)<\infty$, then there exists a unique\footnote{Uniqueness is meant modulo relabeling and modulo Lebesgue negligible modifications of the $U_i$'s.} {\bf essential partition $\{U_i\}_i$ of $U$ induced by $S$}, that is to say, $\{U_i\}_i$ is a countable partition of $U$ modulo Lebesgue negligible sets such that, for each $i$, $S$ does not essentially disconnect $U_i$.
    \end{theorem}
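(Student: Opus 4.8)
### Proof proposal for Theorem~\ref{theorem decomposition intro}

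The plan is to construct the essential partition by a maximal-splitting / exhaustion argument modeled on the proof of the decomposition theorem for sets of finite perimeter \cite{ambrosiocaselles}, with the key new ingredient being a \emph{perimeter bound controlled by $S$} for any set arising in an essential splitting of $U$. First I would record the basic algebra of essential disconnection: if $S$ essentially disconnects $U$ into $\{T_1,T_2\}$, then the measure-theoretic identities $U^\one\cap\pa^e T_i=U^\one\cap\pa^e T_1\cap\pa^e T_2\shn S$ (for $i=1,2$) hold, and hence $\H^n(U^\one\cap\pa^e T_i)\le\H^n(S\cap U^\one)<\infty$. Combined with the Federer criterion and the fact that $\pa^e T_i\subset\pa^e U\cup(U^\one\cap\pa^e T_i)$ up to $\H^n$-null sets (since $T_i\subset U$), this shows $T_i$ has finite perimeter in $\R^{n+1}$, with $P(T_i)\le P(U)+\H^n(S\cap U^\one)=:m<\infty$. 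So every ``piece'' produced by essentially disconnecting $U$ — or by iterating — is a set of finite perimeter with perimeter bounded by the fixed constant $m$. This uniform bound is what replaces the role of $P(U)$ in the classical argument.

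Next I would run the iteration. Define $\alpha_0=\sup\{\min(|T_1|,|T_2|): S \text{ essentially disconnects } U \text{ into } \{T_1,T_2\}\}$; if $\alpha_0=0$ then $U$ is not essentially disconnected by $S$ and the trivial partition $\{U\}$ works. Otherwise pick a splitting realizing at least $\alpha_0/2$, and repeat on each piece, at each stage splitting (a piece of) the current partition so as to grab at least half of the current supremal ``small-side volume.'' Each splitting keeps all pieces inside the class of finite-perimeter subsets of $U$ with perimeter $\le m$, and reduces volumes; a standard argument (the sum of perimeters of the pieces cannot exceed a geometric-type bound because essential disconnection across a piece adds the shared essential boundary to $S\cap U^\one$, whose measure is finite and \emph{not} reused) shows the process terminates after countably many steps in a partition $\{U_i\}_i$ of $U$ (mod $\L^{n+1}$) such that $S$ does not essentially disconnect any $U_i$. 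The care needed here is exactly as in \cite{ambrosiocaselles}: one must check the volumes $|U_i|$ can be made to exhaust $|U|$, which follows because if a positive-volume remainder were left, it would itself admit an essential splitting of definite size, contradicting maximality.

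For \textbf{uniqueness} I would argue as in the classical case: if $\{U_i\}_i$ and $\{V_j\}_j$ are two essential partitions of $U$ induced by $S$, I claim each $U_i$ coincides (mod $\L^{n+1}$) with some $V_j$. Fix $U_i$ with $|U_i|>0$. For each $j$ with $|U_i\cap V_j|>0$ and $|U_i\setminus V_j|>0$, the pair $\{U_i\cap V_j,\, U_i\setminus V_j\}$ would be a nontrivial Borel partition of $U_i$; one checks using $U_i,V_j\subset U$ and the locality of essential boundaries that $U_i^\one\cap\pa^e(U_i\cap V_j)\cap\pa^e(U_i\setminus V_j)$ is $\H^n$-contained in $U^\one\cap\pa^e V_j\cap\pa^e V_j^c\subset\bigcup_k \pa^e V_k$, which is $\H^n$-contained in $S$ since $\{V_k\}_k$ is induced by $S$ and $S$ does not essentially disconnect any $V_k$ — more precisely, the shared essential boundary of $V_j$ with its complement inside $U^\one$ lies in $S$ because it lies in the union of the interfaces between the $V_k$'s, which is $\H^n$-contained in $S$ by the definition of ``induced partition.'' Hence $S$ would essentially disconnect $U_i$, a contradiction. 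Therefore $U_i$ is $\L^{n+1}$-essentially contained in a single $V_j$; by symmetry $V_j$ is essentially contained in a single $U_{i'}$, forcing $U_i=V_j$ mod $\L^{n+1}$, and a countable-exhaustion argument matches up the two families.

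The \textbf{main obstacle} I anticipate is not the algebra of essential boundaries but the termination/exhaustion step: ensuring that the greedy splitting procedure actually produces a partition of \emph{all} of $U$ (no positive-volume ``dust'' left over) and that the pieces are genuinely not essentially disconnected by $S$ in the \emph{limit}, not merely at each finite stage. This requires a lower-semicontinuity / stability property of essential disconnection under the $L^1$-convergence $\sum_{i}\mathbf{1}_{U_i^{(k)}}\to\sum_i\mathbf{1}_{U_i}$ produced by the iteration, together with the uniform perimeter bound $m$ to extract BV-compactness; this is the analogue of, and should follow the same pattern as, the corresponding step in \cite[Theorem 1]{ambrosiocaselles}, with ``indecomposable'' replaced throughout by ``not essentially disconnected by $S$.''
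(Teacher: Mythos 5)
Your uniqueness argument is fine (it is essentially the intersection argument the paper uses in parts (a)--(b) of Theorem \ref{theorem decomposition}), but the existence part has a genuine gap exactly at the step you flag. The greedy bisection scheme produces, along each branch of the splitting tree, pieces that \emph{are} essentially disconnected by $S$ (that is why they keep being split), and the candidate elements of the limit partition are decreasing intersections along infinite branches together with the finitely-produced ``final'' pieces. To conclude you need two things you do not have: (1) that a limit piece is not essentially disconnected by $S$, and (2) that no positive-measure ``dust'' (possibly split among uncountably many null atoms of infinite branches) is left over. For (1), the stability you invoke is false: ``$S$ does not essentially disconnect $W_j$'' is \emph{not} preserved under $L^1$-convergence $W_j\to W$, even for $S=\varnothing$ -- take two disjoint half-squares joined by a channel whose volume shrinks to zero; each $W_j$ is indecomposable but the limit is decomposable. (The paper itself points out this failure for limit partitions in the remark after Theorem \ref{theorem basic closure for homotopic}.) Nor can you argue across stages, because essential disconnection of a subset does not pass to a superset (step one of the proof of Theorem \ref{theorem spanning with partition} goes only in the subset direction), so a splitting of the limit atom of definite size gives no contradiction with the greedy choices made on the larger stage-$k$ pieces. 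For (2), ``contradicting maximality'' does not parse: the greedy choices constrain the pieces actually produced, not a residual set that the process never examines. Your appeal to \cite{ambrosiocaselles} is misplaced here, since the ACMM proof (and the paper's) is not a greedy bisection with a limit-stability step.

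What the paper actually does sidesteps both issues: fix $s\in(n/(n+1),1)$ and maximize $\sum_i|U_i|^s$ over \emph{all} Lebesgue partitions of $U$ induced by $S$ (in the sense of \eqref{induced partition}). The uniform bound $\sum_i P(U_i;U^\one)\le 2\,\H^n(S\cap U^\one)$ for such partitions (your ``not reused'' heuristic should be replaced by the Caccioppoli-partition identity \eqref{caccioppoli partitions}, which counts each interface twice, giving \eqref{actual perimeter bound 2}) yields compactness via Lemma \ref{lemma partition compactness}; after refining the maximizing sequence to be nested one passes to a limit partition, shows by a relative isoperimetric/density argument that it is still induced by $S$, and shows it still attains the maximum. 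Then the property that $S$ does not essentially disconnect any $U_i$ is immediate: an essential splitting of some $U_i$ would produce an admissible partition with strictly larger value of $\sum_i|U_i|^s$ by strict subadditivity of $z\mapsto z^s$, contradicting maximality. In other words, maximality of the concave functional does the work that your scheme asks of an $L^1$-stability property which is simply not true; if you want to keep your architecture you would need to replace that step by such a variational selection (or prove a monotone-limit substitute, which the pinched-channel example shows cannot be a general semicontinuity statement).
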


Given $U$ and $S$ as in the statement of Theorem \ref{theorem decomposition intro} we can define\footnote{Uniquely modulo $\H^n$-null sets thanks to Federer's theorem recalled in \eqref{federer theorem} below.} the {\bf union of the} (reduced) {\bf boundaries} (relative to $U$)  {\bf of the essential partition} induced by $S$ on $U$ by setting\footnote{Given a Borel set $E$, we denote by $\pa^*E$ its reduced boundary relative to the maximal open set $A$ wherein $E$ has locally finite perimeter.}
    \begin{equation}
    \label{def of UBEP}
    {\rm UBEP}(S;U)=U^\one\cap\bigcup_i\pa^*U_i\,.
    \end{equation}
    Two properties of ${\rm UBEP}$'s which well illustrate the concept are: first, if $\RR(S)$ denotes the rectifiable part of $S$, then ${\rm UBEP}(S;U)$ is $\H^n$-equivalent to ${\rm UBEP}(\RR(S);U)$; second, if $S^*$ is $\H^n$-contained in $S$, then ${\rm UBEP}(S;U)$ is $\H^n$-contained in ${\rm UBEP}(S;U)$; both properties are proved in Theorem \ref{theorem decomposition} (an expanded restatement of Theorem \ref{theorem decomposition intro}).

    \medskip

    We can use the concepts just introduced to provide an alternative and technically more workable characterization of homotopic spanning in the measure theoretic setting. This is the content of our first main result, which is illustrated in Figure \ref{fig Scase}.

    \begin{theorem}[Homotopic spanning for locally $\H^n$-finite sets (Section \ref{section spanning and partitions})]\label{theorem spanning with partition S case}
    If $\wire\subset\R^{n+1}$ is a closed set in $\R^{n+1}$, $\C$ is a spanning class for $\wire$, and $S\subset\Om$ is locally $\H^n$-finite in $\Om$, then $S$ is $\C$-spanning $\wire$ if and only if for every $(\gamma,\Phi, T)\in \mathcal{T}(\C)$ we have that, for $\H^1$-a.e. $s\in\SS^1$,
    \begin{eqnarray}\label{spanning and the S partition equation S case}
    &&\mbox{$T[s]$ is $\H^n$-contained in ${\rm UBEP}(S\cup T[s];T)$}\,.
    \end{eqnarray}
    \end{theorem}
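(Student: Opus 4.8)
The plan is to prove the equivalence in Theorem~\ref{theorem spanning with partition S case} by reducing both conditions to a common statement about the essential partition of a fixed slice-cylinder and then transferring between that common statement and Definition~\ref{def homot span borel}. Fix $(\gamma,\Phi,T)\in\T(\C)$ and a slice parameter $s\in\SS^1$, and write $S_s=S\cup T[s]$. Since $S$ is locally $\H^n$-finite in $\Om$ and $T[s]$ is a smooth $n$-disk, we have $\H^n(S_s\cap T^\one)<\infty$ for $\H^1$-a.e.\ $s$ (by the coarea/Fubini-type slicing that $\H^1$-a.e.\ slice of a finite-measure set has finite $\H^n$-mass, applied to $S$ on the tube $T$), so Theorem~\ref{theorem decomposition intro} applies to the pair $(T,S_s)$ and produces the induced essential partition $\{T_i\}_i$ with ${\rm UBEP}(S_s;T)=T^\one\cap\bigcup_i\pa^*T_i$. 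The whole proof is then about comparing ``$T[s]$ is $\H^n$-contained in ${\rm UBEP}(S_s;T)$'' with the pointwise disconnection requirement \eqref{spanning borel intro}.

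First I would prove the implication that \eqref{spanning and the S partition equation S case} implies \C-spanning. Suppose $T[s]$ is $\H^n$-contained in ${\rm UBEP}(S_s;T)=T^\one\cap\bigcup_i\pa^*T_i$. For $\H^n$-a.e.\ $x\in T[s]$ we then have $x\in T^\one\cap\pa^*T_i$ for some $i$; since $\{T_i\}_i$ is a nontrivial partition, setting $T_1=T_i$ and $T_2=T\setminus T_i$ gives a nontrivial Borel partition with $x\in\pa^eT_1\cap\pa^eT_2$ (here one uses that $\pa^*T_i\subset\pa^eT_i=\pa^e(T\setminus T_i)$ up to $\H^n$-null sets, via Federer's theorem). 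It remains to check that $S_s$ essentially disconnects $T$ into $\{T_1,T_2\}$, i.e.\ that $T^\one\cap\pa^eT_1\cap\pa^eT_2$ is $\H^n$-contained in $S_s$. But $T^\one\cap\pa^eT_i$ is, modulo $\H^n$-null sets, contained in $\bigcup_j(T^\one\cap\pa^*T_j)={\rm UBEP}(S_s;T)$, and by the defining property of the induced partition (``$S_s$ does not essentially disconnect any $T_j$'') together with the structure of the UBEP established in Theorem~\ref{theorem decomposition} (the expanded restatement), one has ${\rm UBEP}(S_s;T)$ $\H^n$-contained in $S_s$. Hence $T^\one\cap\pa^eT_1\cap\pa^eT_2$ is $\H^n$-contained in $S_s$, which is exactly essential disconnection, so \eqref{spanning borel intro} holds.

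For the converse, assume $S$ is \C-spanning and fix a good $s$ for which \eqref{spanning borel intro} holds; I want $T[s]\subset{\rm UBEP}(S_s;T)$ modulo $\H^n$. The idea is: for $\H^n$-a.e.\ $x\in T[s]$ there is a nontrivial partition $\{T_1^x,T_2^x\}$ of $T$, depending on $x$, with $x\in\pa^eT_1^x\cap\pa^eT_2^x$ and such that $S_s$ essentially disconnects $T$ into it. Now compare $\{T_1^x,T_2^x\}$ with the canonical induced essential partition $\{T_i\}_i$ of $(T,S_s)$. Because $S_s$ essentially disconnects $T$ into $\{T_1^x,T_2^x\}$ but does \emph{not} essentially disconnect any single $T_i$, each $T_i$ must lie (mod Lebesgue-null) entirely inside $T_1^x$ or entirely inside $T_2^x$ --- this is the key ``coarsening'' fact, and I expect it to follow from the uniqueness/minimality characterization of the induced partition in Theorem~\ref{theorem decomposition}, perhaps phrased as: the induced partition is the finest essential partition, hence refines every partition into sets not essentially disconnected by $S_s$, in particular $\{T_1^x,T_2^x\}$. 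Given this, $\pa^eT_j^x$ is $\H^n$-contained in $T^{(0)}\cup T^{(1)}\cup\bigcup_i\pa^eT_i$ restricted appropriately, so that $T^\one\cap\pa^eT_1^x\cap\pa^eT_2^x$ is $\H^n$-contained in $T^\one\cap\bigcup_i\pa^*T_i={\rm UBEP}(S_s;T)$. Since $x\in\pa^eT_1^x\cap\pa^eT_2^x$ and (after discarding an $\H^n$-null set) $x\in T^\one$ when $x\in T[s]$ and $T[s]$ avoids $\pa\Phi(\SS^1\times\cl B_1^n)$ for the good slice, we conclude $x\in{\rm UBEP}(S_s;T)$ for $\H^n$-a.e.\ $x\in T[s]$, which is \eqref{spanning and the S partition equation S case}.

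The main obstacle is the coarsening fact in the converse direction: turning ``$S_s$ does not essentially disconnect any $T_i$'' plus ``$S_s$ essentially disconnects $T$ into $\{T_1^x,T_2^x\}$'' into ``each $T_i$ is contained in one side of $\{T_1^x,T_2^x\}$.'' This is precisely where the measure-theoretic subtleties of essential connectedness bite --- one cannot argue naively with topological connectedness, and one must handle the interplay between $T^\one$, the essential boundaries, and $\H^n$-containment carefully, presumably by invoking the relative isoperimetric inequality inside $T$ together with the characterization of indecomposability, or directly the uniqueness statement of Theorem~\ref{theorem decomposition intro}. A secondary but real point is checking that for $\H^1$-a.e.\ $s$ the slice $T[s]$ is $\H^n$-contained in $T^\one$ (equivalently misses $T^{(0)}$ and $\pa^eT$ up to $\H^n$-null), so that the quantifier ``for $\H^n$-a.e.\ $x\in T[s]$'' interacts correctly with the $T^\one$ appearing in the definition of UBEP; this should come from the smoothness of $\Phi$ and Fubini, but it must be stated.
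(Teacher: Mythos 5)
Your ``if'' direction and the overall scheme are essentially the paper's: Theorem \ref{theorem spanning with partition S case} is obtained there as the $E=\varnothing$ case of Theorem \ref{theorem spanning with partition}, whose step four is exactly your first implication (take $T_1=T_i$, $T_2=T\setminus T_i$ at a point $x\in\pa^*T_i$ and use \eqref{induced partition}), and whose steps one and three contain the mechanism behind your ``coarsening''. But in the converse direction your final inference has a genuine gap. You argue that $T^\one\cap\pa^e T_1^x\cap\pa^e T_2^x$ is $\H^n$-contained in ${\rm UBEP}(S\cup T[s];T)$ and then conclude that the specific point $x$ lies in the ${\rm UBEP}$. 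That does not follow: $x$ may sit in the $\H^n$-null exceptional set of that containment, and since the partition $\{T_1^x,T_2^x\}$ --- hence the exceptional set --- depends on $x$, you cannot aggregate these null sets over $x\in T[s]$ to obtain the claimed ``$\H^n$-a.e.\ $x\in T[s]$'' conclusion. Two further inaccuracies: the coarsening fact is not a consequence of a ``finest partition refines every partition into sets not essentially disconnected by $S\cup T[s]$'' principle, because $T_1^x,T_2^x$ need not be sets that $S\cup T[s]$ fails to essentially disconnect; and the finiteness of $\H^n((S\cup T[s])\cap T^\one)$ requires no slicing in $s$, since $\cl(T)$ is a compact subset of $\Om$ and $S$ is locally $\H^n$-finite (likewise $T[s]\subset T\subset T^\one$ is trivial because $T$ is open).

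The repair uses exactly the ingredients the paper isolates. First, the coarsening follows from the elementary restriction property (step one of the proof of Theorem \ref{theorem spanning with partition}): if $S'$ essentially disconnects $G$ into $\{G_1,G_2\}$ and $H\subset G$ satisfies $|H\cap G_1|\,|H\cap G_2|>0$, then $S'$ essentially disconnects $H$ into $\{H\cap G_1,H\cap G_2\}$; applied with $H=T_i$ and $G_j=T_j^x$, the defining property \eqref{essential partition} of the induced essential partition forces each $T_i$ to lie, modulo Lebesgue null sets, on one side of $\{T_1^x,T_2^x\}$. Second, conclude pointwise rather than through the a.e.\ containment: if $x\in T_i^\one$ for some $i$, coarsening would make $x$ a point of density one for $T_1^x$ or for $T_2^x$, contradicting $x\in\pa^eT_1^x\cap\pa^eT_2^x$; hence every admissible $x\in T[s]$ avoids $\bigcup_i T_i^\one$, and since for a Caccioppoli partition $\H^n$-a.e.\ point of $T$ lies in $\bigcup_i T_i^\one\cup\bigcup_i\pa^*T_i$, it follows that $\H^n$-a.e.\ $x\in T[s]$ lies in $\bigcup_i\pa^*T_i={\rm UBEP}(S\cup T[s];T)$. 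This is, in contrapositive form, the paper's step three: assuming a set $G\subset T[s]$ with $\H^n(G)>0$ disjoint from $\bigcup_i\pa^*T_i$, one finds $i$ with $\H^n(G\cap T_i^\one)>0$, picks an admissible $x$ there, and contradicts the fact that $S\cup T[s]$ does not essentially disconnect $T_i$. With these two corrections your argument coincides with the paper's; as written, the last step does not close.
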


      \begin{figure}
    \input{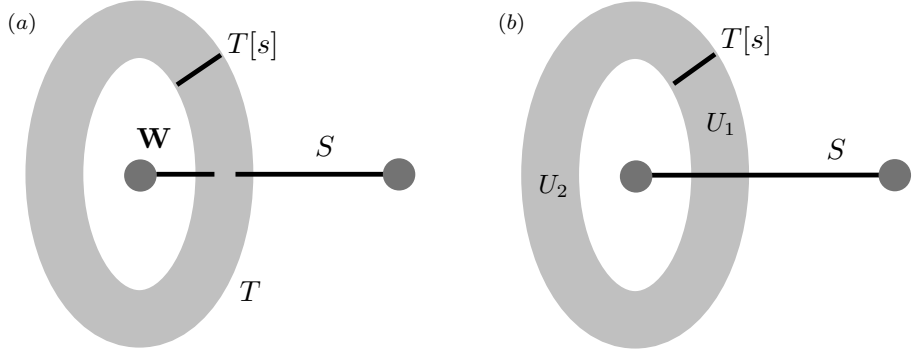}
    \caption{\small{With $\wire$ consisting of two disks in the plane, and $T$ a test tube for the $\C$-spanning condition: (a) $S$ consists of a segment with a gap: since the gap is inside of $T$, the essential partition of $T$ induced by $S\cup T[s]$ consists of only one set, $U_1=T$, so that $T\cap\pa^*U_1=\varnothing$ and \eqref{spanning and the S partition equation S case} cannot hold; (b) $S$ consists of a full segment and in this case (with the possible exception of a choice of $s$ such that $T[s]$ is contained in $S$), the essential partition of $T$ induced by $S\cup T[s]$ consists of two sets $\{U_1,U_2\}$, such that $T[s]\subset T\cap\pa^*U_1\cap\pa^*U_2$; in this case \eqref{spanning and the S partition equation S case} holds.}}
    \label{fig Scase}
    \end{figure}

\subsection{Direct Method on generalized soap films and Gauss' capillarity}\label{section main results} The most convenient setting for addressing the existence of minimizers in Gauss' capillarity theory is of course that of sets of finite perimeter \cite{FinnBOOK,maggiBOOK}. However, if the notion of homotopic spanning is limited to closed sets, as it is the case when working with Definition \ref{def homot span closed}, then one cannot directly use homotopic spanning on sets of finite perimeter, and this is the reason behind the specific formulation \eqref{def of psi} of $\psi(v)$ used in  \cite{MaggiScardicchioStuvard,KingMaggiStuvard}. Equipped with Definition \ref{def homot span borel} we can now formulate Gauss' capillarity theory with homotopic spanning conditions directly on sets of finite perimeter. We shall actually consider {\it two} different possible formulations
\begin{eqnarray*}
&&\psi_{\rm bk}(v)=\inf\Big\{\H^n(\Om\cap\pa^*E):\mbox{$|E|=v$ and $\Om\cap(\pa^* E\cup E^\one)$ is $\C$-spanning $\wire$}\Big\}\,,
\\
&&\psi_{\rm bd}(v)=\inf\Big\{\H^n(\Om\cap\pa^*E):\mbox{$|E|=v$ and $\Om\cap \pa^*E$ is $\C$-spanning $\wire$}\Big\}\,,
\end{eqnarray*}
where the subscripts ``bk'' and ``bd'' stand to indicate that the spanning is prescribed via the {\it bulk} of $E$ (that is, in measure theoretic terms, via the set $\Om\cap(\pa^*E\cup E^\one)$ or via the (reduced) boundary of $E$. Inspired by the definition of the class $\K$ introduced in \eqref{def of K}, we also introduce the class $\K_{\rm B}$ of {\bf generalized soap films} defined by
\begin{eqnarray}\label{def of KB}
&&\mathcal{K}_{\rm B}=\Big\{(K,E):\mbox{$K$ and $E$ are Borel subsets of $\Om$,}
\\\nonumber
&&\hspace{2.7cm}\mbox{$E$ has locally finite perimeter in $\Om$ and $\partial^*E\cap \Omega \shn K$}\Big\}\,.
\end{eqnarray}
Here the subscript ``B'' stands for ``Borel'', and $\K_{\rm B}$ stands as a sort of measure-theoretic version of $\K$.

\medskip

In the companion paper \cite{novackGENMIN} the following relaxation formulas for problems $\psi_{\rm bk}$ and $\psi_{\rm bd}$ are proved,
\begin{equation}
  \label{relaxation formulas}
  \psi_{\rm bk}(v)=\Psi_{\rm bk}(v)\,,\qquad   \psi_{\rm bd}(v)=\Psi_{\rm bd}(v)\,,\qquad\forall v>0\,,
\end{equation}
where the following minimization problems on $\KK_{\rm B}$ are introduced
\begin{eqnarray}
\label{def of Psibk}
&&\Psi_{\rm bk}(v)=\inf\Big\{\F_{\rm bk}(K,E):(K,E)\in\mathcal{K}_{\rm B}\,,|E|=v\,,\mbox{$K\cup E^{\one}$ is $\C$-spanning $\wire$}\Big\}\,,\hspace{1cm}
\\
\label{def of Psibd}
&&\Psi_{\rm bd}(v)=\inf\Big\{\F_{\rm bd}(K,E):(K,E)\in\mathcal{K}_{\rm B}\,,|E|=v\,,\mbox{$K$ is $\C$-spanning $\wire$}\Big\}\,.
\end{eqnarray}
Here $\F_{\rm bk}$ and $\F_{\rm bd}$ are the relaxed energies defined for $(K,E)\in\KK_{\rm B}$ and $A\subset\Om$ as
\begin{eqnarray}
      \label{def of Fb}
      &&\F_{\rm bk}(K,E;A)=2\,\H^n(A\cap K\cap E^{\zero})+\H^n(A\cap\pa^*E)\,,
      \\\label{def of F}
      &&\F_{\rm bd}(K,E;A)=2\,\H^n(A\cap K\setminus\pa^*E)+\H^n(A\cap\pa^*E)\,,
\end{eqnarray}
(We also set, for brevity, $\F_{\rm bk}(K,E):=\F_{\rm bk}(K,E;\Om)$ and $\F_{\rm bd}(K,E):=\F_{\rm bd}(K,E;\Om)$.) We refer to these problems, respectively, as the ``bulk-spanning'' or ``boundary-spanning'' Gauss' capillarity models. In this paper we shall directly work with these relaxed models. In particular, the validity of \eqref{relaxation formulas}, although of definite conceptual importance, is not playing any formal role in our deductions.

\medskip

A first remark concerning the advantage of working with the relaxed problems $\Psi_{\rm bk}$ and $\Psi_{\rm bd}$ rather than with their ``classical'' counterparts $\psi_{\rm bk}$ and $\psi_{\rm bd}$ is that while the latter two with $v=0$ are trivial (sets with zero volume have zero distributional perimeter), the problems $\Psi_{\rm bk}(0)$ and $\Psi_{\rm bd}(0)$ are actually non-trivial, equal to each other, and amount to a measure-theoretic version of the Harrison--Pugh formulation of Plateau's problem $\ell$ introduced in \eqref{hp problem intro}: more precisely, if we set
\begin{equation}
    \label{def of ellB}
    \ell_{\rm B}:=\frac{\Psi_{\rm bk}(0)}2=\frac{\Psi_{\rm bd}(0)}2=\inf\Big\{\H^n(S):\mbox{$S$ is a Borel set $\C$-spanning $\wire$}\Big\}\,,
    \end{equation}
then, by Theorem \ref{theorem definitions equivalence intro}, we evidently have $\ell_{\rm B}\le\ell$; and, as we shall prove in the course of our analysis, we actually have that $\ell=\ell_{\rm B}$ as soon as $\ell<\infty$.

\medskip

Our second main result concerns the applicability of the Direct Method on the competition classes of $\Psi_{\rm bk}(v)$ and $\Psi_{\rm bd}(v)$.

    \begin{theorem}[Direct Method for generalized soap films (Sections \ref{section closure theorem basic} and \ref{section closure theorems sharp})]\label{theorem first closure theorem intro} Let $\wire$ be a closed set in $\mathbb{R}^{n+1}$, $\C$ a spanning class for $\wire$, $\{(K_j,E_j)\}_j$ be a sequence in $\K_{\rm B}$ such that $\sup_j\H^n(K_j)<\infty$, and let a Borel set $E$ and Radon measures $\mu_{\rm bk}$ and $\mu_{\rm bd}$ in $\Om$ be such that $E_j\toloc E$ and
    \begin{eqnarray*}
    &&\H^n\mres (\Om\cap\pa^*E_j) + 2\,\H^n \mres (\mathcal{R}(K_j) \cap E_j^\zero) \weakstar \mu_{\rm bk}\,,
    \\
    &&\H^n\mres (\Om\cap\pa^*E_j) + 2\,\H^n \mres (\mathcal{R}(K_j) \setminus \pa^* E_j) \weakstar \mu_{\rm bd}\,,
    \end{eqnarray*}
    as $j\to\infty$. Then:

    \medskip

    \noindent {\bf (i) Lower semicontinuity:} the sets
    \begin{eqnarray*}
      K_{\rm bk}\!\! &:=&\!\!\big(\Om\cap\partial^* E\big) \cup \Big\{x\in \Omega \cap E^\zero : \theta^n_*(\mu_{\rm bk})(x)\geq 2 \Big\}\,,
      \\
      K_{\rm bd}\!\!&:=&\!\!\big(\Om\cap\partial^* E\big) \cup \Big\{x\in \Omega \setminus\pa^*E : \theta^n_*(\mu_{\rm bd})(x)\geq 2 \Big\}\,,
    \end{eqnarray*}
    are such that $(K_{\rm bk},E),(K_{\rm bd},E)\in\K_{\rm B}$ and
    \begin{eqnarray*}
      \mu_{\rm bk}\!\!&\ge&\!\! \H^n\mres (\Om\cap\pa^*E) + 2\,\H^n \mres (K_{\rm bk} \cap E^\zero)\,,
      \\
      \mu_{\rm bd}\!\!&\ge&\!\!\H^n\mres (\Om\cap\pa^*E) + 2\,\H^n \mres (K_{\rm bd} \setminus\pa^*E)\,,
    \end{eqnarray*}
    with
    \begin{equation}\nonumber
      \liminf_{j\to\infty}\F_{\rm bk}(K_j,E_j)\ge\F_{\rm bk}(K_{\rm bk},E)\,,\qquad
        \liminf_{j\to\infty}\F_{\rm bd}(K_j,E_j)\ge\F_{\rm bd}(K_{\rm bd},E)\,.
    \end{equation}

    \medskip

    \noindent {\bf (ii) Closure:} we have that
    \begin{eqnarray*}
      &&\mbox{if $K_j\cup E_j^\one$ is $\C$-spanning $\wire$ for every $j$,}
      \\
      &&\mbox{then $K_{\rm bk}\cup E^\one$ is $\C$-spanning $\wire$}\,,
    \end{eqnarray*}
    and that
    \begin{eqnarray*}
      &&\mbox{if $K_j$ is $\C$-spanning $\wire$ for every $j$,}
      \\
      &&\mbox{then $K_{\rm bd}$ is $\C$-spanning $\wire$}\,.
    \end{eqnarray*}
    \end{theorem}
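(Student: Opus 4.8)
The plan is to treat the two spanning statements in parallel, since they differ only in whether the ``bulk'' set $E_j^{\one}$ or the reduced boundary $\pa^*E_j$ is used to complement $K_j$ in the spanning test. I will work throughout with the characterization of $\C$-spanning for locally $\H^n$-finite sets provided by Theorem \ref{theorem spanning with partition S case}: since $\sup_j\H^n(K_j)<\infty$, all the sets in play are globally $\H^n$-finite, and likewise $K_{\rm bk},K_{\rm bd}$ are $\H^n$-finite by the lower-semicontinuity statement in part (i) (their ``collapsed'' parts have finite $\mu$-mass, hence finite $\H^n$-measure). Thus it suffices to fix a test triple $(\gamma,\Phi,T)\in\T(\C)$ and show that, for $\H^1$-a.e.\ slice parameter $s\in\SS^1$, the slice $T[s]$ is $\H^n$-contained in ${\rm UBEP}(K_{\rm bk}\cup E^{\one}\cup T[s];T)$ in the bulk case (resp.\ in ${\rm UBEP}(K_{\rm bd}\cup T[s];T)$ in the boundary case).

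First I would reduce to a fixed slice. Using Fubini/coarea for the diffeomorphism $\Phi$, the hypothesis ``$K_j\cup E_j^{\one}$ is $\C$-spanning'' gives a full-measure set of $s$ for which, simultaneously for all $j$, $T[s]$ is $\H^n$-contained in ${\rm UBEP}(K_j\cup E_j^{\one}\cup T[s];T)$, and for which the slices $T[s]\cap \pa^*E_j$, $T[s]\cap\pa^*E$, and $T[s]$'s interaction with the various density sets behave as in the standard slicing theory of sets of finite perimeter; one also wants $\mu_{\rm bk}(T[s])$ controlled and $\mu_{\rm bk}(\pa(\Phi(\SS^1\times B_r^n)))=0$ for a.e.\ radius, etc. Fix such an $s$. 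Now the core of the argument: from Theorem \ref{theorem spanning with partition S case} applied to each $K_j\cup E_j^{\one}$, $T[s]$ essentially disconnects $T$ relative to the induced essential partition of $K_j\cup E_j^{\one}\cup T[s]$. The goal is to pass this property to the limit. I would run the decomposition theorem (Theorem \ref{theorem decomposition intro}) on $U:=T$ with the limiting set $S_\infty:=K_{\rm bk}\cup E^{\one}\cup T[s]$, obtaining an essential partition $\{U_i\}_i$; the claim is that $T[s]\shn{\rm UBEP}(S_\infty;T)=T^{\one}\cap\bigcup_i\pa^*U_i$.

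The key step — and the main obstacle — is a compactness-and-lower-semicontinuity argument for the induced essential partitions. Concretely: the components of the essential partition of $K_j\cup E_j^{\one}\cup T[s]$ inside $T$ are sets of finite perimeter in $T$ whose reduced boundaries are $\H^n$-contained in $\RR(K_j)\cup\pa^*E_j\cup T[s]$, hence have total perimeter bounded by $2\,\H^n(K_j)+\H^n(\pa^*E_j)+\H^n(T[s])\le C$. So, up to a subsequence and relabeling, each component converges in $L^1_{\loc}(T)$ to a limit set of finite perimeter; by the structure of $K_{\rm bk}$ in part (i) — precisely, that the lower $n$-density of $\mu_{\rm bk}$ is $\ge2$ on the collapsed part, together with lower semicontinuity of perimeters along $E_j\toloc E$ — the reduced boundaries of these limits are $\H^n$-contained in $K_{\rm bk}\cup\pa^*E\cup T[s]\subset S_\infty$. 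This is exactly where the density lower bound defining $K_{\rm bk}$ (and $K_{\rm bd}$) is used: a boundary point of a limit component that is \emph{not} in $\pa^*E\cup T[s]$ must be a point where mass of $\H^n\mres\pa^*E_j+2\H^n\mres(\RR(K_j)\cap E_j^{\zero})$ accumulates with lower $n$-density at least $2$ (two sheets of $\pa^*E_j$, or one doubled sheet of $\RR(K_j)$, collapse there), hence lies in the set $\{\theta^n_*(\mu_{\rm bk})\ge2\}\subset K_{\rm bk}$. Having produced a limiting Borel partition of $T$ whose interior boundaries are $\H^n$-contained in $S_\infty$, I would invoke the \emph{uniqueness} clause of Theorem \ref{theorem decomposition intro}: the essential partition induced by $S_\infty$ is the coarsest such, so ${\rm UBEP}(S_\infty;T)$ is $\H^n$-contained in $T^{\one}\cap\bigcup$ (boundaries of the limit partition). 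Finally, to get the reverse containment $T[s]\shn{\rm UBEP}(S_\infty;T)$, I would use that $T[s]$ separates the two nontrivial sides of $T$ already at the level of \emph{each} $j$, and that this ``two-sidedness'' is detected by the $L^1$-limit: if $T[s]$ were $\H^n$-a.e.\ interior to a single component $U_{i_0}$ of the $S_\infty$-induced partition, then for large $j$ the corresponding $j$-components would merge across $T[s]$ (because $T[s]$ is a smooth disk and the $j$-partition boundaries converge to those of the limit), contradicting that $K_j\cup E_j^{\one}$ is $\C$-spanning. The technical heart is making ``merge across $T[s]$'' precise in the measure-theoretic setting — one controls, via the slicing of sets of finite perimeter in $T$ transverse to $T[s]$, that the symmetric difference of the two sides stays bounded below in measure — and this is where I expect most of the work to go; the boundary-spanning case is identical with $E_j^{\one}$ deleted and $K_{\rm bd}$, $\mu_{\rm bd}$ in place of $K_{\rm bk}$, $\mu_{\rm bk}$.
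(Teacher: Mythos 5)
Your overall architecture is the right one and matches the paper's: reformulate spanning slice-wise via unions of boundaries of induced essential partitions, extract an $L^1$-limit partition by the perimeter bound (Lemma \ref{lemma partition compactness}), show the limit interfaces sit inside $K_{\rm bk}\cup\pa^*E\cup T[s]$ via a multiplicity-two density estimate, and conclude by the monotonicity of induced partitions (Theorem \ref{theorem decomposition}-(a)) together with the characterization of Theorems \ref{theorem spanning with partition S case}/\ref{theorem spanning with partition}. Two remarks on that part. First, a technical misstep: you cannot form ${\rm UBEP}(K_{\rm bk}\cup E^\one\cup T[s];T)$, since $E^\one$ has infinite $\H^n$-measure when $|E|>0$ and Theorem \ref{theorem decomposition} requires $\H^n$-finiteness of the inducing set; the correct bulk formulation (Theorem \ref{theorem spanning with partition}) induces the partition by $K\cup T[s]$ alone and only asks that $T[s]\cap E^\zero$ be $\H^n$-contained in it. Second, your justification of the density-$2$ bound (``two sheets collapse, hence $\theta^n_*(\mu_{\rm bk})\ge 2$'') is the right idea but, to actually produce the factor $2$ from $\mu_{\rm bk}$, one needs the bookkeeping the paper carries out: classify the interfaces $\pa^*U^j_i\cap\pa^*U^j_k$ according to whether the adjacent components lie in $E_j^\zero$ or $E_j^\one$, derive the identity expressing $\sum_{i\in X_0^j}P(U_i^j;A)$ as $\H^n(A\cap\pa^*E_j)+2\H^n(A\cap K_j^*\cap E_j^\zero)$, and pass it to the limit on finite subfamilies; as written, your argument is a heuristic, not a proof.

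The genuine gap is at what you yourself call the technical heart: the persistence of the slice in the union of boundaries of the limit partition (the content of Theorem \ref{theorem basic closure for homotopic}). Your proposed mechanism — if $T[s]$ were $\H^n$-a.e.\ interior to a single limit component, then ``for large $j$ the corresponding $j$-components would merge across $T[s]$, contradicting that $K_j\cup E_j^\one$ is $\C$-spanning'' — does not work as stated. Nothing merges at any finite $j$: the level-$j$ partition is an input already consistent with spanning, and the dangerous scenario is precisely that a single essentially connected $j$-component wraps around the tube and approaches $T[s]$ from both sides, with the thin region near the slice either occupied by components of vanishing volume or bounded by pieces of $K_j$ that collapse onto the slice; in the limit the slice then lies in the density-one set of one component, and there is no contradiction with the level-$j$ hypothesis. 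What rules this out is quantitative and uses the limit of the measures $\H^n\mres K_j$: choosing the slice so that $\mu(T[s_0])=0$ for the weak-star limit $\mu$ of $\H^n\mres K_j$ (a genericity condition on the slice with respect to the \emph{limit} measure, which your list of genericity requirements does not contain — controlling $\mu_{\rm bk}(T[s])$ is both weaker and in terms of the wrong measure), one shows by slicing the tube along its circular fibers that, over $\H^n$-a.e.\ point of the alleged bad set, the $j$-th partition must place \emph{two} boundary points of the relevant component in an arbitrarily short arc around the slice parameter; integrating gives $\liminf_j P$ of that component, localized near $T[s_0]$, at least twice the $\H^n$-measure of the bad set, while the same quantity is bounded by the $\H^n$-measure of $K_j$ in a shrinking neighborhood of $T[s_0]$, hence by $\mu(T[s_0])=0$. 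Your ``symmetric difference of the two sides stays bounded below in measure'' does not capture this: the obstruction is not a lower volume bound on two sides, but a perimeter-concentration estimate played against the non-charging of the slice by the limit measure. Without this step (or an equivalent substitute), the closure statement is not proved.
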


    The delicate part of Theorem \ref{theorem first closure theorem intro} is proving the closure statements. This will require first to extend the characterization of homotopic spanning from locally $\H^n$-finite sets to generalized soap films (Theorem \ref{theorem spanning with partition}), and then to discuss the behavior under weak-star convergence of the associated Radon measures of the objects appearing in conditions like \eqref{spanning and the S partition equation S case} (Theorem \ref{theorem basic closure for homotopic}).

\subsection{Existence of minimizers in $\Psi_{\rm bk}(v)$ and convergence to $\ell$}\label{subsection existence and convergence} From this point onward, we focus our analysis on the bulk-spanning relaxation $\Psi_{\rm bk}(v)$ of Gauss' capillarity. There are a few important reasons for this choice: (i) from the point of view of physical modeling, working with the boundary or with the bulk spanning conditions seem comparable; (ii) the fact that $\Psi_{\rm bk}(0)=\Psi_{\rm bd}(0)$ suggest that, at small values of $v$, the two problems should actually be equivalent (have the same infima and the same minimizers); (iii) the bulk spanning variant is the one which is relevant for the approximation of Plateau-type singularities with solutions of the Allen--Cahn equations discussed in \cite{MNR2}; (iv) despite their similarities, carrying over the following theorems for both problems would require the repeated introduction of two versions of many arguments, with a significant increase in length, and possibly with at the expense of clarity.

\medskip

The following theorem provides the starting point in the study of $\Psi_{\rm bk}(v)$.

    \begin{theorem}[Existence of minimizers and vanishing volume limit for $\Psi_{\rm bk}$ (Section \ref{section existence theorems})]\label{thm existence EL for bulk}
    If $\wire$ is a compact set in $\mathbb{R}^{n+1}$ and $\C$ is a spanning class for $\wire$ such that $\ell<\infty$, then
    \begin{equation}
      \label{ellb equals ell}
    \ell_{\rm B}=\ell\,,
    \end{equation}
    and, moreover:

    \medskip

    \noindent {\bf (i) Existence of minimizers and Euler--Lagrange equation:} for every $v>0$ there exist minimizers $(K,E)$ of $\Psi_{\rm bk}(v)$ such that $(K,E)\in\KK$ and both $E$ and $K$ are bounded; moreover, there is $\l\in\R$ such that
    \begin{align}\label{euler lagrange equation bulk}
        \lambda \int_{\partial^* E} X \cdot \nu_{E} \,d\H^n = \int_{\partial^* E} \mathrm{div}^{K}\,X\,d\H^n + 2\int_{K \cap E^{\zero}} \mathrm{div}^{K}\, X\,d\H^n\,,
    \end{align}
    for every $X\in C^1_c(\mathbb{R}^{n+1};\mathbb{R}^{n+1})$ with $X\cdot \nu_\Omega =0$ on $\partial \Omega$;

    \medskip

    \noindent {\bf (ii) Regularity from the Euler--Lagrange equations:} if $(K,E)\in\KK$ is a minimizer of either $\Psi_{\rm bk}(v)$,
     then there is a closed set $\Sigma\subset K$, with empty interior in $K$, such that $K\setminus\Sigma$ is a smooth hypersurface; moreover, $K\setminus(\Sigma\cup\pa E)$ is a smooth minimal hypersurface, $\Om\cap\pa^*E$ is a smooth hypersurface with mean curvature constantly equal to $\l$, and $\H^n(\Sigma\setminus\pa E)=0$; in particular, $\Om\cap(\pa E\setminus\pa^*E)$ has empty interior in $K$;

    \medskip

    \noindent {\bf (iii) Convergence to the Plateau problem:} if $(K_j,E_j)$ is a sequence of minimizers for $\Psi_{\rm bk}(v_j)$ with $v_j\to 0^+$, then there exists a minimizer $S$ of $\ell$ such  that, up to extracting subsequences, as Radon measures in $\Om$,
    \begin{align}
    \label{convergence as measures}
            \H^n\mres(\partial^* E_j \cap \Omega) + 2\H^n\mres (K_j \cap E_j^{\zero}) \weakstar 2\H^n\mres S\,,
    \end{align}
    as $j\to\infty$;
    In particular, $\Psi_{\rm bk}(v)\to 2\,\ell=\Psi_{\rm bk}(0)$ as $v\to 0^+$.
    \end{theorem}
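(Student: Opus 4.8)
\emph{Overall strategy.} The plan is to run the Direct Method on $\Psi_{\rm bk}(v)$ via the closure Theorem~\ref{theorem first closure theorem intro}, to extract the Euler--Lagrange equation \eqref{euler lagrange equation bulk} by a volume-constrained first variation, to read off the regularity statement (ii) from it using Allard's theorem and the comparison arguments of \cite{KMS2,KMS3}, and to settle the vanishing-volume statements by comparing with a fattened minimizer of $\ell$. Since $\ell<\infty$, the fattening construction of the last paragraph already shows $\Psi_{\rm bk}(v)<\infty$ for all $v\ge0$. Given a minimizing sequence $\{(K_j,E_j)\}_j$ for $\Psi_{\rm bk}(v)$, I would first apply a pruning/confinement argument --- discarding the parts of $K_j$ outside a fixed large ball containing $\wire$ and relocating any stray volume of $E_j$, at the cost of only $o(1)$ in energy, which is legitimate because the $\C$-spanning condition is tested only against the bounded tubes arising from $\T(\C)$ --- so that $\{K_j\cup E_j\}_j$ is equibounded. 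Then $\sup_j\H^n(K_j)<\infty$ yields, along a subsequence, $E_j\toloc E$ with $|E|=v$ (tightness rules out loss of volume at infinity) and weak-$*$ convergence of the energy measures to a Radon measure $\mu_{\rm bk}$; Theorem~\ref{theorem first closure theorem intro} then produces $(K_{\rm bk},E)\in\K_{\rm B}$ with $K_{\rm bk}\cup E^\one$ $\C$-spanning $\wire$ and $\F_{\rm bk}(K_{\rm bk},E)\le\liminf_j\F_{\rm bk}(K_j,E_j)=\Psi_{\rm bk}(v)$, i.e.\ a minimizer in $\K_{\rm B}$.

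\emph{Upgrade to $\K$ and the identity $\ell_{\rm B}=\ell$.} To upgrade the minimizer $(K_{\rm bk},E)$ to one in $\K$, I would use the first variation below: the varifold $V=\H^n\mres(\Om\cap\pa^*E)+2\,\H^n\mres(K_{\rm bk}\cap E^\zero)$ is integral, by Allard's compactness theorem applied to the approximating energy measures, and has bounded generalized mean curvature, so $K_{\rm bk}$ is $\H^n$-rectifiable and $\spt V\cap\Om$ is relatively closed and $\H^n$-equivalent to $(\Om\cap\pa^*E)\cup(K_{\rm bk}\cap E^\zero)$; replacing $K$ by this support gives a minimizer in $\K$ with $E$ and $K$ bounded. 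Running this argument with $v=0$ produces a minimizer $(K_0,\varnothing)$ of $\Psi_{\rm bk}(0)=2\,\ell_{\rm B}$ whose varifold $2\,\H^n\mres K_0$ is stationary in $\Om$, so that --- by the density lower bound for stationary integral varifolds together with rectifiability --- the set $\spt(2\,\H^n\mres K_0)\cap\Om$ is relatively closed and $\H^n$-equivalent to $K_0$, hence $\C$-spanning $\wire$ also in the sense of Definition~\ref{def homot span closed} by Theorem~\ref{theorem definitions equivalence intro}. Therefore $\ell\le\H^n(K_0)=\ell_{\rm B}$, and since $\ell_{\rm B}\le\ell$ always holds (again by Theorem~\ref{theorem definitions equivalence intro}), we obtain \eqref{ellb equals ell}.

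\emph{Euler--Lagrange equation and regularity.} Let $(K,E)\in\K$ be a minimizer of $\Psi_{\rm bk}(v)$ and $X\in C^1_c(\R^{n+1};\R^{n+1})$ with $X\cdot\nu_\Om=0$ on $\pa\Om$. For $|t|$ small the flow $\{f_t\}$ of $X$ is a diffeomorphism of $\R^{n+1}$ with $f_t(\wire)=\wire$ and $f_t(\Om)=\Om$; since $\C$ is closed under homotopies in $\Om$, the competitor $(f_t(K),f_t(E))$ is admissible except for the volume constraint, which I would restore to first order by composing with an auxiliary flow changing volume at unit rate; the standard first variation of $\F_{\rm bk}$ under these deformations, with a Lagrange multiplier $\l\in\R$ for the volume, then yields \eqref{euler lagrange equation bulk}. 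This identity says precisely that $V=\H^n\mres(\Om\cap\pa^*E)+2\,\H^n\mres(K\cap E^\zero)$ is an integral varifold with generalized mean curvature in $\Om$ bounded by $|\l|$ (equal to $\l\,\nu_E$ on $\pa^*E$ and to $0$ on $K\cap E^\zero$), so Allard's theorem \cite{Allard} produces a relatively closed $\Sigma\subset K$ off which $K$ is a smooth hypersurface, with $\Om\cap\pa^*E$ of constant mean curvature $\l$ and $K\setminus(\Sigma\cup\pa E)$ minimal; that $\Sigma$ has empty interior in $K$, that $\H^n(\Sigma\setminus\pa E)=0$, and hence that $\Om\cap(\pa E\setminus\pa^*E)$ has empty interior in $K$, would be deduced from the comparison arguments of \cite{KMS2,KMS3}, which use only the partial minimality \eqref{kms minimizers intro} (a consequence of $\F_{\rm bk}$-minimality, by testing against smooth open competitors $E'$ with $(\pa E',E')$ admissible) and the full minimality of type \eqref{full minimizers} now available in this framework.

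\emph{Vanishing volume and the main obstacle.} For the upper bound, fix a minimizer $S$ of $\ell$ (an Almgren minimal set, by \cite{harrisonpughACV,DLGM}) and let $E=\{\dist(\cdot,S)<\de\}\cap\Om$: its relative perimeter in $\Om$ tends to $2\,\H^n(S)=2\,\ell$ and its volume is $\sim 2\,\de\,\H^n(S)$ as $\de\to0^+$, by the Minkowski content properties of Almgren minimal sets, and since $E$ is open one has $S\subset E^\one$, so $\Om\cap(\pa^*E\cup E^\one)$ is $\C$-spanning $\wire$ by monotonicity of the spanning condition; choosing $\de=\de(v)\to0^+$ so that $|E|=v$ gives $\limsup_{v\to0^+}\Psi_{\rm bk}(v)\le2\,\ell$. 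For the lower bound and \eqref{convergence as measures}, take minimizers $(K_j,E_j)$ of $\Psi_{\rm bk}(v_j)$ with $v_j\to0^+$ and extract, as in the existence argument above, a limit with $|E|=0$ (so $E^\one=\varnothing$ and $K_{\rm bk}$ is $\C$-spanning $\wire$) and $\mu_{\rm bk}\ge2\,\H^n\mres K_{\rm bk}$; then
\[
2\,\ell=2\,\ell_{\rm B}\le2\,\H^n(K_{\rm bk})\le\mu_{\rm bk}(\Om)\le\liminf_j\Psi_{\rm bk}(v_j)\le2\,\ell\,,
\]
so every inequality is an equality: $\Psi_{\rm bk}(v)\to2\,\ell=\Psi_{\rm bk}(0)$, $\mu_{\rm bk}=2\,\H^n\mres K_{\rm bk}$, and $K_{\rm bk}$ minimizes $\ell_{\rm B}=\ell$; by the support argument above, $K_{\rm bk}$ is then $\H^n$-equivalent to a minimizer $S$ of $\ell$, which is exactly \eqref{convergence as measures}. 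I expect the main obstacle to be the confinement step --- a truncation lemma that decreases $\F_{\rm bk}$ while preserving measure-theoretic $\C$-spanning, needed both for compactness and for tightness of volume and mass --- together with the identification of the minimizing $K$ with the relatively closed support of its associated integral varifold; both go through the interplay of Theorem~\ref{theorem first closure theorem intro} with the first variation identity and Allard's theory, and it is this interplay, rather than any single estimate, that carries the argument.
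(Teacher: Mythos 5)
The main gap is the confinement/tightness step on which your whole existence argument rests. You claim that one may discard the parts of $K_j$ outside a fixed large ball and relocate stray volume of $E_j$ at an $o(1)$ energy cost ``because the $\C$-spanning condition is tested only against the bounded tubes,'' and that tightness then forces $|E|=v$ in the limit. Neither claim is justified. First, for each \emph{fixed} $j$ the spanning condition must hold for \emph{every} $(\gamma,\Phi,T)\in\T(\C)$, and these tubes reach arbitrarily far from $\wire$; truncating $K_j\cup E_j^\one$ outside a fixed ball may destroy the essential-disconnection property for far-reaching tubes, so truncation does not obviously preserve admissibility (and relocating the far volume with $o(1)$ perimeter error requires cutting radii $r_j\to\infty$, not a fixed ball). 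Second, and more seriously, loss of volume at infinity is a genuine possibility in this problem: a minimizing sequence can ship part of its volume off as nearly isoperimetric droplets, and no cheap pruning rules this out --- the paper proves that no volume is lost only in Appendix \ref{appendix every} (Theorem \ref{theorem cant lose volume}), under the additional hypothesis that $\pa\wire$ is $C^2$, by a delicate comparison argument (sliding a half-space to first contact with $\cl(E)\cup\wire$, the $\Gamma_\de$ competitors, a Hopf-lemma analysis of the sign of $\l$). The paper's existence proof is designed precisely to avoid needing tightness: the limit $(K,E)$ produced by Theorem \ref{theorem first closure theorem intro} is shown to minimize $\Psi_{\rm bk}(|E|)$; the escaped volume $v^*=v-|E|$ is accounted for by cutting at radii $r_j\to\infty$ and using the Euclidean isoperimetric inequality, yielding $\Psi_{\rm bk}(v)\ge\F_{\rm bk}(K,E)+(n+1)\,\om_{n+1}^{1/(n+1)}(v^*)^{n/(n+1)}$, which matches the subadditivity estimate \eqref{psiv1 v2}; and a minimizer of $\Psi_{\rm bk}(v)$ is then \emph{reconstructed} by first proving (via the Euler--Lagrange equation, monotonicity and the density estimates) that $K$ and $E$ are bounded, and then adding a ball of volume $v^*$ disjoint from $K\cup E\cup\wire$. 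Your proposal contains none of this accounting, so as written the existence of minimizers with $|E|=v$ --- and hence everything downstream that uses such a minimizer --- is not established.

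Two secondary points. Your appeal to Allard's compactness theorem ``applied to the approximating energy measures'' to get integrality of the limit varifold does not work: the elements of a minimizing sequence are arbitrary competitors with no first-variation bounds, so varifold compactness does not apply along the sequence; in the paper rectifiability comes from Lemma \ref{lemma rectfiable spanning} plus minimality of the \emph{limit} (comparison with its rectifiable part), and the first-variation/monotonicity argument is run on the limit alone. Also, for the upper bound $\Psi_{\rm bk}(v)\le 2\ell+{\rm o}(1)$ you do not need Minkowski-content properties of Almgren minimal sets: in the relaxed class it suffices to adjoin to a minimizer $S$ of $\ell_{\rm B}$ a far-away ball of volume $v$, which is exactly \eqref{psiv1 v2} and avoids the rectifiability/density hypotheses that the Minkowski-content route would demand. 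The remaining parts of your plan --- the direct method at $v=0$ combined with stationarity and the monotonicity formula to get $\ell=\ell_{\rm B}$, the volume-fixing first variation for \eqref{euler lagrange equation bulk}, and the chain of inequalities identifying the vanishing-volume limit --- do follow the paper's route.
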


The conclusions of Theorem \ref{thm existence EL for bulk} about $\Psi_{\rm bk}(v)$ can be read in parallel to the conclusions about $\psi(v)$ obtained in \cite{KingMaggiStuvard}. The crucial difference is that, in place of the ``weak'' minimality inequality \eqref{kms minimizers intro}, which in this context would be equivalent to $\F_{\rm bk}(K,E)\le\H^n(\Om\cap\pa^*E')$ for every competitor $E'$ in $\psi_{\rm bk}(v)$, we now have the proper minimality inequality
\begin{equation}
  \label{proper min}
  \F_{\rm bk}(K,E)\le \F_{\rm bk}(K',E')
\end{equation}
for every competitor $(K',E')$ in $\Psi_{\rm bk}(v)$. Not only the final conclusion is stronger, but the proof is also entirely different: whereas \cite{KingMaggiStuvard} required the combination of a whole bestiary of specific competitors (like the cup, cone, and slab competitors described therein) with the full force of Preiss' theorem, the approach presented here seems more robust as it does not exploit any specific geometry, and it is squarely rooted in the basic theory of sets of finite perimeter.

\subsection{Equilibrium across transition lines in wet soap films}\label{section validation} We now formalize the validation of \eqref{third pl} for soap films in the form of a sharp regularity theorem for minimizers $(K,E)$ of $\Psi_{\rm bk}(v)$.

\medskip

The starting point to obtain this result is the connection between homotopic spanning and partitions into indecomposable sets of finite perimeter established in Theorem \ref{theorem spanning with partition S case}/Theorem \ref{theorem spanning with partition}. This connection hints at the possibility of showing that if $(K,E)$ is a minimizer of $\Psi_{\rm bk}(v)$, then the elements $\{U_i\}_i$ of the essential partition of $\Om$ induced by $K\cup E^\one$ are actually $(\Lambda,r_0)$-minimizers of the perimeter in $\Om$, i.e., there exist $\Lambda$ and $r_0$ positive constants such that
    \[
    P(U_i;B_r(x))\le P(V;B_r(x))+\Lambda\,|V\Delta U_i|\,,
    \]
whenever $V\Delta U_i\cc \Om$ and $\diam(V\Delta U_i)<r_0$. The reason why this property is not obvious is that proving the $(\Lambda,r_0)$-minimality of $U_i$ requires working with {\it arbitrary local competitors} $V_i$ of $U_i$. However, when working with homotopic spanning conditions, checking the admissibility of competitors is the notoriously delicate heart of the matter -- as reflected in the fact that only very special classes of competitors have been considered in the literature (see, e.g., the cup and cone competitors and the Lipschitz deformations considered in \cite{DLGM}, the slab competitors and exterior cup competitors of \cite{KingMaggiStuvard}, etc.).

\medskip

The idea used to overcome this difficulty, which is illustrated in
    \begin{figure}
    \input{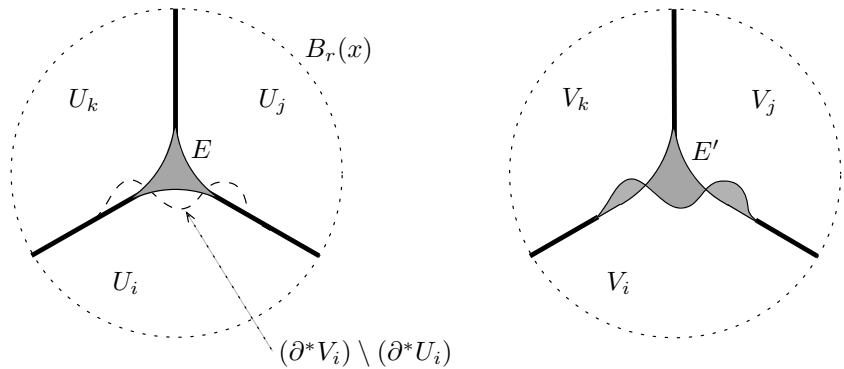}
    \caption{\small{On the left, a minimizer $(K,E)$ of $\Psi_{\rm bk}(v)$, and the essential partition induced by $(K,E)$ in a ball $B_r(x)$; the multiplicity $2$ part of $K\cap B_r(x)$ are depicted with bold lines, to distinguish them from the multiplicity one parts in $B_r(x)\cap\pa^*E$. On the right, a choice of $(K',E')$ that guarantees both the energy gap identity \eqref{energy difference} and the $\H^n$-containment \eqref{still contains} needed to preserve homotopic spanning. The volume constraint can of course be restored as a lower order perimeter perturbation by taking a diffeomorphic image of $(K',E')$, an operation that trivially preserves homotopic spanning.}}
    \label{fig lambda}
    \end{figure}
    Figure \ref{fig lambda}, is the following. By Theorem \ref{theorem decomposition intro}, we can locally represent $\F_{\rm bk}(K,E;B_r(x))$ as the sum of perimeters $P(U_i;B_r(x))+P(U_j;B_r(x))+P(U_k;B_r(x))$. Given a local competitor $V_i$ for $U_i$ we can carefully define a competitor $(K',E')$ so that the elements of the essential partition induced by $K'\cup (E')^\one$ in $\Om$, that can be used to represent  $\F_{\rm bk}(K',E';B_r(x))$ as the sum $P(V_i;B_r(x))+P(V_j;B_r(x))+P(V_k;B_r(x))$, are such that
    \begin{eqnarray}\label{energy difference}
    \F_{\rm bk}(K',E';B_r(x))-\F_{\rm bk}(K,E;B_r(x))=P(V;B_r(x)) - P(U_i;B_r(x))\,.
    \end{eqnarray}
    The trick is that by suitably defining $K'$ and $E'$ we can recover the entirety of $B_r(x)\cap\pa^*U_j$ and $B_r(x)\cap\pa^*U_k$ by attributing different parts of these boundaries to different terms in the representation of $\F_{\rm bk}(K',E';B_r(x))$. In other words we are claiming that things can be arranged so that we still have
    \begin{equation}
      \label{still contains}
      B_r(x)\cap\big(\pa^*U_j\cap\pa^*U_k)\shn K'\cup (E')^\one\,.
    \end{equation}
    The fact that we have been able to preserve all but one reduced boundary among those of the elements of the essential partition of $B_r(x)$ induced by $(K,E)$ is enough to shows that $K'\cup (E')^\one$ is still $\C$-spanning $\wire$ by means of Theorem \ref{theorem spanning with partition S case}/Theorem \ref{theorem spanning with partition}.

    \medskip

    By the regularity theory of $(\Lambda,r_0)$-perimeter minimizers (see, e.g. \cite[Part III]{maggiBOOK}) we can deduce the $C^{1,\a}$-regularity of the elements of the partition (away from a closed singular set with area minimizing dimensional bounds). This is already sufficient to prove the continuity of the normal across $\Om\cap(\pa E\setminus\pa^*E)$, but it also allows us to invoke the regularity theory for free boundaries in the double membrane problem, and to obtain the following sharp regularity result, with which we conclude our introduction.

    \begin{theorem}[Equilibrium along transition lines for soap films (Section \ref{section regularity transition regions})]\label{thm regularity for bulk}
    If $\wire$ is a compact set in $\mathbb{R}^{n+1}$, $\mathcal{C}$ is a spanning class for $\wire$ such that $\ell<\infty$, $v>0$, and $(K_*,E_*)$ is a minimizer of $\Psi_{\rm bk}(v)$, then there is $(K,E)\in\KK$ such that $K$ is $\H^n$-equivalent to $K_*$, $E$ is Lebesgue equivalent to $E_*$, $(K,E)$ is a minimizer of $\Psi_{\rm bk}(v)$, both $E$ and $K$ are bounded, $K\cup E$ is $\C$-spanning $\wire$, and
    \begin{equation}
      \label{no collapsing}
      K\cap E^\one=\varnothing\,;
    \end{equation}
    in particular, $K$ is the disjoint union of $\Om\cap\pa^*E$, $\Om\cap(\pa E \setminus\pa^*E)$, and $K\setminus\pa E$.

    \medskip

    Moreover, there is a closed set $\Sigma\subset K$ with the following properties:

    \medskip

    \noindent {\bf (i):} $\Sigma=\varnothing$ if $1\leq n \leq 6$, $\Sigma$ is locally finite in $\Omega$ if $n=7$, and $\mathcal{H}^{s}(\Sigma)=0$ for every $s>n-7$ if $n\geq 8$;

    \medskip

    \noindent {\bf (ii):} $(\Om\cap\pa^*E)\setminus\Sigma$ is a smooth hypersurface with constant mean curvature (denoted by $\l$ if computed with respect to $\nu_E$);

    \medskip

    \noindent {\bf (iii):} $(K\setminus\pa E)\setminus\Sigma$ is a smooth minimal hypersurface;

    \medskip

    \noindent {\bf (iv):} if $\Om\cap(\pa E\setminus\pa^*E)\setminus\Sigma\ne\varnothing$, then $\l<0$; moreover, for every $x\in\Om\cap(\pa E\setminus\pa^*E)\setminus\Sigma$, $K$ is the union of two $C^{1,1}$-hypersurfaces that detach tangentially at $x$; more precisely, there are $r>0$, $\nu\in\SS^n$, $u_1,u_2\in C^{1,1}(\DD_r^\nu(x))$ such that
    \[
    u_1(x)=u_2(x)=0\,,\qquad\mbox{$u_1\le u_2$ on $\DD_r^\nu(x)$}\,,
    \]
    with $\{u_1<u_2\}$ and ${\rm int}\{u_1=u_2\}$ both non-empty, and
    \begin{eqnarray}\label{grafico K}
    \CC_r^\nu(x)\cap K&=&\cup_{i=1,2}\big\{y+u_i(y)\,\nu:y\in\DD_r^\nu(x)\big\}\,,
    \\\label{grafico pa E}
    \CC_r^\nu(x)\cap \pa^*E&=&\cup_{i=1,2}\big\{y+u_i(y)\nu:y\in\{u_1<u_2\}\big\}\,,
    \\\label{grafico E}
    \CC_r^\nu(x)\cap E&=&\big\{y+t\,\nu:t\in\big(u_1(y),u_2(y)\big)\big\}\,.
    \end{eqnarray}
    Here,
    \begin{eqnarray*}
      &&\DD_\nu^r(x)=x+\{y\in\nu^\perp:|y|<r\}\,,
      \\
      &&\CC_\nu^r(x)=x+\{y+t\,\nu:y\in\nu^\perp\,,|y|<r\,,|t|<r\}\,.
    \end{eqnarray*}

    \medskip

\noindent {\bf (v):} we have
\[
\Gamma:=\Om\cap(\pa E\setminus\pa^*E)=\Gamma_{\rm reg}\cup\Gamma_{\rm sing}\,,\qquad \Gamma_{\rm reg}\cap\Gamma_{\rm sing}=\varnothing\,,
\]
where: $\Gamma_{\rm reg}$ is relatively open in $\Gamma$ and for every $x\in\Gamma_{\rm reg}$ there are $r>0$ and $\beta\in(0,1)$ such that $\Gamma_{\rm reg}\cap B_r(x)$ is a $C^{1,\b}$-embedded $(n-1)$-dimensional manifold; $\Gamma_{\rm sing}$ is relatively closed in $\Gamma$ and can be partitioned into a family $\{\Gamma_{\rm sing}^k\}_{k=0}^{n-1}$ where, for each $k$, $\Gamma_{\rm sing}^k$ is locally $\H^k$-rectifiable in $\Om$.
\end{theorem}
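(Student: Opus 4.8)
The plan is to proceed in three main stages: first reduce to a non-collapsed representative, then establish $(\Lambda,r_0)$-minimality of the cells of the induced essential partition, and finally bootstrap the regularity via De Giorgi's theorem and the double-membrane free-boundary theory. For the first stage, starting from an arbitrary minimizer $(K_*,E_*)$ of $\Psi_{\rm bk}(v)$, I would apply Theorem \ref{thm existence EL for bulk} to know minimizers in $\KK$ exist and satisfy the Euler--Lagrange equation \eqref{euler lagrange equation bulk}; then I would modify $K_*$ on an $\H^n$-null set (using that $\H^n(\Sigma\setminus\pa E)=0$ from Theorem \ref{thm existence EL for bulk}(ii)) and pass to the closure of $\pa^*E$ inside $\Om$ to land in $\KK$. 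The condition $K\cap E^\one=\varnothing$ should follow because any portion of $K$ sitting in the interior of $E$ contributes area with multiplicity $2$ to $\F_{\rm bk}$ but can be removed at no cost to the spanning condition: by Theorem \ref{theorem spanning with partition S case}, $\C$-spanning of $K\cup E^\one$ depends only on ${\rm UBEP}(K\cup E^\one\cup T[s];T)$, and points of $E^\one$ are invisible to the essential-partition construction inside $T$ (they lie in the interior of a single cell), so deleting $K\cap E^\one$ preserves spanning while strictly lowering energy unless it was already empty; minimality forces \eqref{no collapsing}.

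For the core stage, fix a ball $B=B_r(x)\cc\Om$ with $r$ small (depending on $(K,E)$), and let $\{U_i\}_i$ be the essential partition of $\Om$ induced by $K\cup E^\one$ from Theorem \ref{theorem decomposition intro}. Using \eqref{no collapsing} one checks that inside $B$ the energy decomposes as $\F_{\rm bk}(K,E;B)=\tfrac12\sum_i P(U_i;B)$ up to the part of $\pa^*E$ which borders $E^\zero$ cells with multiplicity one; the bookkeeping here is that multiplicity-$2$ pieces of $K$ are exactly the facets shared by two $E^\zero$-cells. Given an arbitrary competitor $V_i$ for a single cell $U_i$ with $V_i\Delta U_i\cc B$ and $\diam(V_i\Delta U_i)<r_0$, I would construct $(K',E')$ as in Figure \ref{fig lambda}: keep $E'=E$ outside $B$, adjust the cell $U_i$ to $V_i$, and define $K'$ to recover \emph{all but one} of the reduced boundaries $\pa^*U_j$ ($j\ne i$) inside $B$, so that the energy identity \eqref{energy difference} holds and the containment \eqref{still contains} is preserved. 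Then Theorem \ref{theorem spanning with partition S case}/\ref{theorem spanning with partition} guarantees $K'\cup(E')^\one$ is still $\C$-spanning $\wire$ (preserving all but one boundary among the cells of a partition is exactly what ${\rm UBEP}$ needs), and restoring the volume $|E'|=v$ costs only $\Lambda\,|V_i\Delta U_i|$ via a diffeomorphism supported away from $B$; comparing with the minimality \eqref{proper min} yields $P(U_i;B_\rho(y))\le P(V_i;B_\rho(y))+\Lambda\,|U_i\Delta V_i|$, i.e. each $U_i$ is a $(\Lambda,r_0)$-minimizer of perimeter in $\Om$.

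For the final stage, the classical theory of $(\Lambda,r_0)$-perimeter minimizers (De Giorgi \cite{DeGiorgiREG}, \cite{tamanini}, \cite[Part III]{maggiBOOK}) gives that each $\Om\cap\pa^*U_i$ is a $C^{1,\a}$ hypersurface outside a closed singular set $\Sigma_i$ of the stated codimension, and the union $\Sigma$ of these has the dimension bounds in (i); items (ii), (iii) then follow from \eqref{euler lagrange equation bulk} read cell-by-cell (constant mean curvature $\l$ on $\pa^*E$, minimality on $K\setminus\pa E$), while $\l<0$ when the transition set is nonempty comes from the first variation comparing the two sides of a transition facet. For (iv)–(v), locally at a point $x\in\Om\cap(\pa E\setminus\pa^*E)\setminus\Sigma$ the set $K$ is the union of two $C^{1,\a}$ graphs $u_1\le u_2$ over a disk $\DD_r^\nu(x)$ with $\{u_1=u_2\}$ carrying mean curvature $0$ and $\{u_1<u_2\}$ carrying mean curvature $\pm\l$; this is precisely a two-membrane obstacle-type system, so the free-boundary regularity of \cite{silvestre,FocardiGelliSpadaro} upgrades $u_1,u_2$ to $C^{1,1}$, forces the graphs to detach tangentially, and stratifies the coincidence-set boundary $\Gamma$ into a relatively open $C^{1,\beta}$ part $\Gamma_{\rm reg}$ of dimension $n-1$ and a closed lower-dimensional part $\Gamma_{\rm sing}$ with the $\H^k$-rectifiable stratification. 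The main obstacle I anticipate is the competitor construction in the core stage: one must simultaneously achieve the \emph{exact} energy identity \eqref{energy difference} (so no spurious area is created or lost when reattaching the boundaries of the untouched cells to $K'$) and the containment \eqref{still contains} (so spanning survives), and doing this for a perturbation $V_i$ of a \emph{single} cell while the other cells $U_j,U_k$ may have complicated topology near $\pa B_r(x)$ requires a careful choice of which facets go into $K'$ versus $\pa^*E'$ — this is the delicate point where the ${\rm UBEP}$ characterization of spanning is indispensable and where a naive construction would either violate the volume/energy bookkeeping or destroy $\C$-spanning.
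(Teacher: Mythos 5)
Your overall architecture coincides with the paper's: normalize to a good representative, prove a one-cell $(\Lambda,r_0)$-perimeter minimality by a spanning-preserving local competitor with a diffeomorphic volume restoration, apply De Giorgi's regularity to get (i)--(iii) and the two-graph $C^{1,\a}$ picture, and then invoke the double-membrane free boundary theory of \cite{silvestre,FocardiGelliSpadaro} for (iv)--(v). However, your justification of \eqref{no collapsing} is wrong: you argue that the part of $K$ inside $E^\one$ ``contributes area with multiplicity $2$ to $\F_{\rm bk}$'' so that removing it strictly lowers the energy. But $\F_{\rm bk}(K,E)=2\,\H^n(K\cap E^\zero)+\H^n(\Om\cap\pa^*E)$ charges only $K\cap E^\zero$; the set $K\cap E^\one$ carries no bulk energy at all (it is the boundary-spanning energy $\F_{\rm bd}$ that would see it), so minimality yields no strict decrease and your mechanism cannot force $K\cap E^\one=\varnothing$. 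In the paper this normalization --- together with boundedness, relative closedness of $K$, the identification of $K$ with $\Om\cap\spt V$ for the associated varifold, and the density estimates \eqref{upper density estimates proof}--\eqref{lower density estimates proof} --- is the content of Theorem \ref{thm existence EL for bulk section}-(i), obtained from the first variation, the monotonicity formula, and an ODE/volume-density argument; note also that Theorem \ref{thm existence EL for bulk} as stated produces \emph{some} good minimizer, not the required normalization of an \emph{arbitrary} minimizer $(K_*,E_*)$, so this step genuinely needs that refined machinery rather than the shortcut you propose.

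The second gap is precisely the point you flag as your anticipated obstacle, and it is not resolved by the theorem you cite: you assert that Theorem \ref{theorem spanning with partition S case}/\ref{theorem spanning with partition} ``guarantees'' that preserving all but one reduced boundary of the local partition preserves the $\C$-spanning property, but no such statement is proved there --- that is the heuristic of the introduction. In the paper the comparison is set up only after a chain of structural facts about the open connected components $U_j$ of $B_{2r}(x)\setminus(K\cup E)$: $K\cap{\rm int}(U_j^\one)=\varnothing$ (via a spanning-preserving removal plus \eqref{lower density estimates proof}), two-sided volume density bounds at points of $\pa U_j$ obtained from the volume-fixing inequality \eqref{lambda minimality ineq for F} with two explicit competitors, hence $\H^n$-equivalence of $\pa U_j$ and $\pa^*U_j$, finiteness of the cells meeting $B_r$, and $K\cap B_r=B_r\cap\cup_j\pa U_j$ as in \eqref{K char}; only then does Lemma \ref{lemma breakdown of F via open components} give the exact identity \eqref{energy is perimeters!} $\F_{\rm bk}(K,E;B_r)=\sum_jP(U_j;B_r)$ (not $\tfrac12\sum_j$, as in your bookkeeping), which is what makes \eqref{energy difference} available. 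Moreover, the spanning of the modified pair $(K',E')$ is verified not through the ${\rm UBEP}$ characterization but by exhibiting the relatively closed set $S=((K\cup E)\setminus B)\cup(\cl(B)\cap\cup_{j\ge2}\pa U_j)$, checking it is $\H^n$-contained in $K'\cup(E')^\one$, and applying the closed-set characterization (Theorem \ref{theorem definitions equivalence}) together with the ball dichotomy of Lemma \ref{dldrg lemma}; without these ingredients your claims \eqref{energy difference} and \eqref{still contains} remain unproven. Finally, for (iv)--(v) your outline matches the paper, but be aware that one still needs the constrained two-graph minimality \eqref{FB prob}, the resulting system \eqref{ms u2 u1 constant}, and a Hopf-lemma comparison to obtain $\l<0$ before the results of \cite{silvestre,FocardiGelliSpadaro} can be applied.
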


\subsection{Equilibrium across transition lines in wet foams}\label{section foams intro} Based on the descriptions provided in \cite{weaireBOOK,foamchapter}, an effective mathematical model for dry foams at equilibrium in a container is that of locally perimeter minimizing clusters, originating with different terminology in \cite{Almgren76}, and presented in
\cite[Part IV]{maggiBOOK} as follows. Given an open set $\Om\subset\R^{n+1}$, a locally perimeter minimizing clusters is a finite Lebesgue partition $\{U_i\}_i$ of $\Om$ into sets of finite perimeter such that, for some $r_0>0$,
\begin{equation}
  \label{dry foam}
  \sum_i P(U_i;B)\le \sum_iP(V_i;B)
\end{equation}
whenever $B\cc\Om$ is a ball with radius less than $r_0$, and $\{V_i\}_i$ is a Lebesgue partition of $\Om$ with $V_i\Delta U_i\cc B$ and $|V_i|=|U_i|$ for every $i$. The previously cited results of Almgren and Taylor \cite{Almgren76,taylor76} imply that, up to modification of the $U_i$'s by sets of zero Lebesgue measure, when $n=2$, $K=\Om\cap\bigcup_i\pa U_i$ is a closed subset of $\Om$ that is locally $C^{1,\a}$-diffeomorphic to a plane, a $Y$-cone, or a $T$-cone; moreover, the part of $K$ that is a surface is actually smooth and each of its connected component has constant mean curvature. Similar results holds when $n=1$ (by elementary methods) and when $n\ge 3$ (by exploiting \cite{ColomboEdelenSpolaor}).

\medskip

The theory for the relaxed capillarity energy $\F_{\rm bk}$ developed in this paper provides an option for modeling wet foams. Again based on the descriptions provided in \cite{weaireBOOK,foamchapter}, the following seems to be a reasonable model for wet foams at equilibrium in a container. Given an open set $\Omega\subset\R^{n+1}$ we model wet foams by introducing the class
\[
\KK_{{\rm foam}}
\]
of those $(K,E)\in\KK_{\rm B}$ such that, for some positive constants $\Lambda_0$ and $r_0$,
\begin{equation}
  \label{def of wet foams}
  \F_{\rm bk}(K,E;B)\le \F_{\rm bk}(K',E';B)+\,\Lambda_0\,|E\Delta E'|
\end{equation}
whenever $B$ is a ball compactly contained in $\Om$ and with radius less than $r_0$, and $(K',E')\in\KK_{\rm B}$ is such that $(K\Delta K')\cup (E\Delta E')\cc B$ and there are finite Lebesgue partitions $\{U_i\}_i$ and $\{U'_i\}_i$ of $B$ induced, respectively, by $K\cup E^\one$ and by $K'\cup(E')^\one$, such that $|U_i|=|U_i'|$ for every $i$. Notice that inclusion of the term $\Lambda_0\,|E\Delta E'|$ in \eqref{def of wet foams} allows for the inclusion of energy perturbations due to gravity or other forces. Lemma \ref{lemma breakdown of F via open components} will clarify that by taking $(K,E)\in\KK_{\rm foam}$ with $|E|=0$ we obtain a slightly more general notion of dry foam than the one proposed in \eqref{dry foam}.

\begin{theorem}[Equilibrium along transition lines for soap films (Section \ref{section foams proof})]
  \label{theorem foams} If $\Omega\subset\R^{n+1}$ is open and $(K_*,E_*)\in\KK_{\rm foam}$, then there is $(K,E)\in\KK\cap\KK_{\rm foam}$ such that $K$ is $\H^n$-equivalent to $K_*$, $E$ Lebesgue equivalent to $E_*$, $K\cap E^\one=\varnothing$, and such that, for every ball $B\cc\Om$, the open connected components $\{U_i\}_i$ of $B\setminus (K\cup E)$ are such that each $U_i$ is (Lebesgue equivalent to an) open set with $C^{1,\a}$-boundary in $B\setminus\Sigma$. Here $\Sigma$ is a closed subset of $\Om$ with $\Sigma=\varnothing$ if $1\leq n \leq 6$, $\Sigma$ locally finite in $\Omega$ if $n=7$, and $\mathcal{H}^{s}(\Sigma)=0$ for every $s>n-7$ if $n\geq 8$.
\end{theorem}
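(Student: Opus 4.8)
\textbf{Proof plan for Theorem \ref{theorem foams}.}

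The plan is to reduce Theorem \ref{theorem foams} to the $(\Lambda,r_0)$-perimeter minimality of the components of the essential partition induced by $K\cup E^\one$, and then to invoke the classical regularity theory for $(\Lambda,r_0)$-minimizers, exactly as is done for $\Psi_{\rm bk}(v)$-minimizers in the proof of Theorem \ref{thm regularity for bulk}. The difference with respect to Theorem \ref{thm regularity for bulk} is that here there is no homotopic spanning constraint to preserve (and no global volume constraint); instead, the competitor class for $(K,E)\in\KK_{\rm foam}$ already comes with the per-component volume constraints $|U_i|=|U_i'|$ and the allowance of the gravitational term $\Lambda_0\,|E\Delta E'|$. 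So the argument should in fact be \emph{simpler} than that of Theorem \ref{thm regularity for bulk}, since the delicate bookkeeping needed in \eqref{energy difference}--\eqref{still contains} to retain all but one reduced boundary is no longer needed.

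First I would fix a representative: using (the $S=K\cup E^\one$ case of) Theorem \ref{theorem decomposition intro} together with the ``no-collapsing'' surgery already used in the proof of Theorem \ref{thm regularity for bulk} (removing from $K$ the $\H^n$-null, since $K\cap E^\one$ carries multiplicity-$2$ energy that can only decrease the energy when discarded, hence must be empty after the surgery) one produces $(K,E)\in\KK\cap\KK_{\rm foam}$ with $K$ $\H^n$-equivalent to $K_*$, $E$ Lebesgue equivalent to $E_*$, and $K\cap E^\one=\varnothing$; one must check that the modified pair still lies in $\KK_{\rm foam}$, which follows because the energy $\F_{\rm bk}$ and the partitions induced by $K\cup E^\one$ are unchanged modulo $\H^n$-null and Lebesgue-null sets. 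Second, fix a ball $B\cc\Om$ of radius $<r_0$ and let $\{U_i\}_i$ be the finite essential partition of $B$ induced by $K\cup E^\one$; by Lemma \ref{lemma breakdown of F via open components} (the breakdown of $\F_{\rm bk}$ via open components) one has $\F_{\rm bk}(K,E;B)=\sum_i P(U_i;B)$ up to the multiplicity accounting, and $K\cup E^\one$ agrees $\H^n$-a.e. with $B^\one\cap\bigcup_i\pa^*U_i$ inside $B$. Third, given $i_0$ and an arbitrary local competitor $W$ for $U_{i_0}$ with $W\Delta U_{i_0}\cc B'\cc B$, $|W|=|U_{i_0}|$, I would build a competitor $(K',E')\in\KK_{\rm B}$ by replacing $U_{i_0}$ with $W$ and repartitioning $B$; the matching volume constraint on the single altered component is exactly what \eqref{def of wet foams} requires (all other $|U_i'|=|U_i|$), and the gravitational term $\Lambda_0\,|E\Delta E'|\le\Lambda_0\,|U_{i_0}\Delta W|$ is absorbed into the $\Lambda$ of the $(\Lambda,r_0)$-minimality. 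Feeding this into \eqref{def of wet foams} and using the energy breakdown yields
\[
P(U_{i_0};B')\le P(W;B')+\Lambda\,|U_{i_0}\Delta W|
\]
for a constant $\Lambda=\Lambda(n,\Lambda_0)$ and all balls $B'$ of radius $<r_0$, i.e. each $U_{i_0}$ is a $(\Lambda,r_0)$-perimeter minimizer in $\Om$.

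Finally, by the regularity theory for $(\Lambda,r_0)$-perimeter minimizers (e.g.\ \cite[Part III]{maggiBOOK}, via De Giorgi's theorem \cite{DeGiorgiREG,tamanini}), each $U_i$ has, up to a Lebesgue-null modification, boundary of class $C^{1,\a}$ in $\Om\setminus\Sigma$, where $\Sigma$ is the usual relatively closed singular set with $\Sigma=\varnothing$ for $1\le n\le 6$, $\Sigma$ locally finite for $n=7$, and $\H^s(\Sigma)=0$ for $s>n-7$ when $n\ge 8$; since $K$ agrees $\H^n$-a.e.\ in $B$ with $\bigcup_i\pa^*U_i$, and since the open connected components of $B\setminus(K\cup E)$ coincide (modulo Lebesgue-null sets) with the traces on $B$ of the essential-partition cells, the conclusion follows on each ball and hence on $\Om$ by a covering/exhaustion argument. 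The one point requiring care — the main (if modest) obstacle — is step three: one must verify that the competitor $(K',E')$ obtained by surgery genuinely lies in $\KK_{\rm B}$ with $\pa^*E'\cap\Om\shn K'$ and that the \emph{induced} essential partitions of $B$ by $K\cup E^\one$ and by $K'\cup(E')^\one$ are related so that exactly one cell is altered; this is the analogue of the bookkeeping in Figure \ref{fig lambda} but without the spanning-preservation requirement, so it is handled by the same construction used in the proof of Theorem \ref{thm regularity for bulk}, merely dropping the steps that ensured \eqref{still contains}.
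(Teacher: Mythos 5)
Your overall strategy is the same as the paper's (good representative as in step one of the proof of Theorem \ref{thm existence EL for bulk section}, energy breakdown via Lemma \ref{lemma breakdown of F via open components}, $(\Lambda,r_0)$-minimality of the partition cells, De Giorgi regularity), but your step three, which is the heart of the matter, has a genuine gap in the handling of the per-cell volume constraints of \eqref{def of wet foams}. First, replacing the single cell $U_{i_0}$ by a competitor $W$ with $|W|=|U_{i_0}|$ does \emph{not} leave the volumes of the other cells unchanged: the mass of $W\setminus U_{i_0}$ has to be taken from neighbouring cells or from $E$, and the mass of $U_{i_0}\setminus W$ has to be redistributed to them, so in general $|U_i'|\ne |U_i|$ for $i\ne i_0$ (for instance $W$ may take volume $a$ from $U_{i_1}$ and cede volume $a$ to $U_{i_2}$). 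Hence the pair $(K',E')$ you construct is not admissible in \eqref{def of wet foams}, and the inequality you feed into it is unjustified; the parenthetical claim ``all other $|U_i'|=|U_i|$'' is false. Second, even if you could test against every volume-preserving $W$, that would only give volume-\emph{constrained} almost-minimality of $U_{i_0}$, whereas the regularity theory you invoke requires the unconstrained $(\Lambda,r_0)$-inequality against arbitrary compact perturbations, so the restriction $|W|=|U_{i_0}|$ must ultimately be removed, not imposed.

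Both problems are resolved by the one tool the paper explicitly flags and you never mention: the volume-fixing variations for clusters of \cite[Part IV]{maggiBOOK}, which here play exactly the role that the spanning-preservation bookkeeping of Theorem \ref{thm regularity for bulk} played there. Given an arbitrary local competitor $W$ (no volume restriction), one restores \emph{all} the cell volumes by a diffeomorphic variation supported away from the ball where the modification takes place, at a perimeter cost bounded by $C\,|W\Delta U_{i_0}|$, which is then absorbed, together with $\Lambda_0\,|E\Delta E'|$, into the term $\Lambda\,|U_{i_0}\Delta W|$; this is how the analogues of \eqref{K char}, \eqref{finitely many in BR}, \eqref{normalization}, and \eqref{lambda minimality ineq for C} are obtained. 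The same device is also needed, though you do not say so, in your first step: to run the argument of step one of Theorem \ref{thm existence EL for bulk section} in the foam setting (first variation, bounded mean curvature of the associated varifold, density lower bounds, and hence the representative $(K,E)\in\KK$ with $K\cap E^\one=\varnothing$) you must perturb while keeping each $|U_i|$ fixed, which again is provided by the cluster volume-fixing variations. With these insertions your outline matches the paper's proof; without them the key inequality is not established.
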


\subsection*{Organization of the paper} The sections of the paper contain the proofs of the main theorems listed above, as already specified in the statements. To these section we add three appendices. In Appendix \ref{appendix equivalence of}, as already noted, we prove the equivalence of Definition \ref{def homot span closed} and Definition \ref{def homot span borel}. In Appendix \ref{appendix every} we prove that, with some regularity of $\partial\Om$, {\it every} minimizing sequence of $\Psi_{\rm bk}(v)$ is converging to a minimizers, without need for modifications at infinity: this is, strictly speaking, not needed to prove Theorem \ref{thm existence EL for bulk}, but it is a result of its own conceptual interest, it will be crucial for the analysis presented in \cite{MNR2}, and it is easily discussed here in light of the proof of Theorem \ref{thm existence EL for bulk}. Finally, Appendix \ref{sec: geometric remark appendix} contains an elementary lemma concerning the use of homotopic spanning in the plane that, to our knowledge, has not been proved in two dimensions.

\subsection*{Acknowledgements} We thank Guido De Philippis, Darren King, Felix Otto, Antonello Scardicchio, Salvatore Stuvard, and Bozhidar Velichkov for several interesting discussions concerning these problems. FM has been supported by NSF Grant DMS-2247544. FM, MN, and DR have been supported by NSF Grant DMS-2000034 and NSF FRG Grant DMS-1854344. MN has been supported by NSF RTG Grant DMS-1840314.

\subsection*{Notation} {\bf Sets and measures:} We denote by $B_r(x)$ (resp., $B_r^k(x)$) the open ball of center $x$ and radius $r$ in $\R^{n+1}$ (resp., $\R^k$), and omit $(x)$ when $x=0$. We denote by $\cl(X)$, ${\rm int}(X)$, and $I_r(X)$ the closure, interior and open $\e$-neighborhood of $X\subset\R^k$. We denote by $\L^{n+1}$ and $\H^s$ the Lebesgue measure and the $s$-dimensional Hausdorff measure on $\R^{n+1}$, $s\in[0,n+1]$. If $E\subset\R^k$, we set $|E|=\L^k(E)$ and $\om_k=|B_1^k|$. We denote by $E^{{\scriptscriptstyle{(t)}}}$, $t\in[0,1]$, the {\bf points of density $t$} of a Borel set $E\subset\R^{n+1}$, so that $E$ is $\L^{n+1}$-equivalent to $E^{\one}$, and, for every pair of Borel sets $E,F\subset\R^{n+1}$,
\begin{equation}
    \label{X cup Y zero}
      (E\cup F)^\zero=E^\zero\cap F^\zero\,.
\end{equation}
We define by $\pa^eE=\R^{n+1}\setminus(E^{\zero}\cup E^{\one})$ the {\bf essential boundary} of $E$. Given Borel sets $E_j,E\subset \Om$ we write
\[
    E_j\to E\,,\qquad E_j\toloc E\,,
    \]
    when, respectively, $|E_j\Delta E|\to 0$ or $|(E_j\Delta E)\cap \Om'|\to 0$ for every $\Om'\cc\Om$, as $j\to\infty$. Given a Radon measure $\mu$ on $\R^{n+1}$, the $k$-dimensional lower density of $\mu$ is the Borel function $\theta^k_*(\mu):\R^{n+1}\to [0,\infty]$ defined by
    \begin{align*}
        \theta^k_*(\mu)(x) = \liminf_{r\to 0^+}\frac{\mu(\cl(B_r(x)))}{\omega_k r^k}\,.
    \end{align*}
    We repeatedly use the fact that, if $\theta^k_*(\mu)\ge\l$ on some Borel set $K$ and for some $\l\ge0$, then $\mu\ge\l\,\H^k\llcorner K$; see, e.g. \cite[Theorem 6.4]{maggiBOOK}.

    \medskip

    \noindent {\bf Rectifiable sets:} Given an integer $0\le k\le n+1$, a Borel set $S\subset\R^{n+1}$ is {\bf locally $\H^k$-rectifiable} in an open set $\Om$ if $S$ is locally $\H^k$-finite in $\Om$ and $S$ can be covered, modulo $\H^k$-null sets, by a countable union of Lipschitz images of $\R^k$ in $\R^{n+1}$. We say that $S$ is {\bf purely $\H^k$-unrectifiable} if $\H^k(S\cap M)=0$ whenever $M$ is a Lipschitz image of $\R^k$ into $\R^{n+1}$. Finally, we recall that if $S$ is a locally $\H^k$-finite set in $\Om$, then there is a pair $(\RR(S),\P(S))$ of Borel sets, uniquely determined modulo $\H^k$-null sets, and that are thus called, with a slight abuse of language, {\it the} {\bf rectifiable part} and {\it the} {\bf unrectifiable part} of $S$, so that $\RR(S)$ is locally $\H^k$-rectifiable in $\Om$, $\P(S)$ is purely $\H^k$-unrectifiable, and $S=\RR(S)\cup\P(S)$; see, e.g. \cite[13.1]{Simon}.

    \medskip

    \noindent {\bf Sets of finite perimeter:} If $E$ is a Borel set in $\R^{n+1}$ and $D1_E$ is the distributional derivative of the characteristic function of $E$, then we set $\mu_E=-D1_E$. If $A$ is the {\it largest open set} of $\R^{n+1}$ such that $\mu_E$ is a Radon measure in $A$ (of course it could be $A=\varnothing$), then $E$ is of locally finite perimeter in $A$ and the reduced boundary $\pa^*E$ of $E$ is defined as the set of those $x\in A\cap\spt\mu_E$ such that $\mu_E(B_r(x))/|\mu_E|(B_r(x))$ has a limit $\nu_E(x)\in \SS^n$ as $r\to 0^+$. Moreover, we have the general identity (see \cite[(12.12) $\&$ pag. 168]{maggiBOOK})
    \begin{equation}
      \label{sofp1}
      A\cap\cl(\pa^*E)=A\cap\spt\mu_E=\Big\{x\in A:0<|E\cap B_r(x)|<|B_r(x)|\,\,\forall r>0\Big\}\subset A\cap\pa E\,.
    \end{equation}
    By De Giorgi's rectifiability theorem, $\pa^*E$ is locally $\H^n$-rectifiable in $A$, $\mu_E=\nu_E\,\H^n\mres(A\cap\pa^*E)$ on $A$, and $\pa^*E\subset A\cap E^\half\subset A\cap \pa^eE$, and
    \begin{equation}
      \label{degiorgi rect th}
      (E-x)/r \toloc H_{E,x}:=\big\{y\in\R^{n+1}:y\cdot\nu_E(x)<0\big\}\,,\qquad\mbox{as $r\to 0^+$}\,.
    \end{equation}
    By a result of Federer,
    \begin{equation}
      \label{federer theorem}
      \mbox{$A$ is $\H^n$-contained in $E^{\zero}\cup E^{\one}\cup \pa^*E$}\,;
    \end{equation}
    in particular, $\pa^*E$ is $\H^n$-equivalent to $A\cap\pa^eE$, a fact frequently used in the following. By {\it Federer's criterion for finite perimeter}, if $\Om$ is open and $E$ is a Borel set, then
    \begin{equation}
      \label{federer criterion}
      \H^n(\Om\cap\pa^eE)<\infty\qquad\Rightarrow\qquad\mbox{$E$ is of finite perimeter in $\Om$}\,,
    \end{equation}
    see \cite[4.5.11]{F}. If $E$ and $F$ are of locally finite perimeter in $\Om$ open, then so are $E\cup F$, $E\cap F$, and $E\setminus F$, and by \cite[Theorem 16.3]{maggiBOOK}, we have
    \begin{eqnarray}
      \label{E cup F}
      \Om\cap\pa^*(E\cup F)\ehn \Om\cap\Big\{\big(E^\zero\cap\pa^*F\big)\cup\big(F^\zero\cap\pa^*E\big)\cup\{\nu_E=\nu_F\}\Big\}\,,
      \\
      \label{E cap F}
      \Om\cap\pa^*(E\cap F)\ehn \Om\cap\Big\{\big(E^\one\cap\pa^*F\big)\cup\big(F^\one\cap\pa^*E\big)\cup\{\nu_E=\nu_F\}\Big\}\,,
      \\
      \label{E minus F}
      \Om\cap\pa^*(E\setminus F)\ehn \Om\cap\Big\{\big(E^\one\cap\pa^*F\big)\cup\big(F^\zero\cap\pa^*E\big)\cup\{\nu_E=-\nu_F\}\Big\}\,,
    \end{eqnarray}
    where $\{\nu_E=\pm\nu_F\}:=\{x\in\pa^*E\cap\pa^*F:\nu_E(x)=\pm\nu_F(x)\}$. By exploiting Federer's theorem \eqref{federer theorem}, \eqref{E cup F}, \eqref{E cap F}, and \eqref{E minus F} we can also deduce (the details are left to the reader)
    \begin{eqnarray}
      \label{X cap Y zero}
      (E\cap F)^\zero&\ehn&E^\zero\cup F^\zero\cup\{\nu_E=-\nu_F\}\,,
      \\
      \label{X minus Y zero}
      (E\setminus F)^\zero&\ehn&E^\zero\cup F^\one\cup\{\nu_E=\nu_F\}\,.
    \end{eqnarray}
    Finally, combining \eqref{E cup F}, \eqref{E minus F}, and \eqref{X minus Y zero}, we find
    \begin{equation}
      \label{E delta F}
      \pa^*(E\Delta F)\ehn(\pa^*E)\Delta(\pa^*F)\,.
    \end{equation}

    \medskip

    \noindent {\bf Partitions:} Given a Radon measure $\mu$ on $\R^{n+1}$ and Borel set $U\subset\R^{n+1}$ we say that  $\{U_i\}_i$ is a {\bf $\mu$-partition of $U$} if $\{U_i\}_i$ is an at most countable family of Borel subsets of $U$ such that
    \begin{equation}
      \label{mu partition}
      \mu\Big(U\setminus\bigcup_i U_i\Big)=0\,,\qquad \mu(U_i\cap U_j)=0\quad\forall i,j\,;
    \end{equation}
    and we say that $\{U_i\}_i$ is a {\bf monotone $\mu$-partition} if, in addition to \eqref{mu partition}, we also have $\mu(U_i)\ge\mu(U_{i+1})$ for every $i$. When $\mu=\L^{n+1}$ we replace ``$\mu$-partition'' with ``Lebesgue partition''. When $U$ is a set of finite perimeter in $\R^{n+1}$, we say that $\{U_i\}_i$ is a {\bf Caccioppoli partition} of $U$ if $\{U_i\}_i$ is a Lebesgue partition of $U$ and each $U_i$ is a set of finite perimeter in $\R^{n+1}$: in this case we have
    \begin{eqnarray}
    \label{caccioppoli partitions}
    \pa^*U\shn\bigcup_i\pa^*U_i\,,\qquad 2\, \H^n\Big(U^\one\cap\bigcup_i\pa^*U_i\Big)=\sum_i\H^n(U^\one\cap\pa^*U_i)\,,
    \end{eqnarray}
    see, e.g., \cite[Section 4.4]{AFP}; moreover,
    \begin{equation}
      \label{caccioppoli exactly two}
      1\le \#\Big\{i:x\in\pa^*U_i\Big\}\le 2\,,\qquad\forall x\in \bigcup_i\pa^*U_i\,,
    \end{equation}
    thanks to \eqref{degiorgi rect th} and to the fact that there cannot be three disjoint half-spaces in $\R^{n+1}$.

    \section{Induced essential partitions (Theorem \ref{theorem decomposition intro})}\label{section essential partitions} Given a Borel set $S$, we say that a Lebesgue partition $\{U_i\}_i$ of $U$ is {\bf induced by $S$} if, for each $i$,
    \begin{equation}
      \label{induced partition}
      \mbox{$U^\one\cap\pa^eU_i$ is $\H^n$-contained in $S$}\,.
    \end{equation}
    We say that $\{U_i\}_i$ is {\it an} {\bf essential partition of $U$ induced by $S$} if it is a Lebesgue partition of $U$ induced by $S$ such that, for each $i$,
    \begin{equation}
      \label{essential partition}
      \mbox{$S$ does not essentially disconnect $U_i$}\,.
    \end{equation}
    The next theorem, which expands the statement of Theorem \ref{theorem decomposition intro}, shows that when $\H^n$-finite sets uniquely determine induced essential partitions on sets of finite perimeter.

    \begin{theorem}[Induced essential partitions]\label{theorem decomposition}
        If $U\subset\R^{n+1}$ is a bounded set of finite perimeter and $S\subset\R^{n+1}$ is a Borel set with $\H^n(S \cap U^\one)<\infty$, then there exists an essential partition $\{U_i\}_i$ of $U$ induced by $S$ such that each $U_i$ is a set of finite perimeter and
        \begin{equation}
          \label{essential partition perimeter bound}
              \sum_iP(U_i;U^\one) \leq 2\,\H^n(S \cap U^\one)\,.
        \end{equation}
        Moreover: {\bf (a):} if $S^*$ is a Borel set with $\H^n(S^* \cap U^\one)<\infty$, $S^*$ is $\H^n$-contained in $S$, $\{V_j\}_j$ is a Lebesgue partition\footnote{Notice that here we are not requiring that $S^*$ does not essentially disconnect each $V_j$, i.e., we are not requiring that $\{V_j\}_j$ is an essential partition induced by $S^*$. This detail will be useful in the applications of this theorem.} of $U$ induced by $S^*$, and $\{U_i\}_i$ is the essential partition of $U$ induced by $S$, then
        \begin{equation}
          \label{essential partitions are monotone}
          \mbox{$\bigcup_j\,\pa^*V_j$ is $\H^n$-contained in $\bigcup_i\,\pa^*U_i$}\,;
        \end{equation}
        {\bf (b):} if $S$ and $S^*$ are $\H^n$-finite sets in $U^\one$, and either\footnote{Here $\RR(S)$ denotes the $\H^n$-rectifiable part of $S$.} $S^*=\RR(S)$ or $S^*$ is $\H^n$-equivalent to $S$, then $S$ and $S^*$ induce $\mathcal{L}^{n+1}$-equivalent essential partitions of $U$.
    \end{theorem}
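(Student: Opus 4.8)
# Proof Proposal for Theorem \ref{theorem decomposition}

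The plan is to build the essential partition by an iterative decomposition procedure, controlled by a monotonicity-of-perimeter argument, and then to establish uniqueness and the comparison statements (a) and (b) via a careful analysis of how essential boundaries interact.

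\textbf{Existence and the perimeter bound.} First I would observe that any Lebesgue partition $\{U_i\}_i$ of $U$ induced by $S$ (in the sense of \eqref{induced partition}) automatically satisfies the perimeter bound \eqref{essential partition perimeter bound}: since $U^\one\cap\pa^eU_i$ is $\H^n$-contained in $S$, Federer's criterion \eqref{federer criterion} applied inside $U^\one$ shows each $U_i$ has finite perimeter there, and then the Caccioppoli-partition identity \eqref{caccioppoli partitions} gives $\sum_i P(U_i;U^\one)=2\,\H^n\big(U^\one\cap\bigcup_i\pa^*U_i\big)\le 2\,\H^n(S\cap U^\one)$, using $\pa^*U_i\ehn U^\one\cap\pa^eU_i\shn S$ via Federer's theorem \eqref{federer theorem}. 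To produce an \emph{essential} partition, I would run the following refinement: start with the trivial partition $\{U\}$; if $S$ essentially disconnects some current piece $W$ into $\{W_1,W_2\}$, replace $W$ by $W_1,W_2$. Each such split strictly increases $\sum_i P(U_i;U^\one)$ (because $U^\one\cap\pa^eW_1\cap\pa^eW_2$ is a new piece of reduced boundary of positive $\H^n$-measure — if it had zero measure, $W_1$ and $W_2$ would be $\H^n$-indistinguishable on their essential boundaries, contradicting non-triviality together with the relative isoperimetric inequality in $U^\one$). Since the total is bounded by $2\,\H^n(S\cap U^\one)<\infty$, only countably many splits can occur, and a limiting/exhaustion argument (taking the common refinement of all finite-stage partitions, arranged monotonically by measure) yields a countable Lebesgue partition $\{U_i\}_i$ induced by $S$ in which no piece is essentially disconnected by $S$. \textbf{This monotonicity-with-a-finite-budget mechanism is the heart of the existence proof, and getting the strict increase quantified correctly is the main obstacle.}

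\textbf{Statement (a): comparison with coarser partitions.} Given $S^*$ $\H^n$-contained in $S$ and any Lebesgue partition $\{V_j\}_j$ of $U$ induced by $S^*$, I want $\bigcup_j\pa^*V_j\shn\bigcup_i\pa^*U_i$. The idea is that the essential partition $\{U_i\}_i$ is a \emph{refinement} of $\{V_j\}_j$ modulo Lebesgue-null sets: indeed, consider $W_{ij}=U_i\cap V_j$. Using the formulas \eqref{E cap F}, \eqref{X cap Y zero} for essential boundaries of intersections, one checks that $U^\one\cap\pa^e W_{ij}$ is $\H^n$-contained in $(U^\one\cap\pa^e U_i)\cup(U^\one\cap\pa^e V_j)\shn S\cup S^*\ehn S$. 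Hence for fixed $i$, $\{W_{ij}\}_j$ is a partition of $U_i$ into finite-perimeter pieces whose relative essential boundaries lie in $S$. If more than one $W_{ij}$ had positive measure, one would combine them into a nontrivial two-set partition $\{W_{ij},U_i\setminus W_{ij}\}$ with interface $\H^n$-contained in $S$, contradicting the fact that $S$ does not essentially disconnect $U_i$. So each $U_i$ is (Lebesgue-)contained in a single $V_j$, i.e. $\{U_i\}_i$ refines $\{V_j\}_j$; for Caccioppoli partitions refining each other, $\bigcup_j\pa^*V_j\shn\bigcup_i\pa^*U_i$ follows from \eqref{caccioppoli partitions}. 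Applying this with $\{V_j\}_j$ itself taken to be another essential partition induced by $S$ gives containment both ways, hence \eqref{essential partitions are monotone} in the symmetric form, and in particular \textbf{uniqueness} of the essential partition modulo Lebesgue-null sets (and relabeling).

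\textbf{Statement (b): insensitivity to $\H^n$-modifications and to passing to $\RR(S)$.} If $S^*$ is $\H^n$-equivalent to $S$, then ``essentially disconnects'' is unchanged because the defining condition \eqref{def essential connectedness} only constrains $\H^n$-containment of a set in $S$; thus the classes of Lebesgue partitions induced by $S$ and by $S^*$ coincide, and so do the essential partitions, giving Lebesgue equivalence by the uniqueness just established. The subtler case is $S^*=\RR(S)$. Here one uses the purely-unrectifiable structure of $\P(S)=S\setminus\RR(S)$: a purely $\H^n$-unrectifiable set meets any reduced boundary $\pa^*W$ (which is $\H^n$-rectifiable by De Giorgi) in an $\H^n$-null set. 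Therefore, for \emph{any} set $W$ of finite perimeter in $U^\one$, $U^\one\cap\pa^e W$ is $\H^n$-contained in $S$ if and only if it is $\H^n$-contained in $\RR(S)$ — the unrectifiable part $\P(S)$ simply cannot carry any portion of a reduced boundary. Consequently ``$S$ essentially disconnects $W$'' $\iff$ ``$\RR(S)$ essentially disconnects $W$'', the induced-partition classes coincide, and uniqueness again forces the two essential partitions to be Lebesgue-equivalent. \textbf{The one point requiring care here is that \eqref{def essential connectedness} also involves the hypothesis $x\in\pa^e T_1\cap\pa^e T_2$ and the set $T^\one$; but since $T^\one\cap\pa^e T_1\cap\pa^e T_2\ehn\pa^*T_1\cap\pa^*T_2$ is rectifiable, intersecting it with $\P(S)$ is $\H^n$-null, so the equivalence goes through verbatim.}
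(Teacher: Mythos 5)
The existence step is where your proposal breaks down. The assertion that every split dictated by $S$ strictly increases $\sum_i P(U_i;U^\one)$ is false: if $S$ essentially disconnects a piece $W$ into $\{W_1,W_2\}$, the interface $W^\one\cap\pa^eW_1\cap\pa^eW_2$ only needs to be $\H^n$-contained in $S$, and it may well be $\H^n$-null. Take $W$ equal to the union of two disjoint balls and $W_1,W_2$ the two balls: the split is forced (even by $S=\varnothing$), it adds no perimeter, and there is no contradiction with the relative isoperimetric inequality -- that inequality gives information only if one already knows $W$ is essentially connected, which is precisely what the construction is supposed to arrange. So the perimeter cannot serve as a strictly increasing potential with a finite budget. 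More seriously, the ``limiting/exhaustion'' step is not justified: the pieces of the common refinement of infinitely many stages are countable intersections of nested pieces, and $\pa^e\big(\bigcap_k A_k\big)$ is in general not contained in $\bigcup_k\pa^e A_k$ (a point can have density one for every $A_k$ and yet be an essential boundary point of the intersection), so the limit partition need not be induced by $S$; nor is there any guarantee that after countably many stages no piece is essentially disconnected by $S$. This is exactly the difficulty the paper's proof (following Ambrosio--Caselles) is built to avoid: one maximizes the strictly subadditive quantity $\sum_i|U_i|^s$, $s\in(n/(n+1),1)$, over all partitions induced by $S$, so that \emph{every} nontrivial split strictly increases the functional and a maximizer is automatically essential; existence of a maximizer comes from Lemma \ref{lemma partition compactness}, and the fact that the limit partition is still induced by $S$ is proved via the auxiliary set $S_0=U^\one\cap\bigcup_{i,j}\pa^*U_i^j$ together with a density-point/relative-isoperimetric argument. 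Your derivation of \eqref{essential partition perimeter bound} for any induced partition via Federer's criterion \eqref{federer criterion} and \eqref{caccioppoli partitions} is correct (and simpler than the paper's weak-star limit computation), but it does not repair the construction itself.

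Parts (a) and (b) of your proposal are essentially sound and close in spirit to the paper's arguments. Packaging (a) as ``the essential partition refines every Lebesgue partition induced by $S^*$'' (via the intersections $U_i\cap V_j$ and the observation that two positive-measure intersections would let $S$ essentially disconnect $U_i$), and then deducing $\pa^*V_j\shn\bigcup_i\pa^*U_i$ from $D1_{V_j}=\sum_{i\in I_j}D1_{U_i}$ (summability being guaranteed by the perimeter bound), is a legitimate variant of the paper's direct contradiction argument, and it does give uniqueness modulo relabeling. In (b), the rectifiability observation is the right key point, but state explicitly that the competitor pieces $T_1,T_2$ are of finite perimeter -- this follows from Federer's criterion because their essential boundaries inside $U_i^\one$ are $\H^n$-contained in the $\H^n$-finite set $S\cap U^\one$ -- before identifying $T^\one\cap\pa^eT_1\cap\pa^eT_2$ with $\pa^*T_1\cap\pa^*T_2$ modulo $\H^n$; this is the same step carried out in the claim of Lemma \ref{lemma rectfiable spanning}. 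With that caveat, (a) and (b) stand, but the theorem's existence assertion still requires replacing your iterative scheme with the maximization mechanism (or an equivalent device that strictly improves under arbitrary splits and survives passage to the limit).
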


    \begin{proof}
      [Proof of Theorem \ref{theorem decomposition intro}] Immediate consequence of Theorem \ref{theorem decomposition}.
    \end{proof}

    The proof of Theorem \ref{theorem decomposition} follows the main lines of the proof of \cite[Theorem 1]{ambrosiocaselles}, which is indeed the case $S=\varnothing$ of Theorem \ref{theorem decomposition}. We premise to this proof two lemmas that will find repeated applications in later sections too. To introduce the first lemma, we notice that, while it is evident that if $S$ is $\C$-spanning $\wire$ and $S$ is $\H^n$-contained into some Borel set $S^*$, then $S^*$ is also $\C$-spanning $\wire$, however, it is not immediately clear if the rectifiable part $\RR(S)$ of $S$ (which may not be $\H^n$-equivalent to $S$) retains the $\C$-spanning property.

    \begin{lemma}\label{lemma rectfiable spanning}
      If $\wire$ is compact, $\C$ is a spanning class for $\wire$, $S$ is $\C$-spanning $\wire$, and $\H^n\mres S$ is a Radon measure in $\Omega$, then $\mathcal{R}(S)$ is $\C$-spanning $\wire$. Moreover, the sets $T_1$ and $T_2$ appearing in \eqref{spanning borel intro} are sets of finite perimeter.
    \end{lemma}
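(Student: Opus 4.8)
The plan is to deduce both assertions from the characterization of measure-theoretic spanning for locally $\H^n$-finite sets (Theorem~\ref{theorem spanning with partition S case}) together with the structure of induced essential partitions (Theorem~\ref{theorem decomposition}). First, since $\H^n\mres S$ is a Radon measure in $\Om$, the set $S$ is locally $\H^n$-finite in $\Om$, so by Theorem~\ref{theorem spanning with partition S case} the $\C$-spanning property of $S$ is equivalent to the statement that for every $(\gamma,\Phi,T)\in\T(\C)$ and $\H^1$-a.e. $s\in\SS^1$ one has $T[s]\shn{\rm UBEP}(S\cup T[s];T)$. Fixing such a tube, I would record that $T=\Phi(\SS^1\times B_1^n)$ is a bounded open set with smooth compact boundary, hence of finite perimeter, with $\cl(T)\subset\Om$, while $T[s]=\Phi(\{s\}\times B_1^n)$ is $\H^n$-rectifiable with $\H^n(T[s])<\infty$; in particular $\H^n((S\cup T[s])\cap T^\one)\le\H^n(S\cap\cl T)+\H^n(T[s])<\infty$, so for $\H^1$-a.e. $s$ Theorem~\ref{theorem decomposition} produces the essential partition $\{U_i\}_i$ of $T$ induced by $S\cup T[s]$, consisting of finite-perimeter sets, with ${\rm UBEP}(S\cup T[s];T)=T^\one\cap\bigcup_i\pa^*U_i$ by \eqref{def of UBEP}.

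For the ``moreover'' statement I would argue as follows. Fix one of the good parameters $s$ above; for $\H^n$-a.e. $x\in T[s]$ there is an index $i$ with $x\in T^\one\cap\pa^*U_i$, and the claim is that $T_1:=U_i$ and $T_2:=T\setminus U_i$ verify \eqref{spanning borel intro} at $x$. Both are sets of finite perimeter. Since $x\in T^\one$ a blow-up of $T$ at $x$ is all of $\R^{n+1}$, and since $x\in\pa^*U_i$ a blow-up of $U_i$ at $x$ is a half-space, so a blow-up of $T_2$ at $x$ is the complementary half-space; hence $x\in\pa^e T_1\cap\pa^e T_2$. Moreover $|T_1|=|U_i|>0$ (because $\pa^*U_i\subset U_i^\half$), and $|T_2|>0$ (otherwise $U_i$ would be $\L^{n+1}$-equivalent to $T$, forcing $x\in\pa^*U_i\cap T^\one\subset U_i^\half\cap T^\one=T^\half\cap T^\one=\varnothing$), so $\{T_1,T_2\}$ is a non-trivial Borel partition of $T$. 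Finally, since $\{U_i\}_i$ is induced by $S\cup T[s]$, \eqref{induced partition} gives $T^\one\cap\pa^e T_1\cap\pa^e T_2\subset T^\one\cap\pa^e U_i\shn S\cup T[s]$, i.e. $S\cup T[s]$ essentially disconnects $T$ into $\{T_1,T_2\}$. This argument applies verbatim to any locally $\H^n$-finite $\C$-spanning set, hence also to $\RR(S)$ once the last step is in place.

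Then I would prove that $\RR(S)$ is $\C$-spanning $\wire$. Since $\RR(S)$ is $\H^n$-contained in $S$, $\H^n\mres\RR(S)$ is Radon in $\Om$, so $\RR(S)$ is locally $\H^n$-finite and Theorem~\ref{theorem spanning with partition S case} applies; thus it suffices to show $T[s]\shn{\rm UBEP}(\RR(S)\cup T[s];T)$ for every tube and $\H^1$-a.e. $s$. Because $T[s]$ is $\H^n$-rectifiable, $\RR(S)\cup T[s]$ is $\H^n$-rectifiable while $\P(S)$ is purely $\H^n$-unrectifiable, so uniqueness of the rectifiable/unrectifiable decomposition gives $\RR(S\cup T[s])\ehn\RR(S)\cup T[s]$. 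By Theorem~\ref{theorem decomposition}(b) (and the properties of ${\rm UBEP}$ recorded after \eqref{def of UBEP}), a set, its rectifiable part, and any $\H^n$-equivalent set induce $\L^{n+1}$-equivalent essential partitions, hence $\H^n$-equivalent ${\rm UBEP}$'s, so ${\rm UBEP}(S\cup T[s];T)\ehn{\rm UBEP}(\RR(S\cup T[s]);T)\ehn{\rm UBEP}(\RR(S)\cup T[s];T)$. Feeding this into $T[s]\shn{\rm UBEP}(S\cup T[s];T)$, which holds for $\H^1$-a.e. $s$ by the first paragraph since $S$ is $\C$-spanning, yields $T[s]\shn{\rm UBEP}(\RR(S)\cup T[s];T)$ for $\H^1$-a.e. $s$, and Theorem~\ref{theorem spanning with partition S case} concludes.

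The hard part is really concentrated in the first paragraph's reduction: knowing that spanning of a locally $\H^n$-finite set is detected by the ${\rm UBEP}$ of an honest Caccioppoli partition is exactly what makes possible both the finite-perimeter choice of $T_1,T_2$ and the transfer of the spanning property from $S$ to $\RR(S)$ through the stability of ${\rm UBEP}$. The remaining ingredients -- the blow-up computation and the additivity of $\RR$ under finite unions -- are routine.
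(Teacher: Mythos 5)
Your reduction is circular as written. The step you lean on most heavily is the ``only if'' direction of Theorem~\ref{theorem spanning with partition S case} applied to the possibly unrectifiable set $S$: from ``$S$ is $\C$-spanning'' you extract $T[s]\shn{\rm UBEP}(S\cup T[s];T)$ for a.e.\ $s$. But in the paper Theorem~\ref{theorem spanning with partition S case} is obtained by setting $E=\varnothing$ in Theorem~\ref{theorem spanning with partition}, and that theorem characterizes when $\RR(K)$ (not $K$) is $\C$-spanning; with $K=S$ it says ``$\RR(S)$ is $\C$-spanning if and only if the ${\rm UBEP}$ condition holds''. Passing from this to the statement about $S$ itself requires exactly the implication ``$S$ $\C$-spanning $\Rightarrow$ $\RR(S)$ $\C$-spanning'', i.e.\ Lemma~\ref{lemma rectfiable spanning}, the statement you are proving; this lemma is placed in Section~\ref{section essential partitions} precisely because it is upstream of that identification. (A milder version of the same issue affects your appeal to Theorem~\ref{theorem decomposition}-(b), whose written proof cites Lemma~\ref{lemma rectfiable spanning} in the case $S^*=\RR(S)$; there the citation is dispensable, since $\RR(S)\shn S$ makes the needed implication trivial, but for Theorem~\ref{theorem spanning with partition S case} the dependence is genuine.) To make your route non-circular you would have to re-prove the ``only if'' direction for $S$ directly, e.g.\ by running step three of the proof of Theorem~\ref{theorem spanning with partition} with the essential partition induced by $S\cup T[s]$ in place of the one induced by $\RR(K)\cup T[s]$ --- this does work, but it is precisely the extra argument your proposal omits. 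By contrast, the paper's proof is short and local: for any partition $\{T_1,T_2\}$ witnessing \eqref{spanning borel intro}, the interface $T^\one\cap\pa^eT_1\cap\pa^eT_2$ is $\H^n$-contained in the $\H^n$-finite set $(S\cup T[s])\cap T$, so Federer's criterion \eqref{federer criterion} gives that $T_1,T_2$ have finite perimeter, and Federer's theorem \eqref{federer theorem} identifies the interface, modulo $\H^n$-null sets, with $T\cap\pa^*T_1\cap\pa^*T_2$, which is rectifiable and hence $\H^n$-contained in $\RR(S)\cup T[s]$; thus the very same witnesses show that $\RR(S)\cup T[s]$ essentially disconnects $T$, with no essential-partition or ${\rm UBEP}$ machinery needed.

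Separately, your ``moreover'' is weaker than the lemma's: you exhibit one finite-perimeter choice of witnesses (namely $\{U_i,T\setminus U_i\}$), whereas the intended statement --- and what the two-line Federer argument above proves --- is that any partition $\{T_1,T_2\}$ appearing in \eqref{spanning borel intro} automatically consists of sets of finite perimeter, because its essential boundaries inside $T$ are forced into the $\H^n$-finite set $S\cup T[s]$.
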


    \begin{proof} We make the following {\it claim}: if $T$ is open, $T^{\one}\shn T$, $\H^n\mres Z$ is a Radon measure in an open neighborhood of $T$, and $Z$ essentially disconnects $T$ into $\{T_1,T_2\}$, then
    \begin{eqnarray}\label{rect disc 1}
      &&\mbox{$T_1$ and $T_2$ are of locally finite perimeter in $T$}\,,
      \\\label{rect disc 2}
      &&\mbox{$\RR(Z)$ essentially disconnects $T$ into $\{T_1,T_2\}$}\,.
    \end{eqnarray}
    Indeed: Since $T$ is open, we trivially have $T \subset T^{\one}$, and hence $T$ is $\H^n$-equivalent to $T^{\one}$. Taking also into account that $Z$ essentially disconnects $T$ into $\{T_1,T_2\}$, we thus find
    \begin{align}\notag
       T\cap\partial^e T_1 \cap \partial^e T_2 \ehn T^{\one}\cap \partial^e T_1 \cap \partial^e T_2  \shn Z \cap T^{\one} \shn Z \cap T\,.
    \end{align}
    By Federer's criterion \eqref{federer criterion} and the $\H^n$-finiteness of $Z$ in an open neighborhood of $T$ we deduce \eqref{rect disc 1}. By Federer's theorem \eqref{federer theorem}, $\partial^e T_i $ is $(\H^n\mres T)$-equivalent to $\partial^* T_i$, which combined with the $\H^n$-equivalence of $T^{\one}$ and $T$ gives
    \begin{align}\notag
        \partial^e T_1 \cap \partial^e T_2 \cap T^{\one} \ehn \partial^* T_1 \cap \partial^* T_2 \cap T\,. 
    \end{align}
    Since $\partial^* T_1 \cap \partial^* T_2 \cap T$ is $\H^n$-rectifiable and $\partial^e T_1 \cap \partial^e T_2 \cap T^{\one}\shn Z$, we conclude that $\H^n(\partial^e T_1 \cap \partial^e T_2 \cap T^{\one} \cap\mathcal{P}(Z))=0$. Hence,
    \[
    \partial^e T_1 \cap \partial^e T_2 \cap T^{\one}\shn \RR(Z)\,,
    \]
    and \eqref{rect disc 2} follows.

    \medskip

    To prove the lemma: Let $(\g,\Phi,T)\in\T(\C)$, let $J$ be of full measure such that \eqref{spanning borel} holds for every $s\in J$, so that, for every $s\in J$ one finds that for $\H^n$-a.e. $x\in T[s]$ there is a partition $\{T_1,T_2\}$ of $T$ with $x\in\partial^e T_1 \cap \partial^e T_2$ and such that $S \cup  T[s]$ essentially disconnects $T$ into $\{T_1,T_2\}$. By applying the claim with  $Z=S\cup T[s]$, we see that $\RR(S\cup T[s])$ essentially disconnects $T$ into $\{T_1,T_2\}$, and that $T_1$ and $T_2$ have locally finite perimeter in $T$. On noticing that $\RR(S\cup T[s])$ is $\H^n$-equivalent to $\RR(S)\cup T[s]$, we conclude the proof.
    \end{proof}

    The second lemma is just a simple compactness statement for finite perimeter partitions.

    \begin{lemma}[Compactness for partitions by sets of finite perimeter]\label{lemma partition compactness}
    If $U$ is a bounded open set and $\{\{U_i^j\}_{i=1}^\infty\}_{j=1}^\infty$ is a sequence of Lebesgue partitions of $U$ into sets of finite perimeter such that
    \begin{eqnarray}\label{uni per part bound}
    \sup_j\,\sum_{i=1}^\infty P(U_i^j) < \infty\,,
    \end{eqnarray}
    then, up to extracting a subsequence, there exists a Lebesgue partition $\{U_i\}_{i\in \mathbb{N}}$ of $U$ such that for every $i$ and every $A\subset U$ open,
    \begin{align}\label{partition convergence}
        \lim_{j\to\infty}|U_i^j\Delta U_i|=0\,,\qquad P(U_i;A) \leq \liminf_{j\to \infty} P(U_i^j;A)\,.
    \end{align}
    Moreover,
    \begin{align}\label{partition volume with power s}
        \lim_{i\to \infty}\limsup_{j\to \infty} \sum_{k=i+1}^\infty |U^j_k|^{s} =0\,,\qquad\forall s\in\Big(\frac{n}{n+1},1\Big)\,.
    \end{align}
    \end{lemma}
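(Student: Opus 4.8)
\textit{Plan.} The plan is to run the classical compactness scheme for partitions into sets of finite perimeter (cf.\ \cite[Section~4.4]{AFP}), with the Euclidean isoperimetric inequality supplying the quantitative tail control that yields \eqref{partition volume with power s}. Up to relabeling, we may assume each $\{U_i^j\}_i$ is monotone, i.e.\ $|U_i^j|\ge|U_{i+1}^j|$ for every $i$ (this ordering is essential, since otherwise volume may escape to arbitrarily high indices and no limit family would partition $U$). Since each $U_i^j\subset U$ has finite Lebesgue measure, the isoperimetric inequality $|V|^{n/(n+1)}\le c(n)\,P(V)$ together with \eqref{uni per part bound} give
\begin{equation}\label{plan iso bound}
C_0:=\sup_j\sum_{i=1}^\infty P(U_i^j)<\infty\,,\qquad \sup_j\sum_{i=1}^\infty|U_i^j|^{n/(n+1)}\le c(n)\,C_0\,,
\end{equation}
and combining the second bound with monotonicity, $|U_k^j|\le(c(n)\,C_0/k)^{(n+1)/n}$ for every $k,j$.

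From \eqref{plan iso bound} I would deduce \eqref{partition volume with power s} at once: writing $\sigma=n/(n+1)$ and splitting $|U_k^j|^s=|U_k^j|^{s-\sigma}|U_k^j|^{\sigma}$, for $s\in(\sigma,1)$ we get
\begin{equation}\label{plan tail bound}
\sum_{k=i+1}^\infty|U_k^j|^{s}\le\Big(\sup_{k>i}|U_k^j|\Big)^{s-\sigma}\sum_{k=i+1}^\infty|U_k^j|^{\sigma}\le c(n)\,C_0\Big(\frac{c(n)\,C_0}{i+1}\Big)^{(s-\sigma)/\sigma}\,,
\end{equation}
uniformly in $j$, and the right-hand side is $o(1)$ as $i\to\infty$. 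Using in addition $|U_k^j|\le|U|^{1-s}|U_k^j|^s$, the same estimate gives the uniform ``no escape of volume'' bound $\limsup_j\sum_{k>N}|U_k^j|\to0$ as $N\to\infty$, which I will invoke below.

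For the compactness proper: for each fixed $i$ the functions $\{1_{U_i^j}\}_j$ are supported in the fixed bounded set $U$ and have $\sup_j P(U_i^j)\le C_0<\infty$, hence are precompact in $L^1(\R^{n+1})$ by the $BV$ compactness theorem; a diagonal extraction over $i\in\N$ yields a subsequence (not relabeled) and sets of finite perimeter $U_i$ with $1_{U_i^j}\to1_{U_i}$ in $L^1(\R^{n+1})$ for all $i$. Lower semicontinuity of the perimeter under $L^1$ convergence gives $P(U_i;A)\le\liminf_j P(U_i^j;A)$ for every open $A\subset U$, so \eqref{partition convergence} holds. Disjointness passes to the limit because $1_{U_i^j}1_{U_{i'}^j}\to1_{U_i}1_{U_{i'}}$ in $L^1$ (bounded sequences converging in $L^1$ on the bounded set $U$) while the left sides vanish a.e.\ for $i\ne i'$; similarly $|U_i\setminus U|=0$.

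The remaining, and principal, point is that $\{U_i\}_i$ covers $U$, i.e.\ $|U\setminus\bigcup_iU_i|=0$, and this is exactly where the tail estimate is decisive. Each $\{U_i^j\}_i$ is an exact $\L^{n+1}$-partition, so $\sum_{i=1}^\infty|U_i^j|=|U|$; hence for every fixed $N$ the limits below exist and
\[
\sum_{i=1}^N|U_i|=\lim_j\sum_{i=1}^N|U_i^j|=|U|-\lim_j\sum_{i>N}|U_i^j|\,,
\]
and by the ``no escape'' bound from \eqref{plan tail bound} the last term is $o(1)$ as $N\to\infty$. Therefore $\sum_{i=1}^\infty|U_i|=|U|$, which together with disjointness gives $|U\setminus\bigcup_iU_i|=|U|-\sum_i|U_i|=0$, so $\{U_i\}_i$ is a Lebesgue partition of $U$ with the asserted properties. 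The main obstacle throughout is this volume bookkeeping: proving \eqref{partition volume with power s} --- where monotonicity together with the isoperimetric inequality is essential --- and then using it to transfer the exactness of the $\{U_i^j\}_i$ to the limit.
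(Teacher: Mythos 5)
Your proof is correct, and while the overall skeleton (monotone relabeling, diagonal $BV$-compactness, lower semicontinuity, isoperimetric control of the tails) matches the paper's, you execute the two quantitative steps differently. For \eqref{partition volume with power s} the paper compares with the limit sets: for $j\ge J(i)$ and $k>i$ it uses $|U_k^j|\le|U_i^j|\le 2|U_i|$ and the isoperimetric inequality to get $\sum_{k>i}|U_k^j|^s\le C\,|U_i|^{s-n/(n+1)}$ (see \eqref{bad thing take two}), which only controls the $\limsup$ in $j$ and requires the compactness step to have been carried out first; it then proves $|U\setminus\bigcup_iU_i|=0$ by contradiction, propagating a putative volume deficit $M>0$ into the tails via \eqref{bad thing} and contradicting \eqref{partition volume with power s}. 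You instead derive from monotonicity and \eqref{uni per part bound} the bound $|U_k^j|\le(c(n)C_0/k)^{(n+1)/n}$, which yields a tail estimate \emph{uniform in $j$} and entirely independent of the limit partition, and from it both \eqref{partition volume with power s} and a uniform ``no escape of volume'' bound; coverage then follows by direct volume bookkeeping, $\sum_{i\le N}|U_i|=|U|-\lim_j\sum_{i>N}|U_i^j|\ge|U|-\mathrm{o}(1)$, rather than by contradiction. Your route is slightly stronger (a genuinely uniform tail bound, with the tail estimate logically decoupled from the extraction of the limit), while the paper's is marginally shorter in that it reuses the already-extracted sets $U_i$; both hinge on the monotone relabeling in the same essential way, and the remaining steps (diagonal extraction, \eqref{partition convergence}, persistence of disjointness) coincide.
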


    \begin{proof} Up to a relabeling we can assume each $\{U_i^j\}_i$ is monotone. By \eqref{uni per part bound}  and the boundedness of $U$, a diagonal argument combined with standard lower semicontinuity and compactness properties of sets of finite perimeter implies that we can find a not relabeled subsequence in $j$ and a family $\{U_i\}_i$ of Borel subsets of $U$ with $|U_i|\ge |U_{i+1}|$ and $|U_i\cap U_j|=0$ for every $i\ne j$, such that \eqref{partition convergence} holds. We are thus left to prove \eqref{partition volume with power s} and
    \begin{align}\label{is a partition}
    \Big|U \setminus \bigcup_{i=1}^\infty U_i\Big|=0\,.
    \end{align}
    We start by noticing that for each $i$ there is $J(i)\in\N$ such that $|U_k^j|\le 2\,|U_k|$ for every $j\ge J(i)$ and $1\le k\le i$. Therefore if $k\ge i+1$ and $j\ge J(i)$ we find $|U_k^j|\le |U_i^j|\le 2\,|U_i|$, so that, if $j\ge J(i)$,
    \begin{align}\label{bad thing take two}
    \sum_{k=i+1}^\infty |U_k^{j}|^s \leq C(n)\,\sum_{k=i+1}^\infty P(U_{k}^{j})|U_k^{j}|^{s-(n/(n+1))}\le C\,|U_i|^{s-(n/(n+1))}\,,
    \end{align}
    where we have also used the isoperimetric inequality and \eqref{uni per part bound}. Since $|U_{i}|\to 0$ as $i\to\infty$ (indeed, $\sum_i|U_i|\le|U|<\infty$), \eqref{bad thing take two} implies \eqref{partition volume with power s}. To prove \eqref{is a partition}, we notice that if we set $M=|U \setminus \cup_i U_i|$, and we assume that $M$ is positive, then up to further increasing the value of $J(i)$ we can require that
    \begin{align}\label{preliminary bad thing}
         |U_k^j|\leq  |U_k| + \frac{M}{2^{k+2}}\,,\qquad\forall 1\leq k \leq i\,,\,\forall j\ge J(i)\,,
    \end{align}
    (in addition to $|U_k^j|\le 2\,|U_k|$). By \eqref{preliminary bad thing} we obtain that, if $j\ge J(i)$, then
    \begin{align}\label{bad thing}
    |U| - \sum_{k=i+1}^\infty |U_k^{j}|=\sum_{k=1}^i |U_k^{j}| \leq \sum_{k=1}^i  |U_k| + \frac{M}{2^{k+2}}\leq |U|-M+\sum_{k=1}^i  \frac{M}{2^{k+2}}\le |U|- \frac{M}{4}\,.
    \end{align}
    Rearranging \eqref{bad thing} and using the sub-additivity of $z\mapsto z^s$ we conclude that
    \begin{align}\notag
       (M/4)^s \leq \sum_{k=i+1}^\infty |U_k^{j}|^s\,.
    \end{align}
    We obtain a contradiction with $M>0$ by letting $i\to\infty$ and by using \eqref{partition volume with power s}.
    \end{proof}

    \begin{proof}[Proof of Theorem \ref{theorem decomposition}]
    Let $\mathcal{U}(S)$ be the set of all the monotone Lebesgue partitions of $U$ induced by $S$. We notice that $\mathcal{U}(S)\ne\varnothing$, since $\mathcal{U}(S)$ contains the trivial partition with $U_1=U$ and $U_i=\varnothing$ if $i\ge 2$. If $U_i\in\{U_i\}_i$ for $\{U_i\}_i\in \mathcal{U}(S)$, then $\partial^e U_i$ is $\H^n$-contained in $\partial^e U \cup (U^{(1)} \cap S)$, which, by Federer's criterion applied to $U$ and $\mathcal{H}^n(S \cap U^\one)<\infty$, has finite $\mathcal{H}^n$-measure; it follows then that $U_i$ is a set of finite perimeter due to Federer's criterion. We now fix $s\in(n/(n+1),1)$, and consider a maximizing sequence $\{\{U_i^j\}_i\}_j$ for
    \[
    m=\max\Big\{\sum_{i=1}^\infty |U_i|^{s}:\{U_i\}_i\in \mathcal{U}(S)\Big\}\,.
    \]
    By standard arguments concerning reduced boundaries of disjoint sets of finite perimeter (see, e.g. \cite[Chapter 16]{maggiBOOK}), we deduce from \eqref{induced partition} that for every $j$,
    \begin{eqnarray}\notag
    \sum_{i=1}^\infty \mathcal{H}^n\mres \partial^*
     U_i^j&=&\sum_{i=1}^\infty \mathcal{H}^n\mres (\partial^*U_i^j \cap U^{\one})+\sum_{i=1}^\infty \mathcal{H}^n\mres( \partial^*U_i^j\cap \partial^* U)\\ \label{actual perimeter bound 2}
     &\leq&2\,\H^n\mres(S\cap U^\one) + \mathcal{H}^n\mres \partial^* U\,.
    \end{eqnarray}
    Also, due to the sub-additivity of $z\mapsto z^s$ and the general fact that $\partial^e (A \cap B) \subset \partial^e A \cup \partial^e B$, we can refine $\{U^j_i\}_i$ by replacing each $U_i^j$ with the disjoint family
    \[
    \big\{U_i^j \cap U^\ell_k:k\ge1\,,1\le\ell<j\big\}\,,
    \]
    thus obtaining a new sequence in $\mathcal{U}(S)$ which is still maximizing for $m$. As a consequence of this remark, we can assume without loss of generality that the considered maximizing sequence $\{\{U_i^j\}_i\}_j$ for $m$ has the additional property that
    \begin{align}\label{nestedness}
        U\cap\bigcup_i \partial^* U_i^j \subset U\cap\bigcup_i \partial^* U_i^{j+1}\,,\qquad\forall j\,.
    \end{align}
    Thanks to \eqref{actual perimeter bound 2} we can apply Lemma \ref{lemma partition compactness} and, up to extracting a subsequence in $j$, we can find a Lebesgue partition $\{U_i\}_{i\in \mathbb{N}}$ of $U$ by sets of finite perimeter which satisfies \eqref{partition convergence} and \eqref{partition volume with power s}. Moreover, after taking a subsequence, we may assume that $\mathcal{H}^n\mres \partial^* U_i^j\weakstar\mu_i$ for some Radon measures $\mu_i$ such that $\mathcal{H}^n\mres \partial^* U_i \leq \mu_i$ \cite[Prop. 12.15]{maggiBOOK}. Therefore, by \eqref{partition convergence}, Federer's theorem for reduced boundaries, and by \eqref{induced partition} for $\{U^j_i\}_i$, we see that
    \begin{align}\notag
    \mathcal{H}^n&\mres ( \pa^* U)+\sum_{i=1}^\infty \mathcal{H}^n\mres (\partial^* U_i\cap U^\one )=\sum_{i=1}^\infty \mathcal{H}^n\mres (\partial^* U_i)\leq {\rm w}^*\lim_{j\to \infty} \sum_{i=1}^\infty \mathcal{H}^n\mres (\partial^* U_i^j)\\ \notag
    &={\rm w}^*\lim_{j\to \infty}\mathcal{H}^n\mres (\pa^* U) + \sum_{i=1}^\infty \H^n\mres (\partial^e U_i^j \cap U^\one ) \leq \mathcal{H}^n\mres (\pa^* U)+ 2\H^n\mres (S\cap U^\one) \,.
    \end{align}
    By subtracting $\mathcal{H}^n\mres (\pa^* U)$ from both sides, we deduce \eqref{essential partition perimeter bound}.

    \medskip

    We now show, first, that $\{U_i\}_i\in\mathcal{U}(S)$ (i.e., we check the validity of \eqref{induced partition} on $\{U_i\}_i$), and then that $S$ does not essentially disconnect any of the $U_i$. This will complete the proof of the first part of the statement.

    \medskip

    To prove that $U^\one \cap \partial^e U_i \shn S$, let us introduce the $\H^n$-rectifiable set $S_0$ defined by
    \begin{align}\label{s prime def}
        S_0 = U^\one\cap \bigcup_{i,j}\partial^* U_i^j\,.
    \end{align}
    By $\{U_i^j\}_i\in\mathcal{U}(S)$, $S_0$ is contained into $S$ modulo $\H^n$-null sets. Therefore, in order to prove \eqref{induced partition} it will be enough to show that
    \begin{align}\label{reduced containment}
       U^{\one}\cap \partial^* U_i\shn S_0\,,\qquad\forall i\,.
    \end{align}
    Should this not be the case, it would be $\H^n(U^\one \cap \partial^* U_i \setminus S_0)>0$ for some $i$. We could thus pick $x\in U^\one\cap \partial^* U_i$ such that
    \begin{align}\label{1 density at x}
        \theta^n(\H^n \mres (U^\one \cap \partial^* U_i \setminus S_0))(x) = 1\,.
    \end{align}
    Since $\theta^n(\H^n\mres\pa^*U_i)(x)=1$ and $S_0 \subset U^\one$ this implies $\H^n(S_0\cap B_r(x))={\rm o}(r^n)$, while $\pa^*U_i\subset U_i^\half$ gives $|U_i\cap B_r(x)|=(\om_{n+1}/2)\,r^{n+1}+{\rm o}(r^{n+1})$. Therefore, given $\de>0$ we can find $r>0$ such that
    \[
    \H^n( S_0 \cap B_r(x)) < \delta\, r^n\,,\qquad \min\big\{|U_i\cap B_r(x)|,|U_i\setminus B_r(x)|\big\}\ge \Big(\frac{\om_{n+1}}2-\de\Big)\,r^{n+1}\,,
    \]
    and then exploit the relative isoperimetric inequality and \eqref{partition convergence} to conclude that
    \begin{eqnarray*}
      c(n)\, \Big[\Big(\frac{\om_{n+1}}2-\de\Big)\,r^{n+1}\Big]^{n/(n+1)}&\le& P(U_i;B_r(x))\le\liminf_{j\to\infty}P(U_i^j;B_r(x))
      \\
      &\le&\H^n(S_0\cap B_r(x))\le \de\,r^n\,,
    \end{eqnarray*}
    where in the next to last inequality we have used the definition \eqref{s prime def} of $S_0$. Choosing $\de>0$ small enough we reach a contradiction, thus deducing that $\{U_i\}_i\in\mathcal{U}(S)$.

    \medskip

    Taking into account the subadditivity of $z\mapsto z^s$, in order to prove that $S$ does not essentially disconnect any $U_i$ it is sufficient to show that $\{U_i\}_i$ is a maximizer of $m$. To see this, we notice that $|U_i^j\Delta U_i|\to 0$ as $j\to\infty$ implies
    \[
    m=\lim_{j\to\infty}\sum_{i=1}^k|U_i^j|^s+\sum_{i=k+1}^\infty|U_i^j|^s
    =\sum_{i=1}^k|U_i|^s+\lim_{j\to\infty}\sum_{i=k+1}^\infty|U_i^j|^s\,,
    \]
    so that, letting $k\to\infty$ and exploiting \eqref{partition volume with power s}, we conclude that
    \begin{equation}
      \label{maximal energy}
      m=\sum_{i=1}^\infty|U_i|^s\,.
    \end{equation}
    This completes the proof of the first part of the statement (existence of essential partitions).

    \medskip

    Let now $S$, $S^*$, $\{U_i\}_i$, and $\{U_j^*\}_j$  be as in statement (a) -- that is, $S^*$ is $\H^n$-contained in $S$, $\{U_i\}_i$ is an essential partition of $U$ induced by $S$, and, for every $j$, $\{U_j^*\}_j$ is a Lebesgue partition of $U$ induced by $S^*$ -- and set $Z=\cup_i\pa^*U_i$ and $Z^*=\cup_j\pa^*U_j^*$. Arguing by contradiction with \eqref{essential partitions are monotone}, let us assume $\H^n(Z^*\setminus Z)>0$. By the definition of Lebesgue partition we have that $Z\setminus U^\one$ and $Z^*\setminus U^\one$ are both $\H^n$-equivalent to $\pa^*U$. Therefore we have $\H^n((Z^*\setminus Z)\cap U^\one)>0$. Since $U^\one$ is $\H^n$-equivalent to the union of the $\{U_i^{\one}\cup \pa^* U_i\}_{i\in I}$ we can find $i\in I$ and $j\in J$ such that $\H^n(U_i^{\one}\cap\pa^*U_j^*)>0$. This implies that both $(U_i\cap U_j^*)^\half$ and $(U_i\setminus U_j^*)^\half$ are non-empty, and thus that $\{U_j^*\cap U_i,U_i\setminus U_j^*\}$ is a non-trivial Borel partition of $U_i$. Since
    \[
    U_i^{\one}\cap\pa^e(U_j^*\cap U_i)\shn U^\one\cap \pa^*U_j^*\shn S^*\,,
    \]
    we conclude that $S^*$ is essentially disconnecting $U_i$, against the fact that $S$ is not essentially disconnecting $U_i$ and the fact that $S^*$ is $\H^n$-contained in $S$.

    \medskip

    We finally prove statement (b). Let $\{U_i\}_{i\in I}$, and $\{U_j^*\}_{j\in J}$ be essential partitions of $U$ induced by $S$ and $S^*$ respectively. Given $i\in I$ such that $|U_i|>0$, there is at least one $j\in J$ such that $|U_i\cap U_j^*|>0$. We {\it claim} that it must be $|U_i\setminus U_j^*|=0$. Should this not be the case, $\pa^*U_j^*$ would be essentially disconnecting $U_i$, thus implying that $S^*$ (which contains $\pa^*U_j^*$) is essentially disconnecting $U_i$. Now, either because we are assuming that $S^*$ is $\H^n$-equivalent to $S$, or because we are assuming that $S^*=\RR(S)$ and we have Lemma \ref{lemma rectfiable spanning}, the fact that $S^*$ is essentially disconnecting $U_i$ implies that $S$ is essentially disconnecting $U_i$, a contradiction. Having proved the claim, for each $i\in I$ with $|U_i|>0$ there is a unique $\s(i)\in J$ such that $|U_i\Delta U_{\s(j)}^*|=0$. This completes the proof.
    \end{proof}

    \section{Homotopic spanning on generalized soap films (Theorem \ref{theorem spanning with partition S case})}\label{section spanning and partitions} The goal of this section is proving Theorem \ref{theorem spanning with partition S case}, and, actually, to obtain an even more general result. Let us recall that the objective of Theorem \ref{theorem spanning with partition S case} was to reformulate the homotopic spanning property for a Borel set $S$, in the case when $S$ is locally $\H^n$-finite, in terms of unions of boundaries of induced essential partitions. We shall actually need this kind of characterization also for sets $S$ of the more general form $S=K\cup E^\one$, where $(K,E)\in\K_{\rm B}$. For an illustration of the proposed characterization of homotopic spanning on this type of sets, see
    \begin{figure}
    \input{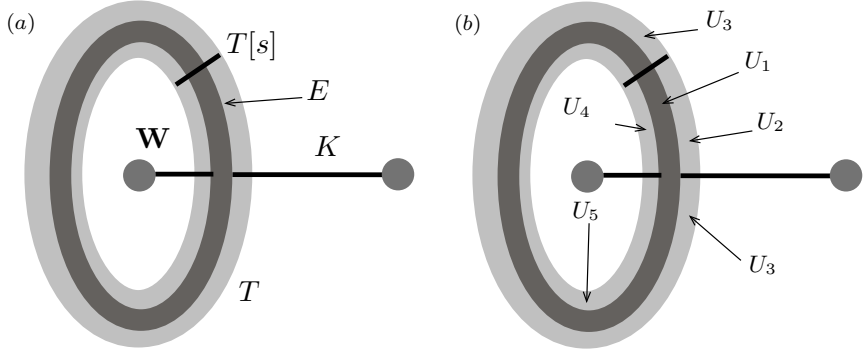}
    \caption{\small{In panel (a) we have depicted a pair $(K,E)$ where $E$ is a tube inside $T$ and $K$ consists of the union of the boundary of $E$ and the {\it non-}spanning set $S$ of Figure \ref{fig Scase}-(a). Notice that $K$ is not $\C$-spanning, if we see things from the point of view of Definition \ref{def homot span closed}, since it misses every loop $\g$ contained in the interior of $E$; while, of course, $K\cup E$ is $\C$-spanning because $E$ has been added. In panel (b) we have depicted the essential partition $\{U_i\}_{i=1}^5$ of $T$ induced by $K\cup T[s]$.  Notice that $E=U_1$, therefore no $\pa^*U_i\cap\pa^*U_j$ $\H^1$-containis $T[s]\cap E$. In particular, $T[s]\cap E$ (which $\H^1$-equivalent to $T[s]\setminus E^\zero$) is not $\H^1$-contained in ${\rm UBEP}(K\cup T[s];T)$, and we see again, this time from the point of view of Definition \ref{def homot span borel} as reformulated in Theorem \ref{theorem spanning with partition S case}, that $K$ is not $\C$-spanning. As stated in Theorem \ref{theorem spanning with partition}, from the viewpoint of Definition \ref{def homot span borel} it is only the $\H^1$-containment of $T[s]\cap E^\zero$ into ${\rm UBEP}(K\cup T[s];T)$ that establishes the $\C$-spanning property of $K\cup E$: and this $\H^1$-containment indeed holds, since $T[s]\cap E^\zero=T[s]\setminus\cl(E)$ is $\H^1$-contained in the union of $\pa^*U_2\cap\pa^*U_3$ and $\pa^*U_4\cap\pa^*U_5$.}}
    \label{fig KEcase}
    \end{figure}
    Figure \ref{fig KEcase}.

    \begin{theorem}[Homotopic spanning for generalized soap films]\label{theorem spanning with partition}
    If $\wire\subset\R^{n+1}$ is a closed set in $\R^{n+1}$, $\C$ is a spanning class for $\wire$, $K$ is a Borel set locally $\H^n$-finite in $\Om$, and $E$ is of locally finite perimeter in $\Om$ such that
    $\Om\cap\pa^*E$ is $\H^n$-contained in $K$, then the set
    \begin{eqnarray}
    \label{S equal to K union E}
    S=\RR(K)\cup E^{\one}
    \end{eqnarray}
    is $\C$-spanning $\wire$ if and only if, for every $(\gamma,\Phi, T)\in \mathcal{T}(\C)$ and $\H^1$-a.e. $s\in\SS^1$,
    \begin{eqnarray}\label{spanning and the S partition equation}
    &&\mbox{$T[s]\cap E^\zero$ is $\H^n$-contained in ${\rm UBEP}(K\cup T[s];T)$}\,.
    \end{eqnarray}
    \end{theorem}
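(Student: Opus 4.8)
The plan is to prove both implications by directly manipulating Borel partitions of the test tube $T$, with Theorem~\ref{theorem decomposition} as the main tool. Fix $(\gamma,\Phi,T)\in\T(\C)$. Since $\cl(T)$ is a compact subset of $\Om$, we have $\H^n\big((K\cup T[s])\cap T\big)<\infty$ for every $s$, so Theorem~\ref{theorem decomposition} yields the essential partition $\{U_i\}_i$ of $T$ induced by $K\cup T[s]$, and ${\rm UBEP}(K\cup T[s];T)=T\cap\bigcup_i\pa^*U_i$ (recall $T$ is open, so $T=T^\one$). We shall use three facts: each $U_i$ has finite perimeter and is indecomposable (``$K\cup T[s]$ does not essentially disconnect $U_i$'' is stronger than ``$\varnothing$ does not''); by \eqref{caccioppoli exactly two}, $\H^n$-a.e.\ point of $T\cap\bigcup_i\pa^*U_i$ lies in exactly two of the $\pa^*U_i$; and, since $\pa^*E\cap\Om$ is $\H^n$-contained in $K$ and $E$ has finite perimeter, $\pa^e(E\cap T)\cap T^\one$ is $\H^n$-contained in $\pa^*E\cap T$, hence in $K$, hence (being rectifiable) in $\RR(K)$. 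Also ${\rm UBEP}(K\cup T[s];T)$ is $\H^n$-equivalent to ${\rm UBEP}(\RR(K)\cup T[s];T)$, so the choice between $K$ and $\RR(K)$ is immaterial.

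\emph{Necessity.} Assume $S=\RR(K)\cup E^\one$ is $\C$-spanning $\wire$, and fix $s$ for which the property in Definition~\ref{def homot span borel} holds. For $\H^n$-a.e.\ $x\in T[s]\cap E^\zero$ there is a non-trivial Borel partition $\{T_1,T_2\}$ of $T$ with $x\in\pa^eT_1\cap\pa^eT_2$ and $T^\one\cap\pa^eT_1\cap\pa^eT_2$ $\H^n$-contained in $S\cup T[s]$; since $\{T_1,T_2\}$ partitions $T$ one has $T^\one\cap\pa^eT_i\shn T^\one\cap\pa^eT_1\cap\pa^eT_2$. Because $x\in E^\zero$ lies in $\pa^eT_1\cap\pa^eT_2$, both $T_1\setminus E$ and $T_2\setminus E$ have positive measure; set
\[
T_1'=T_1\setminus E,\qquad T_2'=(E\cap T)\cup T_2.
\]
A density computation gives $x\in\pa^eT_1'\cap\pa^eT_2'$, and splitting $T^\one\cap\pa^eT_1'$ according to the density classes of $E$ (the $E^\one$-part is empty since $T_1'\cap E=\varnothing$, the $E^\zero$-part equals $T^\one\cap\pa^eT_1\cap E^\zero\shn(\RR(K)\cup T[s])\cap E^\zero$, and the $\pa^eE$-part is $\H^n$-contained in $\pa^*E\cap T\shn K$) one gets $T^\one\cap\pa^eT_1'\cap\pa^eT_2'\shn K\cup T[s]$. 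Thus $\{T_1',T_2'\}$ is a Lebesgue partition of $T$ induced by $K\cup T[s]$; by Federer's criterion $T_1'$ has finite perimeter, and by Theorem~\ref{theorem decomposition}(a), $\pa^*T_1'\cup\pa^*T_2'$ is $\H^n$-contained in $\bigcup_i\pa^*U_i$. Since each $U_i$ is indecomposable and $U_i^\one\cap\pa^*T_1'$ is $\H^n$-null, each $U_i$ is, modulo $\L^{n+1}$, contained in $T_1'$ or in $T_2'$; writing $T_1'$ and $T_2'$ as such unions and using $x\in T^\one\cap\pa^eT_1'$, the point $x$ cannot be a density-$1$ point of either $T_j'$, hence $x\in\pa^*U_i$ for some $i$, i.e.\ $x\in{\rm UBEP}(K\cup T[s];T)$. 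Therefore $T[s]\cap E^\zero$ is $\H^n$-contained in ${\rm UBEP}(K\cup T[s];T)$, which is \eqref{spanning and the S partition equation}.

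\emph{Sufficiency.} Assume \eqref{spanning and the S partition equation} and fix $s$ for which it holds; we produce, for $\H^n$-a.e.\ $x\in T[s]$, a partition witnessing Definition~\ref{def homot span borel}. If $x\in E^\zero$, then for $\H^n$-a.e.\ such $x$ we have $x\in{\rm UBEP}(K\cup T[s];T)$, so $x\in\pa^*U_i\cap\pa^*U_j$ for some $i\ne j$; take $\{U_i,T\setminus U_i\}$, which is non-trivial, has $x$ in both essential boundaries, and satisfies $T^\one\cap\pa^eU_i\shn K\cup T[s]$ (the partition is induced), hence — $U_i$ being of finite perimeter, so $\pa^eU_i\cap T^\one$ is rectifiable — $T^\one\cap\pa^eU_i\shn\RR(K)\cup T[s]\shn S\cup T[s]$. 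If $x\in E^\one$, pick any closed half-space $H$ with $x\in\pa H$ and take $T_1=E\cap T\cap H$, $T_2=T\setminus T_1$: this is non-trivial (since $x\in E^\one\cap T$ forces $|E\cap T\cap H|>0$ while $|T_2|\ge\max\{|T\setminus E|,|T\setminus H|\}>0$), a density count gives $x\in\pa^eT_1\cap\pa^eT_2$, and, as $\pa^eT_1\cap E^\zero=\varnothing$, splitting $T^\one\cap\pa^eT_1$ by the density classes of $E$ yields $T^\one\cap\pa^eT_1\shn E^\one\cup(\pa^*E\cap T)\shn E^\one\cup\RR(K)=S$; here it is precisely $E^\one\subseteq S$ that makes the dividing hyperplane admissible. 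Finally $x\in\pa^eE$ (where $\H^n$-a.e.\ such $x$ lies in $\pa^*E$) is handled by $\{E\cap T,T\setminus E\}$ just as before. In every case the required partition exists, so $S$ is $\C$-spanning $\wire$.

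The delicate point is the necessity argument, and within it the two steps that genuinely use the structure of the problem: the ``swallowing'' of $E$ into one block of the partition — legitimate only because $E^\one\subseteq S$ and $\pa^*E\subseteq K$ — and the identification of the resulting induced partition as a coarsening of the canonical essential partition $\{U_i\}_i$, which relies on Theorem~\ref{theorem decomposition}(a) together with the indecomposability of the $U_i$; sufficiency, by contrast, is constructive. Finally, the case $|E|=0$ recovers Theorem~\ref{theorem spanning with partition S case} (via Lemma~\ref{lemma rectfiable spanning} and the $\RR$-invariance of ${\rm UBEP}$), so one could attempt to deduce the general statement from that special case, but the direct route above seems cleaner.
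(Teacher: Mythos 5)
Your proof is correct, and while it follows the same overall skeleton as the paper (reduce both implications to statements about the essential partition $\{U_i\}_i$ of $T$ induced by $K\cup T[s]$), the mechanisms differ in both directions, in ways worth recording. In the necessity direction the paper argues by contradiction: it assumes a set $G\subset T[s]\cap E^\zero$ of positive $\H^n$-measure misses $\bigcup_i\pa^*U_i$, picks a point of $G\cap U_i^\one$, and uses a restriction lemma (its step one) together with the dichotomy $U_i^\one\subset E^\zero$ or $U_i^\one\subset E^\one$ to show $\RR(K)\cup T[s]$ essentially disconnects $U_i$ — never touching the partition $\{T_1,T_2\}$ produced by Definition \ref{def homot span borel}. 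You instead modify that partition by absorbing $E$ into one block ($T_1'=T_1\setminus E$, $T_2'=(E\cap T)\cup T_2$), check it is induced by $K\cup T[s]$, and then compare with the essential partition via the fact that each $U_i$ must sit (mod $\L^{n+1}$) inside $T_1'$ or $T_2'$; this is a legitimate alternative and makes very transparent where $E^\one\subset S$ and $\pa^*E\shn K$ enter. In the sufficiency direction your cases $x\in\pa^*E$ and $x\in E^\zero$ coincide with the paper's; for $x\in E^\one$ you cut with a half-space through $x$ rather than the paper's two-slice construction $T_1=\Phi(I_1\times B_1^n)\cap E$, which is simpler and works for exactly the reason you isolate: the flat cut lies in $E^\one\subset S$, and the remaining essential boundary is $\H^n$-contained in $\pa^*E\shn\RR(K)$.

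One step should be made explicit, because it is precisely where the $x$-dependence of the partitions could bite. Your final inference in the necessity part — ``$x$ cannot be a density-$1$ point of either $T_j'$, hence $x\in\pa^*U_i$ for some $i$'' — is not a pointwise implication; it needs the structure fact that $\H^n$-a.e.\ point of $T$ belongs to $\bigcup_i U_i^\one\cup\bigcup_i\pa^*U_i$ (valid for the Caccioppoli partition $\{U_i\}_i$ thanks to \eqref{essential partition perimeter bound}; this is also what underlies the paper's ``there is $i$ with $\H^n(G\cap U_i^\one)>0$''). The point to stress is that this exceptional $\H^n$-null set does \emph{not} depend on $x$, whereas your partitions $\{T_1',T_2'\}$ do; discarding it once, you get: if $x$ lies in some $U_{i_0}^\one$, then $x$ would be a density-$1$ point of whichever $T_j'$ contains $U_{i_0}$, a contradiction, so $x\in\pa^*U_i$ for some $i$. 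The preliminary fact you listed (\eqref{caccioppoli exactly two}, that a.e.\ point of $\bigcup_i\pa^*U_i$ lies in exactly two reduced boundaries) is not the one needed here, so add the sentence above; with that, the argument is complete.
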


    \begin{remark}\label{remark RRK cup Eone is spanning too}
      {\rm An immediate corollary of Theorem \ref{theorem spanning with partition} is that if $K$ is $\H^n$-finite and $(K,E)\in\KK_{\rm B}$ then $K\cup E^\one$ is $\C$-spanning $\wire$ if and only if $\RR(K)\cup E^\one$ is $\C$-spanning $\wire$. Indeed, $\RR(K\cup T[s])=\RR(K)\cup T[s]$, so that, by \eqref{def of UBEP}, ${\rm UBEP}(K\cup T[s])={\rm UBEP}(\RR(K)\cup T[s])$.}
    \end{remark}

    \begin{proof}
      [Proof of Theorem \ref{theorem spanning with partition S case}] This is Theorem \ref{theorem spanning with partition} with $E=\varnothing$.
    \end{proof}

    \begin{proof}[Proof of Theorem \ref{theorem spanning with partition}] {\it Step one}: We prove the following claim: If $S$ essentially disconnects $G$ into $\{G_1,G_2\}$ and $H\subset G$ satisfies
    \begin{align}\label{nont parti}
        \min\{|H \cap G_1|\,,\,|H \cap G_2|\} > 0\,,
    \end{align}
    then $S$ essentially disconnects $H$ into $H \cap G_1$ and $H \cap G_2$. Indeed, if $x\in H^{\one}$, then $x\in \partial^e(H \cap G_i)$ if and only if $x\in \partial^e G_i$ ($i=1,2$). Hence $H^{\one} \cap \partial^e (G_1 \cap H) \subset H^{\one} \cap \partial^e G_1 \subset G^{\one} \cap \partial^e G_1$, which, by \eqref{nont parti} and our assumption on $S$ and $G$, gives the desired conclusion.

    \medskip

    \noindent {\it Step two}: Taking from now on $S$, $K$ and $E$ as in the statement we preliminary notice that if $(\g,\Phi,T)\in\T(\C)$, $s\in\SS^1$, and $\{U_i\}_i$ is the essential partition of $T$ induced by $(\RR(K)\cup T[s])$, then
    \begin{align}\label{partial E containment}
        T\cap\partial^* E  \shn T\cap\bigcup_{i} \partial^* U_i\,.
    \end{align}
    Indeed, since $\Om\cap\pa^*E$ is $\H^n$-contained in $\RR(K)$, if a Borel set $G$ is such that $|G\cap E|\,|G\setminus E|>0$ then, by step one, $\RR(K)$ essentially disconnects $G$. In particular, since, for each $i$, $\RR(K)\cup T[s]$ does not essentially disconnect $U_i$, we find that, for each $i$,
    \begin{equation}\label{in or out of E prelude}
    \mbox{either $U_i^{\one}\subset E^\zero$}\qquad\mbox{or $U_i^{\one}\subset E^{\one}$}\,.
    \end{equation}
    Clearly, \eqref{in or out of E prelude} immediately implies  \eqref{partial E containment}.

    \medskip

    \noindent {\it Step three}: We prove the ``only if'' part of the statement, that is, given $(\g,\Phi,T)\in\T(\C)$ and $s\in \mathbb{S}^1$, we assume that
     \begin{eqnarray}\label{spanning borel x}
          &&\mbox{for $\H^n$-a.e. $x\in T[s]$}\,,
          \\\nonumber
          &&\mbox{$\exists$ a partition $\{T_1,T_2\}$ of $T$ with $x\in\partial^e T_1 \cap \partial^e T_2$}\,,
          \\ \nonumber
          &&\mbox{and s.t. $\RR(K)\cup E^{\one}\cup  T[s]$ essentially disconnects $T$ into $\{T_1,T_2\}$}\,,
    \end{eqnarray}
    and then prove that
    \begin{eqnarray}\label{spanning and the S partition equation x}
    &&\mbox{$T[s]\cap E^\zero$ is $\H^n$-contained in $\bigcup_i\pa^*U_i$}\,,
    \end{eqnarray}
    where $\{U_i\}_i$ is the essential partition of $T$ induced by $\RR(K)\cup T[s]$. To this end, arguing by contradiction, we suppose that for some $s\in \mathbb{S}^1$, there is $G \subset T[s] \cap E^\zero$ with $\H^n(G)>0$ and such that $G\cap\cup_i\pa^*U_i=\varnothing$. In particular, there is an index $i$ such that $\H^n(G \cap U_i^{\one})>0$, which, combined with \eqref{in or out of E prelude} and $G \subset E^\zero$, implies
    \begin{equation}
      \label{bibn}
      U_i^{\one}\subset E^\zero\,.
    \end{equation}
    Now by \eqref{spanning borel x} and $\H^n(G \cap U^{\one}_i)>0$, we can choose $x\in G \cap U_i^{\one}$ such that $\RR(K)\cup E^{\one} \cup T[s]$ essentially disconnects $T$ into some $\{T_1,T_2\}$ such that $x\in \partial^e T_1 \cap \partial^e T_2$. Then, $\{U_i \cap T_1, U_i \cap T_2\}$ is a non-trivial partition of $U_i$, so that, by step one and \eqref{bibn}, $\RR(K) \cup T[s]$ essentially disconnects $U_i$ into $\{U_i \cap T_1,U_i \cap T_2\}$. This contradicts the defining property \eqref{essential partition} of essential partitions, and concludes the proof.

    \medskip

    \noindent {\it Step four}: We prove the ``if'' part of the statement. More precisely, given $(\g,\Phi,T)\in\T(\C)$ and $s\in \mathbb{S}^1$, we assume that \eqref{spanning and the S partition equation x} holds at $s$, and then proceed to prove that \eqref{spanning borel x} holds at $s$. We first notice that, since $\{E^{\one},E^\zero,\pa^*E\}$ is a partition of $\Om$ modulo $\H^n$, it is enough to prove \eqref{spanning borel x} for $\H^n$-a.e. $x\in T[s]\cap(E^{\one}\cup E^\zero\cup\pa^*E)$.

    \medskip

    If $x\in T[s] \cap \partial^* E$, then by letting $T_1 = T \cap E$ and $T_2 = T \setminus E$ we obtain a partition of $T$ such that $x\in T\cap\pa^*E=T\cap\pa^*T_1\cap\pa^*T_2\subset\pa^eT_1\cap\pa^eT_2$, and such that $\pa^*E$ essentially disconnects $T$ into $\{T_1,T_2\}$. Since $\Om\cap\pa^*E$ is $\H^n$-contained in $\RR(K)$, we deduce \eqref{spanning borel x}.

    \medskip

    If $x\in T[s] \cap E^\zero$, then, thanks to \eqref{spanning and the S partition equation x} and denoting by $\{U_i\}_i$ the essential partition of $T$ induced by $(\RR(K)\cup T[s])$, there is an index $i$ such that $x\in T \cap \partial^* U_i$. Setting $T_1=U_i$ and $T_2=T\setminus U_i$, we have that $T \cap \partial^* U_i$ (which contains $x$) is in turn contained into $\partial^e T_1 \cap \partial^e T_2 \cap T$. Since the latter set is non-empty, $\{T_1,T_2\}$ is a non-trivial partition of $T$. Moreover, by definition of essential partition,
    \begin{align}\notag
        T^{\one}\cap \partial^e T_1 \cap \partial^e T_2 = T \cap \partial^e U_i \shn \RR(K) \cup T[s]\,,
    \end{align}
    so that $\RR(K) \cup T[s]$ essentially disconnects $T$, and \eqref{spanning borel x} holds.

    \medskip

    Finally, if $x\in T[s] \cap E^{\one}$, we let $s_1=s$, pick $s_2\neq s$, denote by $\{I_1,I_2\}$ the partition of $\SS^1$ defined by $\{s_1,s_2\}$, and set
    \[
    T_1=\Phi(I_1\times B_1^n)\cap E\,,\qquad T_2=\Phi(I_2\times B_1^n)\cup\,\Big(\Phi(I_1\times B_1^n)\setminus E\Big)\,.
    \]
    This is a Borel partition of $T$, and using the fact that $x\in E^{\one}$, we compute
    \[
    |T_1\cap B_r(x)|=|\Phi(I_1\times B_1^n)\cap E\cap B_r(x)|=|\Phi(I_1\times B_1^n)\cap B_r(x)|+{\rm o}(r^{n+1})=\frac{|B_r(x)|}2+{\rm o}(r^{n+1})\,.
    \]
    Therfore $x\in \partial^e T_1 \cap \partial^e T_2$, and by standard facts about reduced boundaries \cite[Chapter 16]{maggiBOOK},
    \[
    \partial^e T_1 \cap \partial^e T_2 \cap T^{\one} \shn \partial^* T_1  \cap T^{\one} \shn \big( \partial^* E \cup ((T[s_1]\cup T[s_2]) \cap E^{\one}) \big)\cap T^{\one}\,.
    \]
    Since $\Om\cap\pa^*E$ is $\H^n$-contained in $\RR(K)$, we have shown \eqref{spanning borel x}.
    \end{proof}

    \section{The fundamental closure theorem for homotopic spanning conditions}\label{section closure theorem basic} In Theorem \ref{theorem spanning with partition S case} and Theorem \ref{theorem spanning with partition} we have presented two reformulations of the homotopic spanning condition in terms of $\H^n$-containment into union of boundaries of essential partitions. The goal of this section is discussing the closure of such reformulations, and provide a statement (Theorem \ref{theorem basic closure for homotopic} below) which will lie at the heart of the closure theorems proved in Section \ref{section closure theorems sharp}.

    \begin{theorem}[Basic closure theorem for homotopic spanning]\label{theorem basic closure for homotopic}
      Let $\wire\subset\R^{n+1}$ be closed and let $\C$ be a spanning class for $\wire$. Let us assume that:

      \medskip

      \noindent {\bf (a):} $K_j$ are $\H^n$-finite Borel subsets of $\Om$ with $\H^n\mres K_j\weakstar\mu$ as Radon measures in $\Om$;

      \medskip

      \noindent {\bf (b):} $(\g,\Phi,T)\in\T(\C)$, $\{s_j\}_j$ is a sequence in $\SS^1$ with $s_j\to s_0$ as $j\to\infty$;

      \medskip

      \noindent {\bf (c):} if $\{U_i^j\}_i$ denotes the essential partition  of $T$ induced by $K_j\cup T[s_j]$, then there is a limit partition $\{U_i\}_i$ of $\{U_i^j\}_i$ in the sense of \eqref{partition convergence} in Lemma \ref{lemma partition compactness};

      \medskip

      Under these assumptions, if $\mu(T[s_0])=0$, $F_j,F\subset\Om$ are sets of finite perimeter with $F_j\to F$ as $j\to\infty$ and such that, for every $j$, $\Om\cap\pa^*F_j$ is $\H^n$-contained in $K_j$ and
      \begin{equation}\label{Tsj Fj in Kjstar}
        \mbox{$T[s_j]\cap F_j^\zero$ is $\H^n$-contained in $K^*_j$}\,,
      \end{equation}
      then
      \begin{equation}\label{Ts0 F in Kstar}
        \mbox{$T[s_0]\cap F^\zero$ is $\H^n$-contained in $K^*$}\,,
      \end{equation}
      where we have set
      \begin{equation}
        \label{def of Kjstar and Kstar}
          K_j^*={\rm UBEP}(K_j\cup T[s_j];T)=T\cap\bigcup_i\pa^*U_i^j\,,\qquad K^*=T\cap\bigcup_i\pa^*U_i\,.
      \end{equation}
    \end{theorem}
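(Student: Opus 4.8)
The plan is to recast the desired $\H^n$-containment in terms of the limit partition $\{U_i\}_i$, and then to rule out by a concentration argument the only configuration that would make it fail.

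\smallskip

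First I would record two dichotomies. By the argument of Steps one and two in the proof of Theorem~\ref{theorem spanning with partition} --- which only uses that $\Om\cap\pa^*F_j\shn K_j$ and that $K_j\cup T[s_j]$ does not essentially disconnect any $U_i^j$ --- one gets that for every $i,j$ either $(U_i^j)^\one\subset F_j^\zero$ or $(U_i^j)^\one\subset F_j^\one$; passing to subsequences and letting $j\to\infty$ with $U_i^j\to U_i$, $F_j\to F$, this yields the limit dichotomy: for each $i$, either $|U_i\cap F|=0$ or $|U_i\setminus F|=0$. Since $\sup_j\H^n(K_j\cap\cl T)<\infty$ by weak-$*$ convergence and $\cl T\cc\Om$, Theorem~\ref{theorem decomposition} and Lemma~\ref{lemma partition compactness} give $\sum_iP(U_i;T)<\infty$, so $\{U_i\}_i$ is a Caccioppoli partition of $T$; in particular $T\shn\bigcup_i(U_i^\one\cup\pa^*U_i)$ and each $U_i^\one$ is $\H^n$-disjoint from every $\pa^*U_k$. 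Combining these facts one checks that \eqref{Ts0 F in Kstar} \emph{fails} only if there is an index $i_0$ with $|U_{i_0}\cap F|=0$ and $\H^n(T[s_0]\cap U_{i_0}^\one)>0$: a positive-$\H^n$ subset of $(T[s_0]\cap F^\zero)\setminus K^*$ must, up to $\H^n$-null sets, lie in some $U_{i_0}^\one$, and $|U_{i_0}\cap F|>0$ would force (limit dichotomy) $U_{i_0}^\one\subset F^\one$, which is impossible.

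\smallskip

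So I would argue by contradiction and fix such an $i_0$. From $|U_{i_0}\cap F|=0$ and the convergences, $|U_{i_0}^j\cap F_j|\to0$, hence $(U_{i_0}^j)^\one\subset F_j^\zero$ for $j$ large; then $T[s_j]\cap(U_{i_0}^j)^\one\subset T[s_j]\cap F_j^\zero\shn K_j^*=T\cap\bigcup_i\pa^*U_i^j$, and since $(U_{i_0}^j)^\one$ is $\H^n$-disjoint from every $\pa^*U_i^j$ we conclude $\H^n\big(T[s_j]\cap(U_{i_0}^j)^\one\big)=0$ for $j$ large. Now comes the crux: fix $x_0\in T[s_0]\cap U_{i_0}^\one$; I would show $\theta^n_*(\mu)(x_0)\ge\tfrac12$. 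Since $U_{i_0}^j\to U_{i_0}$ in $L^1$ and $x_0\in U_{i_0}^\one$, the $U_{i_0}^j$ fill $B_r(x_0)$ up to a small volume fraction for $r$ small and $j$ large. Choosing $r$ in a cofinal-at-$0$ family of good radii (so $\H^n(\pa B_r(x_0)\cap\pa^*U_{i_0}^j)=0$ for all $j$ and $\H^n(\pa B_r(x_0)\setminus U_{i_0}^{j,\one})$ is controlled by $|B_{2r}(x_0)\setminus U_{i_0}|$), writing $B_r^\pm$ for the two components into which the smooth hypersurface $T[s_j]$ (which for $j$ large passes within $r$ of $x_0$; let $f_j$ be a signed distance to it, positive on $B_r^+$) cuts $B_r(x_0)$, and letting $n_j$ be a constant unit vector approximating the normal of $T[s_j]$ near $x_0$, I would apply the Gauss--Green theorem to $U_{i_0}^j\cap B_r^+$ with the field $n_j$. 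By \eqref{E cap F}, $\pa^*(U_{i_0}^j\cap B_r^+)$ splits modulo $\H^n$ into the interior part $B_r^+\cap\pa^*U_{i_0}^j$, the spherical cap $(U_{i_0}^j)^\one\cap\pa B_r(x_0)\cap\{f_j>0\}$, the one-sided sheet $\Gamma_j^+:=\{x\in\pa^*U_{i_0}^j\cap T[s_j]:\nu_{U_{i_0}^j}(x)\cdot n_j<0\}$, \emph{and the set} $(U_{i_0}^j)^\one\cap T[s_j]$ --- which is $\H^n$-null precisely by the identity just derived. Since $\int_{\pa^*(U_{i_0}^j\cap B_r^+)}n_j\cdot\nu\,d\H^n=0$ and a half-sphere computation ($\int_{\SS^n\cap\{\omega\cdot n_j>0\}}\omega\cdot n_j\,d\H^n(\omega)=\om_n$) makes the flux of $n_j$ through the cap equal to $\om_n r^n$ up to errors vanishing as $j\to\infty$ and as $r\to0$ (from $\dist(x_0,T[s_j])\to0$, the curvature of $T[s_0]$, and the controlled hole on $\pa B_r(x_0)$), one gets $\H^n(\Gamma_j^+)+\H^n(B_r^+\cap\pa^*U_{i_0}^j)\ge\om_n r^n-o(1)$, and symmetrically on $B_r^-$. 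Adding, using that $\Gamma_j^+,\Gamma_j^-$ are disjoint subsets of $T[s_j]\cap B_r(x_0)$ with $\H^n(T[s_j]\cap B_r(x_0))\to\om_n r^n$, and $\pa^*U_{i_0}^j\cap(B_r^+\cup B_r^-)=\pa^*U_{i_0}^j\cap(B_r(x_0)\setminus T[s_j])\shn K_j$, I obtain $\H^n(K_j\cap B_r(x_0))\ge\om_n r^n-o(1)$; upper semicontinuity of $\H^n\mres K_j\weakstar\mu$ on closed balls then gives $\mu(\cl B_r(x_0))\ge\tfrac12\om_n r^n$ for all small good radii, i.e.\ $\theta^n_*(\mu)(x_0)\ge\tfrac12$.

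\smallskip

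Finally, since this holds for every $x_0\in T[s_0]\cap U_{i_0}^\one$, the density--measure comparison recalled in the Notation section yields $\mu\ge\tfrac12\,\H^n\mres(T[s_0]\cap U_{i_0}^\one)$, so $\mu(T[s_0])\ge\tfrac12\,\H^n(T[s_0]\cap U_{i_0}^\one)>0$, contradicting $\mu(T[s_0])=0$; hence no such $i_0$ exists and \eqref{Ts0 F in Kstar} holds. The main obstacle is the concentration step: geometrically it is clear that a set that is volume-full near $x_0$ but $\H^n$-invisible on a hypersurface through $x_0$ must develop two ``sheets'' of boundary along that hypersurface and therefore carry $\H^n$-mass there, but converting this into a quantitative lower-density bound for $\mu$ that survives the limit $j\to\infty$ is exactly where the good-radius selection and the Gauss--Green identity above do the work. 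Everything else --- the two dichotomies, the reduction to a single bad index, and the concluding density argument --- is comparatively routine.
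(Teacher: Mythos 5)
Your argument is correct in substance and, up to its final step, follows the same skeleton as the paper: the dichotomy $(U_i^j)^\one\subset F_j^\zero$ or $(U_i^j)^\one\subset F_j^\one$ together with its limit version, the reduction of \eqref{Ts0 F in Kstar} to showing $\H^n(T[s_0]\cap U_{i_0}^\one)=0$ for indices with $|U_{i_0}\cap F|=0$, and the key identity $\H^n(T[s_j]\cap(U_{i_0}^j)^\one)=0$ are precisely Steps one and two of the paper's proof. Where you genuinely diverge is the concentration step. The paper transfers everything to the tube coordinates $Y=\SS^1\times B_1^n$ via $\Phi^{-1}$ and runs a one-dimensional slicing argument over the circles $\SS^1\times\{y\}$: for a.e.\ $y$ in the projection of $Y_{i_0}^\one\cap Y[s_0]$ the slice of $Y_{i_0}$ contains an arc around $s_0$, while $s_j$ lies in the closure of a shrinking arc of a \emph{different} partition element, forcing at least two boundary points of $(W_{i_0}^j)_y$ in any small arc around $s_0$; coarea and Fatou then give a multiplicity-$2$ perimeter lower bound near $Y[s_0]$, of which at most multiplicity $1$ can be absorbed by $Y[s_j]$, so the excess is carried by $K_j$, contradicting $\mu(T[s_0])=0$. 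You implement the same multiplicity-$2$ mechanism locally in $\R^{n+1}$, via a Gauss--Green flux estimate in small balls cut by the almost-flat hypersurface $T[s_j]$, concluding a positive lower $n$-density of $\mu$ at every point of $T[s_0]\cap U_{i_0}^\one$; both routes rest on the identity above plus $\pa^*U_{i_0}^j\cap T\shn K_j\cup T[s_j]$ with $T[s_j]$ able to absorb at most one sheet. What the paper's change of variables buys is that $T[s]$ becomes an exact coordinate slice, so there are no curvature/offset errors, no spherical-cap deficits, and no good-radius selection; the price is the coordinate bookkeeping and the one-dimensional slicing lemmas. Your route is more local and geometric, but to make it airtight you should record two points: the good radii (those with $\H^n(\pa B_r(x_0)\cap\pa^*U_{i_0}^j)=0$ and cap deficit $\H^n(\pa B_r(x_0)\setminus(U_{i_0}^j)^\one)=\mathrm{o}(r^n)$) a priori depend on $j$, so a Fatou-type selection of a single $r$ valid along a subsequence in $j$ is needed --- which suffices, since the bound $\mu(\cl(B_r(x_0)))\ge\limsup_j\H^n(K_j\cap\cl(B_r(x_0)))$ only requires a subsequence; and since your estimate holds only along a cofinal family of radii while $\theta^n_*$ is a liminf over all radii, you should pass to arbitrary radii by monotonicity of $\rho\mapsto\mu(\cl(B_\rho(x_0)))$, losing a dimensional factor, which is harmless because only positivity of the lower density on $T[s_0]\cap U_{i_0}^\one$ is used to contradict $\mu(T[s_0])=0$.
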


    \begin{remark}
      {\rm Notice that $\{U_i\}_i$ may fail to be the essential partition of $T$ induced by $K^*$ (which is the ``optimal'' choice of a Borel set potentially inducing $\{U_i\}_i$ on $T$): indeed, some of the sets $U_i$ may fail to be essentially connected, even though $U_i^j\to U_i$ as $j\to\infty$ and every $U_i^j$, as an element of an essential partition, is necessarily essentially connected; see
    \begin{figure}\input{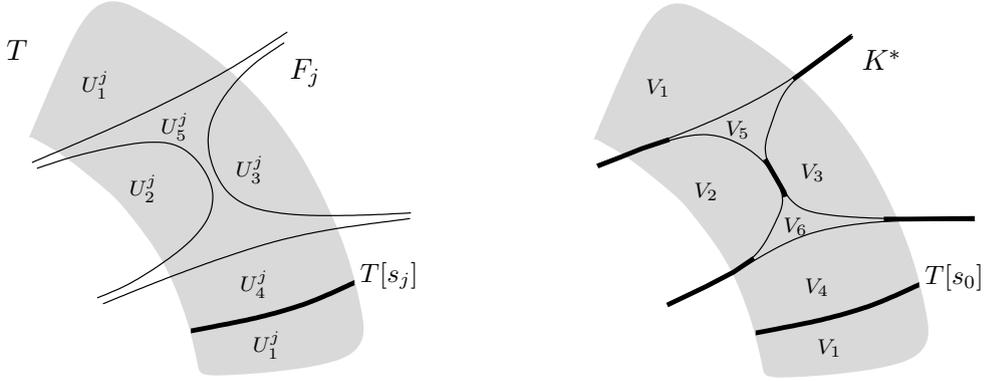}
    \caption{\small{The situation in the proof of Theorem \ref{theorem basic closure for homotopic} in the basic case when $K_j=\Om\cap\pa^*F_j$. The essential partition of $T$ induced by $K_j\cup T[s_j]$ is denoted by $\{U_i^j\}_i$. The limit partition $\{U_i\}_i$ of $\{U_i^j\}_i$ may fail to be the essential partition of $T$ induced by $K^*=T\cap\cup_i\pa^*U_i$, since some of the $U_i$ may be essentially disconnected. In the picture, denoting by $\{V_k\}_k$ the essential partition of $T$ induced by $K^*$, we have  $U_5=V_5\cup V_6=T\cap F$. We also notice, in reference to the notation set in \eqref{def of Xj}, that $X_1^j=\{5\}$ and $X_0^j=\{1,2,3,4\}$.}}\label{fig convergence}\end{figure}
    Figure \ref{fig convergence}.}
    \end{remark}

    \begin{proof}[Proof of Theorem \ref{theorem basic closure for homotopic}] {\it Step one}:  We start by showing that, for each $j$ and $i$ such that $|U_i^j|>0$, we have
    \begin{eqnarray}
    \label{doesnt cross partial Ej}
    \mbox{either}\quad (U_i^j)^{\one} \subset F_j^{\one}\,,\qquad\mbox{or}\quad(U_i^j)^{\one} \subset F_j^\zero\,,
    \end{eqnarray}
    and for each $i$ such that $|U_i|>0$,
    \begin{equation}
    \label{doesnt cross partial E}
    \mbox{either}\quad U_i^{\one} \subset F^{\one}\,,\qquad\mbox{or}\quad U_i^{\one} \subset F^\zero\,.
    \end{equation}
    Postponing for the moment the proof of \eqref{doesnt cross partial Ej} and \eqref{doesnt cross partial E}, let us record several consequences of these inclusions. First, if we set
    \begin{align}\label{def of Xj}
     &X^j_1=\big\{i:|U_i^j|>0\,,\, (U_i^j)^{\one} \subset F_j^{\one}\big\}\,,\qquad X^j_0=\big\{i:|U_i^j|>0\,,\,(U_i^j)^{\one} \subset F_j^\zero\big\}\,,
     \\
     \label{def of X}
     &X_1=\big\{i:|U_i|>0\,,\,U_i^{\one} \subset F^{\one}\big\}\,,\qquad\hspace{0.5cm} X_0=\big\{i:|U_i|>0\,,\,U_i^{\one} \subset F^\zero\big\}\,,
    \end{align}
    then, thanks to \eqref{doesnt cross partial Ej} and \eqref{doesnt cross partial E}, we have
    \begin{eqnarray}
      \label{Xj deco}
      X^j:=\{i:|U_i^j|>0\}=X_0^j\cup X_1^j\,, \qquad X:=\{i:|U_i|>0\}=X_0\cup X_1\,.
    \end{eqnarray}
    Combining \eqref{doesnt cross partial Ej} and \eqref{doesnt cross partial E} with $F_j\to F$ and $U_i^j\to U_i$, we find that for every $i\in X$, there is $J_i\in \mathbb{N}$ such that, for every $m\in\{0,1\}$,
    \begin{align}\label{index agreement}
      \mbox{if $i\in X_m$, then $i \in X_m^j$ for all $j\geq J_i$.}
    \end{align}
    Lastly, $\{U_i^j\}_{i\in X_1^j}$ is a Lebesgue partition of $T \cap F_j$, and thus, by Federer's theorem \eqref{federer theorem},
    \begin{eqnarray}\label{partition of Ej}
       T \cap  F_j^\one\,\, \shn \,\,\bigcup_{i\in X_1^j} (U_i^j)^\one\cup \pa^*U_i^j\,,
       \qquad  T \cap \partial^* F_j \,\,\shn \,\,T \cap \bigcup_{i\in X_1^j} \partial^* U_i^j \,\,\subset\,\, T \cap K_j^*\,.
    \end{eqnarray}
    {\it To prove \eqref{doesnt cross partial Ej} and \eqref{doesnt cross partial E}}: Since $\{U_i^j\}_i$ is the essential partition of $T$ induced by $K_j \cup T[s_j]$ and $K_j^*={\rm UBEP}(K_j\cup T[s_j];T)$, we have
    \begin{eqnarray}
      \label{Uij ess part union}
      &&\mbox{$K^*_j$ is $\H^n$-contained in  $K_j \cup T[s_j]$}\,,\qquad\forall j\,,
      \\
      \label{Uij ess part disco}
      &&
      \mbox{$K_j\cup T[s_j]$ does not essentially disconnect $U_i^j$}\,,\qquad\forall i,j\,.
    \end{eqnarray}
    Since $\Om\cap\pa^*F_j$ is $\H^n$-contained in $K_j\cup T[s_j]$, the combination of \eqref{Uij ess part disco} with Federer's theorem \eqref{federer theorem} gives \eqref{doesnt cross partial Ej}. The combination of $|U_i^j\Delta U_i|\to 0$ as $j\to\infty$ with \eqref{doesnt cross partial Ej} gives \eqref{doesnt cross partial E}.

    \medskip

    \noindent {\it Step two}: We reduce the proof of \eqref{Ts0 F in Kstar} to that of
    \begin{eqnarray}\label{luk}
      \H^n(U_i^\one\cap T[s_0])=0\,,\qquad\forall i\in X_0\,.
    \end{eqnarray}
    Indeed, $\{U_i^\one:i\in X_0\}\cup\{F^\zero\cap\pa^* U_i:i\in X_0\}$ is an $\H^n$-partition of $T\cap F^\zero$. In particular, $T\cap F^\zero$ is $\H^n$-contained in $\cup_{i\in X_0} U_i^{\one}\cup \partial^* U_i$, so that, should \eqref{luk} hold,  then $T[s_0]\cap F^\zero$ would be $\H^n$-contained in $\cup_{i\in X_0}\pa^*U_i$, and thus in $K^*$, thus proving \eqref{Ts0 F in Kstar}.

    \medskip

    \noindent {\it Step three}: We change variables from $T$ to\footnote{Here we identify $\SS^1$ with $\R/(2\pi\Z)$ and, with a slight abuse of notation, denote by $\L^{n+1}$ the ``Lebesgue measure on $\SS^1\times B_1^n$'', which we use to define sets of finite perimeter and points of density in $\SS^1\times B_1^n$.} $Y=\Phi^{-1}(T)=\SS^1\times B_1^n$. We set $Y[s]=\Phi^{-1}(T[s])=\{s\}\times B_1^n$ for the $s$-slice of $Y$, and
    \begin{equation}
      \label{def of Yi interface}
    Y_i=\Phi^{-1}(U_i)\,,\qquad Y_i^j=\Phi^{-1}(U_i^j)\,,\qquad W_i=Y\setminus Y_i\,,\qquad W_i^j=Y\setminus Y_i^j\,,
    \end{equation}
    Since $\Phi$ is a diffeomorphism, by \cite[Lemma A.1]{KingMaggiStuvard} and the area formula we have that
    \begin{equation}
      \label{diffeo boundaries}
      \pa^*\Phi^{-1}(H)=\Phi^{-1}(\pa^*H)\,,\qquad (\Phi^{-1}(H))^{\scriptscriptstyle{(m)}}=\Phi^{-1}(H^{\scriptscriptstyle{(m)}})\,, m\in\{0,1\}\,,
    \end{equation}
    for every set of finite perimeter $H\subset T$; in particular, setting
    \[
    M_j=\Phi^{-1}(F_j\cap T)\,,\qquad M=\Phi^{-1}(F\cap T)\,,
    \]
    by Federer's theorem \eqref{federer theorem}, we see that \eqref{Tsj Fj in Kjstar} is equivalent
    \begin{equation}
      \label{basic closure assumption Y}
      \mbox{$Y[s_j]$ is $\H^n$-contained in $\bigcup_i\pa^*Y_i^j \cup M_j^\one \cup \partial^* M_j $}\,,
    \end{equation}
    By \eqref{partition of Ej} and \eqref{diffeo boundaries}, we may rewrite \eqref{basic closure assumption Y} as
    \begin{equation}\label{alternative basic closure assumption Y}
        \mbox{$Y[s_j]$ is $\H^n$-contained in $\bigcup_{i\in \mathbb{N}}\pa^*Y_i^j \cup \bigcup_{i\in X_1^j} (Y_i^j)^\one $}\,.
    \end{equation}
    Similarly, $Y_i^\one=\Phi^{-1}(U_i^\one)$ for every $i$, and thus \eqref{luk} is equivalent to
    \begin{equation}
    \label{basic closure conclusion Y}
         \H^n(Y_i^{\one} \cap Y[s_0])=0\,,\qquad\forall i\in X_0\,.
    \end{equation}
    We are thus left to prove that \eqref{alternative basic closure assumption Y} implies \eqref{basic closure conclusion Y}.

    \medskip

    To this end, let us denote by $\pp$ the projection of $Y=\SS^1\times B_1^n$ onto $B_1^n$, and consider the sets
    \[
    G_i= \pp\big( Y_i^{\one} \cap Y[s_0]\big)\,,\qquad G_i^*=G^*\cap G_i\,,
    \]
    corresponding to the set $G^*\subset B_1^n$ with $\H^n(B_1^n\setminus G^*)=0$ defined as follows:

    \medskip

    \noindent (i) denoting by $H_y=\{s\in \mathbb{S}^1 : (s,y)\in H\}$ the ``circular slice of $H\subset Y$ above $y$'', if $y\in G^*$, $j\in\N$, $k$ is an index for the partitions $\{Y_k\}_k$ and $\{Y_k^j\}$, and $H\in\{Y_k,W_k,Y_k^j,W_k^j\}$, then $H_y$ is a set of finite perimeter in $\mathbb{S}^1$ with
    \begin{equation}
    \label{equivalence of reduced boundaries}
    H_y \overset{\H^1}{=} (H_y)^{{\one}_{\mathbb{S}^1}}\,, \qquad
    \partial^*_{\mathbb{S}^1}(H_y) \overset{\H^0}{=} (\partial^* H)_y\,,
    \end{equation}
    (and thus with $\partial^*_{\mathbb{S}^1}(H_y)=(\partial^* H)_y$); this is a standard consequence of the slicing theory for sets of finite perimeter, see, e.g., \cite[Theorem 2.4]{BarCagFus13} or \cite[Remark 18.13]{maggiBOOK};

    \medskip

    \noindent (ii) for every $y\in G^*$ and $j\in \N$,
    \begin{equation}
      \label{ii}
      (s_j,y)\in\bigcup_{k\in \mathbb{N}}\pa^*Y_k^j \cup \bigcup_{k\in X_1^j} (Y_k^j)^\one\,;
    \end{equation}
    this is immediate from \eqref{alternative basic closure assumption Y};

    \medskip

    \noindent (iii) for every $y\in G^*$, and $k$ an index for the partitions $\{Y_k\}_k$ and $\{Y_k^j\}$,
    \begin{equation}
      \label{immediate}
      \lim_{j\to\infty}\H^1((Y_k)_y\Delta (Y_k^j)_y)=0\,;
    \end{equation}
    this is immediate from Fubini's theorem and $Y_k^j\to Y_k$ as $j\to\infty$;

    \medskip

    \noindent (iv) for every $y\in G^*$,
    \begin{align}\label{property iv of Gstar}
    \sum_k \H^0((\partial^* Y_k^j)_y ) < \infty;
    \end{align}
    indeed, by applying in the order the coarea formula, the area formula and \eqref{essential partition perimeter bound} we find
    \begin{eqnarray*}
    \sum_k \int_{B_1^n}\H^0((\partial^* Y_k^j)_y )\,d\H^n &\leq& \sum_k P(Y_k^j;Y) \leq (\Lip\Phi^{-1})^n\,\sum_k P(U_k^j;T)
    \\
    &\leq&2\,(\Lip\Phi^{-1})^n\,\H^n(K_j \cup T[s_j])\,.
    \end{eqnarray*}

    \medskip

    Now, let us pick $y\in G_i^*$. Since $y\in G_i$ implies $(s_0,y)\in Y_i^\one$, and $Y_i^\one\cap\pa^*Y_i=\varnothing$, we find $(s_0,y)\not\in\pa^*Y_i$, i.e. $s_0\not\in(\pa^*Y_i)_y$. By $y\in G^*$, we have $(\pa^*Y_i)_y=\partial^*_{\mathbb{S}^1}(Y_i)_y$, so that
    \begin{equation}
      \label{heyhey}
      s_0\not \in \partial^*_{\mathbb{S}^1}(Y_i)_y\,.
    \end{equation}
    Since $(Y_i)_y$ has finite perimeter, $\partial^*_{\mathbb{S}^1}(Y_i)_y$ is a finite set, and so \eqref{heyhey} implies the existence of an open interval $\mathcal{A}_y\subset \mathbb{S}^1$, containing $s_0$, $\H^1$-contained either in $(Y_i)_y$ or in $(W_i)_y$, and such that
    \begin{equation}
      \label{heyhey2}
      \partial_{\mathbb{S}^1}\mathcal{A}_y \subset (\partial^* Y_i)_y=\pa^*_{\SS^1}(W_i)_y\,.
    \end{equation}
    We claim that there is $G_i^{**}\subset G_i^*$, with full $\H^n$-measure in $G_i^*$ (and thus in $G_i$), such that
    \begin{equation}
      \label{bellali}
      \mbox{$\mathcal{A}_y$ is $\H^1$-contained in $(Y_i)_y$}\,,\qquad\forall y\in G_i^{**}\,.
    \end{equation}
    Indeed, let us consider the countable decomposition $\{G_{i,m}^*\}_{m=1}^\infty$ of $G_i^*$ given by
    \[
    G_{i,m}^*=\Big\{y\in G_i^*:\dist\big(\{s_0\},\pa_{\SS^1}\mathcal{A}_y\big)\in\big[1\big/(m+1),1\big/m\big)\Big\}\subset B_1^n\,,
    \]
    and let
    \[
    Z_{i,m}=\big\{y\in G_{i,m}^*: \mbox{$\mathcal{A}_y$ is $\H^1$-contained in $(W_i)_y$}\big\}\,.
    \]
    If $\H^n(Z_{i,m})>0$, then there is $y^*\in Z_{i,m}^\one$, so that $\H^n(Z_{i,m}\cap B_r^n(y^*))=\om_n\,r^n+{\rm o}(r^n)$. Therefore, if $r<1/(m+1)$ and $B_r^1(s_0)$ denotes the open interval of center $s_0$ and radius $r$ inside $\SS^1$, then
    \begin{eqnarray*}
    &&\L^{n+1}\big(Y_i\cap\big(B_r^1(s_0)\times B_r^n(y^*)\big)\big)=\int_{B_r^n(y^*)}\H^1(B_r^1(s_0)\cap (Y_i)_y)\,d\H^n_y
    \\
    &=&\int_{Z_{i,m}\cap B_r^n(y^*)}\H^1(B_r^1(s_0)\cap (Y_i)_y)\,d\H^n_y+{\rm o}(r^{n+1}) ={\rm o}(r^{n+1})
    \end{eqnarray*}
    where in the last identity we have used the facts that $y\in Z_{i,m}\cap B_r^n(y^*)$, $s_0\in\mathcal{A}_y$, and $r<1/(m+1)$ to conclude that $B_r^1(s_0)$ is $\H^1$-contained in $(W_i)_y$; in particular, $(s_0,y^*)\in Y_i^\zero$, against the fact that $Z_{i,m}\subset G_i(=\pp(Y[s_0]\cap Y_i^\one))$. We have thus proved that each $Z_{i,m}$ is $\H^n$-negligible, and therefore that there is $G_i^{**}\subset G_i^*$ and $\H^n$-equivalent to $G_i^*$, such that \eqref{bellali} holds true.

    \medskip

    Having proved \eqref{bellali}, we now notice that, by \eqref{ii}, $y\in G_i^*$ implies
    \begin{equation}
      \label{combined with sj}
      s_j\in\bigcup_{k\in \mathbb{N}}(\pa^*Y_k^j)_y \cup \bigcup_{k\in X_1^j} \big((Y_k^j)^\one\big)_y=\bigcup_{k}\pa^*_{\SS^1}(Y_k^j)_y \cup \bigcup_{k\in X_1^j} \big((Y_k^j)_y \big)^{\one_{\mathbb{S}^1}}\,.
    \end{equation}
    If \eqref{combined with sj} holds because $s_j\in\pa^*_{\SS^1}(Y_k^j)_y$ for some $k$, then, thanks to \eqref{property iv of Gstar} there must $k'\ne k$ such that $s_j\in\pa^*_{\SS^1}(Y_{k'}^j)_y$ too; since either $k$ or $k'$ must be different from $i$, we conclude that $s_i\in\pa^*_{\SS^1}(Y_{k(i)}^j)_y$ for some $k(i)\ne i$; if, instead, \eqref{combined with sj} holds because $s_j\in\big((Y_k^j)_y \big)^{\one_{\mathbb{S}^1}}$ for some $k\in X_1^j$, then we can recall that, thanks to \eqref{index agreement}, $i\in X_0^j$ for every $j\ge J_i$, and thus $i\ne k$; in summary, for each $y\in G_i^*$,
    \begin{equation}
    \label{spanning yay}
    \mbox{if $j\ge J_i$, then $\exists k(j)\ne i$ s.t.}\,\,s_j\in \partial^*_{\mathbb{S}^1} (Y_{k(j)}^j)_y \cup \big((Y_{k(j)}^j)_y\big)^{\one_{\mathbb{S}^1}}\,.
    \end{equation}
    With the goal of obtaining a lower bound on the relative perimeters of the sets $Y_i^j$ in a neighborhood of $G_i$ (see \eqref{bad multiplicity take 2} below), we now consider $y\in G_i^{**}$, and pick $r>0$ such that $\cl B_r^1(s_0) \subset \mathcal{A}_y$. Correspondingly, since $s_j\to s_0$ and \eqref{spanning yay} holds, we can find $J^*=J^*(i,y,r)\ge J_i$ such that, for $j\ge J^*$,
    \begin{equation}
      \label{bellali2}
     s_j\in B_r^1(s_0) \cap   \big[\partial^*_{\mathbb{S}^1} (Y_{k(j)}^j)_y\cup \big((Y_{k(j)}^j)_y\big)^{\one_{\mathbb{S}^1}}\big]\subset\mathcal{A}_y\cap   \big[\partial^*_{\mathbb{S}^1} (Y_{k(j)}^j)_y\cup \big((Y_{k(j)}^j)_y\big)^{\one_{\mathbb{S}^1}}\big]\,.
    \end{equation}
    Now, by \eqref{immediate}, $k(j)\neq i$, and $\mathcal{A}_y \overset{\H^1}{\subset} (Y_i)_y$, we have
    \begin{equation}
      \label{bellali3}
      \lim_{j\to\infty}\H^1(\mathcal{A}_y \cap (Y_{k(j)}^j)_y ) =0\,.
    \end{equation}
    Since, by \eqref{equivalence of reduced boundaries}, $(Y_{k(j)}^j)_y$ is $\H^1$-equivalent to a finite union of intervals, \eqref{bellali2} implies the existence of an open interval $\mathcal{I}_y^j$ such that
    \begin{eqnarray}\label{last}
    s_j\in \cl_{\SS^1}\mathcal{I}_y^j\,,\qquad \mathcal{I}_y^j\overset{\H^1}{\subset}  (Y_{k(j)}^j)_y\,,
    \qquad
    \pa_{\SS^1}\mathcal{I}_y^j\subset(\pa^*Y_{k(j)}^j)_y\subset(\pa^*W_i^j)_y\,,
    \end{eqnarray}
    which, due to \eqref{bellali2} and \eqref{bellali3}, must satisfy
    \[
    \lim_{j\to\infty}\diam \big(\mathcal{I}_y^j \big)=0\,.
    \]
    In particular,
    \[
    \pa_{\SS^1}\,\mathcal{I}_y^j\subset B_r^1(s_0)\,,\qquad\forall j\ge J^*\,,
    \]
    and thus, by the last inclusion in \eqref{last},
    \[
    \H^0\big(B_r^1(s_0)\cap\partial^*_{\SS^1}(W_i^j)_y\big)
    \ge
    \H^0(B_r^1(s_0)\cap \pa_{\SS^1}\mathcal{I}_j^y)\ge 2\,,
    \]
    whenever $j\ge J^*$. Since $y\in G_i^{**}$ and $r>0$ were arbitrary, by the coarea formula and Fatou's lemma,
    \begin{eqnarray}
      \nonumber
      \liminf_{j\to\infty}P(W_i^j;B_r^1(s_0)\times G_i^{**})&\ge&\liminf_{j\to\infty}\int_{G_i^{**}}\H^0\big(B_r^1(s_0)\cap\partial^*_{\SS^1}(W_i^j)_y\big)\,d\H^n_y
      \\
      \label{bad multiplicity take 2}
      &\ge&2\,\H^n(G_i^{**})=2\,\H^n(G_i)\,.
    \end{eqnarray}
    Now, since $\pa^*W_i^j=\pa^* Y_i^j=\Phi^{-1}(\pa^*U_i^j)$, by \eqref{Uij ess part union} we have
    \[
    \mbox{$Y\cap\bigcup_i\pa^*W_i^j$ is $\H^n$-contained in $Y[s_j]\cup\Phi^{-1}\big(T\cap K_j\big)$}\,,
    \]
    which implies, for every $j$ large enough to have $s_j\in B_r^1(s_0)$,
    \begin{eqnarray}
      \nonumber
      &&P(W_i^j;B_r^1(s_0)\times G_i^{**})
      \\\nonumber
    &&\le
    \H^n\big(Y[s_j]\cap( B_r^1(s_0)\times G_1^{**})\big)+\H^n\big( \Phi^{-1}(T\cap K_j)\cap (B_r^1(s_0)\times B_1^n)\big)
    \\\nonumber
    &&=
    \H^n(G_i^{**})+\H^n\big( \Phi^{-1}(T\cap K_j)\cap (B_r^1(s_0)\times B_1^n)\big)
    \\\label{gelato}
    &&\le
    \H^n(G_i)+\Lip(\Phi^{-1})^n\,\H^n\big(K_j\cap \Phi(B_r^1(s_0)\times B_1^n)\big)\,.
    \end{eqnarray}
    By combining \eqref{bad multiplicity take 2} with \eqref{gelato} we conclude that for every $r>0$
    \begin{equation}
      \label{in}
      \H^n(G_i)\le\Lip(\Phi^{-1})^n\,\mu\big(\Phi(\cl(B_r^1(s_0))\times B_1^n)\big)\,,
    \end{equation}
    By $\mu(T[s_0])=0$, if we let $r\to 0^+$ in \eqref{in}, we conclude that $\H^n(G_i)=0$. Now, since $G_i= \pp\big( Y_i^{\one} \cap Y[s_0]\big)$, we have
    \begin{equation}
      \label{soft}
      \H^n\big( Y_i^{\one} \cap Y[s_0]\big)=\H^n(G_i)\,,
    \end{equation}
    thus proving \eqref{basic closure conclusion Y}, and hence the theorem.
    \end{proof}

    \section{Direct Method on generalized soap films (Theorem \ref{theorem first closure theorem intro})}\label{section closure theorems sharp} In Section \ref{section first closure theorem} we prove Theorem \ref{theorem first closure theorem intro}, while in Section \ref{section second closure theorem} we notice the changes to that argument that are needed to prove a different closure theorem that will be crucial in the companion papers \cite{MNR2,MNR3}. In particular, Section \ref{section second closure theorem} will not be needed for the other main results of this paper (although it is included here since it is definitely easier to understand in this context).

    \subsection{Proof of Theorem \ref{theorem first closure theorem intro}}\label{section first closure theorem} Let us first of all recall the setting of the theorem. We are given a closed set $\wire$ in $\mathbb{R}^{n+1}$, a spanning class $\C$ for $\wire$, and a sequence $\{(K_j,E_j)\}_j$ in $\K_{\rm B}$ such that
    \begin{equation}
      \label{energy bound capillarity 2}
      \sup_j\,\H^n(K_j)<\infty\,,
    \end{equation}
    and, for some Borel set $E$ and Radon measures $\mu_{\rm bk}$ and $\mu_{\rm bd}$ in $\Om$, it holds that $E_j\toloc E$ and
    \begin{eqnarray}
    \label{def of mub}
    &&\H^n\mres (\Om\cap\pa^*E_j) + 2\,\H^n \mres (\mathcal{R}(K_j) \cap E_j^\zero) \weakstar \mu_{\rm bk}\,,
    \\
    \label{def of mu}
    &&\H^n\mres (\Om\cap\pa^*E_j) + 2\,\H^n \mres (\mathcal{R}(K_j) \setminus \pa^* E_j) \weakstar \mu_{\rm bd}\,,
    \end{eqnarray}
    as $j\to\infty$. In this setting we want to prove that the sets
    \begin{eqnarray}
      \label{def of Kbk}
      K_{\rm bk}\!\! &:=&\!\!\big(\Om\cap\partial^* E\big) \cup \Big\{x\in \Omega \cap E^\zero : \theta^n_*(\mu_{\rm bk})(x)\geq 2 \Big\}\,,
      \\
      \label{def of Kbd}
      K_{\rm bd}\!\!&:=&\!\!\big(\Om\cap\partial^* E\big) \cup \Big\{x\in \Omega \setminus\pa^*E : \theta^n_*(\mu_{\rm bd})(x)\geq 2 \Big\}\,,
    \end{eqnarray}
    are such that $(K_{\rm bk},E),(K_{\rm bd},E)\in\K_{\rm B}$ and
    \begin{eqnarray}
      \label{mub limit lb}
      \mu_{\rm bk}\!\!&\ge&\!\! \H^n\mres (\Om\cap\pa^*E) + 2\,\H^n \mres (K_{\rm bk} \cap E^\zero)\,,
      \\
      \label{mu limit lb}
      \mu_{\rm bd}\!\!&\ge&\!\!\H^n\mres (\Om\cap\pa^*E) + 2\,\H^n \mres (K_{\rm bd} \setminus\pa^*E)\,,
    \end{eqnarray}
    with
    \begin{equation}
      \label{lsc Fb}
      \liminf_{j\to\infty}\F_{\rm bk}(K_j,E_j)\ge\F_{\rm bk}(K_{\rm bk},E)\,,\qquad
        \liminf_{j\to\infty}\F_{\rm bd}(K_j,E_j)\ge\F_{\rm bd}(K_{\rm bd},E)\,;
    \end{equation}
    and that the closure statements
    \begin{eqnarray}\label{hp bulk spanning}
      &&\mbox{if $K_j\cup E_j^\one$ is $\C$-spanning $\wire$ for every $j$,}
      \\
      \label{limit bulk spanning}
      &&\mbox{then $K_{\rm bk}\cup E^\one$ is $\C$-spanning $\wire$}\,,
    \end{eqnarray}
    and
    \begin{eqnarray}
      \label{hp interface spanning}
      &&\mbox{if $K_j$ is $\C$-spanning $\wire$ for every $j$,}
      \\
      \label{limit interface spanning}
      &&\mbox{then $K_{\rm bd}$ is $\C$-spanning $\wire$}\,,
    \end{eqnarray}
    hold true.

    \begin{proof}[Proof of Theorem \ref{theorem first closure theorem intro}] By $\Om\cap\pa^*E\subset K_{\rm bk}\cap K_{\rm bd}$ we have $(K_{\rm bk},E),(K_{\rm bd},E)\in\K_{\rm B}$. By \cite[Theorem 6.4]{maggiBOOK}, $\tnl(\mu_{\rm bk}) \geq 2$ on $K_{\rm bk}\cap E^\zero$ implies $\mu_{\rm bk} \mres  (K_{\rm bk} \cap E^\zero) \geq 2\H^n \mres (K_{\rm bk} \cap E^\zero)$, and, similarly, we have $\mu_{\rm bd}\mres (K_{\rm bd}\setminus\pa^*E)\ge 2\,\H^n\mres(K_{\rm bd}\setminus\pa^*E)$. Since,  by the lower semicontinuity of distributional perimeter, we have $\min\{\mu_{\rm bk},\mu_{\rm bd}\}\geq \H^n \mres (\partial^* E\cap \Omega)$,  \eqref{mub limit lb}, \eqref{mu limit lb} and \eqref{lsc Fb} follow. We are thus left to prove that if either \eqref{hp bulk spanning} or \eqref{hp interface spanning} holds, then \eqref{limit bulk spanning} or \eqref{limit interface spanning} holds respectively. We divide the proof into three parts, numbered by Roman numerals.

    \medskip

    \noindent {\bf I. Set up of the proof:} Fixing from now on a choice of $(\gamma,\Phi,T)\in \mathcal{T}(\C)$ against which we want to test the $\C$-spanning properties \eqref{limit bulk spanning} and \eqref{limit interface spanning}, we introducing several key objects related to $(\gamma,\Phi,T)$.

    \medskip

    \noindent {\it Introducing $s_0$}: Up to extracting subsequences, let $\mu$ be the weak-star limit of $\H^n\mres\,K_j$, and set
    \begin{align}\label{def of J with mustar}
    J=\{s\in \SS^1:\mu(T[s])=0\}\,,
    \end{align}
    so that $\H^1(\SS^1\setminus J)=0$. We fix $s_0\in J$.

    \medskip

    \noindent {\it Introducing $s_j$, $\{U_i^j\}_i$, and $K_j^*$}: For $\H^1$-a.e. $s\in\SS^1$ it holds that $\H^n(K_j\cap T[s])=0$ for every $j$ and (thanks to Theorem \ref{theorem spanning with partition S case}/Theorem \ref{theorem spanning with partition}) the essential partition $\{U_i^j[s]\}_i$ induced on $T$ by $K_j\cup T[s]$ is such that
    \begin{eqnarray*}
      \mbox{$T[s]\cap E_j^\zero$ is $\H^n$-contained in ${\rm UBEP}(K_j\cup T[s];T)$}\,,&&\qquad\mbox{(if \eqref{hp bulk spanning} holds)}\,,
      \\
      \mbox{$T[s]$ is $\H^n$-contained in ${\rm UBEP}(K_j\cup T[s];T)$}\,,&&\qquad\mbox{(if \eqref{hp interface spanning} holds)}\,.
    \end{eqnarray*}
    Therefore we can find a sequence $s_j\to s_0$ as $j\to\infty$ such that
    \begin{equation}
      \label{def of sj}
      \H^n(K_j\cap T[s_j])=0\qquad\forall j\,,
    \end{equation}
    and, denoting by $\{U_i^j\}_i$ the essential partition of $T$ induced by $K_j\cup T[s_j]$ (i.e. $U_i^j=U_i^j[s_j]$), and setting for brevity
    \begin{equation}
      \label{def of Kj star}
    K_j^*={\rm UBEP}(K_j\cup T[s_j];T)=T\cap\bigcup_i\pa^*U_i^j\,,
    \end{equation}
    we have
    \begin{eqnarray}
    \label{whatabout Kj 2}
    \mbox{$T[s_j]\cap E_j^\zero$ is $\H^n$-contained in $K_j^*$}\,,&&\qquad\mbox{(if \eqref{hp bulk spanning} holds)}\,,
    \\
    \label{whatabout Kj 2 interface}
    \mbox{$T[s_j]$ is $\H^n$-contained in $K_j^*$}\,,&&\qquad\mbox{(if \eqref{hp interface spanning} holds)}\,.
    \end{eqnarray}

    \medskip

    \noindent {\it Introducing $\{U_i\}_i$ and $K^*$:} By \eqref{energy bound capillarity 2}, Lemma \ref{lemma partition compactness}, and up to extract a subsequence we can find a Lebesgue partition $\{U_i\}_i$ of $T$ such that,
    \begin{equation}
      \label{def of Ui}
      \mbox{$\{U_i\}_i$ is the limit of $\{\{U_i^j\}_i\}_j$ in the sense specified by \eqref{partition convergence}}\,.
    \end{equation}
    Correspondingly we set
    \begin{equation}
      \label{def of Kstar}
      K^*=T\cap\bigcup_i\pa^*U_i\,.
    \end{equation}

    \medskip

    Having introduced $s_0$, $s_j$, $\{U_i^j\}_i$, $K_j^*$, $\{U_i\}_i$, and $K^*$,  we notice that if \eqref{hp bulk spanning} holds, then we can apply Theorem \ref{theorem basic closure for homotopic} with $F_j=E_j$ and find that
    \begin{eqnarray}
    \label{whatabout K^*}
    \mbox{$T[s_0]\cap E^\zero$ is $\H^n$-contained in $K^*$}\,,&&\mbox{(if  \eqref{hp bulk spanning} holds)}\,;
    \end{eqnarray}
    if, instead, \eqref{hp interface spanning} holds, then Theorem \ref{theorem basic closure for homotopic} can be applied with $F_j=F=\varnothing$ to deduce
    \begin{eqnarray}
    \label{whatabout K^* int}
    \mbox{$T[s_0]$ is $\H^n$-contained in $K^*$}\,,&&\mbox{(if \eqref{hp interface spanning}  holds)}\,.
    \end{eqnarray}
    We now make the following claim:

    \medskip

    \noindent {\bf Claim:} We have
    \begin{eqnarray}\label{Kstar inclusion}
        &&\mbox{$K^*\setminus (T[s_0]\cup E^\one)$ is $\H^n$-contained in $K_{\rm bk}$}\,,
        \\
        \label{Kstar inclusion int}
        &&\mbox{$K^*\setminus T[s_0]$ is $\H^n$-contained in $K_{\rm bd}$}\,.
    \end{eqnarray}
    The rest of the proof of the theorem is then divided in two parts: the conclusion follows from the claim, and the proof of the claim.

    \medskip

    \noindent {\bf II. Conclusion of the proof from the claim:} {\it Proof that \eqref{hp interface spanning} implies \eqref{limit interface spanning}}: By $\H^1(\SS^1\setminus J)=0$, the arbitrariness of $s_0\in J$, and that of $(\g,\Phi,T)\in\T(\C)$, thanks to Theorem \ref{theorem spanning with partition S case} we can conclude that $K_{\rm bd}$ is $\C$-spanning $\wire$ by showing that
    \begin{equation}
      \label{chiaro1}
      \mbox{$T[s_0]$ is $\H^n$-contained in ${\rm UBEP}(K_{\rm bd}\cup T[s_0];T)$.}
    \end{equation}
    Now, since $\{U_i\}_i$ is a Lebesgue partition of $T$ induced by $K^*$ (in the very tautological sense that $K^*$ is defined as $T\cap\cup_i\pa^*U_i$!) and, by \eqref{Kstar inclusion int} in claim one, $K^*$ is $\H^n$-contained in $K_{\rm bd}\cup T[s_0]$, by Theorem \ref{theorem decomposition}-(a) we have that if $\{Z_i\}_i$ is the essential partition of $T$ induced by $K_{\rm bd}\cup T[s_0]$, then $\cup_i\pa^*U_i$ is $\H^n$-contained in $\cup_i\pa^*Z_i$: therefore, by definition of $K^*$ and by definition of ${\rm UBEP}$, we have that
    \begin{equation}
      \label{f1}
      \mbox{$K^*$ is $\H^n$-contained in ${\rm UBEP}\big(K_{\rm bd}\cup T[s_0];T\big)$}\,.
    \end{equation}
    By combining \eqref{f1} with \eqref{whatabout K^* int} we immediately deduce \eqref{chiaro1} and conclude.

    \medskip

    \noindent {\it Proof that \eqref{hp bulk spanning} implies \eqref{limit bulk spanning}}: Thanks to Theorem \ref{theorem spanning with partition} it suffices to prove that
    \begin{equation}
      \label{chiaro2}
      \mbox{$T[s_0]\cap E^\zero$ is $\H^n$-contained in ${\rm UBEP}(K_{\rm bk}\cup T[s_0];T)$}\,.
    \end{equation}
    By \eqref{whatabout K^*}, the proof of \eqref{chiaro2} can be reduced to that of
    \begin{equation}
      \label{chiaro3}
      \mbox{$K^*\cap E^\zero$ is $\H^n$-contained in ${\rm UBEP}(K_{\rm bk}\cup T[s_0];T)$}\,.
    \end{equation}
    Now, let us consider the Lebesgue partition of $T$ defined by $\{V_k\}_k=\{U_i\setminus E\}_i\cup\{T\cap E\}$. By \cite[Theorem 16.3]{maggiBOOK} we easily see that for each $i$
    \begin{equation}
      \label{chiaro5}
      E^\zero\cap\pa^*U_i\,\shn\, \pa^*(U_i\setminus E)\,\shn \,\big(E^\zero\cap\pa^*U_i\big)\cup\pa^*E\,,
    \end{equation}
    which combined with $T\cap\pa^*(T\cap E)=T\cap\pa^*E\subset K_{\rm bk}$ and with \eqref{Kstar inclusion} in claim one, gives
    \begin{eqnarray}
    \nonumber
    T\cap\bigcup_k\pa^*V_k&=&(T\cap\pa^*E)\cup \Big\{T\cap\bigcup_i\pa^*(U_i\setminus E)\Big\}
    \,\shn\,(T\cap\pa^*E)\cup\,\big(E^\zero\cap K^*\big)
    \\
    \label{chiaro4}
    &\shn&(T\cap\pa^*E)\cup\,\big(K^*\setminus E^\one)\,\shn\,K_{\rm bk}\cup T[s_0]\,.
    \end{eqnarray}
    By \eqref{chiaro4} we can exploit Theorem \ref{theorem decomposition}-(a) to conclude that
    \begin{equation}
      \label{chiaro6}
      \mbox{$T\cap\bigcup_k\pa^*V_k$ is $\H^n$-contained in ${\rm UBEP}(K_{\rm bk}\cup T[s_0];T)$}\,.
    \end{equation}
    By the first inclusion in \eqref{chiaro5}, $E^\zero\cap K^*$ is $\H^n$-contained in $T\cap\bigcup_k\pa^*V_k$, therefore \eqref{chiaro6} implies \eqref{chiaro3}, as required. We are thus left to prove the two claims.

    \medskip

    \noindent {\bf III. Proof of the claim:} We finally prove that $K^*\setminus (T[s_0]\cup E^\one)$ is $\H^n$-contained in $K_{\rm bk}$ (that is \eqref{Kstar inclusion}), and that $K^*\setminus T[s_0]$ is $\H^n$-contained in $K_{\rm bd}$ (that is \eqref{Kstar inclusion int}).

    \medskip

    To this end, repeating the argument in the proof of Theorem \ref{theorem basic closure for homotopic} with $F_j=E_j$ and $F=E$ we see that, if we set $X^j_m=\{i:(U_i^j)^\one\subset E_j^{\scriptscriptstyle{(m)}}\}$ and $X_m=\{i:U_i^\one\subset E^{\scriptscriptstyle{(m)}}\}$ for $m\in\{0,1\}$ (see \eqref{def of Xj} and \eqref{def of X}), then
    \begin{eqnarray}
      \label{Xj deco next}
      X^j:=\{i:|U_i^j|>0\}=X_0^j\cup X_1^j\,, \qquad X:=\{i:|U_i|>0\}=X_0\cup X_1\,;
    \end{eqnarray}
    and, moreover, for every $i$ there is $j(i)$ such that $i\in X_m$ implies $i\in X_m^j$ for every $j\ge j(i)$. Thanks to \eqref{Xj deco next} we easily see that $K_j^*=T\cap\cup_i\pa^*U_i^j$ can be decomposed as
    \begin{equation}
      \label{manca 0}
      K_j^*\ehn\bigcup_{(i,k)\in X_0^j\times X_0^j\,, i\ne j}M_{ik}^j
      \cup\bigcup_{(i,k)\in X_1^j\times X_1^j\,, i\ne j}M_{ik}^j
      \cup\bigcup_{(i,k)\in X_0^j\times X_1^j}M_{ik}^j\,,
    \end{equation}
    where $M_{ik}^j=T\cap\pa^*U_i^j\cap\pa^*U_k^j$ (an analogous decomposition of $K^*$ holds as well, and will be used in the following, but is not explicitly written for the sake of brevity). We now prove that
    \begin{eqnarray}
      \label{Mikj zero}
      &&M_{ik}^j \subset E_j^\zero\,,\qquad\forall i,k\in X^j_0\,, i\ne k\,,
      \\
      \label{Mikj bordo}
      &&M_{ik}^j\subset\pa^e E_j\,,\qquad\forall i\in X^j_0\,,k\in X^j_1\,,
      \\
      \label{Mikj one}
      &&M_{ik}^j \subset E_j^{\one}\,,\qquad\forall i,k\in X^j_1\,, i\ne k\,.
    \end{eqnarray}

    \medskip

    \noindent {\it To prove \eqref{Mikj zero} and \eqref{Mikj one}}: if $i\ne k$, $i,k\in X^j_0$, and $x\in M_{ik}^j$, then (by $|U_i^j\cap U_k^j|=0$) $U_i^j$ and $U_k^j$ blow-up two complementary half-spaces at $x$, an information that combined with the $\L^{n+1}$-inclusion of $U_i^j\cup U_k^j$ in $\R^{n+1}\setminus E_j$ implies
    \begin{eqnarray*}
      |B_r(x)|+{\rm o}(r^{n+1})=|B_r(x)\cap U_i^j|+|B_r(x)\cap U_k^j|\le|B_r(x)\setminus E_j|\,,
    \end{eqnarray*}
    that is, $x\in E_j^\zero$, thus proving \eqref{Mikj zero}; the proof of \eqref{Mikj one} is analogous.

    \medskip

    \noindent {\it To prove \eqref{Mikj bordo}}: if $i\in X^j_0$, $k\in X^j_1$, and $x\in M_{ik}^j$, then
    \begin{eqnarray*}
    &&|B_r(x)\cap E_j|\ge|B_r(x)\cap U_k^j|=\frac{|B_r(x)|}2+{\rm o}(r^{n+1})\,,
    \\
    &&|B_r(x)\setminus E_j|\ge|B_r(x)\cap U_i^j|=\frac{|B_r(x)|}2+{\rm o}(r^{n+1})\,,
    \end{eqnarray*}
    so that $x\not\in E_j^\zero$ and $x\not\in E_j^{\one}$, i.e. $x\in\pa^eE_j$, that is \eqref{Mikj bordo}.

    \medskip

    With \eqref{Mikj zero}--\eqref{Mikj one} at hand, we now prove that
    \begin{eqnarray}
    \label{Mikj bordo decompo}
      T\cap\pa^*E_j\ehn \bigcup_{(i,k)\in X_0^j\times X_1^j}M_{ik}^j\,,
      \\
    \label{Mikj zero decompo}
      K_j^*\cap E_j^\zero\ehn \bigcup_{(i,k)\in X_0^j\times X_0^j\,,k\ne i}M_{ik}^j\,.
    \end{eqnarray}
    (Analogous relations hold with $K^*$ and $E$ in place of $K^*_j$ and $E_j$.)

    \medskip

    \noindent {\it To prove \eqref{Mikj bordo decompo}}: By $\pa^*E_j\subset\pa^eE_j$ and \eqref{doesnt cross partial Ej} we find $\pa^*E_j\cap (U_i^j)^{\one}=\varnothing$ for every $i,j$; hence, since $\{(U_i^j)^{\one}\}_i\cup\{\pa^*U_i^j\}_i$ is an $\H^n$-partition of $T$, and by repeatedly applying \eqref{Mikj zero}, \eqref{Mikj bordo} and \eqref{Mikj one}, we find
    \begin{eqnarray*}
    \bigcup_{(i,k)\in X_0^j\times X_1^j}M_{ik}^j&\shn& T\cap\pa^*E_j\ehn\bigcup_i (T\cap\pa^*E_j\cap\pa^*U_i^j)\ehn\bigcup_{i,k} M_{ik}^j\cap\pa^*E_j
    \\
    &\ehn&\bigcup_{(i,k)\in X_0^j\times X_1^j} M_{ik}^j\cap\pa^*E_j\,,
    \end{eqnarray*}
    which gives \eqref{Mikj bordo decompo}.

    \medskip

    \noindent {\it To prove \eqref{Mikj zero decompo}}: By \eqref{Mikj zero}, \eqref{Mikj bordo}, and \eqref{Mikj one}, $M_{ik}^j$ has empty intersection with $E_j^\zero$ unless $i,k\in X_0^j$, in which case $M_{ik}^j$ is $\H^n$-contained in $E_j^\zero$: hence,
    \[
    \bigcup_{(i,k)\in X_0^j\times X_0^j\,,k\ne i}M_{ik}^j\shn K_j^*\cap E_j^\zero=\bigcup_{(i,k)\in X_0^j\times X_0^j\,, k\ne i}E_j^\zero\cap M_{ik}^j\,,
    \]
    that is  \eqref{Mikj zero decompo}.

    \medskip

    With \eqref{Mikj bordo decompo} and \eqref{Mikj zero decompo} at hand, we now prove the following perimeter formulas: for every open set $A\subset T$ and every $j$,
    \begin{eqnarray}
      \label{perimeter Uij on Xj0}
      \sum_{i\in X^j_0}P(U_i^j;A)=\H^n\big(A\cap\pa^* E_j\big)+2\,\H^n\big( A\cap K^*_j\cap E_j^\zero\big)\,,
      \\
      \label{perimeter Uij on Xj1}
      \sum_{i\in X^j_1}P(U_i^j;A)=\H^n\big(A\cap\pa^* E_j\big)+2\,\H^n\big( A\cap K^*_j\cap E_j^\one\big)\,.
    \end{eqnarray}
    Analogously, for $\a=0,1$,
    \begin{equation}\label{perimeter Ui on Xalfa}
      \sum_{i\in X_\a}P(U_i;A)=\H^n\big(A\cap\pa^* E\big)+2\,\H^n\big( A\cap K^*\cap E^{\scriptscriptstyle{(\a)}}\big)\,.
    \end{equation}

    \medskip

    \noindent {\it To prove \eqref{perimeter Uij on Xj0} and \eqref{perimeter Uij on Xj1}}: Indeed, by \eqref{Mikj bordo decompo} and \eqref{Mikj zero decompo},
    \begin{eqnarray*}
      \sum_{i\in X_0^j}P(U_i^j;A)
      &=&\sum_{(i,k)\in X_0^j\times X_1^j}\H^n(A\cap  M_{ik}^j)
      +\sum_{i  \in X_0^j}\sum_{k\in X_0^j\setminus\{i\}}\H^n(A\cap M_{ik}^j)
      \\
      &=&\H^n\Big(\bigcup_{(i,k)\in X_0^j\times X_1^j} A\cap M_{ik}^j\Big)+2\,\H^n\Big(\bigcup_{(i,k)\in X_0^j\times X_0^j\,,i\ne k} A\cap M_{ik}^j\Big)
      \\
      &=&\H^n(A\cap\pa^*E)+2\,\H^n\big(A\cap K_j^*\cap E_j^\zero\big)\,,
    \end{eqnarray*}
    that is \eqref{perimeter Uij on Xj0}. The proof of \eqref{perimeter Uij on Xj1} is analogous (since \eqref{perimeter Uij on Xj1} is \eqref{perimeter Uij on Xj0} applied to the complements of the $E_j$'s -- recall indeed that $\Om\cap\pa^*E_j=\Om\cap\pa^*(\Om\setminus E_j)$).

    \medskip

    \noindent {\it Conclusion of the proof of \eqref{Kstar inclusion} in the claim}: We want to prove that $K^*\setminus (T[s_0]\cup E^\one)$ is $\H^n$-contained in $K_{\rm bk}$. Since $\{E^\zero,E^\one,\pa^*E\}$ is an $\H^n$-partition of $\Om$, and $\Om\cap\pa^*E$ is contained in $K_{\rm bk}$, looking back at the definition \eqref{def of Kbk} of $K_{\rm bk}$ it is enough to show that
    \begin{equation}
    \label{Kstar inclusion proof}
        \mbox{$\theta^n_*(\mu_{\rm bk})(x)\ge 2$ for $\H^n$-a.e. $x\in (K^*\cap E^\zero)\setminus T[s_0]$}\,.
    \end{equation}
    To this end, we begin noticing that, if $Y_0$ is an arbitrary finite subset of $X_0$, then there is $j(Y_0)$ such that $Y_0\subset X_0^j$ for every $j\ge j(Y_0)$; correspondingly,
    \[
    \sum_{i\in Y_0}P(U_i;A)\le\liminf_{j\to\infty}\sum_{i\in Y_0}P(U_i^j;A)\le
    \liminf_{j\to\infty}\sum_{i\in X_0^j}P(U_i^j;A)\,.
    \]
    By arbitrariness of $Y_0$, \eqref{perimeter Ui on Xalfa} with $\a=0$, \eqref{perimeter Uij on Xj0}, and \eqref{Uij ess part union} (notice that the $\H^n$-containment of the $\H^n$-rectifiable set $K^*_j$ into $K_j\cup T[s_0]$ is equivalent to its $\H^n$-containment in $\RR(K_j\cup T[s_j])=\RR(K_j)\cup T[s_j]$) we conclude that, if $A\subset T$ is open and such that $\cl(A)\cap T[s_0]=\varnothing$, so that $A\cap T[s_j]=\varnothing$ for $j$ large enough, then
    \begin{eqnarray}\nonumber
    &&\H^n\big(A\cap\pa^* E\big)+2\,\H^n\big( A\cap K^*\cap E^\zero\big)
    \\\nonumber
    &&=\sum_{i\in X_0}P(U_i;A)\le\liminf_{j\to\infty}\sum_{i\in X_0^j}P(U_i^j;A)
    \\\nonumber
    &&=\liminf_{j\to\infty}\H^n\big(A\cap\pa^* E_j\big)+2\,\H^n\big( A\cap K^*_j\cap E_j^\zero\big)
    \\
    \nonumber
    &&\le\liminf_{j\to\infty}\H^n\big(A\cap\pa^* E_j\big)+2\,\H^n\big( A\cap \big(\RR(K_j)\cup T[s_j]\big)\cap E_j^\zero\big)
    \\
    \label{nm}
    &&=\liminf_{j\to\infty}\H^n\big(A\cap\pa^* E_j\big)+2\,\H^n\big( A\cap \RR(K_j)\cap E_j^\zero\big)\le\mu_{\rm bk}(\cl(A))\,,
    \end{eqnarray}
    where we have used the definition \eqref{def of mub} of $\mu_{\rm bk}$. Now, if $x\in (K^*\cap E^\zero)\setminus T[s_0]$, then we we can apply \eqref{nm} with $A=B_s(x)$ and $s>0$ such that $\cl(B_s(x))\cap T[s_0]=\varnothing$, together with the fact that $x\in E^\zero$ implies $\H^n(B_s(x)\cap\pa^*E)={\rm o}(s^n)$ as $s\to 0^+$, to conclude that
    \begin{align}\label{nm applied}
      \mu_{\rm bk}(\cl(B_s(x)))&\ge 2\,\H^n\big(B_s(x)\cap K^*\cap E^\zero\big)+{\rm o}(s^n)\,,\qquad\mbox{as $s\to0^+$}\,.
    \end{align}
    Since $K^*\cap E^\zero$ is an $\H^n$-rectifiable set, and thus $\H^n\big(B_s(x)\cap K^*\cap E^\zero\big)=\om_n\,s^n+{\rm o}(s^n)$ for $\H^n$-a.e. $x\in K^*\cap E^\zero$, we deduce \eqref{Kstar inclusion proof} from \eqref{nm applied}.

    \medskip

    \noindent {\it Conclusion of the proof of \eqref{Kstar inclusion int} in the claim}: We want to prove the $\H^n$-containment of $K^*\setminus T[s_0]$ in $K_{\rm bd}$. As in the proof of \eqref{Kstar inclusion}, combining Federer's theorem \eqref{federer theorem} with the definition \eqref{def of Kbd} of $K_{\rm bd}$, we are left to prove that
    \begin{equation}
    \label{Kstar inclusion int proof}
        \mbox{$\theta_*^n(\mu_{\rm bd})(x)\ge 2$ for $\H^n$-a.e. $x\in K^*\setminus(T[s_0]\cup\pa^*E)$}\,.
    \end{equation}
    As proved in \eqref{nm}, if $A\subset T$ is open and such that $\cl(A)\cap T[s_0]=\varnothing$, then by exploiting \eqref{perimeter Uij on Xj0} and \eqref{perimeter Ui on Xalfa} with $\a=0$ we have
    \begin{eqnarray}\label{nm again}
    &&\H^n\big(A\cap\pa^* E\big)+2\,\H^n\big( A\cap K^*\cap E^\zero\big)
    \\\nonumber
    &&\le\liminf_{j\to\infty}\H^n\big(A\cap\pa^* E_j\big)+2\,\H^n\big( A\cap \RR(K_j)\cap E_j^\zero\big)\,;
    \end{eqnarray}
    the same argument, this time based on \eqref{perimeter Uij on Xj1} and \eqref{perimeter Ui on Xalfa} with $\a=1$, also gives
    \begin{eqnarray}\label{nm again one}
    &&\H^n\big(A\cap\pa^* E\big)+2\,\H^n\big( A\cap K^*\cap E^\one\big)
    \\\nonumber
    &&\le\liminf_{j\to\infty}\H^n\big(A\cap\pa^* E_j\big)+2\,\H^n\big( A\cap \RR(K_j)\cap E_j^\one\big)\,;
    \end{eqnarray}
    and, finally, since $\Om\setminus\pa^*E$ is $\H^n$-equivalent to $\Om\cap(E^\zero\cup E^\one)$, the combination of \eqref{nm again} and \eqref{nm again one} gives
    \begin{eqnarray}\label{nm again bordo}
    &&\H^n\big(A\cap\pa^* E\big)+2\,\H^n\big( A\cap K^*\setminus\pa^* E\big)
    \\\nonumber
    &&\le\liminf_{j\to\infty}\H^n\big(A\cap\pa^* E_j\big)+2\,\H^n\big( A\cap \RR(K_j)\setminus\pa^* E_j\big)\le\mu_{\rm bd}(\cl(A))\,,
    \end{eqnarray}
    where we have used the definition \eqref{def of mu} of $\mu_{\rm bd}$. Now, for $\H^n$-a.e. $x\in K^*\setminus(T[s_0]\cup\pa^*E)$ we have $\H^n(B_r(x)\cap\pa^*E)={\rm o}(r^n)$ and $\H^n(B_r(x)\cap K^*\setminus \pa^*E)=\om_n\,r^n+{\rm o}(r^n)$ as $r\to 0^+$, as well as  $\cl(B_r(x))\cap T[s_0]=\varnothing$ for $r$ small enough, so that \eqref{nm again bordo} with $A=B_r(x)$ readily implies \eqref{Kstar inclusion int proof}. The proof of the claim, and thus of the theorem, is now complete.
    \end{proof}

    \subsection{A second closure theorem}\label{section second closure theorem} We now present a variant of the main arguments presented in this section and alternative closure theorem to Theorem \ref{theorem first closure theorem intro}. As already noticed, this second closure theorem, Theorem \ref{theorem second closure theorem} below, will play a role only in the companion paper \cite{MNR2}, where Plateau's laws will be studied in the relation to the Allen--Cahn equation, so that this section can be omitted on a first reading focused on Gauss' capillarity theory alone.

    \medskip

    To introduce Theorem \ref{theorem first closure theorem intro}, let us consider the following question: given an $\H^n$-finite set $S$ which is $\C$-spanning $\wire$, {\it what parts of $S$ are essential to its $\C$-spanning property}? We already know from Lemma \ref{lemma rectfiable spanning} that the unrectifiable part of $S$ is not necessary, since $\RR(S)$ is also $\C$-spanning. However, some parts of $\RR(S)$ could be discarded too -- indeed rectifiable sets can be ``porous at every scale'', and thus completely useless from the point of view of achieving $\C$-spanning. To make an example, consider the rectifiable set $P\subset\R^2$ obtained by removing from $[0,1]$ all the intervals $(q_i-\e_i,q_i+\e_i)$ where $\{q_i\}_i$ are the rational numbers in $[0,1]$ and $2\,\sum_i\e_i=\e$ for some given $\e\in(0,1)$: it is easily seen that $P$ is a rectifiable set with positive $\H^1$-measure in $\R^2$, contained in $\R\times\{0\}$, which fails to essentially disconnect any stripe of the form $(a,b)\times\R$ with $(a,b)\cc(0,1)$. Intuitively, if a set like $P$ stands as an isolated portion of $S$, then $\RR(S)\setminus P$ should still be $\C$-spanning.

    \medskip

    We can formalize this idea as follows. Denoting as usual $\Om=\R^{n+1}\setminus\wire$, we consider the open covering $\{\Om_k\}_k$ of $\Om$ defined by
    \begin{equation}
      \label{def of Omega i}
      \{\Om_k\}_k=\{B_{r_{mh}}(x_m)\}_{m,h}\,,
    \end{equation}
    where $\{x_m\}_m=\Q^{n+1}\cap\Om$ and $\{r_{mh}\}_h=\Q\cap(0,\dist(x_m,\pa\Om))$. For every $\H^n$-finite set $S$ we define the {\bf essential spanning part of $S$ in $\Om$} as the Borel set
    \[
    {\rm ESP}(S)=\bigcup_k\,{\rm UBEP}(S;\Om_k)=\bigcup_k\,\Big\{\Om_k\cap\bigcup_i\pa^*U_i[\Om_k]\Big\}\,,
    \]
    if $\{U_i[\Om_k]\}_i$ denotes the essential partition of $\Om_k$ induced by $S$. Since each ${\rm UBEP}(S;\Om_k)$ is a countable union of reduced boundaries and is $\H^n$-contained in the $\H^n$-finite set $S$, we see that ${\rm ESP}(S)$ is always $\H^n$-rectifiable. The idea is that by following the unions of boundaries of essential partitions induced by $S$ over smaller and smaller balls we are capturing all the parts of $S$ that may potentially contribute to achieve a spanning condition with respect to $\wire$. Thinking about Figure \ref{fig esspart}: the tendrils of $S$ appearing in panel (a) and not captured by ${\rm UBEP}(S;U)$, will eventually be included into ${\rm ESP}(S)$ by considering ${\rm UBEP}$'s of $S$ relative to suitable subsets of $U$. Another way to visualize the construction of ${\rm ESP}(S)$ is noticing that if $B_r(x)\subset B_s(x)\subset\Om$, then
    \[
    B_r(x)\cap {\rm UBEP}(S;B_s(x))\subset {\rm UBEP}(S;B_r(x))\,,
    \]
    which points to the monotonicity property behind the construction of ${\rm ESP}(S)$. Intuitively, we expect that
    \begin{equation}
      \label{intro S 1}
      \mbox{if $S$ is $\C$-spanning $\wire$, then ${\rm ESP}(S)$ is $\C$-spanning $\wire$}
    \end{equation}
    (where $\C$ is an arbitrary spanning class for $\wire$). This fact will proved in a moment as a particular case of Theorem \ref{theorem second closure theorem} below.

    \medskip

    Next, we introduce the notion of convergence behind our second closure theorem. Consider a sequence $\{S_j\}_j$ of Borel subsets of $\Om$ such that $\sup_j\H^n(S_j)<\infty$. If we denote by $\{U_i^j[\Om_k]\}_i$ the essential partition induced on $\Om_k$ by $S_j$, then a diagonal argument based on Lemma \ref{lemma partition compactness} shows the existence of a (not relabeled) subsequence in $j$, and, for each $k$, of a Borel partition $\{U_i[\Om_k]\}_i$ of $\Om_k$ such that  $\{U_i^j[\Om_k]\}_i$ converges to $\{U_i[\Om_k]\}_i$ as $j\to\infty$ in the sense specified by \eqref{partition convergence}. Since ${\rm UBEP}(S_j;\Om_k)=\Om_k\cap\bigcup_i\pa^*U_i^j[\Om_k]$, we call any set $S$ of the form\footnote{The limit partition $\{U_i[\Om_k]\}_i$ appearing in \eqref{spl def} may not be the essential partition induced by $S$ on $\Om_k$ since the individual $U_i[\Om_k]$, arising as $L^1$-limits, may fail to be essentially connected. This said, $\{U_i[\Om_k]\}_i$ is automatically a partition of $\Om_k$ induced by $S_0$.}
    \begin{equation}
      \label{spl def}
      S=\bigcup_k\,\Big\{\Om_k\cap\bigcup_i\pa^*U_i[\Om_k]\Big\}\,,
    \end{equation}
    a {\bf subsequential partition limit of $\{S_j\}_j$ in $\Om$}. Having in mind \eqref{intro S 1}, it is natural to ask if the following property holds:
    \begin{eqnarray}\nonumber
      &&\mbox{if $S_j$ is $\C$-spanning $\wire$ for each $j$}\,,
      \\
      \nonumber
      &&\mbox{and $S$ is a subsequential partition limit of $\{S_j\}_j$ in $\Om$}\,,
      \\\label{intro S 2}
      &&\mbox{then $S$ is $\C$-spanning $\wire$}\,.
    \end{eqnarray}
    Our next theorem implies both \eqref{intro S 1} and \eqref{intro S 2} as particular cases (corresponding to be taking $E_j=\varnothing$ and, respectively, $K_j=S$ and $K_j=S_j$ for every $j$).

    \begin{theorem}
      [Closure theorem for subsequential partition limits]\label{theorem second closure theorem} Let $\wire$ be a closed set in $\mathbb{R}^{n+1}$, $\C$ a spanning class for $\wire$, and $\{(K_j,E_j)\}_j$ a sequence in $\K_{\rm B}$ such that $\sup_j\H^n(K_j)<\infty$ and $K_j\cup E_j^\one$ is $\C$-spanning $\wire$ for every $j$.

      \medskip

      If $S_0$ and $E_0$ are, respectively, a subsequential partition limit of $\{K_j\}_j$ in $\Om$ and an $L^1$-subsequential limit of $\{E_j\}_j$ (corresponding to a same not relabeled subsequence in $j$), and we set
      \[
      K_0=(\Om\cap\pa^*E_0)\cup S_0\,,
      \]
      then $(K_0,E_0)\in\KK_{\rm B}$ and $K_0\cup E_0^\one$ is $\C$-spanning $\wire$.
    \end{theorem}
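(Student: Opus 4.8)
The plan is to establish that $K_0\cup E_0^\one$ is $\C$-spanning $\wire$ via the characterization of Theorem~\ref{theorem spanning with partition}, by combining the slice-wise application of the basic closure Theorem~\ref{theorem basic closure for homotopic} with a \emph{localization} of the construction of the subsequential partition limit $S_0$ onto the balls $\Om_k$ of \eqref{def of Omega i}. The membership $(K_0,E_0)\in\KK_{\rm B}$ is routine: from $\Om\cap\pa^*E_j\shn K_j$ one gets $P(E_j;\Om)\le\sup_j\H^n(K_j)<\infty$, so lower semicontinuity of perimeter applied along $E_j\toloc E_0$ shows $E_0$ has locally finite perimeter in $\Om$; moreover $S_0$ is a Borel subset of $\Om$, locally $\H^n$-finite in $\Om$ (each term $\Om_k\cap\bigcup_i\pa^*U_i[\Om_k]$ in \eqref{spl def} has $\H^n$-measure at most $\sup_j\H^n(K_j)$ by \eqref{essential partition perimeter bound} and lower semicontinuity, while $\{\Om_k\}_k$ is locally finite); and $\Om\cap\pa^*E_0\shn K_0$ by definition of $K_0$.

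Fix $(\g,\Phi,T)\in\T(\C)$; by Theorem~\ref{theorem spanning with partition} and Remark~\ref{remark RRK cup Eone is spanning too} it suffices to show that for $\H^1$-a.e. $s_0\in\SS^1$ the set $T[s_0]\cap E_0^\zero$ is $\H^n$-contained in ${\rm UBEP}(K_0\cup T[s_0];T)$. Since $\cl(T)\cc\Om$, choose $s_0$ in the full-measure set of slices for which: $\mu(T[s_0])=0$, where $\mu$ is a weak-$*$ limit of $\H^n\mres K_j$ along the subsequence defining $S_0,E_0$, further refined so that $\mu$ and all limits below exist; $\H^n(K_j\cap T[s_0])=0$ for every $j$; and, via Theorem~\ref{theorem spanning with partition} applied to each $K_j$, $T[s_0]\cap E_j^\zero$ is $\H^n$-contained in ${\rm UBEP}(K_j\cup T[s_0];T)$ for every $j$. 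Passing to a further (not relabeled) subsequence, which may depend on $T$ and $s_0$ but leaves $S_0,E_0$ unchanged, let $\{U_i[T,s_0]\}_i$ be the limit in the sense of \eqref{partition convergence} of the essential partitions $\{U_i^j[T,s_0]\}_i$ of $T$ induced by $K_j\cup T[s_0]$, and set $K^*[T,s_0]=T\cap\bigcup_i\pa^*U_i[T,s_0]$. Applying Theorem~\ref{theorem basic closure for homotopic} with the constant sequence $s_j\equiv s_0$, with the $K_j$ and the partitions $\{U_i^j[T,s_0]\}_i\to\{U_i[T,s_0]\}_i$, and with $F_j=E_j$, $F=E_0$ (the hypotheses $\Om\cap\pa^*F_j\shn K_j$ and the $\H^n$-containment of $T[s_j]\cap F_j^\zero$ in ${\rm UBEP}(K_j\cup T[s_j];T)$ are exactly what we arranged, and $\mu(T[s_0])=0$) yields that $T[s_0]\cap E_0^\zero$ is $\H^n$-contained in $K^*[T,s_0]$.

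The heart of the argument is to show $K^*[T,s_0]\setminus T[s_0]\shn S_0$. Fix $x\in K^*[T,s_0]\setminus T[s_0]$; since $T\setminus T[s_0]$ is open and contains $x$, choose a ball $\Om_m$ of the family \eqref{def of Omega i} with $x\in\Om_m$ and $\cl(\Om_m)\subset T\setminus T[s_0]$. Because $\Om_m$ is open and disjoint from $T[s_0]$, for every $j$ the restrictions $\{U_i^j[T,s_0]\cap\Om_m\}_i$ form a Lebesgue partition of $\Om_m$ induced by $K_j$ in the sense of \eqref{induced partition}; since $\{U_l^j[\Om_m]\}_l$ is \emph{the} essential partition of $\Om_m$ induced by $K_j$, each $U_i^j[T,s_0]\cap\Om_m$ is, modulo Lebesgue-null sets, a union of elements $U_l^j[\Om_m]$ (a standard property of essential partitions: no element of the essential partition is essentially disconnected by its inducing set, and $\pa^*(U_i^j[T,s_0]\cap\Om_m)\shn K_j$ inside $\Om_m$). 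Hence $|U_l^j[\Om_m]\cap U_i^j[T,s_0]|\in\{0,|U_l^j[\Om_m]|\}$ for all $i,l,j$; letting $j\to\infty$ along the further subsequence (using $U_l^j[\Om_m]\to U_l[\Om_m]$, the partition $\{U_l[\Om_m]\}_l$ entering \eqref{spl def}, and $U_i^j[T,s_0]\cap\Om_m\to U_i[T,s_0]\cap\Om_m$) forces $|U_l[\Om_m]\cap U_i[T,s_0]|\in\{0,|U_l[\Om_m]|\}$, so each $U_l[\Om_m]$ lies, modulo Lebesgue-null sets, in a single $U_i[T,s_0]$. Thus each $U_i[T,s_0]\cap\Om_m$ is a union of the $U_l[\Om_m]$'s, whence by \eqref{caccioppoli partitions}
\[
\Om_m\cap K^*[T,s_0]=\Om_m\cap\bigcup_i\pa^*U_i[T,s_0]\;\shn\;\Om_m\cap\bigcup_l\pa^*U_l[\Om_m]\;\shn\;S_0\,,
\]
the last $\H^n$-containment being immediate from \eqref{spl def}. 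Covering $K^*[T,s_0]\setminus T[s_0]$ by countably many such balls gives $K^*[T,s_0]\setminus T[s_0]\shn S_0\subset K_0$, hence $K^*[T,s_0]\shn K_0\cup T[s_0]$. Since $\{U_i[T,s_0]\}_i$ is a Lebesgue partition of $T$ induced by $K^*[T,s_0]$, Theorem~\ref{theorem decomposition}-(a) (with $S^*=K^*[T,s_0]\shn S=K_0\cup T[s_0]$) gives $K^*[T,s_0]\shn{\rm UBEP}(K_0\cup T[s_0];T)$; combined with the output of Theorem~\ref{theorem basic closure for homotopic} this yields $T[s_0]\cap E_0^\zero\shn{\rm UBEP}(K_0\cup T[s_0];T)$, and Theorem~\ref{theorem spanning with partition} with Remark~\ref{remark RRK cup Eone is spanning too} completes the proof.

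I expect the main obstacle to be the passage to the limit in the localization step: the coarsening relation ``$\{U_i^j[T,s_0]\cap\Om_m\}_i$ is a coarsening of $\{U_l^j[\Om_m]\}_l$'' is transparent at fixed $j$, but must be transported through the $L^1$-limit, and the dichotomy $|U_l^j[\Om_m]\cap U_i^j[T,s_0]|\in\{0,|U_l^j[\Om_m]|\}$ is exactly the mechanism that makes this survive the limit; this is also what explains why the \emph{rectifiable} set $S_0$ (rather than the larger, possibly unrectifiable density set that Theorem~\ref{theorem first closure theorem intro} would produce) already spans. A secondary point requiring care is the bookkeeping of nested subsequences, together with the routine-but-necessary checks that $T$ and the $\Om_m$'s are bounded sets of finite perimeter and that the various partitions are ``induced'' in the precise sense of \eqref{induced partition} on the relevant open sets.
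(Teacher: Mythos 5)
Your argument is, in all essentials, the paper's own proof: after reducing via Theorem \ref{theorem spanning with partition} and Theorem \ref{theorem basic closure for homotopic} to showing that $K^*[T,s_0]\setminus T[s_0]$ is $\H^n$-contained in $S_0$, you localize to rational balls $\Om_m$ of \eqref{def of Omega i} disjoint from $T[s_0]$, use the fact that $K_j$ cannot essentially disconnect any element of the essential partition of $\Om_m$ it induces to get the dichotomy $|U_l^j[\Om_m]\cap U_i^j|\in\{0,|U_l^j[\Om_m]|\}$, pass it to the $L^1$-limit, and conclude by a covering argument and Theorem \ref{theorem decomposition}-(a); this is exactly the mechanism of the paper. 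Your two deviations are immaterial: taking a constant slice $s_j\equiv s_0$ (legitimate, since the good slice properties hold for all $j$ simultaneously on a full-measure set, and Theorem \ref{theorem basic closure for homotopic} allows constant sequences) instead of $s_j\to s_0$, and concluding directly from the stronger containment $K^*[T,s_0]\shn K_0\cup T[s_0]$ rather than retracing the bulk branch of part II of the proof of Theorem \ref{theorem first closure theorem intro}; the paper proves the same stronger containment, so this shortcut is fine.

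One stated justification is wrong, although the fact it supports is true and is genuinely needed: the family $\{\Om_k\}_k$ of \emph{all} rational balls is not locally finite in $\Om$, so local $\H^n$-finiteness of $S_0$ (hence of $K_0$, which you need on each tube in order for ${\rm UBEP}(K_0\cup T[s_0];T)$, Theorem \ref{theorem decomposition}-(a), and Theorem \ref{theorem spanning with partition} to apply) does not follow from that remark. It does follow, for instance, from the bound $\H^n\mres\big(\Om_k\cap\bigcup_i\pa^*U_i[\Om_k]\big)\le\mu$ on $\Om_k$, where $\mu$ is the weak-star limit of $\H^n\mres K_j$ along the subsequence: for open $A\cc\Om_k$, \eqref{caccioppoli partitions}, the lower semicontinuity in \eqref{partition convergence} (plus Fatou over $i$), and the localized form of \eqref{essential partition perimeter bound} give $2\,\H^n\big(A\cap\bigcup_i\pa^*U_i[\Om_k]\big)\le\liminf_j\sum_iP(U_i^j[\Om_k];A)\le\limsup_j2\,\H^n(K_j\cap\cl(A))\le2\,\mu(\cl(A))$, and regularity upgrades this to domination of measures; since a countable union of sets whose $\H^n$-restrictions are each dominated by $\mu$ has $\H^n$-restriction dominated by $\mu$ (disjointify the union), one gets $\H^n\mres S_0\le\mu$, hence $\H^n(S_0)\le\sup_j\H^n(K_j)<\infty$, and $K_0$ is locally $\H^n$-finite because $E_0$ has locally finite perimeter. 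With this repair your proof is complete and coincides with the paper's.
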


    \begin{proof} Since $\Om\cap\pa^*E_0\subset K_0$ by definition of $K_0$ we trivially have $(K_0,E_0)\in\KK_{\rm B}$. Aiming to prove that $K_0\cup E_0^\one$ is $\C$-spanning $\wire$, we fix $(\g,\Phi,T)\in\T(\C)$, and define $s_0$, $s_j$, $\{U_i^j\}_i$ and $\{U_i\}_i$ exactly as in part I of the proof of Theorem \ref{theorem first closure theorem intro}. Thanks to Theorem \ref{theorem basic closure for homotopic} and by arguing as in part II of the proof of Theorem \ref{theorem first closure theorem intro}, we have reduced to prove that
    \begin{equation}
      \label{upon}
      \mbox{$K^*\setminus (T[s_0]\cup E^\one)$ is $\H^n$-contained in $K_0$}\,.
    \end{equation}
    By Federer's theorem \eqref{federer theorem} and since $\Om\cap\pa^*E\subset K_0$ it is enough to prove
    \[
      \mbox{$(K^*\cap E^\zero)\setminus T[s_0]$ is $\H^n$-contained in $S_0$}\,,
    \]
    and, thanks to the construction of $S_0$, we shall actually be able to prove
    \begin{equation}
      \label{upon3}
      \mbox{$K^* \setminus T[s_0]$ is $\H^n$-contained in $S_0$}\,.
    \end{equation}
    To this end let us pick $k$ such that $\Om_k\cc T$ and $\Om_k\cap T[s_0]=\emptyset$. Then, for $j\ge j(k)$, we have $\Om_k\cap T[s_j]=\varnothing$, so that
    \[
    \Om_k\cap {\rm UBEP}\big(K_j\cup T[s_j];T\big)\subset {\rm UBEP}\big(K_j\cup T[s_j];\Om_k\big)={\rm UBEP}\big(K_j;\Om_k\big)\,.
    \]
    Since $\{U_i^j\}_i$ is the essential partition of $T$ induced by $K_j\cup T[s_j]$, if $\{U_m^j[\Om_k]\}_m$ is the essential partition of $\Om_k$ induced by $K_j$, we have just claimed that, for every $i$ and $j\ge j(k)$,
    \begin{equation}
      \label{localize it}
      \Om_k\cap\pa^*U_i^j\subset\Om_k\cap\bigcup_m \pa^*U_m^j[\Om_k]\,.
    \end{equation}
    Since $\{U_m^j[\Om_k]\}_m$ is a Lebesgue partition of $\Om_k$ into essentially connected sets, by \eqref{localize it} the indecomposable components of $\Om_k\cap U_i^j$ must belong to $\{U_m^j[\Om_k]\}_m$. In other words, for each $i$ and each $j\ge j(k)$ there is $M(k,i,j)$ such that
    \[
    \Om_k\cap U_i^j=\bigcup_{m\in M(k,i,j)}U_m^j[\Om_k]\,.
    \]
    As a consequence of $U_i^j\to U_i$ and of $U_m^j[\Om_k]\to U_m[\Om_k]$ as $j\to\infty$ we find that, for a set of indexes $M(k,i)$, it must be
    \[
    \Om_k\cap U_i=\bigcup_{m\in M(k,i)}U_m[\Om_k]\,,
    \]
    and therefore
    \[
    \Om_k\cap\pa^*U_i\,\,\shn\,\,\bigcup_{m\in M(k,i)}\pa^*U_m[\Om_k]\subset S_0\,.
    \]
    Since we have proved this inclusion for every $i$ and for every $k$ such that $\Om_k\cc T$ with $\Om_k\cap T[s_0]=\emptyset$, it follows that $K^*\setminus T[s_0]$ is $\H^n$-contained in $S_0$, that is \eqref{upon3}.
    \end{proof}

    \section{Existence of minimizers and convergence to Plateau's problem (Theorem \ref{thm existence EL for bulk})}\label{section existence theorems} In this section we prove two main results: the first one (Theorem \ref{corollary existence plateau}) concerns the equivalence of Harrison--Pugh Plateau's problem $\ell$ with its measure theoretic reformulation $\ell_{\rm B}$ (see \eqref{def of ellB}); the second (Theorem \ref{thm existence EL for bulk}) is a very refined version of Theorem \ref{thm existence EL for bulk}.

    \begin{theorem}[Existence for $\ell_{\rm B}$ and $\ell=\ell_{\rm B}$]\label{corollary existence plateau}
      If $\wire\subset\R^{n+1}$ is closed, $\C$ is a spanning class for $\wire$, and the Harrison--Pugh formulation of the Plateau problem
      \begin{align}\notag
        \ell= \inf\big\{\H^n(S) : \mbox{$S$ is a closed subset $\Om$, $S$ is $\C$-spanning $\wire$}\big\}
      \end{align}
      is finite, then the problem
      \begin{align}\notag
        \ell_{\rm B}= \inf\big\{\H^n(S) : \mbox{$S$ is a Borel subset $\Om$, $S$ is $\C$-spanning $\wire$}\big\}
      \end{align}
      admits minimizers, and given any minimizer $S$ for $\ell_{\rm B}$, there exists relatively closed $S^*$ which is $\H^n$-equivalent to $S$ and a minimizer for $\ell$. In particular, $\ell=\ell_{\rm B}$.
    \end{theorem}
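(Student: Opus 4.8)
The plan is the following. The inequality $\ell_{\rm B}\le\ell$ is immediate from Theorem \ref{theorem definitions equivalence intro}, since every relatively closed $\C$-spanning competitor for $\ell$ is in particular a Borel $\C$-spanning competitor for $\ell_{\rm B}$. It therefore suffices to prove (i) that $\ell_{\rm B}$ admits a minimizer, and (ii) that every minimizer $S$ of $\ell_{\rm B}$ is $\H^n$-equivalent to a relatively closed set $S^*$. Granting (i) and (ii): $S^*$ is $\C$-spanning, because homotopic spanning in the sense of Definition \ref{def homot span borel} is insensitive to $\H^n$-negligible modifications, so $S^*$ is an admissible competitor for $\ell$ and hence $\ell\le\H^n(S^*)=\H^n(S)=\ell_{\rm B}$; combined with $\ell_{\rm B}\le\ell$ this gives $\ell=\ell_{\rm B}$, and $S^*$ is a minimizer of $\ell$.

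For (i), I would take a minimizing sequence $\{S_j\}_j$ of Borel $\C$-spanning sets, so that $\H^n(S_j)\to\ell_{\rm B}$ and $\sup_j\H^n(S_j)<\infty$, and apply Theorem \ref{theorem first closure theorem intro} with $(K_j,E_j)=(S_j,\varnothing)$. Then $E=\varnothing$, up to a subsequence $\H^n\mres\RR(S_j)\weakstar\mu$, the measure $\mu_{\rm bk}$ equals $2\,\mu$, and $\mu(\Om)\le\liminf_j\H^n(\RR(S_j))\le\liminf_j\H^n(S_j)=\ell_{\rm B}$. By part (i) of that theorem the set $K_{\rm bk}=\{x\in\Om:\theta^n_*(\mu)(x)\ge1\}$ satisfies $2\,\mu=\mu_{\rm bk}\ge2\,\H^n\mres K_{\rm bk}$, hence $\H^n(K_{\rm bk})\le\mu(\Om)\le\ell_{\rm B}$; by part (ii), since each $K_j\cup E_j^\one=S_j$ is $\C$-spanning, $K_{\rm bk}\cup E^\one=K_{\rm bk}$ is $\C$-spanning $\wire$. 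Thus $K_{\rm bk}$ is a minimizer of $\ell_{\rm B}$, proving (i).

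For (ii), let $S$ be a minimizer of $\ell_{\rm B}$. By Lemma \ref{lemma rectfiable spanning} the set $\RR(S)$ is $\C$-spanning, so $\H^n(\RR(S))\ge\ell_{\rm B}=\H^n(S)$ and $S$ is $\H^n$-equivalent to the $\H^n$-rectifiable set $\RR(S)$; we may thus assume $S$ rectifiable. The core step is a uniform lower density bound: there is $c=c(n)>0$ such that $\theta^n_*(\H^n\mres S)(x)\ge c$ for every $x\in\Om\cap\spt(\H^n\mres S)$. Fix such an $x$ and a ball $B_r(x)\cc\Om$, let $\{U_i\}_i$ be the essential partition of $B_r(x)$ induced by $S$ (Theorem \ref{theorem decomposition}), let $\{\hat U_i\}_i$ be the ``conical'' partition obtained by coning the traces of the $U_i$ on $\pa B_r(x)$ from the center $x$, and set $S'=(S\setminus B_r(x))\cup\big(\cl(B_r(x))\cap\bigcup_i\pa^*\hat U_i\big)$. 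Using Theorem \ref{theorem spanning with partition S case} one checks that $S'$ is again $\C$-spanning $\wire$: $S'$ coincides with $S$ outside $B_r(x)$, and since the conical surgery preserves the traces of the essential partition on $\pa B_r(x)$, the essential partition induced on any test tube $T$ by $S'\cup T[s]$ is not coarsened away from $B_r(x)$, so the relevant slices $T[s]$ remain $\H^n$-contained in ${\rm UBEP}(S'\cup T[s];T)$. Writing $m(r)=\H^n(S\cap B_r(x))$, the cone estimate and the coarea formula give $\H^n(S'\cap B_r(x))\le\tfrac rn\,\H^{n-1}\big(\pa B_r(x)\cap\bigcup_i\pa^*U_i\big)\le\tfrac rn\,m'(r)$ for a.e.\ $r$ (using that ${\rm UBEP}(S;B_r(x))=B_r(x)\cap\bigcup_i\pa^*U_i$ is $\H^n$-contained in $S$), while $S'$ and $S$ coincide up to $\H^n$-null sets elsewhere. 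Minimality of $S$ then forces $m(r)\le\tfrac rn\,m'(r)$ for a.e.\ small $r$, so $r\mapsto m(r)/r^n$ is nondecreasing; a standard argument in the spirit of De Giorgi's density estimate, using that $x\in\spt(\H^n\mres S)$ keeps $m$ from vanishing, then yields $\theta^n_*(\H^n\mres S)(x)\ge c(n)>0$.

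Finally, by the density comparison recalled in the Notation (\cite[Theorem 6.4]{maggiBOOK}), $\theta^n_*(\H^n\mres S)\ge c$ on $Z:=\Om\cap\spt(\H^n\mres S)$ gives $\H^n\mres S\ge c\,\H^n\mres Z$, hence $\H^n(Z\setminus S)=0$; combined with the trivial $\H^n(S\setminus\spt(\H^n\mres S))=0$, this shows that the relatively closed set $S^*:=Z$ is $\H^n$-equivalent to $S$, establishing (ii). The main obstacle in this scheme is the lower density bound, and within it the admissibility of the conical competitor $S'$: as emphasized in the introduction, verifying that a local surgery preserves homotopic spanning is the delicate point, and it is precisely the measure-theoretic reformulation of Theorem \ref{theorem spanning with partition S case} — together with the monotonicity of ${\rm UBEP}$ under shrinking balls — that makes this verification manageable.
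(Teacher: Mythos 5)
Your reduction and your part (i) coincide with the paper's argument: the inequality $\ell_{\rm B}\le\ell$ from the equivalence of the two spanning definitions, and existence of a minimizer of $\ell_{\rm B}$ by applying Theorem \ref{theorem first closure theorem intro} to $(K_j,E_j)=(S_j,\varnothing)$, together with rectifiability via Lemma \ref{lemma rectfiable spanning}, are exactly the steps taken there. The divergence, and the problem, is in how you produce the relatively closed representative. The paper does this without any competitor surgery: since $\C$-spanning is preserved by diffeomorphisms $f$ with $\{f\ne\id\}\cc\Om$, minimality gives $\H^n(S)\le\H^n(f(S))$ for all such $f$, hence the multiplicity-one varifold ${\rm var}(S,1)$ is stationary, and the monotonicity formula yields a relatively closed $S^*$ that is $\H^n$-equivalent to $S$. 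Your route instead rests on the admissibility of a conical competitor, and that is precisely where your argument has a genuine gap: the assertion that $S'=(S\setminus B_r(x))\cup\big(\cl(B_r(x))\cap\bigcup_i\pa^*\hat U_i\big)$ is still $\C$-spanning is stated in one sentence (``the essential partition induced on any test tube is not coarsened away from $B_r(x)$''), but nothing in the paper establishes cone-competitor admissibility for the measure-theoretic spanning condition, and the surgery of Section \ref{section regularity transition regions} works only because it modifies the boundary of a single cell while retaining, via \eqref{still contains}, all the remaining interfaces. Worse, your specific competitor cones only the traces of ${\rm UBEP}(S;B_r(x))$, thereby discarding every part of $S\cap B_r(x)$ that does not essentially disconnect the ball; such a piece can still be indispensable for essentially disconnecting a thin tube $(\g,\Phi,T)\in\T(\C)$ threading through $B_r(x)$ (within the large ball one can pass around it, within the thin tube one cannot), so after your surgery the condition \eqref{spanning and the S partition equation S case} may fail for those tubes. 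Even the classical cone competitor over the full slice $S\cap\pa B_\rho(x)$ requires, in the closed-set setting, the topological argument behind Lemma \ref{dldrg lemma} and \cite{DLGM}, and a measure-theoretic analogue would have to be proved, not asserted.

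Two smaller points. First, the differential inequality $m(r)\le\tfrac rn m'(r)$ only gives monotonicity of $m(r)/r^n$, and monotonicity alone does not produce a positive lower density bound; you need to combine it with the fact that $\theta^n(\H^n\mres S)(y)=1$ at $\H^n$-a.e.\ $y\in S$ (rectifiability) and with monotonicity at nearby centers $y\in B_{r/2}(x)$ to get $\theta^n_*(\H^n\mres S)(x)\ge 2^{-n}$ on the support; ``$m$ does not vanish'' is not sufficient. Second, your coned quantity should be the $(n-1)$-dimensional spherical slice of the interfaces (controlled by $m'(\rho)$ for a.e.\ $\rho$ via coarea), not $\H^{n-1}(\pa B_r(x)\cap\bigcup_i\pa^*U_i)$ as written. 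These are repairable, but the admissibility of the competitor is the heart of the matter, and as it stands your proof does not establish it; the paper's stationarity-plus-monotonicity argument is the simpler and safer route.
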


    \begin{theorem}[Theorem \ref{thm existence EL for bulk} refined]\label{thm existence EL for bulk section}
    If $\wire$ is a compact set in $\mathbb{R}^{n+1}$ and $\C$ is a spanning class for $\wire$ such that $\ell<\infty$, then for every $v>0$ there exist minimizers $(K,E)$ of $\Psi_{\rm bk}(v)$. Moreover,

    \medskip

    \noindent {\bf (i):} if $(K_*,E_*)$ is a minimizer of $\Psi_{\rm bk}(v)$, then there is $(K,E)\in\KK$ such that $K$ is $\H^n$-equivalent to $K^*$, $E$ is Lebesgue equivalent to $E_*$, $(K,E)$ is a minimizer of $\Psi_{\rm bk}(v)$, both $E$ and $K$ are bounded, $K\cup E$ is $\C$-spanning $\wire$, $K\cap E^\one=\varnothing$, and there is $\lambda\in \mathbb{R}$ such that
    \begin{eqnarray}
    \label{first var cap theorem section}
    \lambda \int_{\Om\cap\partial^* E} X \cdot \nu_{E} \,d\H^n = \int_{\Om\cap\partial^* E} \mathrm{div}^{K}\,X\,d\H^n + 2\int_{K \cap E^\zero} \mathrm{div}^{K}\, X\,d\H^n\,,&&
    \\\nonumber
    \hspace{2.3cm}\forall X\in C^1_c(\mathbb{R}^{n+1};\mathbb{R}^{n+1})\quad\mbox{with $X\cdot \nu_\Omega =0$ on $\partial \Omega$}\,,&&
    \end{eqnarray}
    and there are positive constants $c=c(n)$ and $r_1=r_1(K,E)$ such that
    \begin{equation}
     \label{upper density estimates proof}
      |E\cap B_\rho(y)|\le (1-c)\,\om_{n+1}\,\rho^{n+1}\,,
    \end{equation}
    for every $y\in\Om\cap\pa E$ and $\rho<\min\{r_1,\dist(y,\wire)\}$; under the further assumption that $\partial \wire$ is $C^2$, then there is positive $r_0=r_0(n,\wire,|\l|)$ such that
    \begin{equation}
     \label{lower density estimates proof}
      \H^n(K\cap B_r(x))\ge c\,r^n
    \end{equation}
    for every $x\in\cl(K)$ and $r<r_0$;

    \medskip

    \noindent {\bf (ii):} if $(K_j,E_j)$ is a sequence of minimizers for $\Psi_{\rm bk}(v_j)$ with $v_j\to 0^+$, then there exists a minimizer $S$ of $\ell$ such  that, up to extracting subsequences, as Radon measures in $\Om$,
    \begin{align}
    \label{convergence as measures section}
            \H^n\mres(\Om\cap\partial^* E_j) + 2\,\H^n\mres (K_j \cap E_j^{\zero}) \weakstar 2\H^n\mres S\,,
    \end{align}
    as $j\to\infty$. In particular, $\Psi_{\rm bk}(v)\to 2\,\ell=\Psi_{\rm bk}(0)$ as $v\to 0^+$.
    \end{theorem}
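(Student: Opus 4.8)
The plan is to obtain existence by the Direct Method through Theorem \ref{theorem first closure theorem intro}, to normalize an arbitrary minimizer by exploiting the $\H^n$-stability of Definition \ref{def homot span borel}, to derive the Euler--Lagrange equation and the density estimates from comparison arguments whose admissibility rests on Theorem \ref{theorem spanning with partition}, and to treat the vanishing-volume limit again via Theorem \ref{theorem first closure theorem intro} together with Theorem \ref{corollary existence plateau}. For existence, fix $v>0$ (the competition class of $\Psi_{\rm bk}(v)$ is nonempty since $\ell<\infty$, by thickening a finite-energy $\C$-spanning set as in the proof of (ii) below) and take a minimizing sequence $(K_j,E_j)$. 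Replacing $K_j$ with $\big(\Om\cap\RR(K_j)\big)\setminus E_j^\one$ does not increase $\F_{\rm bk}(K_j,E_j)$ (which only sees $K_j\cap E_j^\zero$ and $\Om\cap\pa^*E_j\subset E_j^\half$), keeps $(K_j,E_j)\in\KK_{\rm B}$, and preserves the $\C$-spanning of $K_j\cup E_j^\one$ because $\big(K_j\setminus E_j^\one\big)\cup E_j^\one=K_j\cup E_j^\one$ as sets and by Remark \ref{remark RRK cup Eone is spanning too}; so we may assume $\sup_j\H^n(K_j)<\infty$. After the standard confinement of the minimizing sequence to a fixed ball (which, $\wire$ being compact, affects neither the $\C$-spanning constraint nor $\KK_{\rm B}$; see Appendix \ref{appendix every}) and a volume-restoring infinitesimal dilation, we have $E_j\to E$ in $L^1$ with $|E|=v$, and Theorem \ref{theorem first closure theorem intro} produces $(K_{\rm bk},E)\in\KK_{\rm B}$ with $K_{\rm bk}\cup E^\one$ still $\C$-spanning $\wire$ and $\F_{\rm bk}(K_{\rm bk},E)\le\liminf_j\F_{\rm bk}(K_j,E_j)=\Psi_{\rm bk}(v)$; hence $(K_{\rm bk},E)$ is a minimizer, and by its definition $K_{\rm bk}\subset(\Om\cap\pa^*E)\cup E^\zero$, so $K_{\rm bk}\cap E^\one=\varnothing$.

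\emph{Proof of (i).} Given a minimizer $(K_*,E_*)$, set $K=\big(\Om\cap\RR(K_*)\cap E_*^\zero\big)\cup\big(\Om\cap\pa^*E_*\big)$ and let $E$ be an open representative of $E_*^\one$; this leaves $\F_{\rm bk}$ unchanged, so $(K,E)$ is still a minimizer, and it preserves the $\C$-spanning of $K\cup E^\one$ since, by the dichotomy \eqref{in or out of E prelude} in the proof of Theorem \ref{theorem spanning with partition}, the portion of $K_*$ inside $E_*^\one$ does not contribute to the ${\rm UBEP}$-containment \eqref{spanning and the S partition equation} at points of $E_*^\zero$; now $K\cap E^\one=\varnothing$ and $K$ is $\H^n$-rectifiable. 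The first variation of $\F_{\rm bk}$ along one-parameter families of diffeomorphisms of $\R^{n+1}$ fixing $\wire$ (which trivially preserve $\C$-spanning, and, for volume-preserving flows, the constraint $|E|=v$) vanishes on volume-preserving fields, so introducing a Lagrange multiplier $\l$ for $|E|=v$ yields \eqref{first var cap theorem section}; equivalently, the generalized surface counting $\Om\cap\pa^*E$ with multiplicity one and $K\cap E^\zero$ with multiplicity two has generalized mean curvature bounded by $|\l|$ in $\Om$. The upper volume-density bound \eqref{upper density estimates proof} follows by comparing $(K,E)$ with $\big(K\cup\pa B_\rho(y)\,,\,E\setminus B_\rho(y)\big)$, admissible by Theorem \ref{theorem spanning with partition} (enlarging $K$ by a sphere and replacing $E$ by $E\setminus B_\rho(y)$ preserves the ${\rm UBEP}$-containment), minimality and the isoperimetric inequality then forcing the bound. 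Allard's theorem \cite{Allard}, applied to $\Om\cap\pa^*E$ (bounded mean curvature) and to $K\setminus\pa E$ (stationary), shows that $K$ is a smooth hypersurface off a closed set with area-minimizing dimensional bounds; hence $(K,E)\in\KK$, $E$ is Lebesgue-equivalent to a bounded open set, and, since $K\cup E$ and $K\cup E^\one$ then agree modulo $\H^n$-null sets, $K\cup E$ is $\C$-spanning $\wire$. Boundedness of $E$ and $K$ follows from the density estimates and a cut-and-paste argument using compactness of $\wire$; and, when $\pa\wire$ is $C^2$, the lower perimeter-density bound \eqref{lower density estimates proof} follows from Allard's monotonicity formula together with a boundary density estimate.

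\emph{Proof of (ii).} Thickening a minimizer $S$ of $\ell$ (equivalently of $\ell_{\rm B}$, by Theorem \ref{corollary existence plateau}) into sets $E_j$ with $|E_j|=v_j$ and $\H^n(\Om\cap\pa^*E_j)\to 2\,\H^n(S)$, arranged so that $\pa^*E_j\cup E_j^\one$ is $\C$-spanning (Minkowski content of $\H^n$-rectifiable sets), gives $\limsup_j\Psi_{\rm bk}(v_j)\le 2\,\ell$; in particular minimizers $(K_j,E_j)$ of $\Psi_{\rm bk}(v_j)$ satisfy $\sup_j\H^n(K_j)<\infty$. After confining them to a fixed ball as above, Theorem \ref{theorem first closure theorem intro} --- with $v_j\to 0^+$, so the $L^1$-limit $E$ has $|E|=0$ and $E^\one$ is $\H^n$-null --- yields a Borel set $K_{\rm bk}$, $\C$-spanning $\wire$, with $2\,\H^n(K_{\rm bk})\le\liminf_j\F_{\rm bk}(K_j,E_j)\le 2\,\ell$; since $\H^n(K_{\rm bk})\ge\ell_{\rm B}=\ell$ by the definition \eqref{def of ellB} of $\ell_{\rm B}$ and Theorem \ref{corollary existence plateau}, all these inequalities are equalities, so $S:=K_{\rm bk}$ minimizes $\ell_{\rm B}$ and is $\H^n$-equivalent to a minimizer of $\ell$. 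Matching total masses, $\mu_{\rm bk}(\Om)=\lim_j\F_{\rm bk}(K_j,E_j)=2\,\ell=2\,\H^n(S)$ (no mass escapes to infinity, by the confinement), with the lower bound $\mu_{\rm bk}\ge 2\,\H^n\mres S$ from lower semicontinuity, gives \eqref{convergence as measures section}; together with the two inequalities on $\Psi_{\rm bk}(v_j)$ this proves $\Psi_{\rm bk}(v)\to 2\,\ell=\Psi_{\rm bk}(0)$.

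\emph{Main obstacle.} The recurrent difficulty is, at every comparison step, to check that the modified pair is still $\C$-spanning --- precisely the task Theorem \ref{theorem spanning with partition} and Theorem \ref{theorem first closure theorem intro} are built for, but whose hypotheses must be verified for each concrete competitor: the normalization removing $K_*\cap E^\one$, the confinement at infinity, the removal of $B_\rho(y)$ from $E$ in the density estimate, and the thickening of $S$ in the vanishing-volume upper bound. In this last case one must also independently rule out loss of mass at infinity, which is handled by the confinement argument rather than by the closure theorem alone.
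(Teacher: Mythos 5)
The central gap is in your existence argument: you dispose of the possible loss of volume at infinity by invoking a ``standard confinement of the minimizing sequence to a fixed ball'' plus a ``volume-restoring infinitesimal dilation'', so that the $L^1$-limit $E$ automatically has $|E|=v$. This is not available. Nothing prevents a minimizing sequence from sending a macroscopic portion of its volume to infinity (so no infinitesimal dilation can restore it), and truncating $E_j$ at a large sphere changes both the energy and the admissibility in a way that has to be quantified. Your citation of Appendix \ref{appendix every} is circular: Theorem \ref{theorem cant lose volume} is proved \emph{using} step three of the proof of Theorem \ref{thm existence EL for bulk section}, and moreover it requires $\pa\wire$ to be $C^2$, which is not assumed here. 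The paper's proof is organized precisely around this difficulty: it first proves the subadditivity bound $\Psi_{\rm bk}(v_1+v_2)\le\Psi_{\rm bk}(v_1)+(n+1)\om_{n+1}^{1/(n+1)}v_2^{n/(n+1)}$, then shows (via a careful cut at radii $r_j\to\infty$ with $\H^n(E_j^\one\cap\pa B_{r_j})\to0$ and the isoperimetric inequality applied to $E_j\setminus B_{r_j}$) that the limit $(K,E)$ minimizes $\Psi_{\rm bk}(|E|)$ and that $\Psi_{\rm bk}(v)\ge\F_{\rm bk}(K,E)+(n+1)\om_{n+1}^{1/(n+1)}(v-|E|)^{n/(n+1)}$, and only then restores the missing volume by attaching a far-away ball to the (now known to be bounded) limit, the resulting pair being a genuine minimizer of $\Psi_{\rm bk}(v)$ because the two inequalities match. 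Without some replacement for this quantitative analysis your existence proof does not close.

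Two secondary points. First, your competitor for the upper volume-density bound \eqref{upper density estimates proof}, namely $(K\cup\pa B_\rho(y),E\setminus B_\rho(y))$, violates the volume constraint $|E|=v$ and is not obviously repaired by a Lagrange-multiplier remark; moreover, as chosen it does not naturally yield the needed differential inequality for $u(\rho)=|B_\rho(y)\setminus E|$. The paper instead \emph{fills} the ball ($E_r=\Phi^{u(r)}(E_*\cup B_r(x))$, $K_r=\Phi^{u(r)}(K\cup\pa B_r(x))$), restores the volume with a volume-fixing diffeomorphism supported in a ball far from $x$, and obtains $c(n)\,u(r)^{n/(n+1)}\le 2u'(r)+C\,u(r)$, which is what forces $u(r)\gtrsim r^{n+1}$. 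Second, membership of $(K,E)$ in $\KK$ (relative closedness of $K$ and $\Om\cap\cl(\pa^*E)=\Om\cap\pa E\subset K$) does not follow from Allard's theorem as you assert -- Allard gives local graphicality only where densities are close to one, and the area-minimizing dimensional bounds you mention only appear later, in Theorem \ref{thm regularity for bulk}, via $(\Lambda,r_0)$-minimality. In the paper, $(K,E)\in\KK$ is obtained by taking $K=\Om\cap\spt V$ for the multiplicity-$(1,2)$ varifold $V$ with bounded mean curvature (so that lower density bounds give $K$ $\H^n$-equivalent to $\RR(K_*)$ and relatively closed), defining $E$ as the set of points admitting a ball of full $E_*$-measure, and then using the upper density estimate to show $E$ is Lebesgue-equivalent to $E_*$ and $\Om\cap\pa E\subset K$. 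Your treatment of part (ii) is essentially workable, though the Minkowski-content thickening of a Plateau minimizer is unnecessary (the paper's subadditivity inequality with $v_1=0$ gives the upper bound directly), and the appeal to ``confinement'' to match total masses can be replaced by the sandwich $2\ell\ge\liminf_j\F_{\rm bk}(K_j,E_j)\ge\F_{\rm bk}(K,\varnothing)=2\H^n(K)\ge2\ell$, which identifies the limit measure without any control at infinity.
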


    \begin{proof}[Proof of Theorem \ref{corollary existence plateau}] By Theorem \ref{theorem definitions equivalence}, if  $\ell<\infty$, then $\ell_{\rm B}<\infty$. Let now $\{S_j\}_j$ be a minimizing sequence for $\ell_{\rm B}$, then $\{(S_j,\varnothing)\}_j$ is a sequence in $\KK_{\rm B}$ satisfying \eqref{energy bound capillarity 2}. By Theorem \ref{theorem first closure theorem intro}, we find a Borel set $S$ which is $\C$-spanning $\wire$ and is such that
    \[
    2\,\liminf_{j\to\infty}\H^n(S_j)=\liminf_{j\to\infty}\F_{\rm bk}(S_j,\varnothing)\ge\F_{\rm bk}(S,\varnothing)=2\,\H^n(S)\,.
    \]
    This shows that $S$ is a minimizer of $\ell_{\rm B}$. By Lemma \ref{lemma rectfiable spanning}, $S$ is $\H^n$-rectifiable, for, otherwise, $\mathcal{R}(S)$ would be admissible for $\ell_{\rm B}$ and have strictly less area than $S$. We conclude the proof by showing that up to modifications on a $\H^n$-null set, $S$ is relatively closed in $\Omega$ (and thus is a minimizer of $\ell$ too). Indeed the property of being $\C$-spanning $\wire$ is preserved under diffeomorphism $f$ with $\{f\ne\id\}\cc\Omega$. In particular, $\H^n(S) \leq \H^n(f(S))$ for every such $f$, so that the multiplicity one rectifiable varifold $V_S=\var(S,1)$ associated to $S$ is stationary. By a standard application of the monotonicity formula, we can find $S^*$ $\H^n$-equivalent to $S$ such that $S^*$ is relative closed in $\Om$. Since $\H^n(S)=\H^n(S^*)$ and $\C$-spanning is preserved under $\H^n$-null modifications, we conclude the proof.
    \end{proof}

    \begin{proof}[Proof of Theorem \ref{thm existence EL for bulk section}] {\it Step one}: We prove conclusion (i). To this end, let $(K_*,E_*)\in\KK_{\rm B}$ be a minimizer of $\Psi_{\rm bk}(v)$. Clearly, $(\RR(K_*),E_*)\in\KK_{\rm B}$ is such that $\RR(K_*)\cup E^\one$ is $\C$-spanning $\wire$ (thanks to Theorem \ref{theorem spanning with partition}/Remark \ref{remark RRK cup Eone is spanning too}) and $\F_{\rm bk}(\RR(K_*),E_*)\le\F_{\rm bk}(K_*,E_*)$. In particular, $(\RR(K_*),E_*)$ is a minimizer of $\Psi_{\rm bk}(v)$, and energy comparison between $(\RR(K_*),E_*)$ and $(\RR(K_*)\setminus E_*^\one,E_*)$ (which is also a competitor for $\Psi_{\rm bk}(v)$) proves that
    \begin{equation}
      \label{obe 1}
      \H^n(\RR(K_*)\cap E_*^\one)=0\,.
    \end{equation}
    Since ``$\C$-spanning $\wire$'' is preserved under diffeomorphisms, by a standard first variation argument (see, e.g. \cite[Appendix C]{KingMaggiStuvard}) wee see that $(\RR(K_*),E_*)$ satisfies \eqref{first var cap theorem section} for some $\l\in\R$. In particular, the integer $n$-varifold $V={\rm var}(\RR(K_*),\theta)$, with multiplicity function $\theta=2$ on $\RR(K_*)\cap E_*^\zero$ and $\theta=1$ on $\Om\cap\pa^*E_*$, has bounded mean curvature in $\Om$, and thus satisfies $\|V\|(B_r(x))\ge c(n)\,r^n$ for every $x\in K$ and $r<\min\{r_0,\dist(x,\wire)\}$, where $r_0=r_0(n,|\l|)$ and, by definition,
    \[
    K:=\Om\cap\spt V\,.
    \]
    In particular, since \eqref{obe 1} implies $\|V\|\le 2\,\H^n\mres \RR(K^*)$ , we conclude (e.g. by \cite[Corollary 6.4]{maggiBOOK}) that $K$ is $\H^n$-equivalent to $\RR(K_*)$, and is thus $\H^n$-rectifiable and relatively closed  in $\Om$. Now let
    \[
    E=\big\{x\in\Om:\mbox{$\exists\,\, r<\dist(x,\wire)$ s.t. $|E_*\cap B_r(x)|=|B_r(x)|$}\big\}\,,
    \]
    so that, trivially, $E$ is an open subset of $\Om$ with $E\subset E_*^\one$. By applying \eqref{sofp1} to $E_*$, and by noticing that if $x\in\Om\setminus E$ then $|E_*\cap B_r(x)|<|B_r(x)|$ for every $r>0$, and that if $x\in\Om\cap\cl(E)$ then $|E_*\cap B_r(x)|>0$ for every $r>0$, we see that
    \begin{equation}
      \label{obe 2}
      \Om\cap\pa E\,\,\subset\,\, \big\{x\in\Om:0<|E_*\cap B_r(x)|<|B_r(x)|\,\,\forall r>0\big\}\,\,=\,\,\Om\cap\cl(\pa^*E_*)\,.
    \end{equation}
    Since $\|V\|\ge\H^n\mres(\Om\cap\pa^*E_*)$ and $\H^n(B_r(x)\cap\pa^*E)=\om_n\,r^n+{\rm o}(r^n)$ as $r\to 0^+$ for every $x\in\Om\cap\pa^*E$, we see that $\Om\cap\pa^*E_*\subset\Om\cap\spt\|V\|=K$, and since $K$ is relatively closed in $\Om$, we have $\Om\cap\cl(\pa^*E_*)\subset K$, and so $\Om\cap\pa E\subset K$. In particular, $E$ is of finite perimeter, and thus by applying \eqref{sofp1} to $E$,
    \begin{equation}
      \label{obe 3}
      \Om\cap\cl(\pa^*E)\,\,=\,\,\big\{x\in\Om:0<|E\cap B_r(x)|<|B_r(x)|\,\,\forall r>0\big\}\,\,\subset\,\, \Omega \cap \partial E\,.
    \end{equation}
    Finally, if there is $x\in (\Om\cap E_*^\one)\setminus E$, then it must be $0<|E_*\cap B_r(x)|<|B_r(x)|$ for every $r>0$, and thus $x\in\Om\cap\cl(\pa^*E_*)\subset K$. However, we {\it claim} that for every $x\in \Om\cap\cl(\pa^*E_*)$ and $r<\min\{r_*,\dist(x,\wire)\}$ (with $r_*=r_*(K_*,E_*)$) it holds
    \begin{equation}
      \label{obe 4}
      |B_r(x)\cap E_*|\le (1-c)\,\om_{n+1}\,r^{n+1}\,,
    \end{equation}
    in contradiction with $x\in E^\one$; this proves that $\Om\cap E_*^\one\subset E$, and thus that $E_*$ and $E$ are Lebesgue equivalent. Combining the latter information with \eqref{obe 2} and \eqref{obe 3} we conclude that $\Om\cap\cl(\pa^*E)=\Om\cap\pa E \subset K$ and conclude the proof of $(K,E)\in\KK$ -- conditional to proving \eqref{obe 4}.

    \medskip

    To prove \eqref{obe 4}, let us fix $x\in \Om\cap\cl(\pa^*E_*)$ and set $u(r)=|B_r(x)\setminus E_*|$, so that, for a.e. $r>0$ we have
    \begin{eqnarray}\label{obe 5}
    u'(r)=\H^n(E_*^\zero\cap\pa B_r(x))\,,\qquad P(B_r(x)\setminus E_*)=u'(r)+P(E_*;B_r(x))\,.
    \end{eqnarray}
    Since $|E_*|=v>0$, we have $\H^n(\Om\cap\pa^*E_*)>0$, therefore there must be $y_1,y_2\in\Om\cap\pa^*E_*$ with $|y_1-y_2|> 4 r_*$ for some $r_*$ depending on $E_*$. In particular there is $i\in\{1,2\}$ such that $B_{r_*}(x)\cap B_{r_*}(y_i)=\varnothing$, and we set $y=y_i$. Since $y_i\in\Om\cap\pa^*E_*$, there is $w_*>0$ and smooth maps $\Phi:\Om\times(-w_*,w_*)\to\Om$ such that $\Phi(\cdot,w)$ is a diffeomorphism of $\Om$ with $\{\Phi(\cdot,w)\ne\Id\}\cc B_{r_*}(y)$, and
    \begin{equation}
      \label{obe 6}
      |\Phi(E_*,w)|=|E_*|-w\,,\qquad P(\Phi(E_*,w);B_{r_*}(y))\le P(E_*,B_{r_*}(y))(1+ 2\,|\l|\,|w|)\,,
    \end{equation}
    for every $|w|<w_*$. We then consider $r_1$ such that $|B_{r_1}|<w_*$, so that for every $r<\min\{r_1,\dist(x,\wire)\}$ we have $0\le u(r)<w_*$, and thus we can define
    \[
    (K_r,E_r)=\Big(\Phi^{u(r)}\big(K\cup\pa B_r(x)\big),\Phi^{u(r)}\big(E_*\cup B_r(x)\big)\Big)\,.
    \]
    Since $\Phi^{u(r)}$ is a diffeomorphism, we have  $\Om\cap\pa^*E_r\subset K_r$, and by the first relation in \eqref{obe 6} and $\Phi^{u(r)}=\Id$ on $\Om\setminus B_{r_*}(y)$, we get
    \[
    |E_r|-|E|=|B_r(x)|-|B_r(x)\cap E_*|+|\Phi^{u(r)}(E_*)\cap B_{r_*}(y)|-|E_*\cap B_{r_*}(y)|=u(r)-u(r)=0\,.
    \]
    Hence $\F_{\rm bk}(K_*,E_*)\le\F_{\rm bk}(K_r,E_r)$, from which we deduce
    \begin{eqnarray*}
      &&P(E;B_r(x))+P(E;B_{r_*}(y))+2\,\H^n(K_*\cap E_*^\zero\cap B_r(x))
      \\
      &&\le \H^n(B_r(x)\cap E^\zero)+P(\Phi^{u(r)}(E_*);B_{r_*}(y))\le u'(r)+P(E_*,B_{r_*}(y))(1+ 2\,|\l|\,|w|)\,;
    \end{eqnarray*}
    where we have used \eqref{obe 5} and \eqref{obe 6}; by adding up $u'(r)$ on both sides of the inequality, and using \eqref{obe 5} again, we find that
    \[
    c(n)\,u(r)^{n/(n+1)}\le P(B_r(x)\setminus E_*)\le 2\,u'(r)+2\,|\l|\,\Psi_{\rm bk}(v)\,u(r)\,,
    \]
    for a.e. $r<\min\{r_1,\dist(x,\wire)\}$; since, by \eqref{obe 2}, $x\in \Om\cap \cl(\pa^*E_*)$ implies $u(r)>0$ for every $r>0$, we can apply a standard ODE argument to conclude that \eqref{obe 4} holds true.

    \medskip

    We now prove the remaining assertions in statement (i). First of all,  when $\partial \wire$ is $C^2$, we can argue similarly to \cite[Theorem 4.1]{KMS3} to deduce from the modified monotonicity formula of Kagaya and Tonegawa \cite{kagayatone} that the area lower bound in \eqref{lower density estimates proof} holds for every $x\in\cl(K)$ and every $r<r_0$. The validity of the volume upper bound in \eqref{upper density estimates proof} is immediate from \eqref{obe 4} and the Lebesgue equivalence of $E_*$ and $E$. The monotonicity formula for $V$ combined with $\H^n(\Om\cap K)<\infty$ implies of course that $V$ has bounded support. Having proved that $K$ is bounded, $|E|<\infty$ and $\Om\cap\pa E\subset K$ imply that $E$ is bounded too. Since $\RR(K_*)$ and $K$ are $\H^n$-equivalent, we have that $K\cup E_*^\one$ is $\C$-spanning $\wire$. It turns out that $K\cup E^\one$ is $\C$-spanning $\wire$ too, since $E$ and $E_*$ are Lebesgue equivalent {\it and} of finite perimeter, therefore such that $E^\one$ and $E_*^\one$ are $\H^n$-equivalent. In fact, on noticing that $\Om\cap(E^\one\setminus E)\subset\Om\cap\pa E\subset K$, we see that $K\cup E^\one=K\cup E$, so that $K\cup E$ is $\C$-spanning $\wire$, as claimed.

    \medskip

    Finally, we prove that $K\cap E^\one=\varnothing$. We first notice that, since $E\subset\Om$ is open and $K=\Om\cap\spt\,V$ with $\|V\|\le 2\,\H^n\mres\RR(K^*)$, if $K\cap E\ne\emptyset$, then $\H^n(\RR(K_*)\cap E)>0$; and since $E\subset E_*^\one$ by construction, we arrive at a contradiction with \eqref{obe 1}. Hence, $K\cap E=\varnothing$. Now, if $x\in K\cap E^\one$, then, by \eqref{upper density estimates proof}, $x\not\in\Om\cap\pa E$; combining this with $K\cap E=\varnothing$, we find $K\cap E^\one\subset\Om\setminus\cl(E)\subset E^\zero$, and thus $K\cap E^\one=\varnothing$.

    \medskip

    \noindent {\it Step two}: For every $v_1\ge0$ and $v_2>0$ we have
    \begin{equation}
      \label{psiv1 v2}
      \Psi_{\rm bk}(v_1+v_2)\le\Psi_{\rm bk}(v_1)+(n+1)\,\om_{n+1}^{1/(n+1)}\,v_2^{n/(n+1)}\,.
    \end{equation}
    Since $\Psi_{\rm bk}(0)=2\,\ell<\infty$, \eqref{psiv1 v2} implies in particular that $\Psi_{\rm bk}(v)<\infty$ for every $v>0$ (just take $v_1=0$ and $v_2=v$).

    \medskip

    Indeed, let $(K_1,E_1)$ be a competitor in $\Psi_{\rm bk}(v_1)$ and let $\{B_{r_j}(x_j)\}_j$ be a sequence of balls with $|x_j|\to\infty$ and $|E_1\cup B_{r_j}(x_j)|=v_1+v_2$ for every $j$. Setting for the sake of brevity $B_j=B_{r_j}(x_j)$, sine $\pa^*(E_1\cup B_j)$ is $\H^n$-contained in $(\pa^*E_1)\cup\pa B_j$ we have that $(K_2,E_2)$, with $K_2=K_1\cup\pa B_j$ and $E_2=E_1\cup B_j$, is a competitor of $\Psi_{\rm bk}(v_1+v_2)$. Since $\pa B_j\cap E_2^\zero=\varnothing$ implies $E_2^\zero\subset E_1^\zero\setminus\pa B_j$, we find that
    \begin{eqnarray*}
      \Psi_{\rm bk}(v_1+v_2)&\le&2\,\H^n\big(K_2\cap E_2^\zero)+\H^n(\Om\cap\pa^*E_2)
      \\
      &\le& 2\,\H^n(K_1\cap E_1^\zero\setminus\pa B_j)+\H^n(\Om\cap\pa^*E_1)+\H^n(\pa B_j)
      \\
      &\le& \F_{\rm bk}(K_1,E_1)+(n+1)\,\om_{n+1}^{1/(n+1)}\,|B_j|^{n/(n+1)}\,.
    \end{eqnarray*}
    Since $|x_j|\to\infty$, $|E_1|=v_1$, and $|E_1\cup B_{r_j}(x_j)|=v_1+v_2$ imply $|B_j|\to v_2$, we conclude by arbitrariness of $(K_1,E_1)$.

    \medskip

    \noindent {\it Step three}: Now let $\{(K_j,E_j)\}_j$ be a minimizing sequence  for $\Psi_{\rm bk}(v)$. Since $\Psi_{\rm bk}(v)<\infty$, assumption \eqref{energy bound capillarity 2} of Theorem \ref{theorem first closure theorem intro} holds. Therefore there is $(K,E)\in\KK_{\rm B}$ with $K \cup E^{\one}$ is $\C$-spanning $\wire$ and such that, up to extracting subsequences,
    \begin{equation}
      \label{lsc Fb section}
    \lim_{j\to\infty}|(E_j\Delta E)\cap B_R|=0\quad\forall R>0\,,\qquad  \liminf_{j\to\infty}\F_{\rm bk}(K_j,E_j)\ge\F_{\rm bk}(K,E)\,;
    \end{equation}
    actually, to be more precise, if $\mu$ denotes the weak-star limit of $\H^n\mres (\Om\cap\pa^*E_j) + 2\,\H^n \mres (\mathcal{R}(K_j) \cap E_j^\zero)$ in $\Om$, then
    \begin{equation}
      \label{mu larger than}
         \mu\ge 2\,\H^n \mres (K \cap E^\zero)+\H^n\mres (\Om\cap\partial^* E)\,.
    \end{equation}
    We {\it claim} that
    \[
    \mbox{$(K,E)$ is a minimizer of $\Psi_{\rm bk}(|E|)$}\,.
    \]
    (Notice that, at this stage of the argument, we are not excluding that $v^*:=v-|E|$ is positive, nor that $|E|=0$.) Taking into account \eqref{psiv1 v2}, to prove the claim it suffices to show that
    \begin{align}\label{lower bound}
        \Psi_{\rm bk}(v) \geq \F_{\rm bk}(K,E) + (n+1)\,\omega_{n+1}^{1/(n+1)}\,(v^*)^{n/(n+1)}\,.
    \end{align}
    To see this, we start noticing that, given any sequence $\{r_j\}_j$ with $r_j\to\infty$, by \eqref{lsc Fb section} and \eqref{mu larger than} we have that
    \begin{eqnarray}
    \label{right volume inside}
    &&E_j \cap B_{r_j}\toloc E\,,\qquad |E_j \setminus B_{r_j}|\to v^*\,,\qquad\mbox{as $j\to\infty$}\,,
    \\
    \label{right energy inside}
    &&\liminf_{j\to \infty}\,2\,\H^n\big(\RR(K_j)\cap E_j^\zero\cap B_{r_j}\big)+\H^n(B_{r_j}\cap\pa^*E_j)\ge
    \F_{\rm bk}(K,E)\,,
    \end{eqnarray}
    Moreover, since $|E_j|<\infty$, we can choose $r_j\to \infty$ so that $\H^n(E_j^\one\cap\pa B_{r_j})\to 0$, while, taking into account that $P(E_j\setminus B_{r_j})=\H^n(E_j^\one\cap \pa B_{r_j})+\H^n((\pa^*E_j)\setminus B_{r_j})$, we have
    \begin{eqnarray*}
    \F_{\rm bk}(K_j,E_j)&\ge&2\,\H^n\big(\RR(K_j)\cap E_j^\zero\cap B_{r_j}\big)+\H^n(B_{r_j}\cap\pa^*E_j)
    \\
    &&+P(E_j\setminus B_{r_j})-\H^n(E_j^\one\cap\pa B_{r_j})\,.
    \end{eqnarray*}
    By combining these facts with \eqref{right volume inside}, \eqref{right energy inside}, and the Euclidean isoperimetric inequality, we conclude that
    \begin{eqnarray*}
      \Psi_{\rm bk}(v)=\lim_{j\to\infty}\F_{\rm bk}(K_j,E_j)\ge  \F_{\rm bk}(K,E)+(n+1)\,\om_{n+1}^{1/(n+1)}\,\lim_{j\to\infty}|E_j\setminus B_{r_j}|^{n/(n+1)}\,,
    \end{eqnarray*}
    that is \eqref{lower bound}.

    \medskip

    \noindent {\it Step four}: We prove the existence of minimizers in $\Psi_{\rm bk}(v)$, $v>0$. By step three, there is $(K,E)\in\KK_{\rm B}$ such that $K\cup E^\one$ is $\C$-spanning $\wire$, $(K,E)$ is a minimizer of $\Psi_{\rm bk}(|E|)$ and, combining \eqref{psiv1 v2} and \eqref{lower bound},
    \begin{equation}
      \label{lower bound contra}
      \Psi_{\rm bk}(v)=\Psi_{\rm bk}(|E|)+(n+1)\,\omega_{n+1}^{1/(n+1)}\,(v-|E|)^{n/(n+1)}\,.
    \end{equation}
    Since $(K,E)$ is a minimizer in $\Psi_{\rm bk}(|E|)$, by step one we can assume that $K$ is $\H^n$-rectifiable and that both $K$ and $E$ are bounded. We can thus find $B_r(x_0)\cc\Om$ such that $|B_r(x_0)|=v-|E|$, $|B_r(x_0)\cap E|=0$, and $\H^n(K\cap B_r(x_0))=0$. In this way $(K_*,E_*)=(K\cup\pa B_r(x_0),E\cup B_r(x_0))\in\KK_{\rm B}$ is trivially $\C$-spanning $\wire$ and such that $|E_*|=v$, and thus is a competitor for $\Psi_{\rm bk}(v)$. At the same time,
    \[
    \F_{\rm bk}(K_*,E_*)=\F_{\rm bk}(K,E)+(n+1)\,\omega_{n+1}^{1/(n+1)}\,(v-|E|)^{n/(n+1)}
    \]
    so that, by \eqref{lower bound contra}, $(K_*,E_*)$ is a minimizer of $\Psi_{\rm bk}(v)$. Having proved that minimizers of $\Psi_{\rm bk}(v)$ do indeed exist, a further application of step one completes the proof of statement (i).

    \medskip

    \noindent {\it Step five}: We finally prove statement (ii). Let us consider a sequence $v_j\to 0^+$ and corresponding minimizers $(K_j,E_j)$ of $\Psi_{\rm bk}(v_j)$. By \eqref{psiv1 v2} with $v_1=0$ and $v_2=v_j$ we see that $\{(K_j,E_j)\}_j$ satisfies the assumptions of Theorem \ref{theorem first closure theorem intro}. Since $|E_j|=v_j\to 0$, setting $\mu_j= \H^n\mres (\Om\cap\pa^*E_j) + 2\,\H^n \mres (\mathcal{R}(K_j) \cap E_j^\zero)$, the conclusion of Theorem \ref{theorem first closure theorem intro} is that there are a Radon measure $\mu$ in $\Om$ and a Borel set $K$ such that $K$ is $\C$-spanning $\wire$ and $\mu_j\weakstar\mu$ for a Radon measure $\mu$ in $\Om$ such that $\mu\ge 2\,\H^n\mres K$. Thanks to \eqref{psiv1 v2} we thus have
    \begin{eqnarray*}
      2\,\ell&=&\lim_{j\to\infty}\Psi_{\rm bk}(0)+(n+1)\,\om_{n+1}^{1/(n+1)}\,v_j^{n/(n+1)}
      \ge \liminf_{j\to\infty}\Psi_{\rm bk}(v_j)
      \\
      &=&\liminf_{j\to\infty}\F_{\rm bk}(K_j,E_j)
      \ge \F_{\rm bk}(K,\emptyset)= 2\,\H^n(K)\ge 2\,\ell\,.
    \end{eqnarray*}
    We conclude that $\Psi_{\rm bk}(v_j)\to 2\,\ell$, $K$ is a minimizer of $\ell$, and $\mu=2\,\H^n\mres K$, thus completing the proof of the theorem.
    \end{proof}

    \begin{proof}
      [Proof of Theorem \ref{thm existence EL for bulk}] The identity \eqref{ellb equals ell} is proved in Theorem \ref{corollary existence plateau}. Conclusions (i), (ii), and (iii) are proved in Theorem \ref{thm existence EL for bulk section}.
    \end{proof}

\section{Equilibrium across transition lines in wet soap films (Theorem \ref{thm regularity for bulk})}\label{section regularity transition regions} We finally prove Theorem \ref{thm regularity for bulk}. We shall need two preliminary lemmas:

    \begin{lemma}[Representation of $\mathcal{F}_{\rm bk}$ via induced partitions]\label{lemma breakdown of F via open components}
    If $U\subset \Omega$ is a set of finite perimeter, $(K,E)\in \mathcal{K}_{\rm B}$ is such that $\mathcal{F}_{\rm bk}(K,E) <\infty$, and $\{U_i\}_{i}$ is a Lebesgue partition of $U\setminus E$ induced by $K$, then each $U_i$ has finite perimeter, and, setting $K^*=\bigcup_i\pa^*U_i$, we have
    \begin{align}\label{ke decomp formula}
        \F_{\rm bk}(K,E;U^\one)= \sum_{i} \H^n(U^\one\cap\pa^*U_i) + 2\,\mathcal{H}^n\big( U^\one\cap (K\setminus K^*) \cap E^\zero\big)\,;
    \end{align}
    see
    \begin{figure}
    \input{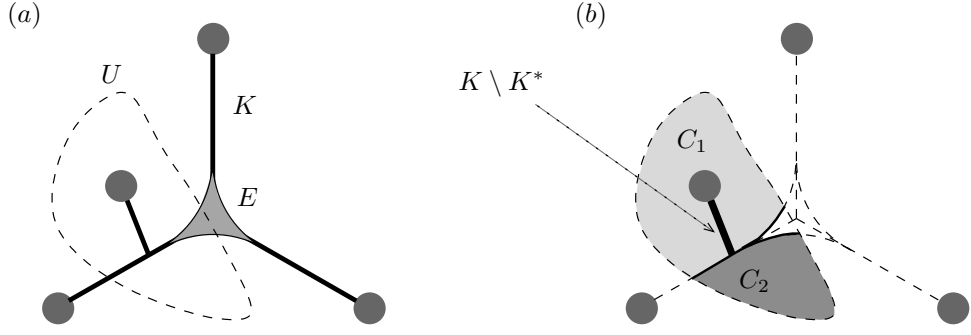}
    \caption{\small{The situation in Lemma \ref{lemma breakdown of F via open components}: (a) a depiction of the left hand side of \eqref{ke decomp formula}, where $K\setminus\pa^*E$ is drawn with a bold line to indicate that, in the computation of $\F_{\rm bk}(K,E;U^\one)=\H^n(U^\one\cap\pa^*E)+2\,\H^n(U^\one\cap K\setminus\pa^*E)$, it is counted with multiplicity $2$; (b) a depiction of the right hand side of \eqref{ke decomp formula}, where $K\setminus K^*$ is drawn with a bold line to indicate that it has to be counted with multiplicity $2$.}}
    \label{fig c1c2}
    \end{figure}
    Figure \ref{fig c1c2}.
    \end{lemma}

    \begin{proof} For each $i$, $\pa^eU_i$ is contained in $(\pa^eU)\cup(\pa^eE)\cup(U\setminus E)^\one$, where both $\pa^eU$ and $\pa^eE$ are $\H^n$-finite being $U$ and $E$ of finite perimeter, and where $(U\setminus E)^\one\cap\pa^eU_i$ is $\H^n$-contained in $K$ by assumption. Now, $(U\setminus E)^\one\subset\R^{n+1}\setminus E^\one$, so that
    \[
    \H^n\big((U\setminus E)^\one\cap\pa^eU_i\big)\le\H^n(K\setminus E^\one)\le \F_{\rm bk}(K,E)<\infty\,.
    \]
    This shows that, for each $i$, $U_i$ is a set of finite perimeter. As a consequence $\{U\cap E\}\cup\{U_i\}_i$ is a Caccioppoli partition of $U$, so that, by \eqref{caccioppoli partitions},
    \begin{equation}
      \label{h3}
      2\,\H^n\Big(U^\one\cap\Big[\pa^*(U\cap E)\cup K^*\Big]\Big)=
    \H^n\big(U^\one\cap\pa^*(U\cap E)\big)+\sum_i\H^n(U^\one\cap\pa^*U_i)\,,
    \end{equation}
    with $K^*=\bigcup_i\pa^*U_i$. Now, thanks to \eqref{E cap F}, \eqref{E minus F}, and the inclusion in \eqref{caccioppoli partitions}, we have
    \[
    U^\one\cap\pa^*(U\cap E)\ehn U^\one\cap\pa^*E \shn U^\one\cap K^*\,,
    \]
    which combined with \eqref{h3} gives
    \begin{equation}
      \label{cacciop partition}
      2\,\H^n(U^\one\cap K^*)=
    \H^n\big(U^\one\cap\pa^* E\big)+\sum_i\H^n(U^\one\cap\pa^*U_i)\,.
    \end{equation}
    Therefore, using in order
    \[
    U^\one\cap\pa^*E\shn U^\one\cap K^*\,,\qquad K^*\shn K\,,\qquad \H^n(K^*\cap E^\one)=0\,,
    \]
    and Federer's theorem \eqref{federer theorem}, we obtain
    \begin{eqnarray*}
      \F_{\rm bk}(K,E;U^\one)&=&\H^n(U^\one\cap\pa^*E)+2\,\H^n(U^\one\cap K\cap E^\zero)
      \\
      &=&2\,\H^n(U^\one\cap K^*\cap \pa^*E)-\H^n(U^\one\cap\pa^*E)
      \\
      &&+2\,\H^n(U^\one\cap K^*\cap E^\zero)+2\,\H^n(U^\one\cap (K\setminus K^*)\cap E^\zero)
      \\
      &=&2\,\H^n(U^\one\cap K^*)-\H^n(U^\one\cap\pa^*E)+2\,\H^n(U^\one\cap (K\setminus K^*)\cap E^\zero)
      \\
      &=&\sum_i\H^n(U^\one\cap\pa^*U_i)+2\,\H^n(U^\one\cap (K\setminus K^*)\cap E^\zero)\,,
    \end{eqnarray*}
    where in the last identity we have used \eqref{cacciop partition}.
    \end{proof}

    The next lemma is a slight reformulation of \cite[Lemma 10]{DLGM} and \cite[Lemma 4.1]{delederosaghira}.

    \begin{lemma}\label{dldrg lemma}
        If $\wire$ is closed, $\mathcal{C}$ is a spanning class for $\wire$, $S$ is relatively closed in $\Om$ and $\mathcal{C}$-spanning $\wire$, and $B\subset\Om$ is an open ball, then for any $\gamma \in \mathcal{C}$ we either have $\gamma(\SS^1) \cap (S\setminus B)\neq \varnothing$, or $\gamma(\SS^1)$ has non-empty intersection with at least two connected components of $B \setminus S$. In particular, it intersects the boundaries of both components.
    \end{lemma}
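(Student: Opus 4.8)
The plan is to argue by contradiction, detaching $\g$ from $S$ by a homotopy inside $\Om$. Fix $\g\in\C$; if $\g(\SS^1)\cap(S\setminus B)\neq\varnothing$ there is nothing to prove, so assume $\g(\SS^1)\cap S\subseteq B$, noting that $\g(\SS^1)\cap S\neq\varnothing$ since $S$ is $\C$-spanning $\wire$. Suppose, towards a contradiction, that $\g(\SS^1)$ has non-empty intersection with at most one connected component $U$ of the open set $B\setminus S$. The goal is to produce $\g'\in\C$ with $\g'(\SS^1)\cap S=\varnothing$, which is absurd.

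First I would set $A=\g^{-1}(S)$, which is compact (as $S$ is relatively closed in $\Om$) and contained in the open subset $\g^{-1}(B)$ of $\SS^1$; moreover, since every $s\in\pa A$ has $\g(s)\in S\subseteq B$, the points of $\SS^1$ lying just outside $A$ near $\pa A$ are mapped by $\g$ into $B\setminus S$. Hence I can choose finitely many pairwise disjoint closed arcs $J_1,\dots,J_p\subseteq\g^{-1}(B)$ whose interiors cover $A$ and whose endpoints avoid $A$, so that $\g(J_l)\subseteq B$ and $\g(\pa J_l)\subseteq B\setminus S$; by the contradiction hypothesis this forces $\g(\pa J_l)\subseteq U$ for each $l$. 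Since $U$ is open and connected, hence path-connected, and $B$ is contractible, for each $l$ I would replace $\g|_{J_l}$ by a smooth arc valued in $U$ with the same endpoints, linked to $\g|_{J_l}$ by a homotopy relative to $\pa J_l$ carried out inside $B\subseteq\Om$; keeping $\g$ fixed on the complement of $J_1\cup\cdots\cup J_p$ glues these into a homotopy of loops in $\Om$ from $\g$ to a loop $\g'$. Because $A$ is contained in the interiors of the $J_l$, $\g=\g'$ avoids $S$ on the complement of the arcs, while $\g'$ maps each $J_l$ into $U\subseteq B\setminus S$; thus $\g'(\SS^1)\cap S=\varnothing$.

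The main obstacle is that $\C$ is only assumed stable under smooth families of \emph{embeddings}, so the homotopy above must be realized through embedded loops before one may conclude $\g'\in\C$. I would address this by performing the arc replacements one at a time and perturbing each intermediate loop to a smooth embedding, using that $\SS^1$ has positive codimension in $\R^{n+1}$ together with standard general-position and ambient-isotopy arguments; this is the step needing the most care, and in the borderline dimension $n=1$ genericity alone is insufficient, so one constructs the isotopy explicitly. Granting this, $\g'\in\C$ with $\g'(\SS^1)\cap S=\varnothing$ contradicts that $S$ is $\C$-spanning $\wire$, and therefore $\g(\SS^1)$ meets at least two distinct connected components $C_1,C_2$ of $B\setminus S$. (The excluded degenerate case $\g(\SS^1)\subseteq S$, where $\g$ would meet no component of $B\setminus S$, is handled by the same argument after a preliminary embedded perturbation of $\g$, and does not arise in the applications, where $S$ is $\H^n$-finite.)

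Finally, to pass from ``meets $C_1,C_2$'' to ``meets $\pa C_1$ and $\pa C_2$'', I would use that $C_1$ and $C_2$ are disjoint open sets, so $C_2\subseteq\R^{n+1}\setminus\cl(C_1)$: the connected set $\g(\SS^1)$ then meets both $C_1$ and $\R^{n+1}\setminus\cl(C_1)$, and since $\g(\SS^1)\cap C_1$ and $\g(\SS^1)\setminus\cl(C_1)$ are non-empty and relatively open in $\g(\SS^1)$, connectedness forces $\g(\SS^1)\cap\pa C_1\neq\varnothing$; the same reasoning gives $\g(\SS^1)\cap\pa C_2\neq\varnothing$, completing the proof.
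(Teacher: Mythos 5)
Your overall strategy is the same as the paper's (Lemma \ref{dldrg appendix 2d lemma}): assuming $\g(\SS^1)\cap(S\setminus B)=\varnothing$ and that $\g$ meets at most one component $U$ of $B\setminus S$, you reroute the portions of $\g$ touching $S$ through $U$ and contradict the $\C$-spanning property; covering $\g^{-1}(S)$ by arcs inside $\g^{-1}(B)$ instead of cutting $\g$ along $\pa B$ is a harmless (indeed slightly more flexible) variant. The genuine gap sits exactly at the step you flag and then defer: producing a replacement loop that is an \emph{embedding}. For $n\ge 2$ this is a general-position matter and the statement is already in \cite[Lemma 4.1]{delederosaghira}; the entire reason Appendix \ref{sec: geometric remark appendix} exists is the planar case $n=1$, where two replacement arcs drawn in the same component $U$, together with the untouched portion of $\g$, will in general be forced to cross, and no genericity or small-perturbation argument removes such crossings. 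The paper's proof consists precisely of the construction you omit: the nestedness property \eqref{arc claim} of the maximal arcs (via the Jordan curve theorem), the path-connectedness \eqref{its path connected} of the region $W_1\setminus S$ cut off by the first replacement arc, and the iterative placement of the remaining arcs inside the resulting nested regions so that they are pairwise disjoint. Writing that ``one constructs the isotopy explicitly'' leaves out what is, in the plane, the mathematical content of the lemma.

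Two further points. First, your mechanism for concluding $\g'\in\C$ is not sound in the physical case $n=2$: in $\R^3$ an embedded loop homotopic to $\g$ need not be isotopic to it (rerouting arcs inside $B$ can change the knot type), so ``standard general-position and ambient-isotopy arguments'' cannot in general upgrade your homotopy to a smooth family of embeddings. The paper does not attempt this: it produces a piecewise smooth \emph{embedding} $\overline{\g}$ agreeing with $\g$ outside $B$, hence homotopic to $\g$ in $\Om$, and then exploits the stability of the spanning class under homotopies between embedded loops, as in \cite[Lemma 10]{DLGM}, by approximating $\overline{\g}$ in $C^0$ with elements of $\C$; no isotopy is ever constructed, and insisting on one makes the argument unfixable by genericity alone. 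Second, the parenthetical dismissal of the case $\g(\SS^1)\subseteq S$ is not a proof: if $S$ contains a neighborhood of $\g(\SS^1)$, no ``preliminary embedded perturbation'' can help, so this case must be excluded or treated by an actual argument rather than by appealing to the applications.
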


    We are now ready for the proof of Theorem \ref{thm regularity for bulk}.

    \begin{proof}[Proof of Theorem \ref{thm regularity for bulk}] The opening part of the statement of Theorem \ref{thm regularity for bulk} is Theorem \ref{thm existence EL for bulk section}-(i), therefore we can directly consider a minimizer $(K,E)\in\KK$ of $\Psi_{\rm bk}(v)$ such that both $E$ and $K$ are bounded, $K\cup E$ is $\C$-spanning $\wire$, and
    \begin{equation}
      \label{no collapsing proof}
      K\cap E^\one=\varnothing\,,
    \end{equation}
    and begin by proving the existence of a closed set $\Sigma\subset K$ closed such that (i): $\Sigma=\varnothing$ if $1\leq n \leq 6$, $\Sigma$ is locally finite in $\Omega$ if $n=7$, and $\mathcal{H}^{s}(\Sigma)=0$ for every $s>n-7$ if $n\geq 8$; (ii): $(\pa^*E)\setminus\Sigma$ is a smooth hypersurface with constant mean curvature; (iii) $K\setminus(\cl(E)\cup\Sigma)$ is a smooth minimal hypersurface; (iv)$_\a$: if $x\in[\Om\cap(\pa E\setminus\pa^*E)]\setminus\Sigma$, then there are $r>0$, $\nu\in\SS^n$, $u_1,u_2\in C^{1,\a}(\DD_r^\nu(x);(-r/4,r/4))$ ($\a\in(0,1/2)$ arbitrary) such that $u_1(x)=u_2(x)=0$, $u_1\le u_2$ on $\DD_r^\nu(x)$, $\{u_1<u_2\}$ and ${\rm int}\{u_1=u_2\}$ are both non-empty, and
    \begin{eqnarray}\label{grafico K proof}
    \CC_r^\nu(x)\cap K&=&\cup_{i=1,2}\big\{y+u_i(y)\,\nu:y\in\DD_r^\nu(x)\big\}\,,
    \\\label{grafico pa E proof}
    \CC_r^\nu(x)\cap \pa^*E&=&\cup_{i=1,2}\big\{y+u_i(y)\nu:y\in\{u_1<u_2\}\big\}\,,
    \\\label{grafico E proof}
    \CC_r^\nu(x)\cap E&=&\big\{y+t\,\nu:y\in\{u_1<u_2\}\,,u_1(x)<t<u_2(x)\big\}\,.
    \end{eqnarray}
    (The sharp version of conclusion (iv), that is conclusion (iv)$_\a$ with $\a=1$, and conclusion (v), will be proved in the final step five of this proof.) The key step to prove conclusions (i)--(iv)$_\a$ is showing the validity of the following claim.

    \medskip

    \noindent {\it Claim}: There exist positive constants $\Lambda$ and $r_0$ such that if $B_{2r}(x)\cc \Omega$, then, denoting by $\{U_j\}_j$ the open connected components of $B_{2r}(x)\setminus (E \cup K)$,
    \begin{eqnarray}
    \label{K char}
    &&B_r(x)\cap K = B_r(x)\cap\cup_j\pa U_j\,,
    \\
    \label{finitely many in BR}
    &&\#\big\{i:B_r(x)\cap U_j\ne\varnothing\}<\infty\,,
    \\
    \label{normalization}
    &&B_{2\,r}(x)\cap \cl (\pa^* U_j) =B_{2\,r}(x)\cap \pa U_j\,,
    \\
    \label{lambda minimality ineq for C}
    && P(U_j;B_r(x))\leq P(V_j;B_r(x)) + \Lambda\,|U_j\Delta V_j|\,,
    \end{eqnarray}
    whenever $V_j$ satisfies $V_j\Delta U_j \cc B_{r}(x)$ and $\diam(U_j\Delta V_j)<r_0$.

    \medskip

    \noindent {\it Deduction of (i)-(iv) from the claim}: Let $\{B_{2r_i}(x_i)\}_{i\in \mathbb{N}}$ be a countable family of balls, locally finite in $\Omega$, such that $B_{2r_i}(x_i)\cc \Omega$ and $\Omega = \cup_i B_{r_i}(x_i)$. Setting for brevity
    \[
    \Om_i=B_{r_i}(x_i)\,,
    \]
    by \eqref{finitely many in BR} there are finitely many connected components $\{U_j^i\}_{j=1}^{J_i}$ of $B_{2r_i}(x_i)\setminus (E \cup K)$ such that $U_j^i \cap \Om_i\neq \varnothing$. Thanks to \eqref{lambda minimality ineq for C}, we deduce from \cite[Theorem 28.1]{maggiBOOK} that, if we set $\Sigma_j^i=\Om_i\cap(\pa U_j^i \setminus \pa^* U_j^i)$, then $\Om_i\cap\pa^*U_j^i$ is a $C^{1,\a}$-hypersurface for every $\a\in(0,1/2)$, and $\Sigma_j^i$ is a closed set that satisfies the dimensional estimates listed in conclusion (i). In particular, if we set
    \begin{equation}\label{def of singular set}
    \Sigma=\cup_{i\in \mathbb{N}} \cup_{j=1}^{J_i} \Sigma^i_j\,,
    \end{equation}
    then $\Sigma\subset K$ thanks to $\Sigma_j^i\subset \Om_i\cap \pa U_j^i $ and to \eqref{K char}, and conclusion (i) holds by the local finiteness of the covering $\{B_{2r_i}(x_i)\}_i$ of $\Om$ and from $J_i<\infty$ for every $i$. Before moving to prove the remaining conclusions, we first notice that \eqref{K char} gives
    \begin{eqnarray}\nonumber
    \Om_i\cap K\setminus\Sigma&=&\Om_i\cap\cup_{j=1}^{J_i}\pa U_j^i\setminus\Sigma
    \\
    &\subset&\Om_i\cap\cup_{j=1}^{J_i} (\pa U_j^i\setminus\Sigma^i_j)
    \,\,=\,\,\Om_i\cap\cup_{j=1}^{J_i} \pa^* U_j^i\,;
    \label{K breakdown}
    \end{eqnarray}
    second, we notice that, since $K$ is $\H^n$-finite,
    \begin{equation}
      \label{is cacc}
      \mbox{$\{E\cap\Om_i,U_i^j\cap\Om_i\}_{j=1}^{J_i}$ is a Caccioppoli partition of $\Om_i$}\,;
    \end{equation}
    finally, we recall that, by \eqref{euler lagrange equation bulk}, for every $X\in C^1_c(\Om;\mathbb{R}^{n+1})$ it holds
    \begin{align}\label{euler lagrange equation bulk 2}
        \lambda\, \int_{\partial^* E} X \cdot \nu_{E} \,d\H^n = \int_{\partial^* E} \mathrm{div}^{K}\,X\,d\H^n
         + 2\,\int_{K \cap E^{\zero}} \mathrm{div}^{K}\, X\,d\H^n\,.
    \end{align}

    \noindent {\it To prove conclusion (ii)}: Given $x\in\Om\cap\pa^*E\setminus\Sigma$, there is $i\in\N$ such that $x\in \Om_i\cap\pa^*E$. By $\Om\cap\pa^*E\subset K$ and by \eqref{K breakdown} there is $j(x)\in\{1,...,J_i\}$ such that $x\in\pa^*U_{j(x)}^i$. By \eqref{is cacc}, we can use \eqref{caccioppoli exactly two} and $x\in\Om\cap\pa^*E\cap\pa^*U_{j(x)}^i$ to deduce that
    \begin{equation}
      \label{not in}
      x\not\in\cup_{j\ne j(x)}\pa^*U_j^i\,.
    \end{equation}
    Let $r>0$ be such that $B_r(x)\cap\pa^*U_{j(x)}^i$ is a $C^1$-hypersurface. Since $\Sigma$ contains $\cup_j\pa U_j^i$ and \eqref{normalization} holds, \eqref{not in} implies that there is $r>0$ such that
    \begin{equation}
      \label{taking into}
      B_r(x)\cc\Om_i\setminus\Sigma\,,\qquad B_r(x)\cap \cup_j\pa U_j^i=B_r(x)\cap\pa U_{j(x)}^i=B_r(x)\cap\pa^* U_{j(x)}^i\,.
    \end{equation}
    Since $B_r(x)\cap \cup_{j\ne j(x)}\pa U_j^i=\varnothing$ and $B_r(x)\cap U_{j(x)}^i\ne\varnothing$, we also have that
    \[
    B_r(x)\cap \cup_j U_j^i=B_r(x)\cap U_{j(x)}^i\,,
    \]
    and thus, by \eqref{is cacc}, that $\{E\cap B_r(x),U_{j(x)}^i\cap B_r(x)\}$ is an $\H^n$-partition of $B_r(x)$. In particular, $B_r(x)\cap\pa^*E=B_r(x)\cap\pa^*U_{j(x)}^i$: intersecting with $B_r(x)$ in \eqref{K breakdown} and taking into account \eqref{taking into}, we conclude that
    \begin{eqnarray*}
    B_r(x)\cap K&=&B_r(x)\cap[\Om_i\cap K\setminus\Sigma]\,\,\subset\,\, B_r(x)\cap[\Om_i\cap\cup_{j=1}^{J_i} \pa^* U_j^i]\,\,=\,\,B_r(x)\cap\pa^*U_{j(x)}^i
    \\
    &=&B_r(x)\cap\pa^*E\,,
    \end{eqnarray*}
    and \eqref{euler lagrange equation bulk 2} implies that, for every $X\in C^1_c(B_r(x);\mathbb{R}^{n+1})$,
    \begin{align}\label{euler lagrange equation bulk 3}
        \lambda \int_{\partial^* E} X \cdot \nu_{E} \,d\H^n = \int_{\partial^* E} \mathrm{div}^{K}\,X\,d\H^n\,.
    \end{align}
    Hence, $\pa^*E$ can be represented, locally in $B_r(x)$, as the graph of distributional solutions of class $C^{1,\a}$ to the constant mean curvature equation. By Schauder's theory, $B_r(x)\cap\pa^*E$ is a smooth hypersurface whose mean curvature with respect to $\nu_E$ is equal to $\l$ thanks to \eqref{euler lagrange equation bulk 3}.

    \medskip

    \noindent {\it To prove conclusions (iii) and (iv)}: Let us now pick $x\in K\setminus(\Sigma\cup\pa^*E)$ and let $i\in\N$ be such that $x\in\Om_i\cap K$. Let $i\in\N$ be such that $x\in\Om_i$. By \eqref{K breakdown} there is $j(x)\in\{1,...,J_i\}$ such that $x\in\pa^*U_{j(x)}^i$. By \eqref{is cacc} and by \eqref{caccioppoli exactly two}, either $x\in\pa^*E$ (which is excluded from the onset), or there is $k(x)\ne j(x)$ such that $x\in\pa^*U_{k(x)}^i$. We have thus proved that
    \begin{equation}
      \label{jx kx}
      x\in\pa^*U_{j(x)}^i\cap \pa^*U_{k(x)}^i\,,\qquad x\not\in\cup_{j\ne j(x),k(x)}\pa^*U_j^i\,.
    \end{equation}
    To prove conclusion (iii) we notice that if we are in the case when $x\in K\setminus(\Sigma\cup\pa E)=K\setminus(\Sigma\cup\cl(E))$ (thanks to $K\cap E=\varnothing$), then $x\not\in\cl(E)$ implies that, for some $r>0$, $B_r(x)\cap(\Sigma\cup\cl(E))=\emptyset$. In particular, by \eqref{is cacc} and \eqref{jx kx}, $\{B_r(x)\cap U_{j(x)}^i,B_r(x)\cap U_{k(x)}^i\}$ is an $\H^n$-partition of $B_r(x)$, and by \eqref{K breakdown}
    \[
    B_r(x)\cap K=B_r(x)\cap \pa^*U_{j(x)}^i=B_r(x)\cap\pa^*U_{k(x)}^i\,,
    \]
    is a $C^{1,\a}$-hypersurface. Under these conditions, \eqref{euler lagrange equation bulk 2} boils down to
    \begin{align}\label{euler lagrange equation bulk 5}
    \int_{K} \mathrm{div}^{K}\, X\,d\H^n=0\,,\qquad\forall X\in C^1_c(B_r(x);\mathbb{R}^{n+1})\,,
    \end{align}
    so that $K$ can be represented, locally in $B_r(x)$, as the graph of distributional solutions to the minimal surfaces equation of class $C^{1,\a}$. By Schauder's theory, $B_r(x)\cap K$ is a smooth minimal surface.

    To finally prove conclusion (iv), let us assume that $x\in\Om\cap(\pa E\setminus\pa^*E)\setminus\Sigma$. In this case \eqref{is cacc} and \eqref{jx kx} do not imply that $\{B_r(x)\cap U_{j(x)}^i,B_r(x)\cap U_{k(x)}^i\}$ is an $\H^n$-partition of $B_r(x)$; actually, by $\Om\cap\pa E=\Om\cap\cl(\pa^*E)$, the fact that $x\in\pa E$ implies that $B_s(x)\cap\pa^*E\ne\emptyset$ for every $s>0$, so that $|B_s(x)\cap E|>0$ for every $s>0$, and the situation is such that, for every $s<r$,
    \begin{equation}
      \label{again1}
      \mbox{$\{B_s(x)\cap U_{j(x)}^i,B_s(x)\cap U_{k(x)}^i, B_s(x)\cap E\}$ is an $\H^n$-partition of $B_s(x)$}
    \end{equation}
    with all three sets in the partition having positive measure.

    Now, by the first inclusion in \eqref{jx kx}, there exists $\nu\in\SS^n$ such that, up to further decrease the value of $r$ and for some $u_1,u_2\in C^{1,\a}(\DD_r^\nu(x);(-r/4,r/4))$ with $u_1(x)=u_2(x)=0$ and $\nabla u_1(x)=\nabla u_2(x)=0$ it must hold
    \begin{eqnarray*}
    \CC_r^\nu(x)\cap U_{j(x)}^i=\big\{y+t\,\nu:y\in\DD_r^\nu(x)\,,t> u_2(y)\big\}\,,
    \\
    \CC_r^\nu(x)\cap U_{k(x)}^i=\big\{y+t\,\nu:y\in\DD_r^\nu(x)\,,t< u_1(y)\big\}\,.
    \end{eqnarray*}
    By $U_{j(x)}^i\cap U_{k(x)}^i=\varnothing$ we have $u_1\le u_2$ on $\DD_r^\nu(x)$, so that \eqref{again1} gives
    \[
    \CC_r^\nu(x)\cap E=\big\{y+t\,\nu:y\in\{u_1<u_2\}\,,u_1(y)<t< u_2(y)\big\}\,,
    \]
    and $\{u_1<u_2\}$ is non-empty. Again by \eqref{jx kx} and \eqref{K breakdown} we also have that
    \begin{eqnarray*}
    \CC_r^\nu(x)\cap K&=&\cup_{k=1}^2\,\big\{y+u_k(y)\,\nu:y\in\DD_r^\nu(x)\big\}\,,
    \\
    \CC_r^\nu(x)\cap\pa^*U_{j(x)}^i\cap\pa^*U_{k(x)}^i&=&\big\{y+u_1(y)\,\nu:y\in\DD_r^\nu(x)\cap\{u_1=u_2\}\big\}\,,
    \\
    \CC_r^\nu(x)\cap\pa^*E&=&\cup_{k=1}^2\,\big\{y+u_k(y)\,\nu:y\in\DD_r^\nu(x)\cap\{u_1<u_2\}\big\}\,.
    \end{eqnarray*}
    This completes the proof of conclusion (iv)$_\a$.
    %
    \medskip

    \noindent{\it Proof of the claim}: Assuming without loss of generality that $x=0$, we want to find $\Lambda$ and $r_0$ positive such that if $B_{2r}\cc \Omega$, then, denoting by $\{U_j\}_j$ the open connected components of $B_{2r}\setminus (E \cup K)$, we have
    \begin{eqnarray}
      \label{K char2}
      &&B_r\cap K = B_r\cap\cup_j\pa U_j\,,
      \\
      \label{finitely many in BR 2}
      &&\#\big\{j:B_r\cap U_j\ne\varnothing\big\}<\infty\,,
      \\
      \label{normalization 2}
      &&B_{2\,r}\cap \cl (\pa^* U_j) =B_{2\,r}\cap \pa U_j\,,
    \end{eqnarray}
    and that $P(U_j;B_r)\leq P(V_j;B_r) + \Lambda\,|U_j\Delta V_j|$ whenever $V_j$ satisfies $V_j\Delta U_j \cc B_r$ and $\diam(U_j\Delta V_j)<r_0$.

    \medskip

    \noindent {\it Step one}: We prove that
    \begin{eqnarray}
    \label{K doesnt see them and theyre equivalent}
    K \cap \mathrm{int}\, U_j^\one = \varnothing\,,\qquad \mathrm{int}\, U_j^\one = U_j\quad \forall j\,.
    \end{eqnarray}
    To this end, we begin by noticing that, for every $j$,
    \begin{eqnarray}
    \label{one inclusion}
    B_{2\,r}\cap \pa U_j  &\subset& B_{2\,r}\cap K\,,
    \\
    \label{easy inclusions}
    U_j \,\,\subset\,\, {\rm int}(U_j^\one) &\subset& B_{2\,r} \cap \cl U_j \,\,\subset\,\, B_{2\,r}\cap (U_j \cup K)\,,
    \\
    \label{more easy inclusions}
    B_{2\,r}\cap \pa[{\rm int}(U_j^\one)]&\subset& B_{2\,r}\cap K\,.
    \end{eqnarray}
    Indeed, for every $k$ and $j$, $U_k\cap U_j=\varnothing$ with $U_k$ and $U_j$ open gives $U_k\cap\pa U_j=\varnothing$, so that $B_{2r}\cap\pa U_j\subset B_{2r}\setminus\cup_kU_k=B_{2\,r}\cap(E\cup K)=B_{2\,r}\cap K$ thanks to the fact that $E\cap\pa U_j=\varnothing$ (as $U_j\cap E=\varnothing$). Having proved \eqref{one inclusion}, one easily deduces the third inclusion in \eqref{easy inclusions}, while the first two are evident. Finally, from \eqref{easy inclusions}, and since $K$ is closed, we find
    \[
    B_{2\,r}\cap\cl\big({\rm int}(U_j^\one)\big)\subset B_{2\,r}\cap(\cl(U_j)\cup K)\,,
    \]
    so that subtracting ${\rm int}(U_j^\one)$, and recalling that $U_j\subset {\rm int}(U_j^\one)$ we find
    \[
    B_{2\,r}\cap \pa[{\rm int}(U_j^\one)]\subset B_{2\,r}\cap(K\cup\pa U_j)
    \]
    and deduce \eqref{more easy inclusions} from \eqref{one inclusion}.

    \medskip

    Next, we claim that,
    \begin{equation}
    \label{kprime E}
    \mbox{if $K_* = K \setminus \bigcup_j \intci$, then $(K_*,E)\in\KK$ and $K_*\cup E$ is $\C$-spanning}\,.
    \end{equation}
    {\it To prove that $(K_*,E) \in \mathcal{K}$}, the only assertion that is not immediate is the inclusion $\Omega \cap \pa E \subset K_*$. To prove it we notice that if $z\in \intci$, then $B_s(z)\subset \intci$ for some $s>0$, so that $U_j\cap E=\varnothing$ gives $|E\cap B_s(z)|=0$. Since $E$ is open this implies $B_s(z) \cap E = \varnothing$, hence $z\notin \pa E$.

    \medskip

    \noindent{\it To prove that $E \cup K_*$ is $\mathcal{C}$-spanning}: Since $E\cup K_*$ is relatively closed in $\Omega$, it suffices to verify that for arbitrary $\gamma \in \mathcal{C}$, $(K_* \cup E) \cap \gamma \neq \varnothing$. Since $K \setminus B_{2r} = K_* \setminus B_{2r}$, we directly assume that $(K \cup E) \cap (\gamma \setminus B_{2r})=\varnothing$. Since $K \cup E$ is $\mathcal{C}$-spanning $\wire$, by Lemma \ref{dldrg lemma}, there are two distinct connected components $U_j$ and $U_k$ of $B_{2r}\setminus(K\cup E)$ such that there is  $\gamma(\SS^1)\cap B_{2\,r}\cap (\pa U_j)\cap(\pa U_k)\ne\varnothing$. We conclude by showing that
    \begin{equation}
      \label{southwest}
      B_{2\,r}\cap(\pa U_j)\cap(\pa U_k)\subset K_*\,,\qquad\forall j\ne k\,.
    \end{equation}
    Indeed any point in $B_{2r}\cap(\pa U_j)\cap(\pa U_k)$ is an accumulation point for both $U_j$ and $U_k$, and thus, by \eqref{easy inclusions}, for both ${\rm int}U_j^\one$ and ${\rm int}U_k^\one$. Since $U_j\cap U_k=\emptyset$ implies $({\rm int}U_j^\one)\cap({\rm int}U_k^\one)=\emptyset$, an accumulation point for both ${\rm int}U_j^\one$ and ${\rm int}U_k^\one$ must lie in $[\pa({\rm int}U_j^\one)]\cap[\pa({\rm int}U_k^\one)]$. We thus deduce \eqref{southwest} from \eqref{more easy inclusions}, and complete the proof of \eqref{kprime E}.

    \medskip

    \noindent{\it To deduce \eqref{K doesnt see them and theyre equivalent} from \eqref{kprime E}, and complete step one}: By \eqref{kprime E}, $(K_*,E)$ is admissible in $\Psi_{\rm bk}(v)$. Since $(K,E)$ is a minimizer of $\Psi_{\rm bk}(v)$, we conclude that $\H^n(K\setminus K_*)=0$. Would there be $z\in{\rm int}(U_j^\one)\cap K$ for some $j$, then by \eqref{lower density estimates proof}, and with $\rho>0$ such that $B_\rho(z)\subset {\rm int}(U_j^\one)$, we would find
    \[
    c\,\rho^n\le\H^n(K\cap B_\rho(z))\le\H^n(K\cap {\rm int}(U_j^\one))\le\H^n(K\setminus K_*)=0\,.
    \]
    This shows that $K\cap{\rm int}(U_j^\one)=\varnothing$. Using this last fact in combination with  ${\rm int}(U_j^\one)\subset B_{2\,r}\cap (U_j\cap K)$ from \eqref{easy inclusions} we conclude that ${\rm int}(U_j^\one)\subset U_j$, and thus that ${\rm int}(U_j^\one)= U_j$ by the first inclusion in \eqref{easy inclusions}.

    \medskip

    \noindent{\it Step two}: We prove \eqref{normalization 2}, i.e. $B_{2\,r}\cap\cl(\pa^*U_j)=B_{2\,r}\cap\pa U_j$. The $\subset$ inclusion is a general fact, see \eqref{sofp1}. To prove the reverse inclusion we recall, again from \eqref{sofp1}, that $z\in B_{2\,r}\cap\cl(\pa^*U_j)$ if and only if $0<|B_\rho(z)\cap U_j|<|B_\rho|$ for every $\rho>0$. Now, if $z\in B_{2\,r}\cap\pa U_j$, then clearly, being $U_j$ open, we have $|U_j\cap B_\rho(z)|>0$ for every $\rho>0$; moreover, should $|B_\rho(z)\cap U_j|=|B_\rho|$ hold for some $\rho$, then we would have $z\in{\rm int}(U_j^\one)$, and thus $z\in U_j$ by \eqref{K doesnt see them and theyre equivalent}, a contradiction.

    \medskip

    \noindent{\it Step three}: We prove, for each $j$, the $\H^n$-equivalence of $\pa^*U_j$ and $\pa U_j$, that is
    \begin{align}\label{boundaries agree up to null}
    \H^n(B_{2\,r}\cap\pa U_j \setminus \pa^* U_j)=0\,.
    \end{align}
    By a standard argument \cite[Theorem 21.11]{maggiBOOK} it will suffice to prove the existence of $r_0>0$ and $\a,\b\in(0,1/2)$ (depending on $n$) such that, for each $j$ and each $z\in B_{2\,r}\cap \pa U_j$, it holds
    \begin{align}\label{upper lower volume bounds}
    \alpha\, |B_\rho| \leq |B_\rho(z) \cap U_j | \leq (1-\b) |B_\rho|\,,
    \end{align}
    for every $\rho<\min\{r_0,\dist(z,\pa B_{2\,r})\}$.

    \medskip

    \noindent{\it Proof of the lower bound in \eqref{upper lower volume bounds}}: Since diffeomorphic images of $\C$-spanning sets are $\C$-spanning, a standard argument using diffeomorphic volume fixing variations shows the existence of positive constants $\Lambda$ and $r_0$ such that if $(K',E') \in \mathcal{K}_{\rm B}$, $K' \cup (E')^\one$ is $\mathcal{C}$-spanning $\wire$, and $(K'\Delta K) \cup (E'\Delta E)\cc B_\rho(z)$ for some $\rho<r_0$ and $B_\rho(z)\cc B_{2\,r}$, then
    \begin{align}\label{lambda minimality ineq for F}
        \mathcal{F}_{\rm bk}(K,E) \leq \mathcal{F}_{\rm bk}(K',E') + \Lambda\,|E \Delta E'|\,.
    \end{align}
    We claim that we can apply \eqref{lambda minimality ineq for F} with
    \begin{align}\label{Eprime Kprime}
     E' = E \cup \big(B_\rho(z) \cap \cl U_j\big)\,,\quad K' = \big(K \cup (U_j^\one \cap \pa B_\rho(z)\big) \setminus (E')^\one\,,
    \end{align}
    where $\rho<r_0$, $B_\rho(z)\cc B_{2\,r}$, and
    \begin{equation}
      \label{choice of rho}
      \H^n\big(\pa B_\rho(z)\cap[\pa^*E\cup\pa^*U_j]\big)=\mathcal{H}^n(K \cap \pa B_\rho(z))=0\,.
    \end{equation}
    Indeed, $K'\cup (E')^\one$ contains $K\cup E^\one$, thus $K\cup E$ being $E$ open, and is thus $\C$-spanning. To check that $(K',E')\in\KK_{\rm B}$, we argue as follows. First, we notice that $\H^n(\{\nu_E=\nu_{B_\rho(z)\cap\cl(U_j)}\})=0$, since it is $\H^n$-contained in the union of $\pa B_\rho(z)\cap\pa^*E$ and $\{\nu_E=\nu_{\cl(U_j)}\}$, that are $\H^n$-negligible by \eqref{choice of rho} and by the fact that $\nu_E=-\nu_{\cl(U_j)}$ $\H^n$-a.e. on $\pa^*E\cap\pa^*\cl(U_j)$ thanks to $|E\cap\cl(U_j)|=0$. By $\H^n(\{\nu_E=\nu_{B_\rho(z)\cap\cl(U_j)}\})=0$ and \eqref{E cup F} we thus have
    \begin{equation}
      \label{t1}
      \Om\cap\pa^*E'\ehn\Om\cap\big\{\big[E^\zero \cap \pa^*\big(B_\rho(z) \cap \cl U_j\big)\big]\cup \big[\big(B_\rho(z) \cap \cl U_j\big)^\zero\cap \pa^*E\big]\big\}\,.
    \end{equation}
    Since $U_j$ is Lebesgue equivalent to $\cl(U_j)$ (indeed, $B_{2\,r}\cap\pa U_j\subset K$), we have $U_j^\one=[\cl(U_j)]^\one$ and $\pa^*[\cl(U_j)]=\pa^* U_j$, so that \eqref{E cap F}  and \eqref{choice of rho} give
    \begin{eqnarray}\nonumber
    &&\pa^*\big(B_\rho(z)\cap\cl(U_j)\big)\ehn\big\{[\cl(U_j)]^\one\cap\pa B_\rho(z)\big\}\cup \big\{B_\rho(x)\cap\pa^*[\cl(U_j)]\big\}\,,
      \\\label{t half}
      &&=\big(U_j^\one\cap\pa B_\rho(z)\big)\cup \big(B_\rho(x)\cap\pa^* U_j\big)\subset \big(U_j^\one\cap\pa B_\rho(z)\big)\cup K\,,
    \end{eqnarray}
    by $B_{2\,r}\cap\pa U_j\subset K$. By \eqref{t1} and $\H^n((E')^\one\cap\pa^*E')=0$ we thus find that
    \begin{equation}
      \label{t2}
      \Om\cap\pa^*E'\cap\pa^*\big(B_\rho(z)\cap\cl(U_j)\big)\shn K'\,.
    \end{equation}
    Moreover, by $\Om\cap\pa^*E\subset\Om\cap\pa E\subset K$ and
    \[
    (\pa^*E)\cap\big(B_\rho(z) \cap \cl U_j\big)^\zero\subset E^\half\cap \big(B_\rho(z) \cap \cl U_j\big)^\zero\subset \R^{n+1}\setminus (E')^\one\,,
    \]
    we find $(\pa^*E)\cap\big(B_\rho(z) \cap \cl U_j\big)^\zero\subset K\setminus (E')^\one\subset K'$, which combined with \eqref{t2} finally proves the $\H^n$-containment of $\Om\cap\pa^*E'$ into $K'$, and thus $(K',E')\in\KK_{\rm B}$. We have thus proved that $(K',E')$ as in \eqref{Eprime Kprime} is admissible into \eqref{lambda minimality ineq for F}. Since $\F_{\rm bk}(K,E;\pa B_\rho(z))=0$ by \eqref{choice of rho} and $\F_{\rm bk}(K,E;A)=\F_{\rm bk}(K',E';A)$ if $A=\Om\setminus\cl(B_\rho(z))$, we deduce from \eqref{lambda minimality ineq for F} that
    \begin{align}\label{lambda minimality ineq for F2}
        \mathcal{F}_{\rm bk}(K,E;B_\rho(z)) \leq \mathcal{F}_{\rm bk}(K',E';\cl(B_\rho(z))) + \Lambda\,|E \Delta E'|\,.
    \end{align}
    To exploit \eqref{lambda minimality ineq for F2}, we first notice that $\{B_\rho(z)\cap U_k\}_k$ is a Lebesgue partition of $B_\rho(z)\setminus E$ with $B_\rho(z)^\one\cap\pa^*(B_\rho(z)\cap U_k)=B_\rho(z)\cap\pa^*U_k$ for every $k$, so that, by Lemma \ref{lemma breakdown of F via open components},
    \begin{equation}
      \label{t3}
      \F_{\rm bk}(K,E;B_\rho(z))=2\,\H^n\Big(B_\rho(z)\cap E^\zero\cap \Big(K\setminus\bigcup_k\pa^*U_k\Big)\Big)+\sum_kP(U_k;B_\rho(z))\,.
    \end{equation}
    Similarly, $\{B_\rho(z)\cap U_k\}_{k\ne j}$ is a Lebesgue partition of $B_\rho(z)\setminus E'$, so that again by Lemma \ref{lemma breakdown of F via open components} we find
    \begin{eqnarray}\nonumber
      &&\F_{\rm bk}(K',E';B_\rho(z))=2\,\H^n\Big(B_\rho(z)\cap (E')^\zero\cap \Big(K'\setminus\bigcup_{k\ne j}\pa^*U_k\Big)\Big)+\sum_{k\ne j}P(U_k;B_\rho(z))
      \\\label{t4}
      &&=
      2\,\H^n\Big(B_\rho(z)\cap (E')^\zero\cap \Big(K\setminus\bigcup_k\pa^*U_k\Big)\Big)+\sum_{k\ne j}P(U_k;B_\rho(z))
    \end{eqnarray}
    where in the last identity we have used that, by \eqref{Eprime Kprime}, we have $B_\rho(z)\cap(E')^\zero\cap\pa^*U_j=0$ and $B_\rho(z)\cap K' \cap (E')^\zero = B_\rho(z)\cap K\cap (E')^\zero$. Combining \eqref{lambda minimality ineq for F2}, \eqref{t3}, \eqref{t4} and the fact that $(E')^\zero\subset E^\zero$, we find that
    \begin{eqnarray}\label{t5}
      P(U_j;B_\rho(z))\le \mathcal{F}_{\rm bk}\big(K',E';\pa B_\rho(z)\big)+ \Lambda\,|B_\rho(z)\cap U_j|\,.
    \end{eqnarray}
    The first term in $\mathcal{F}_{\rm bk}\big(K',E';\pa B_\rho(z)\big)$ is $P(E';\pa B_\rho(z))$: taking into account $\H^n(\pa^*E\cap\pa B_\rho(z))=0$, by \eqref{t1} and the second identity in \eqref{t half} we find
    \begin{eqnarray*}
     P(E';\pa B_\rho(z))&=&\H^n\big(\pa B_\rho(z)\cap E^\zero \cap \pa^*\big(B_\rho(z) \cap \cl U_j\big)\big)
     \\
     &=&\H^n(E^\zero\cap U_j^\one\cap\pa B_\rho(z))=\H^n(U_j^\one\cap\pa B_\rho(z))\,,
    \end{eqnarray*}
    while for the second term in $\mathcal{F}_{\rm bk}\big(K',E';\pa B_\rho(z)\big)$, by $\H^n(K\cap\pa B_\rho(z))=0$,
    \[
    \H^n(K'\cap (E')^\zero\cap\pa B_\rho(z))=\H^n((E')^\zero\cap U_j^\one\cap\pa B_\rho(z))=0
    \]
    since $(E')^\zero\subset(B_\rho(z)\cap\cl(U_j))^\zero$ and $B_\rho(z)\cap\cl(U_j)$ has positive Lebesgue density at points in $U_j^\one\cap\pa B_\rho(z)$. Having thus proved that $\mathcal{F}_{\rm bk}\big(K',E';\pa B_\rho(z)\big)=\H^n(U_j^\one\cap\pa B_\rho(z))$, we conclude from \eqref{t5} that
    \[
    P(U_j;B_\rho(z))\le \H^n(U_j^\one\cap\pa B_\rho(z))+ \Lambda\,|B_\rho(z)\cap U_j|\,,
    \]
    for a.e. $\rho<r_0$. Since $z\in B_{2\,r}\cap\pa U_j=B_{2\,r}\cap\cl(\pa^*U_j)$ and \eqref{sofp1} imply that $|B_\rho(z)\cap U_j|>0$ for every $\rho>0$, a standard argument (see, e.g. \cite[Theorem 21.11]{maggiBOOK}) implies that, up to further decrease the value of $r_0$ depending on $\Lambda$, and for some constant $\a=\a(n)\in(0,1/2)$, the lower bound in \eqref{upper lower volume bounds} holds true.

    \medskip

    \noindent{\it Proof of the upper bound in \eqref{upper lower volume bounds}}: We argue by contradiction that, no matter how small $\beta\in(0,1/2)$ is, we can find $j$, $z\in B_{2\,r}\cap\pa U_j$, and $\rho<\min\{r_0,\dist(z,\pa B_{2\,r})\}$, such that
    \begin{align}\label{contradiction upper density}
        |B_\rho(z) \cap U_j| > (1-\b)\,|B_\rho|\,.
    \end{align}
    We first notice that for every $k\ne j$ it must be $B_{\rho/2}(z)\cap\pa U_k=\varnothing$: indeed if $w\in B_{\rho/2}(z)\cap\pa U_k$ for some $k\ne j$, then by the lower bound in \eqref{upper density estimates proof} and by \eqref{contradiction upper density} we find
    \[
    \a\,|B_{\rho/2}|\le|U_k\cap B_{\rho/2}(w)|\le |B_\rho(z)\setminus U_j|<\b\,|B_\rho|
    \]
    which gives a contradiction if $\b<\a/2^{n+1}$. By $B_{\rho/2}(z)\cap\pa U_k=\varnothing$ it follows that
    \begin{equation}
      \label{theball}
      B_{\rho/2}(z)\subset\cl(U_j)\cup\cl(E)\,.
    \end{equation}
    Let us now set
    \begin{equation}
      \label{nuovo comp}
      E'=E\setminus B_{\rho/2}(z)\,,\qquad K'=\big(K\setminus B_{\rho/2}(z)\big)\cup\big(E^\one\cap\pa B_{\rho/2}(z)\big)\,.
    \end{equation}
    By \eqref{E minus F}, if $\H^n(\pa^*E\cap\pa B_{\rho/2})=0$, then $(K',E')\in\KK$, since $(\Om\setminus B_{\rho/2}(z))\cap\pa^*E\subset K\setminus B_{\rho/2}(z)\subset K'$ implies
    \[
    \Om\cap\pa^*E'\ehn\Om\cap\big\{\big((\pa^*E)\setminus B_{\rho/2}(z)\big)\cup\big(E^\one\cap\pa B_{\rho/2}(z)\big)\big\}\subset K'\,.
    \]
    Moreover $K'\cup(E^\one)'$ is $\C$-spanning $\wire$ since it contains $(K\cup E)\setminus B_{\rho/2}(z)$, and
    \begin{equation}\label{uv}
    \mbox{$(K \cup E)\setminus B_{\rho/2}(z)$ is $\mathcal{C}$-spanning $\wire$}\,.
    \end{equation}
    Indeed, if $\g\in\C$ and $\g(\SS^1)\cap (K\cup E)\setminus B_{\rho/2}(z)=\emptyset$, then by applying Lemma \ref{dldrg lemma} to $S=K\cup E$ and $B=B_{2\,r}$ we see that either $\g(\SS^1)\cap (K\cup E)\setminus B_{2\,r}\ne\varnothing$ (and thus $\g(\SS^1)\cap(K \cup E)\setminus B_{\rho/2}(z)\ne\varnothing$ by $B_{\rho/2}(z)\subset B_r$), or there are $k\ne h$ such that $\g(\SS^1)\cap \pa U_k\ne\varnothing$ and $\g(\SS^1)\cap \pa U_h\ne\varnothing$. Up to possibly switch $k$ and $h$, we have that $k\ne j$, so that \eqref{theball} implies that $\varnothing\ne\g(\SS^1)\cap \pa U_k=\g(\SS^1)\cap \pa U_k\setminus B_{\rho/2}(z)$, where the latter set is contained in $K\setminus B_{\rho/2}(z)$ by \eqref{K char2} and $B_{\rho/2}(z)\subset B_r$. This proves \eqref{uv}.

    \medskip

    We can thus plug the competitor $(K',E')$ defined in \eqref{nuovo comp} into \eqref{lambda minimality ineq for F2}, and find
    \[
    \F_{\rm bk}(K,E;B_{\rho/2}(z))\le\F_{\rm bk}\big(K',E';\cl(B_{\rho/2}(z))\big)+\Lambda\,|E\cap B_{\rho/2}(z)|\,,
    \]
    for every $\rho<\min\{r_0,\dist(z,\pa B_{2\,r})\}$ such that $\H^n(K\cap\pa B_{\rho/2}(z))=0$. Now, by Lemma \ref{lemma breakdown of F via open components} and by \eqref{theball} we have
    \[
    \F_{\rm bk}(K,E;B_{\rho/2}(z))\ge P(U_j;B_{\rho/2}(z))=P(E;B_{\rho/2}(z))\,,
    \]
    while \eqref{E cap F} gives
    \[
    \cl(B_{\rho/2}/z)\cap K'\ehn\cl(B_{\rho/2}/z)\cap\pa^*E'\ehn E^\one\cap\pa B_{\rho/2}(z)\,,
    \]
    thus proving that, for a.e. $\rho<\min\{r_0,\dist(z,\pa B_{2\,r})\}$,
    \[
    P(E;B_{\rho/2}(z))\le\H^n(E^\one\cap B_{\rho/2}(z))+\Lambda\,|E\cap B_{\rho/2}(z)|\,.
    \]
    Since $z\in B_{2\,r}\cap\pa U_j$ and $B_{\rho/2}(z)\cap\pa^*U_j=B_{\rho/2}(z)\cap\pa^*E$, by \eqref{sofp1} we see that $|E\cap B_{\rho/2}(z)|>0$ for every $\rho<\min\{r_0,\dist(z,\pa B_{2\,r})\}$. By a standard argument, up to further decrease the value of $r_0$, we find that for some $\a'=\a'(n)$ it holds
    \[
    |E\cap B_{\rho/2}(z)|\ge \a'\,|B_{\rho/2}|\,,\qquad\forall  \rho<\min\{r_0,\dist(z,\pa B_{2\,r})\}\,,
    \]
    and since $|E\cap B_{\rho/2}(z)|=|B_{\rho/2}(z)\setminus U_j|$ this give a contradiction with \eqref{contradiction upper density} up to further decrease the value of $\beta$.

    \medskip

    \noindent{\it Step three}: We prove \eqref{K char2} and \eqref{finitely many in BR 2}. The lower bound in \eqref{upper lower volume bounds} implies \eqref{finitely many in BR 2}, i.e., $J=\#\{j:U_j\cap B_r\ne\varnothing\}<\infty$. Next, by $B_{2\,r}\cap \pa U_j\subset K$ (last inclusion in \eqref{easy inclusions}), to prove \eqref{K char2} it suffices to show that
    \begin{equation}\label{two inclusions}
        K\cap B_r\subset \cup_{j=1}^J \pa U_j\,.
    \end{equation}
    Now, if $z\in K \cap B_r$, then by $K\cap E=\varnothing$ we have either $z\in K\setminus\cl(E)$ or $z\in B_r\cap\pa E$, and, in the latter case, $|E\cap B_\rho(z)|\le(1-c)\,|B_\rho|$ for every $\rho<\min\{r_0,\dist(z,\pa\wire)\}$ thanks to \eqref{upper density estimates proof}. Therefore, in both cases, $z$ is an accumulation point for $(\cup_{j=1}^J U_j)^\one \cap B_{r}$. Since $J$ is finite, there must be at least one $j$ such that $z\in\cl(U_j)$ -- hence $z\in\pa U_j$ thanks to $K\cap U_j=\varnothing$.

    \medskip

    Before moving to the next step, we also notice that
    \begin{equation}
      \label{energy is perimeters!}
      \mathcal{F}_{\rm bk}(K,E;B_r) =\sum_{j=1}^J P(U_j;B_r)\,.
    \end{equation}
    Indeed, by \eqref{K char2}, \eqref{finitely many in BR 2}, and \eqref{boundaries agree up to null} we have
    \begin{equation}
    \label{K is reduced boundaries}
        K \cap B_r= B_r\cap \cup_{j=1}^J \pa U_j \ehn B_r\cap\cup_{j=1}^J \pa^* U_j\,,
    \end{equation}
    so that, in the application of Lemma \ref{lemma breakdown of F via open components}, i.e. in \eqref{t3}, the multiplicity $2$ terms vanishes, and we find \eqref{energy is perimeters!}.

    \medskip

    \noindent{\it Step four}: In this step we consider a set of finite perimeter $V_1$ such that, for some $B:=B_\rho(z)\subset B_{r}$ with $\rho<r_0$ and $\H^n(K\cap\pa B)=0$, we have
    \begin{eqnarray}\label{condition 2 on perturbations}
         U_1 \Delta V_1 \cc B\,.
    \end{eqnarray}
    We then define a pair of Borel sets $(K',E')$ as
    \begin{eqnarray}\label{good competitors 1}
      E'&=&\big(E\setminus B\big)\,\cup\,\big[B\cap\big(V_1\Delta (E\cup U_1)\big)\big]\,,
      \\ \label{good competitors 2}
      K'&=&\big(K\setminus B\big)\,\cup\, \big[B\cap\big(\pa^*V_1\cup\pa^*U_2\cup\cdots\cup\pa^*U_J\big)\big]\,,
    \end{eqnarray}
    and show that $(K',E')\in\KK_{\rm B}$, $K'\cup (E')^\one$ is $\C$-spanning $\wire$, and
    \begin{equation}
      \label{the difference}
      \F_{\rm bk}(K',E')-\F_{\rm bk}(K,E)\le P(V_1;B)-P(U_1;B)\,.
    \end{equation}
    As a consequence of \eqref{the difference}, \eqref{lambda minimality ineq for F} and $|E\Delta E'|=|U_1\Delta V_1|$, we find of course that $P(U_1;\Om)\le P(V_1;\Om)+\Lambda\,|U_1\Delta V_1|$, thus showing that $U_1$ is a $(\Lambda,r_0)$-perimeter minimizer in $\Om$.

    \medskip

    Proving that $(K',E')\in\KK_{\rm B}$ is immediately reduced to showing that $B\cap\pa^*E'$ is $\H^n$-contained in $B\cap(\pa^*V_1\cup\pa^*U_2\cup\cdots\cup\pa^*U_J)$ thanks to $\H^n(K\cap\pa B)=0$. Now, on taking into account that, by \eqref{E cup F} and \eqref{E minus F}, $\pa^*(X\cup Y)$ and $\pa^*(X\setminus Y)$ are both $\H^n$-contained in $(\pa^*X)\cup(\pa^*Y)$, and thus $\pa^*(X\Delta Y)$ is too, we easily see that
    \[
    B\cap\pa^*E'=B\cap\pa^*[V_1\Delta(E\cup U_1)]\shn(B\cap\pa^*V_1)\cup(B\cap\pa^*(E\cup U_1))\,.
    \]
    However, $B\cap(E\cup U_1)=B\setminus(\cup_{j=2}^JU_j)$, so that $\pa^*X=\pa^*(\R^{n+1}\setminus X)$ gives
    \[
    B\cap\pa^*(E\cup U_1)=B\cap\pa^*(\cup_{j=2}^JU_j)\shn B\cap\cup_{j\ge2}\pa^*U_j\,,
    \]
    where we have used again the $\H^n$-containment of $\pa^*(X\cup Y)$ in $(\pa^*X)\cup(\pa^*Y)$. This proves that $(K',E')\in\KK_{\rm B}$.

    \medskip

    To prove that $K'\cup(E')^\one$ is $\C$-spanning $\wire$, we show that the set $S$ defined by
    \[
    S=\big((K\cup E)\setminus B\big)\cup\big(\cl(B)\cap\cup_{j\ge 2}\pa U_j\big)\,,
    \]
    is $\H^n$-contained in $K'\cup(E')^\one$ and is $\C$-spanning $\wire$.

    \medskip

    To prove that $S$ is $\H^n$-contained in $K'\cup(E')^\one$, we start by noticing that $(K\cup E)\setminus\cl(B)$ is $\H^n$-equivalent to $(K\cup E^\one\cup \pa^*E)\setminus\cl(B)\subset K\cup E^\one$ (by $(K,E)\in\KK_{\rm B}$), whereas $|(E\Delta E')\setminus B|=0$ implies $(E^\one\Delta (E')^\one)\setminus\cl(B)=\varnothing$: hence $S\setminus\cl(B)$ if $\H^n$-contained in $K'\cup(E')^\one$. Next, by \eqref{boundaries agree up to null} and by definition of $K'$,
    \[
    S\cap B= B\cap \cup_{j\ge 2}\pa U_j\ehn B\cap\cup_{j\ge 2}\pa^* U_j\subset K'\,.
    \]
    Finally, by $\H^n(K\cap\pa B)=0$, \eqref{one inclusion}, and Federer's theorem, $(S\cap\pa B)\setminus K$ is $\H^n$-equivalent to $(E^\one\cap\pa B)\setminus K$, where $E^\one\cap A=(E')^\one\cap A$ in an open neighborhood $A$ of $\pa B$ thanks to $U_1\Delta V_1\cc B$.

    \medskip

    To prove that $S$ is $\C$-spanning $\wire$, since $S$ is relatively closed in $\Om$ and thanks to Theorem \ref{theorem definitions equivalence}, we only need to check that $S\cap\g(\SS^1)\ne\varnothing$ for every $\g\in\C$. Since $(K\cup E)\cap\g(\SS^1)\ne\varnothing$ for every $\g\in\C$, this is immediate unless $\g$ is such that $S\cap\g(\SS^1)\setminus B=\varnothing$; in that case, however, Lemma \ref{dldrg lemma} implies the existence of $j\ne k$ such that $\g(\SS^1)\cap B\cap\pa U_j$ and $\g(\SS^1)\cap B\cap\pa U_k$ are both non-empty. Since either $j\ge 2$ or $k\ge 2$, we conclude by \eqref{one inclusion} that $\g(\SS^1)\cap B\cap K'\ne\varnothing$, thus completing the proof.

    \medskip

    We are thus left to prove the validity of \eqref{the difference}. Keeping \eqref{energy is perimeters!} and $\F_{\rm bk}(K',E';B)\le\F_{\rm bd}(K',E';B)$ into account, this amounts to showing that
    \begin{equation}
      \label{left to lambda}
      \F_{\rm bd}(K',E';B)=\H^n(B\cap\pa^*E')+2\,\H^n\big(B\cap K'\setminus\pa^*E'\big)= P(V_1;B)+\sum_{j=2}^J P(U_j;B)\,.
    \end{equation}
    To this end we notice that by \eqref{E delta F} and $B\cap E'=B\cap[V_1\Delta(E\cup U_1)]$ we have
    \begin{eqnarray*}
    B\cap\pa^*E'&\ehn& B\cap\big\{\pa^*V_1\cup\pa^*(E\cup U_1)\big\}
    \\
    &\ehn&B\cap\big\{(\pa^*V_1)\,\cup\,(U_1^\zero\cap\pa^*E)\,\cup\,(E^\zero\cap\pa^*U_1)\big\}\,,
    \end{eqnarray*}
    where we have used \eqref{E cup F} and $\H^n(\{\nu_E=\nu_{U_1}\})=0$ (as $E\cap U_1=\varnothing$). By \eqref{caccioppoli partitions} and \eqref{caccioppoli exactly two}, since $\{B\cap E,B\cap U_j\}_{j=1}^N$ is a Caccioppoli partition of $B$, we have
    \[
    U_1^\zero\cap\pa^*E=(\pa^*E)\cap\bigcup_{j\ge 2}(\pa^*U_j)\,,
    \qquad
    E^\zero\cap\pa^*U_1=(\pa^*U_1)\cap\bigcup_{j\ge 2}(\pa^*U_j)\,,
    \]
    so that
    \begin{eqnarray*}
    B\cap\pa^*E'&\ehn&B\cap\Big\{(\pa^*V_1)\cup \Big(\big[(\pa^*E)\cup(\pa^*U_1)\big]\cap\bigcup_{j\ge 2}(\pa^*U_j)\Big)\Big\}\,,
    \\
    B\cap(K'\setminus\pa^*E')&\ehn& B\cap\Big(\bigcup_{j\ge 2}\pa^*U_j\Big)\setminus\big[(\pa^*E)\cup(\pa^*U_1)\big]\,.
    \end{eqnarray*}
    We thus find
    \begin{eqnarray*}
      &&\H^n(B\cap\pa^*E)+2\,\H^n(B\cap(K'\setminus\pa^*E'))
      \\
      &&=P(V_1;B)+2\,\H^n\Big(\Big(\bigcup_{j\ge 2}\pa^*U_j\Big)\setminus(\pa^*E\cup\pa^*U_1)\Big)
      +\H^n\Big(\Big(\bigcup_{j\ge 2}\pa^*U_j\Big)\cap(\pa^*E\cup\pa^*U_1)\Big)
      \\
      &&=P(V_1;B)+\sum_{j\ge 2}P(U_j;B)\,,
    \end{eqnarray*}
    that is \eqref{left to lambda}.

    \medskip

    \noindent {\it Step five}: In this final step we prove conclusions (iv) and (v). To this end we fix $x\in[\Om\cap(\pa E\setminus\pa^*E)]\setminus\Sigma$, and recall that, by conclusion (iv)$_\a$, there are $r>0$, $\nu\in\SS^n$, $u_1,u_2\in C^{1,\a}(\DD_r^\nu(x);(-r/4,r/4))$ ($\a\in(0,1/2)$ arbitrary) such that $u_1(x)=u_2(x)=0$, $u_1\le u_2$ on $\DD_r^\nu(x)$, $\{u_1<u_2\}$ and ${\rm int}\{u_1=u_2\}$ are both non-empty, and
    \begin{eqnarray}\label{grafico K proof 2}
    \CC_r^\nu(x)\cap K&=&\cup_{i=1,2}\big\{y+u_i(y)\,\nu:y\in\DD_r^\nu(x)\big\}\,,
    \\\label{grafico pa E proof 2}
    \CC_r^\nu(x)\cap \pa^*E&=&\cup_{i=1,2}\big\{y+u_i(y)\nu:y\in\{u_1<u_2\}\big\}\,,
    \\\label{grafico E proof 2}
    \CC_r^\nu(x)\cap E&=&\big\{y+t\,\nu:y\in\{u_1<u_2\}\,,u_1(x)<t<u_2(x)\big\}\,.
    \end{eqnarray}
    We claim that $(u_1,u_2)$ has the minimality property
    \begin{equation}\label{FB prob}
      \A(u_1,u_2)\le \A(w_1,w_2):=\int_{\DD_r^\nu(x)}\sqrt{1+|\nabla w_1|^2}+\sqrt{1+|\nabla w_2|^2}\,,
    \end{equation}
    among all pairs $(w_1,w_2)$ with $w_1,w_2\in{\rm Lip}(\DD_r^\nu(x);(-r/2,r/2))$ that satisfy
    \begin{equation}
      \label{FB prob comp}
      \left\{\begin{split}
        &w_1\le w_2\,,\quad\mbox{on $\DD_r^\nu(x)$}\,,
        \\
        &w_k=u_k\,, \quad\mbox{on $\pa\DD_r^\nu(x)$, $k=1,2$}\,,
      \end{split}\right .
      \qquad \int_{\DD_r^\nu(x)}w_2-w_1=\int_{\DD_r^\nu(x)}u_2-u_1\,.
    \end{equation}
    Indeed, starting from a given a pair $(w_1,w_2)$ as in \eqref{FB prob comp}, we can define $(K'\cap\CC_r^\nu(x),E'\cap\CC_r^\nu(x))$ by replacing $(u_1,u_2)$ with $(w_1,w_2)$ in \eqref{grafico K proof 2} and \eqref{grafico E proof 2}, and then define $(K',E')\in\KK_{\rm B}$ by setting $K'\setminus\CC_r^\nu(x)=K\setminus\CC_r^\nu(x)$ and $E'\setminus\CC_r^\nu(x)=E\setminus\CC_r^\nu(x)$. Since $\pa\CC_r^\nu\setminus(K'\cup E')=\pa\CC_r^\nu\setminus(K\cup E)$ it is easily seen (by a simple modification of Lemma \ref{dldrg lemma} where balls are replaced by cylinders) that $(K',E')$ is $\C$-spanning $\wire$. Since $|E'|=|E|$, the minimality of $(K,E)$ in $\Psi_{\rm bk}(v)$ implies that $\F_{\rm bk}(K,E)\le\F_{\rm bk}(K',E')$, which readily translates into \eqref{FB prob}.

    \medskip

    Recalling that both $A_0={\rm int}\{u_1=u_2\}$ and $A_+=\{u_1<u_2\}$ are non-empty open subsets of $\DD_r^\nu(x)$, and denoting by ${\rm MS}(u)[\vphi]=\int_{\DD_r^\nu(x)}\nabla\vphi\cdot[(\nabla u)/\sqrt{1+|\nabla u|^2}]$ the distributional mean curvature operator, we find that
    \begin{eqnarray}\nonumber
    {\rm MS}(u_1)+{\rm MS}(u_2)=0\,,&&\qquad\mbox{on $\DD_r^\nu(x)$}\,,
    \\\nonumber
    {\rm MS}(u_k)=0\,,&&\qquad\mbox{on $A_0$ for each $k=1,2$}\,,
    \\\label{ms u2 u1 constant}
    {\rm MS}(u_2)=-{\rm MS}(u_1)=\l\,,&&\qquad\mbox{on $A_+$}\,,
    \end{eqnarray}
    for some constant $\l\in\R$; in particular, $u_1,u_2\in C^\infty(A_0)\cap C^\infty(A_+)$.
    We notice that it must be
    \begin{equation}
      \label{lambda is negative}
      \l<0\,.
    \end{equation}
    Indeed, arguing by contradiction, should it be that $\l\ge0$, then by \eqref{ms u2 u1 constant} we find ${\rm MS}(u_2)\ge0$ and ${\rm MS}(u_1)\le 0$ on $A_+$. Since $A_+$ is open an non-empty, there is an open ball $B\subset A_+$ such that $\pa B\cap\pa A_+=\{y_0\}$. Denoting by $x_0$ the center of $B$ and setting $\nu=(x_0-y_0)/|x_0-y_0|$, by $u_1\le u_2$, $u_1(y_0)=u_2(y_0)$ and $u_k\in C^1(\DD_r^\nu(x))$ we find that $\nabla u_1(y_0)=\nabla u_2(y_0)$. At the same time, by applying Hopf's lemma in $B$ at $y_0$, we see that since ${\rm MS}(u_2)\ge0$ and ${\rm MS}(u_1)\le 0$ on $B$, it must be $\nu\cdot\nabla u_2(y_0)<0$ and $\nu\cdot\nabla u_1(y_0)>0$, against $\nabla u_1(y_0)=\nabla u_2(y_0)$.

    \medskip

    By \eqref{ms u2 u1 constant}, \eqref{lambda is negative}, and $u_2\ge u_1$ on $\DD_r^\nu(x)$ we can apply the sharp regularity theory for the double membrane problem developed in \cite[Theorem 5.1]{silvestre} and deduce that $u_1,u_2\in C^{1,1}(\DD_r^\nu(x))$. Next we notice that, for every $\vphi\in C^\infty_c(A_+)$, and setting $u_+=u_2-u_1$,
    \[
    2\,\l\,\int_{A_+}\vphi={\rm MS}(u_2)[\vphi]-{\rm MS}(u_1)[\vphi]=
    \int_{A_+} {\rm A}(x)[\nabla u_+]\cdot \nabla\vphi\,,
    \]
    where we have set, with $f(z)=\sqrt{1+|z|^2}$,
    \[
    {\rm A}(x)=\int_0^1\,\nabla^2f\big(s\,\nabla u_2(x)+(1-s)\,\nabla u_1(x)\big)\,ds\,.
    \]
    In particular, $u_+\in C^{1,1}(\DD_r^\nu(x))$ is a non-negative distributional solution of
    \[
    \Div({\rm A}(x)\nabla u_+)=-2\,\l\,,\qquad\mbox{on $A_+$}\,,
    \]
    with a strictly positive right-hand side (by \eqref{lambda is negative}) and with ${\rm A}\in {\rm Lip}(A_+;\R^{n\times n}_{\rm sym})$ uniformly elliptic. We can thus apply the regularity theory for free boundaries developed in \cite[Theorem 1.1, Theorem 4.14]{FocardiGelliSpadaro} to deduce that
    \[
    {\rm FB}=\DD_r^\nu(x)\cap\pa\{u_+=0\}=\DD_r^\nu(x)\cap\pa\{u_2=u_1\}\,,
    \]
    can be partitioned into sets ${\rm Reg}$ and ${\rm Sing}$ such that ${\rm Reg}$ is relatively open in ${\rm FB}$ and such that for every $z\in{\rm Reg}$ there are $r>0$ and $\beta\in(0,1)$ such that $B_r(x)\cap{\rm FB}$ is a $C^{1,\beta}$-embedded $(n-1)$-dimensional manifold, and such that ${\rm Sing}=\cup_{k=0}^{n-1}{\rm Sing}_k$ is relatively closed in ${\rm FB}$, with each ${\rm Sing}_k$ locally $\H^k$-rectifiable in $\DD_r^\nu(x)$. Since, by \eqref{grafico pa E proof 2},
    \[
    \CC_r^\nu(x)\cap(\pa E\setminus\pa^*E)=\big\{y+u_1(y)\,\nu:y\in{\rm FB}\big\}
    \]
    and $u_1\in C^{1,1}(\DD_r^\nu(x))$, we conclude by a covering argument that $\Om\cap(\pa E\setminus\pa^*E)$ has all the required properties, and complete the proof of the theorem.
    \end{proof}

\section{Equilibrium across transition lines in wet foams (Theorem \ref{theorem foams})}\label{section foams proof}

\begin{proof}
  [Proof of Theorem \ref{theorem foams}] Let $\Omega\subset\R^{n+1}$ be open and let $(K_*,E_*)\in\KK_{\rm foam}$. We can find $(K,E)\in\KK$ such that $K$ is $\H^n$-equivalent to $K_*$, $E$ Lebesgue equivalent to $E_*$, and $K\cap E^\one=\varnothing$ by repeating with minor variations the considerations made in step one of the proof of Theorem \ref{thm existence EL for bulk section} (we do not have to worry about the $\C$-spanning condition, but have to keep track of the volume constraint imposed for each $U_i$, which can be done by using the volume-fixing variations for clusters from \cite[Part IV]{maggiBOOK}). In proving the regularity part of the statement, thanks to Theorem \ref{theorem decomposition}-(a) we can directly work with balls $B\cc\Om$ having radius less than $r_0$ (with $r_0$ as in \eqref{def of wet foams}), and consider the open connected components $\{U_i\}_i$ of $B$ induced by $K\cup E$. Using Lemma \ref{lemma breakdown of F via open components} and, again, volume-fixing variation techniques in place of the theory of homotopic spanning, we can proceed to prove analogous statement to \eqref{K char}, \eqref{finitely many in BR}, \eqref{normalization}, and \eqref{lambda minimality ineq for C}, thus proving the $(\Lambda,r_0)$-minimality of each $U_i$ in $B$. The claimed $C^{1,\a}$-regularity of each $U_i$ outside of a closed set $\Sigma$ with the claimed dimensional estimates follows then from De Giorgi's theory of perimeter minimality \cite{DeGiorgiREG,Tamaniniholder,maggiBOOK}.
\end{proof}

    \appendix

    \section{Equivalence of homotopic spanning conditions}\label{appendix equivalence of} In Theorem \ref{theorem definitions equivalence} we prove that, when $S$ is a closed set, the notion of ``$S$ is $\C$-spanning $\wire$'' introduced in Definition \ref{def homot span borel} boils down to the one in Definition \ref{def homot span closed}. We then show that the property of being $\C$-spanning is stable under reduction to the rectifiable part of a Borel set, see Lemma \ref{lemma rectfiable spanning}.

    \begin{theorem}\label{theorem definitions equivalence}
      Given a closed set $\wire\subset\R^{n+1}$, a spanning class $\C$ for $\wire$, and a set $S$ relatively closed in $\Om$, the following two properties are equivalent:

      \medskip

      \noindent {\bf (i):} for every $\g\in\C$, we have $S\cap\g(\SS^1)\ne\varnothing$;

      \medskip

      \noindent {\bf (ii):} for every $(\gamma,\Phi, T)\in \mathcal{T}(\C)$ and for $\H^1$-a.e. $s\in \mathbb{S}^1$, we have
        \begin{eqnarray}\label{spanning borel}
          &&\mbox{for $\H^n$-a.e. $x\in T[s]$}\,,
          \\\nonumber
          &&\mbox{$\exists$ a partition $\{T_1,T_2\}$ of $T$ with $x\in\partial^e T_1 \cap \partial^e T_2$}\,,
          \\ \nonumber
          &&\mbox{and s.t. $S \cup  T[s]$ essentially disconnects $T$ into $\{T_1,T_2\}$}\,.
        \end{eqnarray}

        \noindent In particular, $S$ is $\C$-spanning $\wire$ according to Definition \ref{def homot span closed} if and only if it does so according to Definition \ref{def homot span borel}.
    \end{theorem}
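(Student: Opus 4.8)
The plan is to prove the two implications (i)$\Rightarrow$(ii) and (ii)$\Rightarrow$(i) separately; the closing assertion is then immediate, because Definition \ref{def homot span closed} is precisely condition (i), whereas Definition \ref{def homot span borel} asks that \eqref{spanning borel} hold for \emph{every} $(\gamma,\Phi,T)\in\T(\C)$, which is exactly condition (ii).

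For (ii)$\Rightarrow$(i) I would argue by contraposition. Assume $S$ is relatively closed in $\Om$ and some $\gamma\in\C$ has $S\cap\gamma(\SS^1)=\varnothing$. Then $\gamma(\SS^1)$ is a compact subset of $\Om$ lying at positive distance from the closed set $S$, and since the normal bundle of an embedded circle in $\R^{n+1}$ is trivial, a sufficiently thin tubular neighbourhood of $\gamma(\SS^1)$ provides $(\gamma,\Phi,T)\in\T(\C)$ with $T\cap S=\varnothing$. Now fix any $s\in\SS^1$ and any Borel partition $\{T_1,T_2\}$ of $T$ that is essentially disconnected by $S\cup T[s]$: then $T\cap\pa^eT_1\cap\pa^eT_2$ is $\H^n$-contained in $(S\cap T)\cup T[s]=T[s]$, an $\H^n$-finite rectifiable set, so by \eqref{federer criterion} each $T_i$ has finite perimeter in $T$, with $\pa^*T_i\cap T$ $\H^n$-contained in $T[s]$. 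Hence $T_1$ has vanishing relative perimeter in the connected open set $T\setminus T[s]\cong(\SS^1\setminus\{s\})\times B_1^n$, so $T_1$ is Lebesgue-equivalent to $\varnothing$ or to $T$, and $\{T_1,T_2\}$ is trivial. Thus \eqref{spanning borel} fails for every $x$, hence for every $s$, and (ii) is false.

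For (i)$\Rightarrow$(ii) I would fix $(\gamma,\Phi,T)\in\T(\C)$, pass to coordinates $Y=\SS^1\times B_1^n$, and set $\wi S=\Phi^{-1}(S\cap T)$, relatively closed in $Y$. The first step is that each inner circle $\gamma_y=\Phi(\cdot,y)$, $y\in B_1^n$, belongs to $\C$: indeed $t\mapsto\Phi(\cdot,ty)$ is a smooth family of embeddings of $\SS^1$ into $\Om$ joining $\gamma$ to $\gamma_y$, and $\C$ is closed under homotopies; so by (i) the set $\wi S$ meets every meridian $\SS^1\times\{y\}$ of $Y$. Since $\Phi$ is a diffeomorphism, essential boundaries and densities transform as in \eqref{diffeo boundaries}, and it is enough to produce, for $\H^1$-a.e.\ $s$ and $\H^n$-a.e.\ $x=(s,y_0)\notin\wi S$, a non-trivial Borel partition $\{Y_1,Y_2\}$ of $Y$ with $x\in\pa^eY_1\cap\pa^eY_2$ essentially disconnected by $\wi S\cup(\{s\}\times B_1^n)$. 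I would build it geometrically: with $W_s=Y\setminus(\wi S\cup(\{s\}\times B_1^n))$ open, take $Y_1$ to be the connected component of $(s+\e,y_0)$ in $W_s$ for $\e>0$ small, and $Y_2=Y\setminus Y_1$. Because components of an open set are open and pairwise disjoint, every boundary point of $Y_1$ relative to $Y$ lies in $Y\setminus W_s$, so the interface $Y\cap\pa^eY_1\cap\pa^eY_2$ is automatically $\H^n$-contained in $\wi S\cup(\{s\}\times B_1^n)$.

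The hard part will be proving that this partition is non-trivial and that $x$ really lies on its essential boundary: one circular side of $x$ lies in $Y_1$, and what must be shown is that the opposite side lies in $Y_2$, i.e.\ that for $\H^1$-a.e.\ $s$ and $\H^n$-a.e.\ $x=(s,y_0)$ \emph{no path in $W_s$ joins $(s+\e,y_0)$ to $(s-\e,y_0)$}. Cutting $Y$ along $\{s\}\times B_1^n$ turns it into the solid cylinder $[s,s+2\pi]\times B_1^n$, in which $\wi S$ meets the open fibre $(s,s+2\pi)\times\{y\}$ over the generic $y$ (namely those whose meridian meets $\wi S$ somewhere other than $(s,y)$), and a closed set with this property separates a neighbourhood of the bottom face from a neighbourhood of the top face. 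Turning this into a rigorous separation statement for an \emph{arbitrary} relatively closed $\wi S$ --- which may be purely $\H^n$-unrectifiable, whose ``first-hit'' and ``last-hit'' graphs over $B_1^n$ need not bound the whole complement, and which may meet the cut disk in a set of positive $\H^n$-measure --- is the delicate core of the proof; I would handle it by choosing $s$ outside a suitable Lebesgue-null set (via Fubini/coarea) and isolating the bottom and top ``escape zones'' $\{t<\text{first hit}\}$ and $\{t>\text{last hit}\}$, which are open by lower and upper semicontinuity of the hit functions and are separated by $\wi S$ along the relevant fibres. Finally, I would record that the partition genuinely depends on $x$ --- distinct points of $\{s\}\times B_1^n$ may sit on the common boundary of different pairs of components of $W_s$, see Figure \ref{fig triple} --- which is exactly the reason, anticipated after \eqref{spanning borel}, why \eqref{spanning borel} must allow $\{T_1,T_2\}$ to depend on $x$. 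Assembling the two implications then yields the equivalence of Definition \ref{def homot span closed} and Definition \ref{def homot span borel}.
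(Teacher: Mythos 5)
Your (ii)$\Rightarrow$(i) argument is correct and is essentially the paper's step one, with the helpful extra detail that an interface $\H^n$-contained in $T[s]$ alone forces the partition to be trivial (zero relative perimeter in the connected open set $T\setminus T[s]$). The architecture of your (i)$\Rightarrow$(ii) argument also matches the paper's: for $x\in T[s]\setminus S$ one takes $Y_1$ to be the connected component of $T\setminus(S\cup T[s])$ containing one local side of $x$ and $Y_2$ its complement, and everything reduces to showing that the opposite local side of $x$ does \emph{not} lie in the same component. Your proposed proof of that crux is where the argument fails. You try to deduce the separation from the fact that $\wi S$ meets ($\H^1$-a.e.) fibre of the cut cylinder (correct, since the inner circles $\Phi(\cdot,y)$ belong to $\C$), via first-hit/last-hit functions and their semicontinuity. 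That implication is false: in $(0,1)\times(-1,1)$ the relatively closed set $C=\big(\{1/2\}\times(-1,0]\big)\cup\big(\{3/4\}\times[0,1)\big)$ meets every vertical fibre $(0,1)\times\{y\}$, the two ``escape zones'' below the first hit and above the last hit are open, and yet a path avoiding $C$ joins them: cross to $y>0$ at a height below $1/2$, rise there to a height in $(1/2,3/4)$, cross back to $y<0$, and rise past $3/4$. Transplanted into the tube, such a weaving path closed up across $T[s]$ is precisely a winding-number-one loop missing $S$ that is \emph{not} a meridian; excluding it requires applying hypothesis (i) to such loops, not only to the circles $\Phi(\cdot,y)$. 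This is exactly what the paper does at the corresponding point: assuming the two sides are joined by a path in $U_i\setminus(S\cup T[s])$, it closes the path up with a single transversal crossing of $T[s]$, makes the concatenation an embedding (Sard-type adjustment), and shows via the degree computation that either the loop or its orientation reversal is homotopic to $\gamma$ in $T$, hence lies in $\C$ while missing $S$ --- contradicting (i). That homotopy/degree step is the heart of the proof and cannot be recovered from your fibrewise semicontinuity scheme.

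A second, smaller omission: you only produce partitions at points $x\in T[s]\setminus\wi S$. When $|S\cap T|>0$, the slice $T[s]$ meets $S$ in a set of positive $\H^n$-measure for a set of $s$ of positive measure, and \eqref{spanning borel} must be verified at those points as well; the paper treats them separately (its case one), taking $T_1=\Phi(I_1\times B_1^n)\cap S$ for a half-tube bounded by $T[s]$ and a second generic slice $T[s']$, and using the closedness of $S$ plus the generic choice of $s,s'$ to get the interface $\H^n$-contained in $S$, and the density of $S$ at $x$ to get $x\in\pa^eT_1\cap\pa^eT_2$. The degenerate case $|S\cap T|=|T|$ also needs a (trivial) separate word. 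These are fixable, but together with the flawed separation step the (i)$\Rightarrow$(ii) half of your proposal does not go through as written.
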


    \begin{remark}[$x$-dependency of $\{T_1,T_2\}$]\label{remark x dep}
    {\rm In the situation of Figure \ref{fig disco} it is clear that the same choice of $\{T_1,T_2\}$ can be used to check the validity of \eqref{spanning borel} at every $x\in T[s]$. One may thus wonder if it could suffice to reformulate \eqref{spanning borel} so that the partition $\{T_1,T_2\}$ is independent of $x$. The simpler example we are aware of and that shows this simpler definition would not work is as follows. In $\R^3$, let $\wire$ be a closed $\de$-neighborhood of a circle $\Gamma$, let $U$ be the open $\de$-neighborhood of a loop with link number {\it three} (or higher {\it odd} number) with respect to $\wire$, let $K$ be the disk spanned by $\Gamma$, and let $S=\Omega\cap[(K\setminus U)\cup\pa U]$, see
    \begin{figure}
    \input{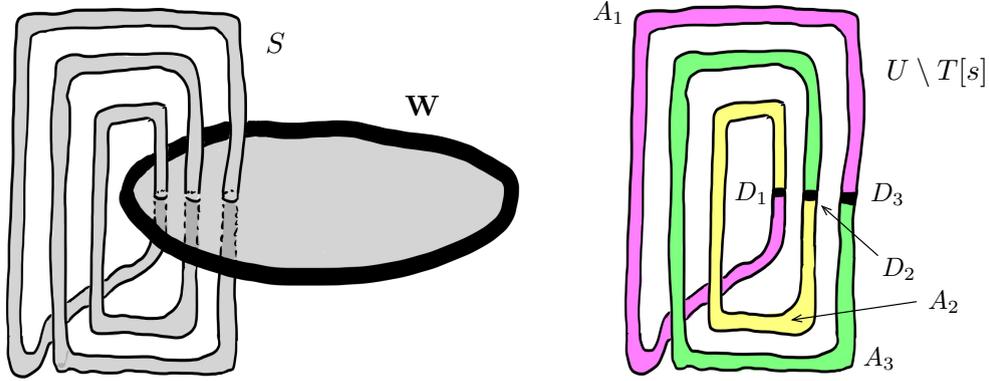}
    \caption{\small{The situation in Remark \ref{remark x dep}. The components $A_1$, $A_2$ and $A_3$ (depicted in purple, yellow, and green respectively) of $U\setminus T[s]$ are bounded by the three disks $\{D_i\}_{i=1}^3$ (depicted as boldface segments).}}
    \label{fig triple}
    \end{figure}
    Figure \ref{fig triple}. Now consider a ``test tube'' $T$ which compactly contains $U$ and is such that, for every $s$, $U\cap T[s]$ consists of three disks $\{D_i\}_{i=1}^3$. Since $U\subset T$, the property ``$S\cup T[s]$ essentially disconnects $T$ into $\{T_1,T_2\}$ in such a way that $T[s]\subset T\cap\pa^e T_1\cap\pa^e T_2$'' would immediately imply ``$U\cap(S\cup T[s])=U\cap T[s]$ essentially disconnects $T\cap U=U$ into $\{U_1,U_2\}$ with $U\cap T[s]\subset U\cap\pa^e U_1\cap\pa^e U_2$'', where $U_i=T_i\cap U$ (see step one in the proof of Theorem \ref{theorem spanning with partition} for a formal proof of this intuitive assertion). However, the latter property does not hold. To see this, denoting by $\{A_i\}_{i=1}^3$ the three connected components of $U\setminus T[s]$, we would have $U_1=A_i\cup A_j$ and $U_2=A_k$ for some choice of $i\ne j\ne k\ne i$, whereas, independently of the choice made, $U\cap\pa^e U_1\cap\pa^e U_2$ always fails to contain one of the disks $\{D_i\}_{i=1}^3$: for example, if $U_1=A_1\cup A_2$ and $U_2=A_3$, then $U\cap\pa^e U_1\cap\pa^e U_2=D_2\cup D_3$, and $D_1$ is entirely missed. We conclude that the set $S$ just constructed, although clearly $\C$-spanning $\wire$ in terms of Definition \ref{def homot span closed}, fails to satisfy the variant of \eqref{spanning borel} where a same partition $\{T_1,T_2\}$ is required to work for $\H^n$-a.e. choice of $x\in T[s]$.}
    \end{remark}

    \begin{proof}[Proof of Theorem \ref{theorem definitions equivalence}] {\it Step one}: We prove that (ii) implies (i). Indeed, if there is $\g\in\C$ such that $S\cap\g(\SS^1)=\varnothing$, then, $S$ being closed, we can find $(\g,\Phi,T)\in\T(\C)$ such that $\dist(S,T)>0$. By (ii), there is $s\in\SS^1$ such that $S \cup  T[s]$ essentially disconnects $T$. By $\dist(S,T)>0$ we see that $(S \cup  T[s])\cap T= T[s]$, so that $T[s]$ essentially disconnects $T$, a contradiction.

    \medskip

    \noindent {\it Step two}: We now prove that (i) implies (ii). To this end we consider an arbitrary $(\g,\Phi,T)\in\T(\C)$ and aim at proving the existence of $J$ of full $\H^1$-measure in $\SS^1$ such that, if $s\in J$, then \eqref{spanning borel}  holds.

    \medskip

    This is trivial, with $J=\SS^1$, if $|S\cap T|=|T|$. Indeed, in this case, we have $T=S^{\one}\cap T$, that, combined with $S$ being closed, implies $T=S \cap T$. In particular, $S\cup T[s]=T$ for every $s\in\SS^1$, and since, trivially, $T$ essentially disconnects $T$, the conclusion follows.

    \medskip

    We thus assume that $|S\cap T|<|T|$: in particular,
    \[
    U=T\setminus S
    \]
    is a non-empty, open set, whose connected components are denoted by $\{U_i\}_{i\in I}$ ($I$ a countable set). By the Lebesgue points theorem, $\L^{n+1}$-a.e. $x\in T$ belongs either to $U^\zero$ or to $U$. Then, by the smoothness of $\Phi$ and by the area formula, we can find a set $J$ of full $\H^1$-measure in $\SS^1$ such that
    \begin{equation}
      \label{choice of J 1}
      \H^n\big(T[s]\setminus(U^\zero\cup U)\big)=0\,,\qquad\forall s\in J\,.
    \end{equation}
    In particular, given $s\in J$, we just need to prove \eqref{spanning borel} when either $x\in T[s]\cap U^\zero$ or $x\in T[s]\cap U$. Before examining these two cases we also notice that we can further impose on $J$ that
    \begin{eqnarray}\label{choice of s}
     \H^n\Big( T[s]\cap\Big[\pa^e U\cup\pa^e S\cup\big(U^{\one}\setminus U\big)
      \cup \bigcup_{i\in I}\big(U_i^{\one}\setminus U_i\big)\Big]\Big)=0\,,\qquad \forall s\in J\,.
    \end{eqnarray}
    Indeed, again by the Lebesgue points theorem, the sets $\pa^e U$, $\pa^e S$, $U^{\one}\setminus U$, and $\cup_{i\in I}U_i^{\one}\setminus U_i$ are all $\L^{n+1}$-negligible.

    \medskip

    \noindent {\it Case one, $x\in  T[s]\cap U^\zero$}: To fix ideas, notice that $U^\zero\ne\varnothing$ implies $|S\cap T|>0$, and in particular  $S$ has positive Lebesgue measure.
    Given an arbitrary $s'\in J\setminus\{s\}$ we denote by $\{I_1,I_2\}$ the partition of $\SS^1$ bounded by $\{s,s'\}$, and then consider the Borel sets
    \[
    T_1=\Phi(I_1\times B_1^n)\cap S\,,\qquad T_2=\Phi(I_2\times B_1^n)\cup\,\Big(\Phi(I_1\times B_1^n)\setminus S\Big)\,.
    \]
    We first notice that $\{T_1,T_2\}$ is a non-trivial partition of $T$: Indeed $|T_1|>0$ since $x$ has density $1/2$ for $\Phi(I_1\times B_1^n)$ and (by $x\in U^\zero$) density $1$ for $S\cap T$; at the same time $|T_2|=|T\setminus T_1|\ge |T\setminus S|>0$. Next, we claim that
    \begin{equation}
      \label{heee}
      \mbox{$T^{\one}\cap\pa^eT_1\cap\pa^eT_2$ is $\H^n$-contained in $S$}\,.
    \end{equation}
    Indeed, since $\Phi(I_1\times B_1^n)$ is an open subset of $T$ with $T\cap\pa[\Phi(I_1\times B_1^n)]=T[s]\cup T[s']$, and since $\pa^eT_1$ coincides with $\pa^eS$ inside the open set $\Phi(I_1\times B_1^n)$, we easily see that
    \begin{eqnarray*}
    T^{\one}\cap\pa^eT_1\cap\pa^eT_2&=&T\cap\pa^eT_1=T\cap\pa^e\big(\Phi(I_1\times B_1^n)\cap S\big)
    \\
    &\subset&\big(\Phi(I_1\times B_1^n)\cap \pa^e S\big)\cup\Big(\big(T[s]\cup T[s']\big)\setminus S^\zero\Big)\,.
    \end{eqnarray*}
    Now, on the one hand, by $\H^n(\pa^eS\cap( T[s]\cup T[s']))=0$ (recall \eqref{choice of s}), it holds
    \[
    \mbox{$\big( T[s]\cup T[s']\big)\setminus S^\zero$ is $\H^n$-contained in $T\cap S^{\one}$}\,;
    \]
    while, on the other hand, by $\Om\cap\pa^eS\subset\Om\cap\pa S\subset\Om\cap S$ (since $S$ is closed in $\Om$) and by $\Phi(I_1\times B_1^n)\subset T\subset\Om$, we also have that $\Phi(I_1\times B_1^n)\cap \pa^e S\subset T\cap S$; therefore
    \[
    \mbox{$T^{\one}\cap\pa^eT_1\cap\pa^eT_2$ is $\H^n$-contained in $T\cap(S\cup S^{\one})= T\cap S$}\,,
    \]
    where we have used that $S$ is closed to infer $S^{\one}\subset S$. Having proved \eqref{heee} and the non-triviality of $\{T_1,T_2\}$, we conclude that $S$ (and, thus, $S\cup T[s]$) essentially disconnects $T$ into $\{T_1,T_2\}$. We are left to prove that  $x\in T\cap\pa^eT_1\cap\pa^eT_2$. To this end, we notice that $x\in T[s]\cap(T\setminus S)^\zero$ and $\Phi(I_1\times B_1^n)\subset T$ imply
    \[
    |T_1\cap B_r(x)|=|\Phi(I_1\times B_1^n)\cap S\cap B_r(x)|=|\Phi(I_1\times B_1^n)\cap B_r(x)|+{\rm o}(r^{n+1})=\frac{|B_r(x)|}2+{\rm o}(r^{n+1})\,,
    \]
    so that $x\in(T_1)^\half\subset\pa^e T_1$; since $T\cap\pa^eT_1=T\cap\pa^eT_1\cap\pa^eT_2$ and $x\in T$ we conclude the proof in the case when $x\in T[s]\cap U^\zero$.

    \medskip

    \noindent {\it Case two, $x\in  T[s]\cap U$}: In this case there exists $i\in I$ such that $x\in U_i$, and, correspondingly, we claim that
    \begin{eqnarray}\label{partition V1V2}
      &&\mbox{$\exists\{V_1,V_2\}$ a non-trivial Borel partition  of $U_i\setminus T[s]$}\,,
      \\\nonumber
      &&\mbox{s.t. $x\in \pa^eV_1\cap\pa^eV_2$ and $T\cap(\pa V_1\cup\pa V_2)\subset S\cup T[s]$}\,.
    \end{eqnarray}
    Given the claim, we conclude by setting $T_1=V_1$ and $T_2=V_2\cup(T\setminus U_i)$. Indeed, since $V_2\cap U_i=T_2\cap U_i$ with $U_i$ open implies $U_i\cap\pa^eV_1=U_i\cap\pa^eT_1$, we deduce from \eqref{partition V1V2} that
    \[
    x\in U_i\cap\pa^eV_1\cap\pa^eV_2=U_i\cap \pa^eT_1\cap\pa^e T_2\,;
    \]
    at the same time, $S\cup T[s]$ essentially disconnects $T$ into $\{T_1,T_2\}$ since, again by \eqref{partition V1V2},
    \[
    T^{\one}\cap \pa^eT_1\cap\pa^e T_2=T\cap \pa^eT_1 = T\cap \pa^eV_1\subset T\cap\pa V_1\subset S\cup T[s]\,.
    \]
    We are thus left to prove \eqref{partition V1V2}. To this end, let us choose $r(x)>0$ small enough to have that $B_{r(x)}(x)\subset U_i$, and that $B_{r(x)}(x)\setminus T[s]$ consists of exactly two connected components $\{V_1^x,V_2^x\}$; in this way,
    \begin{equation}
      \label{x in mezzo}
      x\in (V_1^x)^\half\cap (V_2^x)^\half\,.
    \end{equation}
    Next, we define
    \begin{eqnarray*}
      &&\mbox{$V_1=$ the connected component of $U_i\setminus T[s]$ containing $V_1^x$}\,,
      \\
      &&V_2=U_i\setminus(T[s]\cup V_1)\,.
    \end{eqnarray*}
    Clearly $\{V_1,V_2\}$ is a partition of $U_i\setminus T[s]$, and, thanks to $\pa V_1\cup\pa V_2\subset T[s]\cup\pa U_i$, we have
    \[
    T\cap(\pa V_1\cup\pa V_2)\subset T\cap ( T[s]\cup\pa U_i)\subset S\cup T[s]\,.
    \]
    Therefore \eqref{partition V1V2} follows by showing that $|V_1|\,|V_2|>0$. Since $V_1$ contains the connected component $V_1^x$ of $B_{r(x)}(x)\setminus T[s]$, which is open and non-empty, we have $|V_1|>0$. Arguing by contradiction, we assume that
    \[
    |V_2|=|U_i\setminus(T[s]\cup V_1)|=0\,.
    \]
    Since $V_1$ is a connected component of the open set $U_i\setminus T[s]$ this implies that
    \[
    U_i\setminus T[s]=V_1\,.
    \]
    Let $x_1\in V_1^x$ and $x_2\in V_2^x$ (where $V_1^x$ and $V_2^x$ are the two connected components of $B_{r(x)}(x)\setminus T[s]$). Since $V_1$ is connected and $\{x_1,x_2\}\subset U_i\setminus T[s]=V_1$, there is a smooth embedding $\g_1$ of $[0,1]$ into $V_1$ with $\g_1(0)=x_1$ and $\g_1(1)=x_2$. Arguing as in \cite[Proof of Lemma 10, Step 2]{DGM} using Sard's theorem, we may modify $\gamma_1$ by composing with a smooth diffeomorphism such that the modified $\gamma_1$ intersects $\pa B_{r(x)}(x)$ transversally at finitely many points. Thus $\gamma_1([0,1]) \setminus \cl B_{r(x)}(x)$ is partitioned into finitely many curves $\gamma_1((a_i,b_i))$ for disjoint arcs $(a_i,b_i)\subset [0,1]$. Since $B_{r(x)}(x)\setminus T[s]$ is disconnected into $V_1^x$ and $V_2^x$ and $\gamma_1$ is disjoint from $T[s]$, there exists $i$ such that, up to interchanging $V_1^x$ and $V_2^x$, $\gamma(a_i)\in \cl V_1^x \cap \pa B_{r(x)}(x)$ and $\gamma(b_i)\in \cl V_2^x \cap \pa B_{r(x)}(x)$. Let us call $\tilde{\gamma_1}$ the restriction of $\gamma_1$ to $[a_i,b_i]$. Next, we choose a smooth embedding $\g_2$ of $[0,1]$ into $B_{r(x)}(x)$ such that $\g_2(0)=\tilde{\gamma}_1(a_i)$, $\g_2(1)=\tilde{\gamma}_1(b_i)$, and $\g_2([0,1])$ intersects $T[s]\cap B_{r(x)}(x)$ at exactly one point, denoted by $x_{12}=\g_2(t_0)$, with
    \begin{align}\label{nonvanishing derivative}
        \g_2'(t_0)\neq 0\,.
    \end{align}
    Since $\tilde{\gamma_1}((a_i,b_i)) \cap \cl B_{r(x)}(x)=\varnothing$ and $\gamma_2([0,1]) \subset \cl B_r(x)$, we can choose $\g_2$ so that the concatenation of $\g_1$ and $\g_2$ defines a smooth embedding $\g_*$ of $\SS^1$ into $U_i\subset T$. Up to reparametrizing we may assume that $\g_*(1)=x_{12}$. Since $\g_1([0,1])\subset V_1$ and $V_1\cap(S\cup T[s])=\varnothing$, we have that
    \begin{equation}
      \label{single point}
      \g_*(\SS^1)\cap (S\cup T[s])=\g_2([0,1])\cap(S\cup T[s])=\{x_{12}\}\subset T[s]\cap B_{r(x)}(x)\,.
    \end{equation}
    A first consequence of \eqref{single point} is that $\g_*(\SS^1)\cap S=\varnothing$. Similarly, the curve $\g_{**}:\mathbb{S}^1\to \Omega$ defined via $\g_{**}(t)=\g_{*}(\overline{t})$ ($t\in\SS^1$) where the bar denotes complex conjugation, has the same image as $\gamma_*$ and thus satisfies $\gamma_{**}(\mathbb{S}^1) \cap S = \varnothing$ as well. Therefore, in order to obtain a contradiction with $|V_2|=0$, it is enough to prove that either $\g_*\in\C$ or $\g_{**}\in\C$. To this end we are now going to prove that one of $\g_*$ or $\g_{**}$ is homotopic to $\g$ in $T$ (and thus in $\Om$), where $\g$ is the curve from the tube $(\g,\Phi,T)\in\T(\C)$ considered at the start of the argument.

    \medskip

    Indeed, let $\pp:\SS^1\times B_1^n\to\SS^1$ denote the canonical projection $\pp(t,x)=t$, and consider the curves $\s_*=\pp\circ\Phi^{-1}\circ\g_*:\SS^1\to\SS^1$ and $\s_{**}=\pp\circ\Phi^{-1}\circ\g_{**}$. By \eqref{single point}, $\s_*^{-1}(\{s\})=\{1\}$, and $1$ is a regular point of $\s_{*}$ by \eqref{nonvanishing derivative} and since $\Phi$ is a diffeomorphism. Similarly, $\s_{**}^{-1}(\{s\})=\{1\}$ and $1$ is a regular point of $\s_{**}$. Now by our construction of $\g_{**}$, exactly one of $\g_*$ or $\g_{**}$ is orientation preserving at $1$ and the other is orientation reversing. So we may compute the winding numbers of $\s_*$ and $\s_{**}$ via (see e.g. \cite[pg 27]{milnor}):
    \begin{align}\notag
        \mbox{deg}\,\s_* = \mbox{sgn}\, \det D\s_*(1) = - \mbox{sgn}\, \det D\s_{**}(1) = - \mbox{deg}\,\s_{**}\in \{+1,-1\}\,.
    \end{align}
    If we define $\s=\pp\circ\Phi^{-1}\circ\g$, then $\s$ has winding number $1$, and so is homotopic in $\mathbb{S}^1$ to whichever of $\s_*$ or $\s_{**}$ has winding number $1$. Since $\Phi$ is a diffeomorphism of $\SS^1\times B_1^n$ into $\Om$, we conclude that $\g$ is homotopic relative to $\Omega$ to one of $\g_{*}$ or $\g_{**}$, and, thus, that $\g^*\in\C$ or $\g_{**}\in \C$ as desired.
    \end{proof}

    \section{Convergence of every minimizing sequence of $\Psi_{\rm bk}(v)$}\label{appendix every} In proving Theorem \ref{thm existence EL for bulk} we have shown that every minimizing sequence $\{(K_j,E_j)\}_j$ of $\Psi_{\rm bk}(v)$ has a limit $(K,E)$ such that, denoting by $B^{(w)}$ a ball of volume $w$, it holds
    \begin{align}\notag
    \Psi_{\rm bk}(v)=\Psi_{\rm bk}(|E|)+P(B^{(v-|E|)})\,,\qquad\Psi_{\rm bk}(|E|)=\F_{\rm bk}(K,E)\,,
    \end{align}
    with both $K$ and $E$ bounded. In particular, minimizers of $\Psi_{\rm bk}(v)$ can be constructed in the form $(K\cup\pa B^{(v-|E|)}(x),E\cup B^{(v-|E|)}(x))$ provided $x$ is such that $B^{(v-|E|)}(x)$ is disjoint from $K\cup E\cup \wire$. This argument, although sufficient to prove the existence of minimizers of $\Psi_{\rm bk}(v)$, it is not sufficient to prove the convergence of every minimizing sequence of $\Psi_{\rm bk}(v)$, i.e., to exclude the possibility that $|E|<v$. This is done in the following theorem at the cost of assuming the $C^2$-regularity of $\pa\Om$. This result will be important in the companion paper \cite{MNR2}.

    \begin{theorem}\label{theorem cant lose volume}
      If $\wire$ is the closure of a bounded open set with $C^2$-boundary, $\C$ is a spanning class for $\wire$, and $\ell<\infty$, then for every $v>0$ and every minimizing sequence $\{(K_j,E_j)\}_j$ of $\Psi_{\rm bk}(v)$ there is a minimizer $(K,E)$ of $\Psi_{\rm bk}(v)$ such that $K$ is $\H^n$-rectifiable and, up to extracting subsequences and as $j\to\infty$,
      \begin{equation}
        \label{what K and E do}
          E_j\to E\,,\qquad \mu_j\weakstar  \H^n\mres(\Om\cap \pa^*E)+ 2\,\H^n\mres(K\cap E^\zero)\,,
      \end{equation}
      where $\mu_j=\H^n\mres(\Om\cap \pa^*E_j)+2\,\H^n\mres(\RR(K_j)\cap E_j^\zero)$.
    \end{theorem}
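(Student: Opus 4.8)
The plan is to combine the concentration--compactness dichotomy already extracted inside the proof of Theorem~\ref{thm existence EL for bulk section} with a reattachment argument that rules out loss of volume at infinity.

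Step three of the proof of Theorem~\ref{thm existence EL for bulk section} shows that, after passing to a subsequence, any minimizing sequence $\{(K_j,E_j)\}_j$ of $\Psi_{\rm bk}(v)$ satisfies $E_j\toloc E$ and $\mu_j\weakstar\mu$ for a Radon measure $\mu\ge\H^n\mres(\Om\cap\pa^*E)+2\,\H^n\mres(K\cap E^\zero)$ in $\Om$, where $(K,E)$ is a minimizer of $\Psi_{\rm bk}(|E|)$ and
\[
\Psi_{\rm bk}(v)=\Psi_{\rm bk}(|E|)+P(B^{(v^*)})\,,\qquad v^*:=v-|E|\ge0\,,
\]
with $P(B^{(w)})=(n+1)\,\om_{n+1}^{1/(n+1)}\,w^{n/(n+1)}$ the perimeter of a ball of volume $w$. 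By Theorem~\ref{thm existence EL for bulk section}-(i) --- this is where the $C^2$-regularity of $\pa\wire$ enters --- we may moreover take $(K,E)\in\KK$ with $E$, $K$ bounded, $K$ $\H^n$-rectifiable, $K\cap E^\one=\varnothing$, and $K\cup E$ $\C$-spanning $\wire$. If we prove $v^*=0$ we are done: then $|E|=v$, $(K,E)$ minimizes $\Psi_{\rm bk}(v)$, and a comparison of total masses forces the lower-semicontinuity inequality for $\mu$ to be an equality, which is precisely \eqref{what K and E do}.

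So suppose $v^*>0$; I would contradict the identity above by building $(K',E')\in\KK_{\rm B}$ with $|E'|=v$, $K'\cup(E')^\one$ $\C$-spanning $\wire$, and $\F_{\rm bk}(K',E')<\F_{\rm bk}(K,E)+P(B^{(v^*)})=\Psi_{\rm bk}(v)$. The competitor reattaches a volume-$v^*$ ball to the bounded core $(K,E)$: pick a centre $x_0$ (a point of $\H^n$-density one of $K$ if $|E|=0$; a point of $\Om\cap\pa^*E$ lying on a supporting hyperplane of the bounded set $E$ if $|E|>0$) and a radius $\rho>0$ with $|B_\rho(x_0)\setminus E|=v^*$ and $\H^n(\pa B_\rho(x_0)\cap(K\cup\pa^*E))=0$ (perturbing $x_0$ if needed), and set $E'=E\cup B_\rho(x_0)$ and $K'=(K\setminus B_\rho(x_0))\cup\pa B_\rho(x_0)$. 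Using \eqref{E cup F}, \eqref{X cup Y zero}, \eqref{E cap F} and Federer's theorem one checks that $(K',E')\in\KK_{\rm B}$, $|E'|=v$, and $K\cup E^\one\subset K'\cup(E')^\one$ (so $K'\cup(E')^\one$ is $\C$-spanning $\wire$ by monotonicity of $\C$-spanning under $\H^n$-containment, as recorded before Lemma~\ref{lemma rectfiable spanning}), and, by a routine bookkeeping computation,
\[
\F_{\rm bk}(K',E')\le\F_{\rm bk}(K,E)+\H^n(\pa B_\rho(x_0))-2\,\H^n(B_\rho(x_0)^\one\cap K\cap E^\zero)-\H^n(\pa B_\rho(x_0)\cap E^\one)-\H^n(B_\rho(x_0)^\one\cap\Om\cap\pa^*E)\,.
\]

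The delicate point, which I expect to be the main obstacle, is to turn this into a \emph{strict} inequality against $\F_{\rm bk}(K,E)+P(B^{(v^*)})$. With $a:=|B_\rho(x_0)\cap E|$, so that $\om_{n+1}\rho^{n+1}=v^*+a$, subadditivity of $t\mapsto t^{n/(n+1)}$ gives $\H^n(\pa B_\rho(x_0))=P(B^{(v^*+a)})\le P(B^{(v^*)})+P(B^{(a)})$, which reduces the goal to
\[
P(B^{(a)})<2\,\H^n(B_\rho(x_0)^\one\cap K\cap E^\zero)+\H^n(\pa B_\rho(x_0)\cap E^\one)+\H^n(B_\rho(x_0)^\one\cap\Om\cap\pa^*E)\,.
\]
If $|E|=0$ then $a=0$ and the right side is at least $2\,\H^n(K\cap B_\rho(x_0)^\one)>0$ since $x_0$ is a density point of $K$; if $|E|>0$, the supporting-hyperplane choice of $x_0$ forces $a\le\tfrac12\om_{n+1}\rho^{n+1}$, hence $a\le v^*$, while $a>0$ by \eqref{sofp1}, and the isoperimetric inequality applied to $E\cap B_\rho(x_0)$ together with \eqref{E cap F} bounds $P(B^{(a)})$ by the interface area recovered inside $B_\rho(x_0)$, with equality only in the rigidity case where $E\cap B_\rho(x_0)$ is a ball --- which, $x_0$ being an interface point with $\pa^*E\subset K$, forces the first term to be strictly positive. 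Making this last dichotomy rigorous --- including the verification that a supporting-hyperplane centre can be taken inside $\Om\cap\pa^*E$ and away from $\wire$, and the treatment of the degenerate situation in which $\pa^*E$ hugs $\wire$ (where one instead attaches the ball against $\pa\wire$ and exploits its $C^2$-regularity) --- is the technical core of the argument; everything else is standard set-of-finite-perimeter bookkeeping.
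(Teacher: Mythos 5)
Your overall reduction coincides with the paper's (the dichotomy identity $\Psi_{\rm bk}(v)=\Psi_{\rm bk}(|E|)+P(B^{(v^*)})$ extracted from step three of the proof of Theorem \ref{thm existence EL for bulk section}, a contradiction when $v^*>0$, and the mass-comparison upgrade to \eqref{what K and E do}), and your ``clean case'' computation is correct --- indeed simpler than you make it: since $t\mapsto t^{n/(n+1)}$ is \emph{strictly} concave, $a=|B_\rho(x_0)\cap E|>0$ already gives $\H^n(\pa B_\rho(x_0))=P(B^{(v^*+a)})<P(B^{(v^*)})+P(B^{(a)})$, so the non-strict isoperimetric inequality $P(B^{(a)})\le P(E\cap B_\rho(x_0))$ suffices and no supporting-hyperplane or rigidity discussion is needed. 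Where it applies, this is genuinely more elementary than the paper's argument (no Lagrange multiplier, no Allard or maximum principle).

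The genuine gap is admissibility of the competitor. You need $\cl(B_\rho(x_0))\subset\Om$, but $\rho$ is dictated by $|B_\rho(x_0)\setminus E|=v^*$ and is therefore macroscopic, while $x_0$ must lie on $\pa^*E$ (or on $K$), hence at bounded distance from $\wire$; in general --- e.g.\ when $v^*$ is comparable to $v$, or when $E$ is a thin region clinging to the spanning surface --- every such ball engulfs part or all of the solid region $\wire$, so $E'=E\cup B_\rho(x_0)\not\subset\Om$ and $(K',E')\notin\KK_{\rm B}$. This is the typical situation, not a ``degenerate'' one. Replacing $B_\rho$ by $B_\rho\setminus\wire$ (and readjusting $\rho$) does not rescue the bookkeeping: with $b=|B_\rho\cap\wire|$ one must now beat $P(B^{(v^*+a+b)})$, and after concavity the term $P(B^{(b)})$ has to be absorbed by $\H^n(\pa B_\rho\cap\wire)$, which fails in general (a finger of $\wire$ of thickness $\e$ entering the ball gives $\H^n(\pa B_\rho\cap\wire)\sim\e^n$ while $b\sim\e^n$ times a fixed length, so $P(B^{(b)})\sim\e^{n^2/(n+1)}\gg\e^n$); the area $\H^n(B_\rho\cap\pa\wire)$ is not available, since interfaces along $\pa\Om$ carry no energy but also yield no saving. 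Your fallback --- ``attach the ball against $\pa\wire$'' --- is exactly where the paper's proof lives: it attaches a ball of volume $\approx v^*$ \emph{tangentially from outside} at a first contact point of a hyperplane with $\cl(E)\cup\wire$, so the overlap is a sliver of height $O(\de^2)$ over a $\de$-disk, costing $O(\de^{n+1})$ against a saving of order $\de^n$; and since the contact point may lie on $\pa E$ rather than on $\pa\wire$ ($E$ can protrude beyond, or enclose, $\wire$), this requires $C^2$ graphicality of $\pa E$ there, which the paper obtains from the Euler--Lagrange equation, the maximum principle and Allard's theorem, after first excluding the collapsed set $K\setminus\pa E$ via the sign of the Lagrange multiplier ($\l$ must equal the positive curvature of the far-away ball, while a nonempty collapsed set forces $\l\le0$). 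None of these ingredients appear in your outline, so as written the argument covers only the favorable geometric configurations.
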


    \begin{proof} By step three in the proof of Theorem \ref{thm existence EL for bulk section}, there is $(K,E)\in\KK_{\rm B}$ satisfying \eqref{what K and E do} and such that $K$ and $E$ are bounded, $(K,E)$ is a minimizer of $\Psi_{\rm bk}(|E|)$, $K$ is $\H^n$-rectifiable, and $|E|\le v$; moreover, if $v>|E|$, then there is $x\in\R^{n+1}$ such that $B^{(v-|E|)}(x)$ is disjoint from $K\cup E\cup \wire$ and $(K',E')=(K\cup\pa B^{(v-|E|)}(x),E\cup B^{(v-|E|)}(x))$ is a minimizer of $\Psi_{\rm bk}(v)$. We complete the proof by deriving a contradiction with the $v^*=v-|E|>0$ case. The idea is to relocate $B^{(v^*)}(x)$ to save perimeter by touching $\pa \wire$ or $\pa E$; see Figure \ref{fig gammadelta}.

    \medskip

    First of all, we claim that $K=\Om\cap\partial E$. If not, since $(K,E)$ and $(K',E')$ respectively are minimizers of $\Psi_{\rm bk}(|E|)$ and $\Psi_{\rm bk}(v)$, then there are $\l,\l'\in\R$ such that $(K,E)$ and $(K',E')$ respectively satisfy \eqref{first var cap theorem section} with $\l$ and $\l'$.  By localizing \eqref{first var cap theorem section} for $(K',E')$ at points in $\Om\cap\pa^*E$ we see that it must be $\l=\l'$; by localizing at points in $\pa B^{(v-|E|)}(x)$, we see that $\l$ is equal to the mean curvature of $\pa B^{(v-|E|)}(x)$, so that $\l>0$; by arguing as in the proof of \cite[Theorem 2.9]{KMS2} (see \cite{novackGENMIN} for the details), we see that if $K\setminus(\Om\cap\pa E)\ne\varnothing$, then $\l\le 0$, a contradiction.

    \medskip

    Having established that $K=\Om\cap\pa E$, we move an half-space $H$ compactly containing $\cl(E)\cup\wire$ until the boundary hyperplane  $\pa H$ first touches $\cl(E)\cup\wire$. Up to rotation and translation, we can thus assume that $H=\{x_{n+1}>0\}$ and
    \begin{equation}
      \label{first touch}
      0\in\cl(E)\cup\wire\subset \cl(H)\,.
    \end{equation}
    We split \eqref{first touch} into two cases, $0\in\Om\cap\pa E$ and $0\in\wire$, that are then separately discussed for the sake of clarity. In both cases we set $x=(x',x_{n+1})\in\R^n\times\R\equiv\R^{n+1}$, and set
    \begin{eqnarray*}
      {\bf C}_\de&=&\{x:x_{n+1}\in(0,\de)\,,|x'|<\de\}\,,
      \\
      {\bf L}_\de&=&\{x:|x'|=\de,x_{n+1}\in(0,\de)\}\,,
      \\
      {\bf T}_\de&=&\{x:x_{n+1}=\de\,,|x'|<\de\}\,,
      \\
      {\bf D}_\de&=&\{x:x_{n+1}=0\,,|x'|<\de\}\,,
    \end{eqnarray*}
    for every $\de>0$.

    \medskip

    \noindent {\it Case one, $0\in\Om\cap\pa E$}: In this case, by the maximum principle \cite[Lemma 3]{delgadinomaggi}, \eqref{first var cap theorem section}, and the Allard regularity theorem, we can find $\de_0>0$ and $u\in C^2({\bf D}_{\de_0};[0,\de_0])$ with $u(0)=0$ and $\nabla u(0)=0$ such that ${\bf C}_{\de_0}\cc\Om$ and
    \begin{eqnarray}
     \label{E is locally epigraph}
      &&E\cap{\bf C}_{\de_0}=\big\{x\in{\bf C}_{\de_0}:\de_0>x_{n+1}>u(x')\big\}\,,
      \\
      \nonumber
      &&(\pa E)\cap{\bf C}_{\de_0}=\big\{x\in{\bf C}_{\de_0}:x_{n+1}=u(x')\big\}\,.
    \end{eqnarray}
    Since $0\le u(x')\le C\,|x'|^2$ for some $C=C(E)$, if we set
    \begin{equation}
      \label{def of Gamma delta}
      \Gamma_\de=\Big\{x\in{\bf C}_\de:0<x_{n+1}<u(x')\Big\}\,,\qquad\de\in(0,\de_0)\,,
    \end{equation}
    then we have
    \begin{eqnarray}\label{cylindrical volume estimate}
      |\Gamma_\de|\!\!&\le&\!\! C\,\de^{n+2}\,,
      \\\label{cylindrical perimeter estimate}
      P\big(\Gamma_\de;{\bf L}_\de\big)\!\!&\le&\!\! C\,\de^{n+1}\,.
    \end{eqnarray}
    We then set
    \begin{equation}
      \label{def of Edelta}
      E_\de=E \cup \Gamma_\de\cup\big(B_{r_\delta}(z_\delta) \setminus H\big)\,,
    \end{equation}
    see
    \begin{figure}
    \input{gammadelta.pstex_t}
    \caption{\small{(a): the construction of $E_\de$ when $0\in\Om\cap\pa E$; (b) the construction of $E_\de$ when $0\in\wire$.}}
    \label{fig gammadelta}
    \end{figure}
    Figure \ref{fig gammadelta}-(a), where $r_\de>0$ and $z_\de\in\R^{n+1}\setminus\cl(H)$ are uniquely determined by requiring that, first,
    \begin{eqnarray}
    \label{def of zeta delta 1}
      \cl(B_{r_\de}(z_\de))\cap\pa H=\pa {\bf C}_\de\cap\pa H=\big\{x:x_{n+1}=0\,,|x'|\leq \de\big\}\,,
    \end{eqnarray}
    and, second, that
    \begin{equation}
      \label{def of zeta delta 2}
      |E_\de|=v\,.
    \end{equation}
    To see that this choice is possible, we first notice that, since $E\cap\Gamma_\de=\varnothing$, \eqref{def of zeta delta 2} is equivalent to
    \begin{equation}
      \label{def of zeta delta 2bis}
      \big|B_{r_\delta}(z_\delta) \setminus H\big|=v-|E|-|\Gamma_\de|=v^*-|\Gamma_\de|\,.
    \end{equation}
    Taking \eqref{cylindrical volume estimate} into account we see that \eqref{def of zeta delta 1} and \eqref{def of zeta delta 2bis} uniquely determine $z_\de\in\R^{n+1}$ and $r_\de>0$ as soon as $\de_0$ is small enough to guarantee $v^*-|\Gamma_{\de_0}|>0$. In fact, by \eqref{cylindrical volume estimate}, $v^*-|\Gamma_\de|\to v^*>0$ with $\H^n(\pa {\bf C}_\de\cap\pa H)\to 0$ as $\de\to 0^+$, so that, up to further decrease $\de_0$, we definitely have $z_\de\not\in H$, and
    \begin{equation}
      \label{size of rdelta}
    \Big|r_\de-\Big(\frac{v^*}{\om_{n+1}}\Big)^{1/(n+1)}\Big|\le C\,\de^{n+2}\,,
    \end{equation}
    where $C=C(E,n,v^*)$.

    \medskip

    We now use the facts that $K\cup E^\one$ is $\C$-spanning $\wire$ and that $E\subset E_\de$ to prove that
    \begin{equation}
      \label{def of Kdelta}
      (K_\de,E_\de)=((\Omega \cap \pa^* E_\delta) \cup (K \cap E_\de^\zero),E_\de)
    \end{equation}
    is such that $K_\de\cup E_\de^\one$ is $\C$-spanning $\wire$ (and thus is admissible in $\Psi_{\rm bk}(v)$ by \eqref{def of zeta delta 2}). To this end, it is enough to show that
    \begin{align}\label{final containment}
    K\cup E^\one \shn K_\delta \cup E_\delta^\one\,.
    \end{align}
    Indeed, by $E\subset E_\delta$ and Federer's theorem \eqref{federer theorem} we have
    \begin{align}\label{e edelta containment}
       E^\one \subset E_\de^\one\,,\qquad E_\de^\zero \subset E^\zero\,,\qquad  E^\one \cup \pa^* E \shn E_\de^\one \cup \pa^* E_\de\,.
    \end{align}
    (Notice indeed that $\pa^*E\subset E^\half\subset\R^{n+1}\setminus E_\de^\zero$). Next, using in order Federer's theorem \eqref{federer theorem}, \eqref{e edelta containment} and $K\subset \Omega$, and the definition of $K_\delta$, we have
    $$
    E^\one \cup (K\setminus E_\de^\zero)\ehn E^\one \cup [K \cap (\pa^* E_\de \cup E_\de^\one)]\shn E_\de^\one \cup (\Om \cap \pa^* E_\de) \subset E_\de^\one \cup K_\de\,.
    $$
    But $K\cap E_\delta^\zero \subset K_\delta$ by definition, which combined with the preceding containment completes the proof of \eqref{final containment}. Having proved that $(K_\de,E_\de)$ is admissible in $\Psi_{\rm bk}(v)$, we have
    \begin{equation}
      \label{start}
      \F_{\rm bk}(K,E)+P(B^{(v^*)})=\Psi_{\rm bk}(v)\le\F_{\rm bk}(K_\de,E_\de)\,.
    \end{equation}
    By \eqref{start}, the definition of $K_\delta$, and \eqref{e edelta containment}, we find
    \begin{eqnarray*}
       &&P(E;\Om) + 2\,\H^n(K \cap E^\zero) + P(B^{(v^*)}) \leq P(E_\delta;\Om) + 2\,\H^n(K_\delta \cap E_\delta^\zero)
       \\
       && \hspace{1.4cm}\le P(E_\delta;\Om) + 2\,\H^n(K \cap E_\delta^\zero)   \leq P(E_\delta;\Om) + 2\,\H^n(K \cap E^\zero)\,,
    \end{eqnarray*}
    from which we deduce
    \begin{align}\label{bad estimate}
    P(E;\Omega) + P(B^{(v^*)}) \leq P(E_\de;\Om)\,.
    \end{align}
    We now notice that $E_\de$ coincides with $E$ in the open set $\Om\cap H\setminus\cl({\bf C}_\de)$, and with $B_{r_\de}(z_\de)$ in the open set $\R^{n+1}\setminus\cl(H)$, so that
    \begin{eqnarray*}
      &&\Big(\Om\cap H\setminus\cl({\bf C}_\de)\Big)\cap\pa^*E_\de=\Big(\Om\cap H\setminus\cl({\bf C}_\de)\Big)\cap\pa^*E\,,
      \\
      &&
      \big(\Om\setminus\cl(H)\big)\cap\pa^*E_\de=  \big(\pa B_{r_\delta}(z_\delta)\big)\setminus\cl(H)\,,
    \end{eqnarray*}
    and \eqref{bad estimate} is equivalent to
    \begin{eqnarray}\label{bad estimate2}
    &&P\big(E;\Omega\cap(\pa H\cup\cl({\bf C}_\de)\big) + P(B^{(v^*)})
    \\\nonumber
    &&\leq P\big(E_\de;\Om\cap(\pa H\cup\cl({\bf C}_\de)\big)+P(B_{r_\de}(z_\de);\R^{n+1}\setminus\cl(H))\,.
    \end{eqnarray}
    In fact, it is easily proved that $(\pa^*E)\cap(\pa H)\setminus\cl({\bf C}_\de)=(\pa^*E_\de)\cap(\pa H)\setminus\cl({\bf C}_\de)$ (which is evident from Figure \ref{fig gammadelta}), so that \eqref{bad estimate2} readily implies
    \begin{eqnarray}\label{bad estimate3}
    P(B^{(v^*)})
    \leq P\big(E_\de;\Om\cap\cl({\bf C}_\de)\big)+P(B_{r_\de}(z_\de);\R^{n+1}\setminus\cl(H))\,.
    \end{eqnarray}
    Now, ${\bf C}_\de\cc\Om$. Moreover, by \eqref{E is locally epigraph}, we have that ${\bf T}_\de$ (the top part of $\pa{\bf C}_\de$) is contained in $E^\one\subset E_\de^\one$, and is thus $\H^n$-disjoint from $\pa^*E_\de$. Similarly, again by \eqref{E is locally epigraph} we have $E\cup\Gamma_\de={\bf C}_\de$, and thus ${\bf D}_\de\subset (E\cup\Gamma_\de)^\half$; at the same time, by \eqref{def of zeta delta 1} we have ${\bf D}_\de\subset(B_{r_\de}(z_\de)\setminus H)^\half$; therefore ${\bf D}_\de\subset E_\de^\one$, and thus ${\bf D}_\de$ is $\H^n$-disjoint from $\pa^*E_\de$. Finally, again by $E\cup\Gamma_\de={\bf C}_\de$ we see that $P(E_\de;{\bf C}_\de)=0$. Therefore, in conclusion,
    \begin{align}\label{first combo}
    P\big(E_\de;\Om\cap\cl({\bf C}_\de)\big)=P(E_\de;{\bf L}_\de)=P(\Gamma_\de;{\bf L}_\de)\le C\,\de^{n+1}\,,
    \end{align}
    where we have used again first \eqref{E is locally epigraph}, and then \eqref{cylindrical perimeter estimate}. Combining \eqref{bad estimate3}-\eqref{first combo} we get
    \begin{eqnarray}\label{bad estimate4}
    P(B^{(v^*)})\le P(B_{r_\de}(z_\de);\R^{n+1}\setminus\cl(H))+C\,\de^{n+1}\,.
    \end{eqnarray}
    Finally, by \eqref{def of zeta delta 1}, \eqref{cylindrical volume estimate}, and \eqref{size of rdelta} we have
    \[
    P(B_{r_\de}(z_\de);\R^{n+1}\setminus\cl(H))\le P(B^{(v^*)})-C(n)\,\de^n\,;
    \]
    by combining this estimate with \eqref{bad estimate4}, we reach a contradiction for $\de$ small enough.

    \medskip

    \noindent {\it Case two, $0\in\wire$}: In this case, by the $C^2$-regularity of $\pa\Om$ we can find $\de_0>0$ and $u\in C^2({\bf D}_{\de_0};[0,\de_0])$ with $u(0)=0$ and $\nabla u(0)=0$ such that
    \begin{eqnarray}
    \label{wire is locally epigraph}
      &&\wire\cap{\bf C}_{\de_0}=\big\{x\in{\bf C}_{\de_0}:\de_0>x_{n+1}>u(x')\big\}\,,
      \\\nonumber
      &&(\pa\Om)\cap{\bf C}_\de=\big\{x\in{\bf C}_{\de_0}:x_{n+1}=u(x')\big\}\,.
    \end{eqnarray}
    We have $0\le u(x')\le C\,|x'|^2$ for every $|x'|<\de_0$ (and some $C=C(\wire)$), so that defining $\Gamma_\de$ as in \eqref{def of Gamma delta} we still obtain \eqref{cylindrical volume estimate} and \eqref{cylindrical perimeter estimate}. We then define $E_\de$, $r_\de$, and $z_\de$, as in \eqref{def of Edelta}, \eqref{def of zeta delta 1} and \eqref{def of zeta delta 2}. Notice that now $E$ and $\Gamma_\de$ may not be disjoint (see Figure \ref{fig gammadelta}-(b)), therefore \eqref{def of zeta delta 2} is not equivalent to \eqref{def of zeta delta 2bis}, but to
    \[
    \big|B_{r_\delta}(z_\delta) \setminus H\big|=v-|E|-|\Gamma_\de\setminus E|=v^*-|\Gamma_\de\setminus E|\,.
    \]
    This is still sufficient to repeat the considerations based on \eqref{def of zeta delta 1} and \eqref{cylindrical volume estimate} proving that $r_\de$ and $z_\de$ are uniquely determined, and satisfy \eqref{size of rdelta}. We can repeat the proof that $(K_\de,E_\de)$ defined as in \eqref{def of Kdelta} is admissible in $\Psi_{\rm bk}(v)$ (since that proof was based only on the inclusion $E\subset E_\de$), and thus obtain \eqref{bad estimate}. The same considerations leading from \eqref{bad estimate} to \eqref{bad estimate3} apply in the present case too, and so we land on
    \begin{eqnarray}\label{bad estimate3bis}
    P(B^{(v^*)})
    \leq P\big(E_\de;\Om\cap\cl({\bf C}_\de)\big)+P(B_{r_\de}(z_\de);\R^{n+1}\setminus\cl(H))\,.
    \end{eqnarray}
    Now, by \eqref{wire is locally epigraph}, ${\bf T}_\de$ is contained in $\wire$, so that $P(E_\de;{\bf T}_\de)=0$. At the same time, if $x=(x',0)\in{\bf D}_\de\cap\Om$, then $u(x')>0$, and thus $x\in (E_\de\cap H)^\half$; since, by \eqref{def of zeta delta 1}, we also have $x\in (E_\de\setminus H)^\half$, we conclude that ${\bf D}_\de\cap\Om\subset E_\de^\one$, and thus that
    \[
    P\big(E_\de;\Om\cap\cl({\bf C}_\de)\big)=P\big(E_\de;\Om\cap{\bf L}_\de\big)\le \H^n(\Om\cap{\bf L}_\de)\le C\,\de^{n+1}\,,
    \]
    where we have used $0\le u(x')\le C\,|x'|^2$ for every $|x'|<\de_0$ again. We thus deduce from \eqref{bad estimate3bis} that
    \[
    P(B^{(v^*)})\le P(B_{r_\de}(z_\de);\R^{n+1}\setminus\cl(H))+C\,\de^{n+1}\,,
    \]
    and from here we conclude as in case one.
    \end{proof}

    \section{An elementary lemma}\label{sec: geometric remark appendix}

    In this appendix we provide a proof of Lemma \ref{dldrg lemma}. The proof is an immediate corollary of a geometric property of closed $\mathcal{C}$-spanning sets (see \eqref{dichot 1}-\eqref{dichot 2} below) first proved in $\mathbb{R}^{n+1}$ for $n\geq 2$ \cite[Lemma 4.1]{delederosaghira}. Here we extend this property to the plane. The difference between $\mathbb{R}^2$ and $\mathbb{R}^{n+1}$ for $n\geq 2$ stems from a part of the argument where one constructs a new admissible spanning curve by modifying an existing one inside a ball. Specifically, ensuring that the new curve does not intersect itself requires an extra argument in $\mathbb{R}^2$.

    \begin{lemma}\label{dldrg appendix 2d lemma}
        Let $n\geq 1$, $\wire\subset \mathbb{R}^{n+1}$ be closed, $\mathcal{C}$ be a spanning class for $\wire$, $S\subset \Omega := \mathbb{R}^{n+1}\setminus \wire$ be relatively closed and $\mathcal{C}$-spanning $\wire$, and $B_r(x) \cc \Omega$. Let $\{\Gamma_i \}_i$ be the countable family of equivalence classes of $\pa B_r(x) \setminus S$ determined by the relation:
        \begin{align}\label{dldrg relation}
            y\sim x \iff \mbox{$\exists \tilde{\gamma} \in C^0([0,1],\cl B_r(x) \setminus S):\tilde{\gamma}(0)=y$, $\tilde{\gamma}(1)=z$, $\tilde{\gamma}((0,1)) \subset B_r(x)$}\,.
        \end{align}
    Then if $\gamma \in \mathcal{C}$, either
    \begin{align}\label{dichot 1}
        \gamma \cap (S \setminus B_r(x))\neq \emptyset
    \end{align}
    or there exists a connected component $\sigma$ of $\gamma \cap \cl B_r(x)$ which is homeomorphic to an interval and such that
    \begin{align}\label{dichot 2}
    \mbox{the endpoints of $\sigma$ belong to two distinct equivalence classes of $\pa B_r(x) \setminus S$.}
    \end{align}
    In particular, the conclusion of Lemma \ref{dldrg lemma} holds.
    \end{lemma}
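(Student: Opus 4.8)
The plan is to prove Lemma \ref{dldrg appendix 2d lemma} by reducing to the following dichotomy for a fixed $\g\in\C$: either $\g$ meets $S$ outside of $B_r(x)$, or it must have a connected component in $\cl B_r(x)$ joining two distinct equivalence classes. To set this up, I would first observe that $\g(\SS^1)\cap\cl B_r(x)$ is a compact subset of $\cl B_r(x)$, and $\g(\SS^1)\setminus S$ is relatively open in $\g(\SS^1)$ (since $S$ is closed). The point of \eqref{dldrg relation} is that two points of $\pa B_r(x)\setminus S$ are in the same class precisely when they can be joined by a path that stays in $\cl B_r(x)$ and avoids $S$ except possibly at its endpoints on the sphere. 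So I will argue by contrapositive: assume \eqref{dichot 1} fails, i.e. $\g(\SS^1)\cap(S\setminus B_r(x))=\varnothing$, and assume moreover that no component $\s$ of $\g(\SS^1)\cap\cl B_r(x)$ homeomorphic to an interval has endpoints in distinct equivalence classes; I will then build a new curve $\g^*$, homotopic to $\g$ in $\Om$, with $\g^*(\SS^1)\cap S=\varnothing$, contradicting that $S$ is $\C$-spanning $\wire$ (via Definition \ref{def homot span closed}).

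The construction of $\g^*$ is the heart of the matter. Since $\g$ avoids $S$ outside $B_r(x)$, all intersections of $\g$ with $S$ occur inside $B_r(x)$. Up to a generic perturbation (composing with a diffeomorphism supported near $\pa B_r(x)$ and invoking Sard's theorem, exactly as in \cite[Proof of Lemma 10, Step 2]{DGM} and in the proof of Theorem \ref{theorem definitions equivalence} above), I may assume $\g$ meets $\pa B_r(x)$ transversally at finitely many points, so that $\g(\SS^1)\cap\cl B_r(x)$ decomposes into finitely many arcs $\g([a_i,b_i])$ with endpoints on $\pa B_r(x)$, plus possibly arcs entirely inside $\cl B_r(x)$ which I can discard as they can only help. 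By the assumption that \eqref{dichot 2} fails, for each such arc the two endpoints $\g(a_i),\g(b_i)$ lie in the \emph{same} equivalence class of $\pa B_r(x)\setminus S$; hence by \eqref{dldrg relation} there is a path $\wi\g_i$ in $\cl B_r(x)\setminus S$ (interior in $B_r(x)$) joining them. The plan is to replace each offending arc $\g([a_i,b_i])$ by $\wi\g_i$. In $\R^{n+1}$ with $n\ge 2$ this is done in \cite[Lemma 4.1]{delederosaghira}; the only new issue in the plane is embeddedness of the resulting curve, since two replacement paths (or a replacement path and an unmodified part of $\g$) could cross. Here I would handle it exactly as in the final paragraphs of the proof of Theorem \ref{theorem definitions equivalence}: after doing one replacement, one obtains an (a priori only immersed) loop, but by a further Sard-type perturbation inside $B_r(x)$ and a homotopy one can make it an embedding, possibly passing to the complex-conjugate reparametrization $\g_{**}(t)=\g_*(\ov t)$ to fix orientation; the winding-number computation $\deg\s_*=\operatorname{sgn}\det D\s_*(1)=-\deg\s_{**}$ from \cite[pg 27]{milnor} shows that exactly one of the two is homotopic in $\Om$ to $\g$, hence lies in $\C$. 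Iterating over the finitely many arcs $\g([a_i,b_i])$ (or, more cleanly, doing all replacements at once and then perturbing to embeddedness) yields the desired $\g^*\in\C$ with $\g^*(\SS^1)\cap S=\varnothing$, the contradiction.

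The main obstacle I anticipate is precisely this planar embeddedness/self-intersection step: in $\R^2$ the replacement paths $\wi\g_i$ have no room to be ``pushed off'' each other, so one must be careful to route them (using that $B_r(x)\setminus S$ is open and the paths may be taken with interior in $B_r(x)$) and then apply Sard's theorem to remove transversal self-crossings, tracking orientations via winding numbers to decide which of $\g_*$, $\g_{**}$ survives in $\C$. All of this machinery is already present in the proof of Theorem \ref{theorem definitions equivalence} in Appendix \ref{appendix equivalence of}, so the proof of Lemma \ref{dldrg appendix 2d lemma} will essentially be a cross-reference plus the observation that the hypothesis ``\eqref{dichot 2} fails'' is exactly what licenses each arc replacement.

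Finally, to deduce Lemma \ref{dldrg lemma} from Lemma \ref{dldrg appendix 2d lemma}: the equivalence classes $\{\Gamma_i\}_i$ of $\pa B_r(x)\setminus S$ under \eqref{dldrg relation} are in bijection with (the boundary traces on $\pa B_r(x)$ of) the connected components of $B_r(x)\setminus S$ — this is a standard point-set fact, since $B:=B_r(x)$ is open and connected and $S$ is relatively closed, so a path in $\cl B\setminus S$ with interior in $B$ joining two boundary points lies, in its interior, in a single component of $B\setminus S$. Therefore, if \eqref{dichot 1} holds for a given $\g$ then trivially $\g(\SS^1)\cap(S\setminus B)\ne\varnothing$; and if \eqref{dichot 2} holds, the arc $\s$ has endpoints in two distinct classes, i.e. the curve $\g$ reaches into two distinct components of $B\setminus S$ and, being connected, must touch the boundary of each. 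This is exactly the conclusion of Lemma \ref{dldrg lemma}, completing the argument.
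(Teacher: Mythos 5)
Your reduction of Lemma \ref{dldrg lemma} to the dichotomy \eqref{dichot 1}--\eqref{dichot 2}, and your overall strategy (assume both fail, replace each arc of $\gamma$ inside $\cl B_r(x)$ by a path in $\cl B_r(x)\setminus S$ permitted by \eqref{dldrg relation}, and contradict the $\C$-spanning property) match the paper's proof. But the step you describe as the ``main obstacle'' --- making the modified planar loop embedded --- is exactly the new content of this lemma, and the tools you invoke do not accomplish it. The winding-number/conjugate-reparametrization argument from the proof of Theorem \ref{theorem definitions equivalence} is not a device for removing self-intersections: there the curve $\g_*$ is an embedding \emph{by construction} (two embedded arcs meeting only at their endpoints), and the degree computation only decides whether $\g_*$ or its reversal $\g_{**}$ is homotopic to $\g$. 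Likewise, Sard-type perturbations make intersections transversal; they do not remove them, and in the plane transversal crossings are stable under small perturbations, while resolving them by surgery changes the loop (possibly into several loops) and its homotopy class. So ``do all replacements at once and then perturb to embeddedness'' is not a valid step: if two replacement paths joining interleaved boundary pairs were needed, they would necessarily cross, and no perturbation inside $B_r(x)\setminus S$ could fix that.

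What actually closes the gap in the paper is a combinatorial--topological argument showing that disjoint replacement paths \emph{can be chosen from the start}: first, since $\g$ is an embedding, the boundary arcs $\ell_k$ cut off by the pairs $\{\g(a_k),\g(b_k)\}$ are nested or disjoint (claim \eqref{arc claim}, proved via the Jordan curve theorem applied to $\g((a_k,b_k))\cup\cl\ell_k$); second, one processes the arcs from maximal to innermost, and at each stage the region $W$ bounded by the previously inserted path and its boundary arc has $W\setminus S$ path-connected (claim \eqref{its path connected}, which needs its own rerouting argument along the previously inserted curve), so the next replacement path can be routed inside $W\setminus S$, disjointly from everything already built. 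Neither of these two claims appears in your proposal, and without them the construction of the embedded competitor $\overline{\gamma}$ does not go through; the case $n\ge2$ can indeed be cross-referenced to \cite[Lemma 4.1]{delederosaghira}, but the planar case cannot be reduced to the appendix machinery you cite.
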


    \begin{remark}
        The planar version of Lemma \ref{dldrg appendix 2d lemma} allows one to extend the main existence result \cite[Theorem 2.7]{delederosaghira} to $\mathbb{R}^2$.
    \end{remark}

    \begin{proof}[Proof of Lemma \ref{dldrg appendix 2d lemma}]
    The proof is divided into two pieces. First we show how to deduce Lemma \ref{dldrg lemma} from the fact that at least one of \eqref{dichot 1}-\eqref{dichot 2} holds. Then we show in $\mathbb{R}^2$ that \eqref{dichot 2} must hold whenever \eqref{dichot 1} does not, completing the lemma since the case $n\geq 2$ is contained in \cite[Lemma 4.1]{delederosaghira}.
    \par
    \medskip
    \noindent{\it Conclusion of Lemma \ref{dldrg lemma} from \eqref{dichot 1}-\eqref{dichot 2}}: We must show that either $\gamma(\mathbb{S}^1) \setminus B_r(x)\neq \varnothing$ or that it intersects at least two open connected components of $B_r(x) \setminus S$. If $\gamma(\mathbb{S}^1) \setminus B_r(x)\neq \varnothing$ we are done, so suppose that $\gamma(\mathbb{S}^1) \setminus B_r(x)= \varnothing$. Then \eqref{dichot 2} must be true, so that the endpoints of some arc $\sigma=\gamma((a,b))\subset B_r(x)$ for an interval $(a,b)\subset \SS^1$ belong to distinct equivalence classes. Choose $\rho$ small enough so that $B_\rho(\gamma(a)) \cup B_\rho(\gamma(b))\subset \Omega \setminus S$ and $a'$, $b'\in I$ such that $\gamma(a') \in  B_\rho(\gamma(a))$ and $\gamma(b') \in  B_\rho(\gamma(b))$. If $\gamma(a')$ and $\gamma(b')$ belonged to the same open connected component of $B_r(x) \setminus S$, we would contradict \eqref{dichot 2}, so they belong to different components as desired.
    \par
    \medskip
    \noindent{\it Verification of \eqref{dichot 1}-\eqref{dichot 2} in $\mathbb{R}^2$}: As in \cite[Lemma 10]{DLGM}, we may reduce to the case where $\gamma$ intersects $\pa B_r(x)$ transversally at finitely many points $\{\gamma(a_k)\}_{k=1}^K \cup \{\gamma( b_k)\}_{k=1}^K$ such that $\gamma \cap B_r(x) = \cup_k \gamma((a_k,b_k))$ and $\{[a_k,b_k]\}_k$ are mutually disjoint closed arcs in $\mathbb{S}^1$. If \eqref{dichot 1} holds we are done, so we assume that
        \begin{align}\label{doesnt hit the outside}
        \gamma \cap  S \setminus  B_r(x)= \varnothing
        \end{align}
        and prove \eqref{dichot 2}. Note that each pair $\{\gamma(a_k),\gamma(b_k)\}$ bounds two open arcs in $\pa B_r(x)$; we make a choice now as follows. Choose $s_0 \in \pa B_r(x) \setminus \cup_k \{\gamma(a_k),\gamma(b_k)\}$. Based on our choice of $s_0$, for each $k$ there is a unique open arc $\ell_k\subset \pa B_r(x)$ such that $\pa_{\pa B_r(x)}\ell_k=\{\gamma(a_k),\gamma(b_k)\}$ and $s_0 \notin \cl_{\pa B_r(x)}\ell_k$. We claim that
    \begin{align}\label{arc claim}
        \mbox{if $k\neq k'$, then either $\ell_k \cc \ell_{k'}$ or $\ell_{k'}\cc \ell_k$}\,.
    \end{align}

    \medskip

    \noindent{\it To prove \eqref{arc claim}}: We consider simple closed curves $\gamma_k$ with images $\gamma((a_k,b_k)) \cup \cl_{\pa B_r(x)} \ell_k$. By the Jordan curve theorem, each $\gamma_k$ defines a connected open subset $U_k$ of $B_r(x)$ with $\pa U_k \cap \pa B_r(x) = \cl_{\pa B_r(x)} \ell_k$. Aiming for a contradiction, if \eqref{arc claim} were false, then for some $k\neq k'$, either
    \begin{align*}
        &\mbox{$\gamma(a_k) \in \ell_{k'}\subset \cl U_{k'}$ and $\gamma(b_k)\in \pa B_r(x) \setminus \cl_{\pa B_r(x)}\ell_{k'}\subset \pa B_r(x) \setminus \cl U_{k'}$ or }\quad \\
        &\mbox{$\gamma(b_k) \in \ell_{k'}\subset \cl U_{k'}$ and $\gamma(a_k)\in \pa B_r(x) \setminus \cl_{\pa B_r(x)}\ell_{k'}\subset \pa B_r(x) \setminus \cl U_{k'}$}\,;
    \end{align*}
    in particular, $\gamma((a_k,b_k))$ has non-trivial intersection with both the open sets $U_{k'}$ and $B_r(x) \setminus \cl U_{k'}$. By the continuity of $\gamma$ and the connectedness of $(a_k,b_k)$, we thus deduce that $\gamma((a_k,b_k)) \cap \pa U_{k'} \neq \varnothing$. Upon recalling that $\gamma((a_k,b_k))\subset B_r(x)$, we find $\gamma((a_k,b_k)) \cap \pa U_{k'}\cap B_r(x)=\gamma((a_k,b_k)) \cap \gamma((a_{k'},b_{k'}))\neq \varnothing$. But this contradicts the fact that $\gamma$ smoothly embeds $\SS^1$ into $\Omega$. The proof of \eqref{arc claim} is finished.

    \medskip

    Returning to the proof of \eqref{dichot 2}, let us assume for contradiction that
    \begin{align}\label{similar endpoints}
        \gamma(a_k) \sim \gamma(b_k)\quad \forall 1\leq k \leq K\,.
    \end{align}
    We are going to use \eqref{doesnt hit the outside}, \eqref{arc claim}, and \eqref{similar endpoints} to create a piecewise smooth embedding $\overline{\gamma}:\mathbb{S}^1\to \Omega$ which is a homotopic deformation of $\gamma$ (and thus approximable by elements in $\mathcal{C}$) such that $\overline{\gamma}\cap S = \varnothing$. After reindexing the equivalence classes $\Gamma_i$, we may assume that $\{\Gamma_1,\dots, \Gamma_{I_{\gamma}} \}$ are those equivalence classes containing any pair $\{\gamma(a_k),\gamma(b_k)\}$ for $1\leq k \leq K$. We will construct $\overline{\gamma}$ in steps by redefining $\gamma$ on those $[a_k,b_k]$ with images under $\gamma$ having endpoints belonging to the same $\Gamma_i$. For future use, let $\Omega_i$ be the equivalence classes of $B_r(x)\setminus S$ determined by the relation \eqref{dldrg relation}. Note that they are open connected components of $B_r(x) \setminus S$.

    \medskip

    \noindent{\it Construction corresponding to $\Gamma_1$}: Relabelling in $k$ if necessary, we may assume that $\{1,\dots , K_1\}$ for some $1\leq K_1 \leq K$ are the indices such that $\{\gamma(a_k),\gamma(b_k)\}\subset \Gamma_1$. By further relabelling and applying \eqref{arc claim} we may assume: first, that $\ell_1$ is a ``maximal" arc among $\{\ell_1,\dots,\ell_{K_1}\}$, in other words
    \begin{align}\label{maximal arc}
     \mbox{for given $k\in\{2,\dots K_1\}$, either $\ell_1 \cap \ell_k=\varnothing$ or $\ell_k \cc \ell_1$}\,;
    \end{align}
    and second, that for some $K_1^1\leq K_1$, $\{\ell_2,\dots, \ell_{K_1^1}\}$ are those arcs contained in $\ell_1$. Since $\Omega_1$ is open and connected, we may connect $\gamma(a_1)$ to $\gamma(b_1)$ by a smooth embedding $\overline{\gamma}_1:[a_1,b_1] \to \cl B_r(x) \setminus S$ with $\overline{\gamma}_1((a_1,b_1))\subset \Omega_1$. Also, by the Jordan curve theorem, $\ell_1 \cup \overline{\gamma}_1$ defines an open connected subset $W_1$ of $B_r(x)$ with $\pa W_1 \cap S = \varnothing$. Using \eqref{arc claim}, we now argue towards constructing pairwise disjoint smooth embeddings $\overline{\gamma}_k:[a_k,b_k]\to \Gamma_1 \cup \Omega_1$.
    \par
    \medskip
    We first claim that
    \begin{align}\label{its path connected}
    \mbox{$W_1\setminus S$ is path-connected}\,.
    \end{align}
    To prove \eqref{its path connected}, consider any $y,z\in W_1\setminus S$. Since $\Omega_1\supset W_1\setminus S$ is open and path-connected, we may obtain continuous $\tilde{\gamma}:[0,1]\to \Omega_1$ connecting $y$ and $z$. If $\tilde{\gamma}([0,1])\subset W_1\setminus S$, we are done. Otherwise, $\varnothing\neq \tilde{\gamma}\cap (\Omega_1 \setminus (W_1\setminus S))= \Omega_1 \setminus W_1 $, with the equality following from $\Omega_1 \cap S = \varnothing$. Combining this information with $\tilde{\gamma}(\{0,1\})\subset W_1\setminus S$, we may therefore choose $[\delta_1,\delta_2]\subset (0,1)$ to be the smallest interval such that $\tilde{\gamma}([0,1]\setminus [\delta_1,\delta_2]) \subset W_1\setminus S$. On $(\delta_1,\delta_2)$, we redefine $\tilde{\gamma}$ using the fact that $\tilde{\gamma}(\{\delta_1,\delta_2\})\subset \pa W_1\cap B_r(x)=\overline{\gamma}_1((a_1,b_1))$ by letting $\tilde{\gamma}((\delta_1,\delta_2)) = \overline{\gamma}_1(I)$, where $\overline{\gamma}_1(I)$ has endpoints $\tilde{\gamma}(\delta_1)$ and $\tilde{\gamma}(\delta_2)$ and $I\subset (a_1,b_1)$. The modified $\tilde{\gamma}$ is a concatenation of continuous curves and is thus continuous; furthermore, $\tilde{\gamma}^{-1}(W_1\setminus S) = [0,\delta_1) \cup (\delta_2,1]$. It only remains to ``push" $\tilde{\gamma}$ entirely inside $W_1\setminus S$, which we may easily achieve by projecting $\tilde{\gamma}((\delta_1-\varepsilon,\delta_2+\varepsilon))$ inside $W_1\setminus S$ for small $\varepsilon$ using the distance function to the smooth curve $\overline{\gamma}_1(a_1,b_1)=\pa W_1 \cap B_r(x) \subset B_r(x) \setminus S$. This completes \eqref{its path connected}.
    \par
    \medskip
    But now since $W_1\setminus S$ is path-connected and open, we may connect any two points in it by a smooth embedding of $[0,1]$, which in particular allows us to connect $\gamma(a_2)$ and $\gamma(b_2)$ by smooth embedding $\overline{\gamma}_2:[a_2,b_2]\to \cl W_1\setminus S$ with $\overline{\gamma}_2((a_2,b_2)) \subset W_1\setminus S$. Let $W_2$ be the connected open subset of $W_1$ determined by the Jordan curve $\overline{\gamma}_2 \cup \ell_2$. Arguing exactly as in \eqref{its path connected}, $W_2\setminus S$ is open and path-connected, so we can iterate this argument to obtain mutually disjoint embeddings $\overline{\gamma}_k:[a_k,b_k]\to \cl W_1 \setminus S\subset \Gamma_1 \cup \Omega_1$ with $\overline{\gamma}_k((a_k,b_k)) \subset \Omega_1$ for $1\leq k \leq K_1^1$.
    \par
    \medskip
    Next, let $\ell_{K_1^1+1}$ be another maximal curve with endpoints in $\Gamma_1$. The same argument as in proving \eqref{its path connected} implies that $\Omega_1 \setminus \cl W_1$ is path-connected, and so $\gamma(a_{K_1^1+1})$, $ \gamma(b_{K_1^1+1})$ may be connected by a smooth embedding $\overline{\gamma}_{K_1^1+1}:[a_{K_1^1+1},b_{K_1^1+1}]\to (\Gamma_1 \cup \Omega_1)\setminus \cl W_1$, that, together with $\ell_{K_1^1+1}$, defines a connected domain $W_{K_1^1+1}\subset \Omega_1$ by the Jordan curve theorem. In addition, $W_{K_1^1+1}\cap W_1=\varnothing$ since $(\ell_2 \cup \overline{\gamma}_{K_1^1+1})\cap \cl W_1=\varnothing$ by \eqref{maximal arc} and the definition of $\overline{\gamma}_{K_1^1+1}$. Repeating the whole iteration procedure for those intervals contained in $\ell_{K_1^1+1}$ and then the rest of the maximal arcs, we finally obtain mutually disjoint embeddings $\overline{\gamma}_k:[a_k,b_k]\to \Gamma_1\cup \Omega_1$ with $\overline{\gamma}_k((a_k,b_k))\subset \Omega_1$ as desired for $1\leq k \leq K_1$.
    \par
    \medskip
    \noindent{\it Conclusion of the proof of \eqref{dichot 2}}: Repeating the $\Gamma_1$ procedure for $\{\Gamma_2,\dots,\Gamma_{I_\gamma}\}$ and using the mutual pairwise disjointness of $\Gamma_i$, we obtain mutually disjoint embeddings $\overline{\gamma}_k:[a_k,b_k]\to \cl B_r(x) \setminus S$ with $\overline{\gamma}_k((a_k,b_k)) \subset B_r(x) \setminus S$ for $1\leq k \leq K_1$. We define $\overline{\gamma}:\mathbb{S}^1\to \Omega$ by
    \[\overline{\gamma}(t)= \begin{cases}
          \gamma(t) & t\in \mathbb{S}^1\setminus \cup [a_k,b_k] \\
          \overline{\gamma}_k(t) & t\in [a_k,b_k]\,,\,\,\,1\leq k \leq K\,.
       \end{cases}
    \]
    Since $\overline{\gamma}=\gamma$ outside $B_r(x) \cc \Omega$, $\overline{\gamma}$ is homotopic to $\gamma$ relative to $\Omega$. Furthermore, $\overline{\gamma}$ is piecewise smooth and homotopic to $\gamma$, and so it can be approximated in the $C^0$ norm by $\{\gamma_j\}\subset \mathcal{C}$. However, by \eqref{doesnt hit the outside} and the construction of $\overline{\gamma}_k$, $\overline{\gamma}\cap S = \varnothing$, which implies that $S \cap \gamma_j = \varnothing$ for large $j$. This contradicts the fact that $S$ is $\mathcal{C}$-spanning $\wire$, and so \eqref{dichot 2} is true.
    \end{proof}

    \bibliographystyle{alpha}
    \bibliography{references}
    \end{document}